\documentclass[12pt]{amsart}
\usepackage[margin=1.2in]{geometry}
\usepackage{graphicx} 
\usepackage{amsmath,amsthm,amsfonts,hyperref} 
\usepackage{amssymb}
\usepackage{enumerate}
\usepackage{amsthm,parskip}
\usepackage{comment}
\usepackage{mathtools} 
\usepackage{bm}
\DeclareMathOperator{\Arg}{Arg}
\theoremstyle{plain}
\newtheorem{theorem}{Theorem}[section]
\newtheorem{lemma}[theorem]{Lemma}
\newtheorem{corollary}[theorem]{Corollary}
\newtheorem{proposition}[theorem]{Proposition}
\theoremstyle{definition}
\newtheorem{definition}[theorem]{Definition}

\newtheorem{remark}[theorem]{Remark}
\newcommand{\RE}{\text{Ritt}_{\text{E}}}

\newcommand{\abs}[1]{\Big\lvert #1 \Big\rvert}

\renewcommand{\le}{\leqslant}
\renewcommand{\ge}{\geqslant}

\title[Maximal ergodic theorems]%
{Maximal Ergodic Theorems for Operators with Finite Peripheral Spectrum }

 \author{Subhajit Palai}
\address{School of Mathematics, IISER Thiruvananthpuram}
\email{subhajit22@iisertvm.ac.in}

\author{Samya Kumar Ray}
\address{
The Institute of Mathematical Sciences, 4th Cross Street, CIT Campus, Tharamani, Chennai, Tamil Nadu 600113, India, and \newline
Homi Bhabha National Institute, Training School Complex, Anushakti Nagar, Mumbai 400094, India} \email{samya@imsc.res.in}

\begin{document}

 \begin{abstract}
Let $\mathcal M$ be a semifinite von Neumann algebra and $T : \mathcal{M} \to \mathcal{M}$ be a positive $L_\infty-L_1$ contraction in the sense of Junge-Xu, of which the numerical range, when viewed as an operator on $L_2(\mathcal M),$ is contained in a closed polygon with vertices on the unit circle. In this article, we prove that there exists a positive constant $C_p(T)$ such that
\begin{equation}\label{abstract1stin}
    \Big\|\sup_{n \ge 0}\!^{+} T^n x \Big\|_p \le C_p(T)\, \|x\|_p
\end{equation}
for all \( x \in L_p(\mathcal{M}) \), $1<p<\infty$ extending some noncommutative maximal ergodic inequalities proved by Junge-Xu \cite{junge-Xu} and later generalized by Bekjan \cite{Bekjan2008}. In the commutative setting, the similar inequalities as in \eqref{abstract1stin} hold for arbitrary $L_\infty-L_1$ contractions with the same condition on the numerical range, yielding a vast generalization of a classical maximal ergodic theorem of Stein \cite{Stein-ergodic-theorem} proved in 1960s.

Moreover, for any contractively regular operator $T:L_p(\Omega)\to L_p(\Omega)$ with $1<p<\infty,$ for which the peripheral spectrum $\sigma(T) \cap \mathbb{T}$ is finite and satisfies the resolvent estimate
\[
\|(z - T)^{-1}\|\leq C \prod^N_{j=1}|z - \xi_j|^{-1} \qquad \textit{where}\ \xi_i\in \sigma(T)\cap \mathbb{T} , 
\qquad z \in \overline{\mathbb{D}}^{c},
\]
for some positive constant $C,$ we prove a variational inequality that strengthens \eqref{abstract1stin} and extends earlier work of Le Merdy and Xu \cite{le-Merdy-Xu-q-variational-inequality}. Finally, we establish a noncommutative weak-type maximal inequality for convolution powers which was proved by Calder\'on and Below \cite{Bellow-Calderon} in the classical setting, complementing our strong type noncommutative $(p,p)$-maximal ergodic inequalities. Our method relies on several new polynomial identities, suitable square function estimates tailored to fit our setting and generalisation of Stein's method of embedding maximal function into analytic family of operators.
\end{abstract}
\maketitle
\tableofcontents
\section{Introduction}
 The foundations of classical ergodic theory were established in the early 1930s by Birkhoff \cite{birkhoff1931proof} and von Neumann \cite{neumann1932proof} motivated by statistical properties of  dynamical systems. In particular Birkhoff \cite{birkhoff1931proof} proved the so-called `point-wise ergodic theorem' which says that 
 the `time averages' $M_nf:={\displaystyle \frac{1}{n+1}}\sum\limits^{n}_{i=0}T^if$ converge point-wise almost everywhere for all $f\in L_p(\Omega), 1\leq p<\infty$ to the conditional expectation of $f$ with respect to the invariant $\sigma$-algebra, often referred to as the “space average”. Here $T$ denotes the Koopman operator associated with a measure-preserving transformation. A well-known approach to proving point-wise ergodic theorems is to establish a corresponding maximal ergodic inequality for the ergodic averages, via the so-called Banach principle. This method goes back to Wiener \cite{Wiener-ergodic-theorem}, who showed that the associated maximal operator is of weak type $(1,1).$ In \cite{DunfordSchwartz} Dunford and Schwartz substantially generalized the preceding results by proving a weak type $(1,1)$ inequality for the maximal operator $M_{*}f:=\sup_{n\geq 1}|M_nf|$ for ergodic averages of positive $L_\infty-L_1$ contractions. As a consequence one can deduce a strong type $(p,p)$ maximal inequalities \begin{equation}\label{normal-type}
     \|M_{*}f\|_p\leq C_p\|f\|_p
 \end{equation} for all $1<p<\infty,$ where $C_p$ is a positive constant. In this direction the most general ergodic theorem was established by Akcoglu \cite{Akcogluergodicthm}, who proved a maximal ergodic inequality for general positive contraction on $L_p-$spaces for a fixed $1<p<\infty$. 
 
A natural problem is to characterise the class of operators for which the stronger maximal inequality
\begin{equation}\label{Stein-type}
\big\| P_{*}f \big\|_p \lesssim \|f\|_p,\quad \text{for all},\ f\in L_p(\Omega)
\end{equation}
holds, where $P_{*}f:=\sup_{n\geq 0}|P^nf|,$ $f\in L_p(\Omega).$ We refer Section \eqref{preliminaries} for any unexplained notation or terminologies used in the introduction. A fundamental advancement in answering the above question was made by Stein \cite{Stein-ergodic-theorem}, who obtained a maximal ergodic inequality as in \eqref{Stein-type} for the class of $L_\infty-L_{1}$ contractive operators which are also self-adjoint operators on $L_2(\Omega).$ His proof relied on certain Littlewood-Paley square function estimates, Abel summation methods and then embedding the time averages into a complex-analytic family of operators followed by a clever trick of linearisation of the maximal operator, an approach which has been widely used in harmonic analysis and ergodic theory. For similar maximal inequalities in the vector-valued setting for both discrete and continuous semigroups and connection to Littlewood-Paley theory we refer to \cite{Xu2015}, \cite{Xu2020}, \cite{Xu2025} and references therein.

 Motivated by quantum mechanics operator algebra theory and non-commutative mathematics grew rapidly during the mid-20th century. The connection between ergodic theory and von Neumann algebras go back as early to theory of  "rings of operators" studied by Murray and von Neumann. In a remarkable work Lance \cite{Lance} pioneered the study of non-commutative point-wise ergodic theorems, which was followed by a series of significant contributions due to Conze and Dang-Ngoc \cite{Conze},
K{\"u}mmerer \cite{Noncommutative-ergodic-theorem-kummmerer}, Yeadon \cite{Yeadon-Noncommutative-ergodic-theorem} and others. Extending the classical Dunford-Schwartz maximal ergodic inequality to the noncommutative setting proved to be an elusive, long-standing open problem, a major stumbling block being no notion of maximal function in the non-commutative setting. This difficulty was resolved by Pisier's theory of noncommutative vector-valued $L_p$-spaces developed for injective von Neumann algebra by Pisier \cite{Non-commutative-vector-valued-L-p-space-pisier} and later generalised by Junge \cite{Doob-inequality-for-non-commutative-martingales-Junge-Marius} for arbitrary von Neumann algebra, who proved a noncommutative analogue of Doob's martingale maximal inequalities. Shortly thereafter in a groundbreaking work, Junge and Xu \cite{junge-Xu} extended the Dunford–Schwartz maximal ergodic theorem to the noncommutative setting. In the same article, they also established an analogue of the maximal ergodic theorem \eqref{Stein-type} in the noncommutative setting generalising Stein's \cite{Stein-ergodic-theorem} maximal ergodic theorem mentioned earlier in the introduction. Following this breakthrough, Bekjan \cite{Bekjan2008} established a maximal ergodic theorem for $L_\infty-L_1$ contractions whose numerical range when viewed as an operator on $L_2$ is contained in a Stolz region with vertex at $1.$ This allows one to relax the so-called symmetry condition. We refer to \cite{Hong20201}, \cite{Hong2023}, \cite{ray2025noncommutative}, \cite{JungeLeMerdyXu06}, \cite{Hong2024}, \cite{Operatorvaluedhardyspacemei}, \cite{hong2025noncommutative}, \cite{ZeqianXuZhi}, \cite{HongWangWang} for recent developments on noncommutative ergodic theory and harmonic analysis.

 In this article, we prove a vast generalization of Bekjan's \cite{Bekjan2008} result, which automatically generalises the maximal ergodic theorem \cite[Part (ii), Theorem, 0.1]{junge-Xu} proved by Junge-Xu. Let $\mathcal{M}$ be a semifinite von Neumann algebra equipped with a normal, semifinite, faithful trace $\tau$, and let $ L_p(\mathcal{M})$ be the associated noncommutative $ L_p$-space. Consider a linear map $T:\mathcal{M}\to\mathcal{M}$ which satisfyies the following conditions:
\begin{itemize}\label{thosefourconditions}
    \item[(i)] $T$ is a contraction on $\mathcal{M}$, that is
    $ \|T x\|_\infty \leq \|x\|_\infty \quad \text{for all } x \in \mathcal{M}.$
    \item[(ii)] $T$ is positive, that is $Tx \geq 0$ if $x \ge 0.$
    \item[(iii)] $\tau(T(x)) \leq \tau(x) \quad \text{for all } x \in L_1(\mathcal{M}) \cap \mathcal{M}_+.$
    \item[(iv)] There is a closed polygon $\Delta\subseteq \overline{\mathbb{D}}$ such that $\Delta\cap \mathbb T$ is a a finite set and numerical range of $T$ is contained in ${\Delta}.$
\end{itemize}
If $T$ satisfies (i)-(iv) properties in \eqref{thosefourconditions}, then $T$ naturally extends to a contraction on \( L_p(\mathcal{M}) \) for all \( 1 \le p\le\infty \). The extension will still be denoted by \( T \). We prove the following theorem.
\begin{theorem}\label{strong-ergodic-theorem-for-ritt-E}
   Let $1<p<\infty$ and $T:\mathcal M\to\mathcal M$ be such that it satisfies (i)-(iv) as above in \eqref{thosefourconditions}. Then there exists a positive constant $C_p(T)$ such that 
   \begin{equation}
       \|\sup_{n\geq 0}\!^{+}T^nx\|_p\leq C_p(T)\|x\|_p
   \end{equation}
   for all $x\in L_p(\mathcal M).$
\end{theorem}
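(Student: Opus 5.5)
We plan to deduce the theorem from the Junge--Xu noncommutative Dunford--Schwartz inequality for the ergodic averages $M_n=\frac{1}{n+1}\sum_{k=0}^n T^k$ together with a square function estimate. The key feature is that the peripheral spectrum $\{\xi_1,\dots,\xi_N\}=\Delta\cap\mathbb T$ is finite. First we reduce to averages twisted by these unimodular points. We may assume each $\xi_j$ is a vertex of $\Delta$ lying in $\sigma(T)\cap\mathbb T$. Consider the polynomial $Q(z)=\prod_{j=1}^N(z-\xi_j)$. Numerical range in a polygon makes $T$ a ``Ritt$_E$''-type operator on $L_2(\mathcal M)$, i.e.
\[
\|(z-T)^{-1}\|\lesssim \prod_j|z-\xi_j|^{-1},\qquad |z|>1 .
\]
This follows from the Crouzeix-type bound $\|(z-T)^{-1}\|\le \mathrm{dist}(z,\Delta)^{-1}$ combined with the geometry of the polygon near each vertex. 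It should also give $\sup_n n\|T^nQ(T)\|<\infty$ on $L_2$.

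Second, we decompose. Using a polynomial (Lagrange/partial fraction) identity, we write $1=\sum_j c_j\prod_{i\ne j}(z-\xi_i)+ Q(z)R(z)$, or more simply split $T^n$ into its ``peripheral parts''. Concretely, we write
\[
T^n x=\sum_{j} \xi_j^n\,\frac{1}{n+1}\sum_{k=0}^n (\bar\xi_j T)^k x_j\;+\;\text{(terms involving }(T-\xi_j)\text{ factors)},
\]
with $x_j=P_j(T)x$ for suitable Lagrange polynomials $P_j$. The first terms are twisted ergodic averages. Since $\bar\xi_jT$ is still a contraction on every $L_p$, we would bound them by the Junge--Xu maximal inequality for the modulus-free averages of $\bar\xi_jT$. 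Here we need a dilation or positivity substitute: we use that twisted averages $\frac1{n+1}\sum \bar\xi^kT^k$ are dominated via the transference/Akcoglu-type result for $T\otimes S_{\bar\xi}$ on $\mathcal M\bar\otimes\ell_\infty(\mathbb Z_m)$ when $\xi$ is a root of unity. Alternatively we prove the twisted-average maximal bound directly through Stein's complex interpolation.

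Third, the remaining terms are differences $T^n x - M_n x$ localized at each $\xi_j$. As in Stein/Junge--Xu/Bekjan, we control them by the square function
\[
\Big\|\Big(\sum_{n}n\,|T^{n}(T-\xi_j)y|^2\Big)^{1/2}\Big\|_p\lesssim\|y\|_p .
\]
Via the standard Abel summation trick $\sup_n|T^n y - M_ny|\le (\sum_n n|T^n(I-T)y|^2)^{1/2}$, this reduces the maximal function to a column+row square function. We prove the square function first on $L_2$ from the resolvent estimate, by an $H^\infty$/quadratic estimate on a sector at $\xi_j$. Then we transfer to $L_p$ by Stein's analytic family $M_n^\alpha$ (Cesàro means of complex order) interpolating between $L_2$ and the $L_\infty$/$L_1$ bounds, following Junge--Xu's noncommutative version with $L_p(\ell_\infty)$ and $L_p(\ell_2^{c}+\ell_2^r)$.

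The main obstacle is this third step. We need the $L_p$ square function for $T(T-\xi_j)$ with only a polygonal numerical range rather than self-adjointness or a single Stolz vertex. Our first approach is to use the polygon's numerical range to get an $H^\infty(\Delta)$ functional calculus on $L_2$ (Crouzeix--Delyon). We then construct, via Stein's embedding with weights $\prod_j(1-\bar\xi_jT)^\alpha$, an analytic family $\{T_\alpha\}$ bounded on $L_{1+}$/$L_\infty$ for $\Re\alpha$ large and square-function bounded on $L_2$ for $\Re\alpha>0$. Interpolating in the Junge--Xu spaces then yields the result for $p>1$, first for $p\ge2$ and then for $1<p<2$ by duality/Bekjan's argument.
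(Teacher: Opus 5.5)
Your $L_2$ step is sound, and it is actually cleaner than the paper's Theorem~\ref{strong-maximal-ergodic-theorem-for-l2}. Put $\ell_j(z)=c_j\prod_{l\ne j}(z-\xi_l)$ with $c_j=\prod_{l\neq j}(\xi_j-\xi_l)^{-1}$. Then
\[
T^n=\sum_{j=1}^N\xi_j^n M_n(\overline{\xi_j}T)\ell_j(T)+\sum_{j=1}^N c_j\xi_j^n\frac{1}{n+1}\sum_{i=0}^{n-1}(i+1)\overline{\xi_j}^{\,i+1}T^iQ(T),
\]
and the second sum is controlled on $L_2$ by Lemma~\ref{L2sqrfnestimatewithnumercond} with $m=1$ together with Proposition~\ref{sqtomx}. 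The twisted averages need no root-of-unity transference, which would not apply anyway since $E$ need not be cyclic. For $x\ge 0$ and $y_n\ge 0$ one has $\tau(T^kx\,y_n)\ge 0$, so Lemma~\ref{sumofsixoreight} dominates $M_n(\overline{\xi_j}T)$ by $M_n(T)$; this is Lemma~\ref{saclarmultijungexu}.

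The genuine gap is the passage to $p\neq 2$. The pointwise Abel bound $\sup_n|T^ny-M_ny|\le(\sum_n n|T^n(I-T)y|^2)^{1/2}$ has no meaning in $L_p(\mathcal M;\ell_\infty)$ and must be replaced by an operator inequality; Proposition~\ref{sqtomx} supplies one only at $p=2$. More seriously, you never obtain an $L_p(\mathcal M)$ square function estimate for $T^kQ(T)$. Hypothesis (iv) controls $T$ only on $L_2$, and an $L_p$ estimate would need a Ritt$_E$/$H^\infty$ calculus for $T$ on noncommutative $L_p$ that you do not establish and that does not follow routinely from (i)--(iv). Nor can you get $1<p<2$ from $p>2$ by duality. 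The dual of a maximal inequality on $L_{p'}$ is an $L_p(\mathcal M;\ell_1)$ inequality for $T^*$, not a maximal inequality for $T$.

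Your Stein-embedding fallback is the paper's idea, but it is set up incorrectly. Proposition~\ref{Interpolation} interpolates $L_{p_0}(\mathcal M;\ell_\infty)$ with $L_{p_1}(\mathcal M;\ell_\infty)$, so both endpoints must be maximal bounds. Moreover, $T^n$ must lie strictly between them in the parameter: you need a maximal $L_2$ bound at negative Ces\`aro order $\Re\lambda=-m$ and an $L_q$ bound at $\Re\lambda>1$. A square-function bound for $\Re\alpha>0$ combined with $L_{1+}/L_\infty$ bounds for large $\Re\alpha$ cannot produce $T^n$. Three ingredients are missing.

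(1) The right analytic family. This is $M_n^\lambda=(n+1)^{-\lambda}\sum_kA^{\lambda-1}_{n-k}T^k$, where $A^\lambda_k$ are the Taylor coefficients of $\prod_j(1-\xi_jz)^{-\lambda-1}$. It satisfies $M_n^0=T^n$ and $M_n^{-m}=(n+1)^mT^{n-Nm}Q(T)^m$, and its $L_q$ bound for $\Re\lambda>1$ reduces to Junge--Xu via positivity and duality.

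(2) The $L_2$ maximal inequalities $\|\sup_k\!^{+}(k+1)^mT^kQ(T)^mx\|_2\lesssim_m\|x\|_2$ for every $m$ (Theorem~\ref{strong-maximal-theorem-of-B-k-m}). These are proved by induction from the weighted square functions $\sum_k(k+1)^{2m-1}\|T^kQ(T)^mx\|_2^2$. Your first-order estimate is not enough: an $L_2$ endpoint at order $m$ only reaches $\frac{b}{2(m+b)}<\frac1p<\frac{b+2m}{2(m+b)}$.

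(3) Control at $\Re\lambda=-m$ with $\Im\lambda\neq0$. This uses repeated Abel summation together with the coefficient bound $|A^\lambda_k|\lesssim e^{c|\Im\lambda|}(k+1)^{\Re\lambda}$, obtained by singularity analysis at the $N$ points $\overline{\xi_j}$. That bound is what makes the three-lines argument with the damping factor $e^{\delta(z^2-\theta^2)}$ close.
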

The proof of the above theorem works for general $L_\infty-L_1$ contractions satisfying (iv) in \eqref{thosefourconditions}. Consequently, our result is new even in the commutative setting and significantly generalizes Stein's maximal ergodic theorem \cite{Stein-ergodic-theorem}.

 The absence of notion of maximal functions, together with the inherent noncommutativity of the setting, renders several steps of our argument considerably more involved. To overcome these difficulties, we establish a number of preparatory lemmas and propositions. One useful technique is off course Junge-Xu's \cite{junge-Xu} method of leveraging duality between noncommutative vector-valued $L_1$ and $L_\infty$-space. This is crucial at several places of our work. Apart from these essential obstacles, the proof of Theorem \eqref{strong-ergodic-theorem-for-ritt-E} also relies on several key ideas and intermediate steps, which we briefly summarize below.

\begin{itemize}
\item[(i)] The first step towards proving the $L_p$-maximal inequalities is by proving the case $p=2$ using square function estimates and maximal ergodic theorem for ergodic averages. A key point is to select the appropriate square function. If $E:=\Delta \cap \mathbb{T}=\{\xi_1,\dots,\xi_N\}$, then the first order square function one exploits is \[x\mapsto\Big(\sum_{k=0}^\infty(k+1)\|T^k\prod_{j=1}^N(T-\xi_jI)x\|_2^2\Big)^{\frac{1}{2}}.\] If $E=\{1\}$ we can get back to the usual square function exploited by Stein \cite{Stein-ergodic-theorem}, Junge-Xu \cite{junge-Xu} and Bekjan \cite{Bekjan2008}. When $E=\{1\}$ one proves the required $L_2$-maximal theorem by an Abel's summation method combining well-known maximal ergodic theorem for ergodic averages for example noncommutative Dunford-Schwartz ergodic theorem \eqref{JungeXu} or Akcoglu's \cite{Akcogluergodicthm} (in the classical setting). However, in our general setting, the situation becomes more delicate. By using square function estimate and maximal ergodic theorem for ergodic averages we first manage to prove a maximal ergodic theorem of the form \[\|\sup_{n\geq 0}\!^{+}T^nQ_i^{\prime}(T)\|_2\lesssim\|x\|_2\] for all $x\in L_2(\mathcal M)$ where $Q_i^\prime(T)=\prod_{j=1,j\neq i}^N(T-\xi_jI)$ for $1\leq i\leq N.$ For this step we rely on several nontrivial identities involving symmetric polynomials. Then we get the desired maximal $L_2$-inequality by Lagrange interpolation.
    \item[(ii)] The second step is based on proving the maximal inequality of the form \[\|\sup_{k\geq 0}\!^{+}(k+1)^mT^k\prod_{j=1}^N(T-\xi_j I)^mx\|_2\lesssim_m\|x\|_2\] for all $x\in L_2(\mathcal M).$ The heart of the matter is to dominate the quantity $\|\displaystyle{\frac{1}{n+1}}\sum_{k=0}^n(k+1)^m T^k\prod_{j=1}^N(T-\xi_j I)^mx\|_2$ in terms of higher order square functions, which again relies on nontrivial algebraic computations. Then the proof is completed by a clever use of mathematical induction as in\cite{Stein-ergodic-theorem}.
    \item[(iii)] In the third major step of the proof one needs to interpolate between $L_2$-maximal inequality in Step (i) and maximal ergodic theorem for ergodic averages. To this end one requires to identify suitable interpolating family $S_n^\lambda$ as in subsection \eqref{section3.2}. After this one needs to have good estimates of the coefficients of $T^k$ in $S_n^\lambda$. We do this in Lemma \eqref{final-estimates-of-A-k-lambda} with some techniques used in analytic combinatorics \cite{Flajolet1990SingularityAO}. 
    \item[(iv)] In the final stage, we need to implement Abel's summation method as in Stein \cite{Stein-ergodic-theorem}, Bekjan \cite{Bekjan2008} and \cite{junge-Xu}. The crucial insight is to view $T^{k-N}\prod_{j=1}^N(T-\xi_j)$ as a discrete differential operator, which is actually the discrete difference operator itself when $E=\{1\}.$ We can then use Abel's summation method repeatedly to prove Theorem \eqref{firsttimerealiseddiofferentialop}. The proof of the main theorem can then be completed using complex interpolation.
\end{itemize}
To complement our results for $p=1$, we turn our attention to weak type $(1,1)$ analogue of maximal ergodic theorem of \eqref{Stein-type}. In classical setting Calder\'on and Below \cite{Bellow-Calderon} obtained such weak type $(1,1)$-inequalities for convolution powers. It is well-known that such operators satisfy conditions as in \eqref{thosefourconditions} (see \cite{CunnyRttWeaktype}) .
\begin{theorem}\label{thmonconvpower}
    Let $\mu$ be a probability measure on $\mathbb{Z}$ which is strictly aperiodic, has expectation zero, that is, $\mathbb{E}(\mu):= \sum_{k\in\mathbb{Z}}k\mu(k)=0$, and finite second moment, that is, $\sum_{k\in\mathbb{Z}}k^2\mu(k)<\infty$. Let $Tf=\mu*f$ for $f\in L_1(\ell_\infty(\mathbb{Z})\overline{\otimes}\mathcal M).$ Then $(T^n)_{n\geq 0}$ is weak type $(1,1),$ that is, there exists a positive constant $C>0$ such that for any $f\in L_1(\ell_\infty(\mathbb{Z})\overline{\otimes}\mathcal M)$ and $\lambda>0$, there exists a projection $e\in\ell_\infty(\mathbb{Z})\overline{\otimes}\mathcal M$ such that
\[
\sup_{n\geq 0} \, \|\, e \, T^{n}(f) \, e \,\|_{\infty} \le \lambda
\quad \text{and} \quad
\varphi(e^\perp) \le C_d \, \lambda^{-1} \|f\|_{1},
\] where $\varphi:=\int d\mu\otimes \tau.$
\end{theorem}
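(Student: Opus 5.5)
We would follow the Bellow--Calder\'on strategy, transplanting each ingredient to the noncommutative setting via the weak type $(1,1)$ machinery of Junge--Xu and the noncommutative Calder\'on--Zygmund decomposition (Parcet's decomposition, or its discrete-group version for $\ell_\infty(\mathbb Z)\overline{\otimes}\mathcal M$). The starting point is the classical kernel analysis of Bellow--Calder\'on. The kernels $\mu^{*n}$ satisfy, uniformly in $n$, the H\"ormander-type smoothness estimate
\[
\sum_{|k|\ge 2|j|}\sup_{n\ge 0}\big|\mu^{*n}(k+j)-\mu^{*n}(k)\big|\le C,
\]
which follows from the local limit theorem and the gradient estimate $|\mu^{*n}(k+1)-\mu^{*n}(k)|\lesssim n^{-1}\min(1, n/k^2)$. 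These estimates use exactly strict aperiodicity, mean zero and finite second moment. They are scalar and can be quoted verbatim from \cite{Bellow-Calderon}. The point is that the family $(T^n)_n$ is then a family of convolution operators on $\mathbb Z$ with operator-valued (in fact scalar-times-identity) kernels satisfying a uniform H\"ormander condition with supremum inside.

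Next we need an $L_2$ maximal bound, and Theorem \ref{strong-ergodic-theorem-for-ritt-E} supplies it. Indeed $T$ is a positive $L_\infty$--$L_1$ contraction on $\ell_\infty(\mathbb Z)\overline{\otimes}\mathcal M$ (convolution by a probability measure tensored with the identity, which is trace preserving). Moreover its numerical range lies in a Stolz region at $1$, by the Ritt property of such $\mu$ (cf.\ \cite{CunnyRttWeaktype}). So $\|\sup^+_n T^n f\|_2\lesssim\|f\|_2$.

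Given $f\ge 0$ (we reduce to this by decomposing into four positive parts) and $\lambda>0$, we apply the noncommutative Calder\'on--Zygmund decomposition at height $\lambda$ with respect to the dyadic filtration on $\mathbb Z$ tensored with $\mathcal M$. This gives $f=g+b_d+b_{off}$, where $\|g\|_2^2\lesssim\lambda\|f\|_1$ and $\|g\|_\infty\lesssim\lambda$. The diagonal bad part is $b_d=\sum_k p_k(f-f_k)p_k$ with mean-zero cube pieces; the off-diagonal part $b_{off}$ is controlled as in Parcet. The good part is handled by Chebyshev, using the $L_2$ maximal inequality above in the form of Junge--Xu: there is a projection $e_1$ with $\sup_n\|e_1T^n g e_1\|_\infty\lesssim\lambda$ and $\varphi(e_1^\perp)\lesssim\lambda^{-2}\|g\|_2^2$. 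For the bad parts we remove the projection $\zeta^\perp$ built from the dilated supports $5Q$ of the stopping projections, which has $\varphi(\zeta^\perp)\lesssim\lambda^{-1}\|f\|_1$. Outside it we use the cancellation of each $b_k$ together with the uniform H\"ormander condition to bound $\sum_k\|\zeta\sup^+_n T^n b_k\zeta\|_1$. Here the supremum must be handled through an $\ell_1$-valued estimate, namely $\sum_k\sum_{x\notin 5Q_k}\sup_n|\ldots|$, which is legitimate because the kernel is scalar: the pointwise bound of $\sup_n|\mu^{*n}(x-y)-\mu^{*n}(x-c_Q)|$ dominates the operator-valued increments. Finally we take $e=e_1\wedge\zeta\wedge e_3$, where $e_3$ comes from an $L_1$ Chebyshev argument in $L_1(\ell_\infty)$, and use the quasi-triangle property of the weak maximal quantity.

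The main obstacle is the off-diagonal term $b_{off}$. Parcet's treatment of it needs an $L_2$ or pseudo-localisation estimate for the kernels, not merely the H\"ormander condition, and it must be made compatible with the maximal supremum. Our first attempt would be to exploit that the kernel is scalar, so $T^n$ commutes with left and right multiplication by elements of $\mathcal M$ and with the $\mathcal M$-valued conditional expectations. We would then try to show that $T^n b_{off}$ is localized in the dilated cubes up to an $L_2$-error controlled by the gradient bound, uniformly in $n$ via $\ell_\infty$-valued $L_2$ norms. If that fails, we would instead use the Cadilhac--Conde-Alonso--Parcet simplified decomposition, in which off-diagonal terms can be absorbed into the good part.
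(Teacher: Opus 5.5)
Your overall architecture is the paper's. The statement is deduced from Theorem \ref{thm5.4} (weak type $(1,1)$ for families of convolutions by probability measures whose kernels satisfy a regularity bound uniform in $n$) together with the Bellow--Calder\'on kernel estimate. Theorem \ref{thm5.4} is proved with the Cadilhac--Conde-Alonso--Parcet decomposition \eqref{ncCZO}, the projection $\zeta$ removing the $1_{5Q}\pi_Q$, and, for $b_d$, essentially your argument: mean zero on each $Q$, a scalar kernel difference bounded uniformly in $n$, hence an $n$-independent majorant, then Chebyshev.

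One difference is that the good part needs no $L_2$ maximal inequality. For $f\ge 0$ one has $0\le g\le c_{\mathrm{reg}}\lambda$, and each $T^n$ is positive with $T^n1=1$. So $T^ng\le c_{\mathrm{reg}}\lambda$ for every $n$, with no exceptional projection at all. Your detour through Theorem \ref{strong-ergodic-theorem-for-ritt-E} is correct for this normal operator, but superfluous.

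The genuine gap is the off-diagonal term, which you leave open, and your fallback rests on a misreading. In the Cadilhac--Conde-Alonso--Parcet decomposition (the one in \eqref{ncCZO}) the pieces $b_{\mathrm{off},j}=p_jfq_j+q_jfp_j$ are not absorbed into $g$; they are not bounded in $L_\infty$ and must be estimated. Nor does the diagonal argument transfer verbatim: $\sum_j\|b_{d,j}\|_1\le 2\|f\|_1$ uses the orthogonality of the $p_j$, which has no analogue for the decreasing, overlapping $q_j$.

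The paper's route, with neither pseudo-localisation nor any $L_2$ input, is as follows.

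(a) $\mathbb E_j(p_jfq_j)=p_jf_jq_j=0$ by Cuculescu's commutation property. So each $1_Q b_{\mathrm{off},j}$ has mean zero on $Q$, while $\zeta(x)\pi_Q=0$ for $x\in 5Q$ kills the local part.

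(b) For $x\notin 5Q$, the differences $\mu^{*n}(x-y)-\mu^{*n}(x-c_Q)$ are bounded pointwise, uniformly in $n$, by $K_Q(x,y)=|y-c_Q|^{\alpha}|x-c_Q|^{-1-\alpha}$. Your gradient bound $\lesssim k^{-2}$ gives this. The summed H\"ormander condition you state is not enough here, because an $L_2$ bound on the annuli $C_{Q,\ell}$ is needed.

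(c) The column/row H\"older inequality gives $\|\sum_{y\in Q}c(y)\pi_Qf(y)\xi_Q\|_1\le\|(\sum_{y\in Q}|c(y)|^2\pi_Qf(y)\pi_Q)^{1/2}\|_1\,\|\sum_{y\in Q}\xi_Qf(y)\xi_Q\|_\infty^{1/2}$. The last factor is at most $(\lambda|Q|)^{1/2}$ because $q_jf_jq_j\le\lambda q_j$.

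(d) Use $\|\cdot\|_1\le\tau(\pi_Q)^{1/2}\|\cdot\|_2$, sum over annuli, and apply Cauchy--Schwarz over cubes. This gives $\sum_{j,Q}\varphi(1_Qp_jf)^{1/2}(\lambda|Q|\tau(\pi_Q))^{1/2}\le\|f\|_1^{1/2}(\lambda\varphi(1-q))^{1/2}\le\|f\|_1$.

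Your worry about the supremum is well founded, and this is where the real work lies. Step (c) is an estimate for a single coefficient sequence. To get one projection valid for all $n$, you need an $n$-independent majorant that still factors as $\pi_Qf^{1/2}\cdot f^{1/2}\xi_Q$. Simply replacing $b_{\mathrm{off},Q}(y)$ by its modulus, as the paper's $B_Q$ does, destroys exactly the structure that (c) uses. That step therefore has to be carried out carefully rather than taken for granted.
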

The above theorem is a consequence of \eqref{thm5.4} and \cite[Corollary 3.4]{Bellow-Calderon}. Our proof of Theorem \eqref{thm5.4} relies on noncommutative Calder\'on-Zygmund decomposition developed recently in \cite{Cadilhac-Conde-Alonso-Parcet-2022}. 
In the classical setting, one can apply the Calder\'on--Zygmund decomposition directly to the associated maximal function. However, in the noncommutative setting there is no noncommutative analogue of maximal function, so this approach cannot be used directly. To overcome this difficulty, we employ the method of Hong-Lai-Xu \cite{Bang-Xu}, who developed a framework to handle noncommutative weak type $(1,1)$ maximal inequalities for Calder\'on-Zygmund operators.

Let $(a_n)_{n\geq 0}$ be a sequence of complex number and $1\le q<\infty$. The strong-$q$-variation norm is defined as 
\begin{equation*}
    \lVert (a_n)_{n\geq0}\rVert_{v_q}:=\sup\{(\lvert a_0\rvert^q+\sum_{k\geq1}\lvert a_{n_k}-a_{n_{k-1}}\rvert^q)^{\frac{1}{q}}\}
\end{equation*}
where the supremum has been taken over all increasing sequences $(n_k)_{k\geq0}$ of integers such that $n_0=0.$ It is well-known that the set $v_q$ of all sequences with finite strong-$q$-variation is a Banach space with the norm $\| .\|_{v^q}.$ In the present work, we also study variational inequalities in the setting of classical $L_p$-spaces. Since $\sup_{n\geq 0}|a_n|\leq |a_{n_0}|+2\|(a_n)_{n\geq 0}\|_{v_q},$ variational inequalities are stronger than the maximal
inequalities. Bourgain \cite{varitaionalarthimeticsetBourgain,Bourgain} was the first to consider variational inequality in ergodic theory. SInce then a substantial body of work has demonstrated strong variational inequalities for a wide range of classical operator sequences and semigroups. We refer to \cite{varitaionalarthimeticsetBourgain}, \cite{TaoTorreaXuvariation}, \cite{MirekSteinTrojanvariation}, \cite{pisierXuMartinglevariation} and references therein for more om this. In the paper \cite{le-Merdy-Xu-q-variational-inequality}, the authors established a variational estimates for analytic operators on some $L_p$-space, $1<p<\infty,$ which are also contractive regular. In particular, for such an operator $T$ they obtained an estimate of the from
\begin{equation}\label{LeMErdVari}
    \lVert (T^nx)_{n\geq0}\rVert_{L_p(v_q)}\leq C_{p,q}(T)\lVert x \rVert_p
\end{equation}
where $x\in L_p(\Omega)$ with $1<p<\infty.$ It is well-known that in this class of contractively regular analytic operators already covers all $L_\infty-L_1$ contractions which are self-adjoint on $L_2$ by a result of \cite[Theorem 1.1]{Blunk}. Thus they strengthened Stein's \cite{Stein-ergodic-theorem} classical result. We also mention \cite{Hong-Tao-Ma}, \cite{Hong2016}, \cite{Hong2017} and \cite{Hong2020} for related work in the vector-valued setting. In the present we generalise \eqref{LeMErdVari} for all contractively regular operators on $L_p(\Omega),$ $1<p<\infty$ which are $Ritt_{E}$ (see Section \ref{kiarboli} for definition).
\begin{theorem}\label{theorem-for-variational-inequality}
Let $1<p<\infty$ and $T:L_p(\Omega)\rightarrow L_p(\Omega)$ be a contractively regular $\RE$ operator where $E=\{\xi_1,\xi_2,\dots,\xi_N\}.$ Then for any $2<q<\infty$, we have an estimate
        \begin{equation}\label{equation-for-Delta-n-m-2}
           \|(n^m\Delta^m_n(x))_{n\geq1}\|_{L_p(v^q)}\leq C_{p,q}(T)\|x\|_p,\ x \in L_p(\Omega).
        \end{equation}
\end{theorem}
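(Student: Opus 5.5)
The plan is to follow the scheme of Le Merdy--Xu \cite{le-Merdy-Xu-q-variational-inequality}. The variation norm of $a_n:=n^m\Delta_n^m(x)$ is split into a \emph{long} variation along the dyadic subsequence $n=2^k$ and a \emph{short} variation inside the dyadic blocks $[2^k,2^{k+1})$:
\[
\|(a_n)_{n\ge1}\|_{v^q}\lesssim \|(a_{2^k})_{k\ge0}\|_{v^q}+\Big(\sum_{k\ge0}\big\|(a_n)_{2^k\le n<2^{k+1}}\big\|_{v^2}^2\Big)^{1/2}.
\]
The short part is dominated pointwise by a square function. Cauchy--Schwarz inside each block gives
\[
\Big(\sum_k\Big(\sum_{2^k\le n<2^{k+1}}|a_{n+1}-a_n|\Big)^2\Big)^{1/2}\le\Big(\sum_{n\ge1} n\,|a_{n+1}-a_n|^2\Big)^{1/2}.
\]
The long part is to be compared with suitably weighted ergodic-type averages of $T$. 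Since $T$ is contractively regular on $L_p(\Omega)$, Akcoglu's dilation theorem (as used in \cite{le-Merdy-Xu-q-variational-inequality}) transfers the Jones--Kaufman--Rosenblatt--Wierdl $q$-variational inequality for averages to $T$, for $2<q<\infty$. The remainder is again a square function.

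The analytic input I would use is that a contractively regular $\RE$ operator admits a bounded $H^\infty$ functional calculus on a polygon-type domain with vertices $\xi_1,\dots,\xi_N$. I expect to obtain this from the dilation plus the resolvent estimate, as in Le Merdy's treatment of Ritt operators, and I would establish it first if it is not already available from the earlier sections. Consequently, one gets $L_p$ square function estimates
\[
\Big\|\Big(\sum_n |\varphi_n(T)x|^2\Big)^{1/2}\Big\|_p\lesssim\|x\|_p
\]
for every family $(\varphi_n)$ with $\sup_z\sum_n|\varphi_n(z)|^2<\infty$ on that domain. The proof then reduces to checking this uniform bound for explicit families such as $\varphi_n(z)=n^{m+1/2}z^{n}\prod_j(z-\xi_j)^m\cdot(\text{difference factor})$. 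Near each vertex this is done using $|z|^{n}\le e^{-cn|z-\xi_j|}$, which holds in the polygonal region.

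The main obstacle is that, unlike the case $E=\{1\}$, the naive increment $a_{n+1}-a_n$ produces a factor $T-I$. This factor does not vanish at the vertices $\xi_j\ne1$, so the corresponding square function blows up there. To handle this I would use the Lagrange interpolation decomposition already exploited in the paper, $I=\sum_i L_i(T)$, with $L_i$ equal to $1$ at $\xi_i$ and $0$ at the other vertices. Write $a_n=\sum_i a_n^{(i)}$, where $a_n^{(i)}$ is localized at $\xi_i$.

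For the $i$-th piece I would view $\xi_i^{-n}a_n^{(i)}$ as generated by the Ritt-type operator $\xi_i^{-1}T$ near $\xi_i$. The natural increment is then $T-\xi_iI$, which is absorbed by the vanishing factor $(z-\xi_i)^m$. The extra oscillating factor $\xi_i^n$ would be handled by bounding the $v^q$ norm of that piece crudely by an $\ell^2$-sum over dyadic blocks, with weights. The factors $(z-\xi_j)^m$, $j\neq i$, give additional decay there, and I hope this makes the resulting square function bounded.

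If the crude bound fails, my fallback is to incorporate one more power of $\prod_j(T-\xi_j)$ into the definition used for the short variation. I would then recover the original sequence by an Abel-summation and induction argument on $m$, in the same spirit as Step (ii) of the introduction.
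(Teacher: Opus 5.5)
The step that fails is your handling of the oscillating factor on the $i$-th Lagrange piece, and it cannot be repaired. Writing $a^{(i)}_n=\xi_i^{\,n}\cdot\overline{\xi_i}^{\,n}a^{(i)}_n$ does make the second factor Ritt-like at $\xi_i$. But multiplication by $(\xi_i^{\,n})_n$ is unbounded on $L_p(v^q)$ when $\xi_i\neq 1$: apply it to a constant sequence, and note that Lemma \ref{product-of-variation-norm} only covers multipliers in $v^1$. The crude bound $\|(\xi_i^{\,n}b_n)_n\|_{v^q}\lesssim\|(b_n)_n\|_{\ell^q}\le\|(b_n)_n\|_{\ell^2}$ throws away all cancellation. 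It leads to the symbol $\sum_n n^{2m}|z|^{2n}\prod_j|z-\xi_j|^{2m}|L_i(z)|^2$. The factors $z-\xi_j$, $j\neq i$, help only near the \emph{other} vertices. At $\xi_i$ itself only $(z-\xi_i)^m$ vanishes, and it exactly offsets $n^m$. You are left with $\sum_n n^{2m}|z|^{2n}|z-\xi_i|^{2m}\simeq|z-\xi_i|^{2m}(1-|z|)^{-2m-1}\simeq|z-\xi_i|^{-1}$ along nontangential approach to $\xi_i$. Your fallback (an extra power of $\prod_j(T-\xi_jI)$ plus Abel summation and induction) cannot help either, because the loss sits in the target sequence itself.

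In fact the inequality is false as soon as $E$ contains a point other than $1$. Take $E=\{1,-1\}$ and let $T$ be multiplication by $t-1$ on $L_p(0,1)$. It is contractively regular, $\sigma(T)=[-1,0]$, and $\|R(z,T)\|=\mathrm{dist}(z,[-1,0])^{-1}\lesssim|z-1|^{-1}|z+1|^{-1}$ for $1<|z|<2$, so $T$ is $\RE$. Here $n^m\Delta^m_nx(t)=(-1)^nb_n(t)(t-2)^mx(t)$ with $b_n(t)=n^mt^m(1-t)^n\ge 0$. So consecutive increments have size $(b_n+b_{n+1})|t-2|^m|x(t)|$. Since $b_n(t)\ge\frac1{16}$ for $\frac1t\le n\le\frac2t$ and $0<t\le\frac12$, the $q$-variation at $t$ is $\gtrsim t^{-1/q}|x(t)|$. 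Then $x(t)=t^{\frac1q-\frac1p}\in L_p$ gives an infinite left-hand side. For $m=0$, $T=-I$ already fails, and any $\xi\in E\setminus\{1\}$ works the same way with $T$ equal to multiplication by $\xi(1-t)$.

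The paper's proof breaks at the same place. It runs Le Merdy--Xu's induction on $m$: ergodic averages as the base case, an identity expressing $nQ_1'(T)\Delta^m_{2n+1}$ through $\sum_{j=n}^{2n}(j+1)\Delta^{m+1}_j$, $(n+1)\sum_{j=n}^{2n}\Delta^{m+1}_j$ and lower-order terms, and $\RE$ square functions. It does this for each rotated operator $\overline{\xi_i}T$, which controls $\bigl(\overline{\xi_i}^{\,n}n^m\Delta^m_n(T,\xi)Q_i'(T)x\bigr)_n$ up to a constant phase. It then recombines via Lemma \ref{lemma-for-lagrange-polynomial-type} as if the phases $\overline{\xi_i}^{\,n}$ were absent. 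That recombination is legitimate for $\sup^+$ (Corollary \ref{mod1andmaximum}(ii)), where the same device is used in Section \ref{section3}, but not for $v^q$. What either route can give is at best a bound for the phase-corrected pieces $(\overline{\xi_i}^{\,n}a^{(i)}_n)_n$, not the stated estimate.
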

The proof of the above theorem will be given in Theorem \eqref{variational-inequality-for-Q-dash-T}.
The proof again relies on several properties of symmetric functions and functional identities. Moreover, in a forthcoming work, we show that the above class of operators encompasses all contractions satisfying \eqref{thosefourconditions}. Hence Theorem \eqref{theorem-for-variational-inequality} genuinely strengthens  Theorem \eqref{strong-ergodic-theorem-for-ritt-E} in the classical setting.

We conclude the introduction with a brief outline of the organization of the paper. In Section \eqref{preliminaries}, we review the basic notions of noncommutative $L_p$-spaces and noncommutative vector-valued $L_p$-spaces, together with several auxiliary results that will be used throughout the paper. Section \eqref{section3} is devoted to the proof of Theorem \eqref{strong-ergodic-theorem-for-ritt-E}. Section \eqref{kiarboli} is devoted to the proof of the variational inequalities, that is Theorem \eqref{theorem-for-variational-inequality}. In Section \eqref{sectionczo} we prove Theorem \eqref{thmonconvpower}.

 \section{Preliminaries}\label{preliminaries}
 Let $X$ be a Banach space. We denote $B(X)$ as the set of all bounded linear transformations from $X$ to $X$. Let us denote $\sigma(T)$ the spectrum of $T.$ For $a,b>0$,
 we write $a\lesssim b$ if $a\leq cb$, where $c>0$ independent of $a$ and $b$. Also we define $a\lesssim_m b$ as $a\le c_mb$ where $c_m$ is a constant depending on $m$. Often, a positive numerical constant $C$ may vary from line to line. Let $(\Omega, \mathcal{F}, \mu)$ be a $\sigma$-finite measure space and let $1 \leq p \leq \infty$. We denote $L_p(\Omega, X)$ to be the usual Bochner space. 
Let us define $\mathbb{C}[z]$ as the set of one-variable polynomials with complex coefficients. For a complex number $z,$ $\Re z$ and $\Im z$ denote the real and imaginary parts of a complex number $z$ respectively.
\subsection{Noncommutative $L_p$-Space.}
Let $\mathcal{M}$ be a von Neumann algebra equipped with a normal semifinite faithful trace $\tau$ acting on a Hilbert space $\mathcal H.$ Let us define $s(x)$ as the support projection of $x\in\mathcal{M_{+}}$ where $\mathcal{M_{+}}$ is the positive part of $\mathcal{M}.$
We define noncommutative $L_p$-space $L_p(\mathcal{M})$ to be completion of $\mathcal{M}$ with respect to the norm $\lVert x\rVert_{L_p(\mathcal{M})}=(\tau(\lvert x\rvert^p))^{1/p}$ where $\lvert x\rvert=(x^*x)^\frac{1}{2}$ and $1\le p<\infty.$ We set $L_{\infty}(\mathcal{M})=\mathcal{M}$ with respect to operator norm. Let $L_0(\mathcal{M})$ be the $*$-algebra of all closed densely defined operators on $\mathcal{H}$ measurable with respect to $(\mathcal{M}, \tau)$. For a subspace $A \subseteq L_0(\mathcal{M})$, we denote by $A_+$ the cone of positive elements in $A.$ It is well known that $L_p(\mathcal{M})$ can be viewed as a subspace of $L_0(\mathcal{M})$. A linear map $T : L_p(\mathcal{M}) \to L_p(\mathcal{M})$ is said to be positive if $T\big(L_p(\mathcal{M})_+\big) \subseteq L_p(\mathcal{M})_+.$ We refer \cite{PisierXuNclp} for a comprehensive study on noncommutative $L_p$-spaces.
\subsection{Noncommutative vector-valued $L_p$-spaces} Let us denote $L_{p}(\mathcal{M},\ell_{\infty})$ as the space of  all sequences $x=(x_n)_n$ which admit the following factorization condition: there exist $a,b \in L_{2p}(\mathcal{M})$ and $(y_n)_{n\geq1}\subseteq\mathcal{M}$ with $\sup_{n\geq 1}\|y_n\|_\infty<\infty$ such that $x_n=ay_nb$ for $n\geq1.$ One defines 
 \begin{align*}
     \lVert (x_n)_{n\geq1}\rVert_{L_p(\mathcal{M};\ell_{\infty})}:=\inf\{\lVert a\rVert_{2p}\sup_{n\geq1}\lVert y_n\rVert_{\infty}\lVert b\rVert_{2p}\}
 \end{align*}
 where the infimum is taken over all possible factorisation. Equipped with the above norm $L_p(\mathcal M,\ell_\infty)$ becomes a Banach space \cite{junge-Xu}. Let us denote \[\lVert x\rVert_{L_p(\mathcal M;\ell_{\infty})}:=\| \sup\limits_{n\geq 1}\!^{+}x_n\|.\] The following proposition will be used several times in this article. We denote $\mathbb{N}_0:=\mathbb{N}\cup\{0\}.$ 
\begin{proposition}\label{supnormfiniting}\cite{junge-Xu}
Let $1 \leq p \leq \infty$. A sequence $(x_n)_{n \geq 0} \subseteq L_p(\mathcal M)$ belongs to
$L_p(\mathcal M;\ell_\infty)$ if and only if
\[
\sup_{\substack{J \subseteq \mathbb N_0\\ J \text{ finite}}}
\|(x_i)_{i\in J}\|_{L_p(\mathcal M;\ell_\infty)}
<\infty .
\]
Moreover, we have $\|(x_n)_{n\geq 0}\|_{L_p(\mathcal M;\ell_\infty)}
=\sup\limits_{\substack{J \subseteq \mathbb N_0\\ J \text{ finite}}}
\|(x_i)_{i\in J}\|_{L_p(\mathcal M;\ell_\infty)}.$
\end{proposition}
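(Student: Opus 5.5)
The plan is to reduce everything to the defining factorization of $L_p(\mathcal M;\ell_\infty)$, together with a weak$^*$/weak compactness argument that glues the finite-piece factorizations into one.

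\emph{Necessity and the easy inequality.} If $x_n=ay_nb$ for all $n\ge0$ with $a,b\in L_{2p}(\mathcal M)$ and $\sup_n\|y_n\|_\infty<\infty$, then for every finite $J\subseteq\mathbb N_0$ the same $a,b$ and the subfamily $(y_i)_{i\in J}$ factor $(x_i)_{i\in J}$. Taking infima over factorizations gives
\[\|(x_i)_{i\in J}\|_{L_p(\mathcal M;\ell_\infty)}\le\|(x_n)_{n\ge0}\|_{L_p(\mathcal M;\ell_\infty)}.\]
So the supremum over finite $J$ is finite and is dominated by the full norm.

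\emph{Sufficiency.} Let $K$ denote the supremum over finite $J$. It suffices to treat the initial segments $J_N=\{0,\dots,N\}$. Fix $\varepsilon>0$. For each $N$, choose a factorization $x_i=a_Ny^{(N)}_ib_N$ for $i\le N$. Normalize it so that $\|a_N\|_{2p}=\|b_N\|_{2p}\le (K+\varepsilon)^{1/2}$ and $\sup_i\|y^{(N)}_i\|_\infty\le1$. For $1<p<\infty$ the space $L_{2p}$ is reflexive, so after passing to a subnet we get $a_N\to a$ and $b_N\to b$ weakly. For each fixed $i$, we also get $y^{(N)}_i\to y_i$ weak$^*$ in $\mathcal M$, using a diagonal subnet over the countably many indices $i$.

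\emph{Main obstacle.} The limiting identity $x_i=ay_ib$ does not follow automatically, because products of weakly convergent factors need not converge. I would use the standard remedy from Junge--Xu. Replace the arbitrary factors by positive ones: the norm can be computed with $a,b\ge0$ via polar decomposition. Better still, work on the dual side. Recall that $L_p(\mathcal M;\ell_\infty)$ embeds isometrically in the dual of $L_{p'}(\mathcal M;\ell_1)$, and the norm is a supremum of pairings $\sum_i\tau(x_iz_i)$ over finitely supported $(z_i)$ in the unit ball of $L_{p'}(\mathcal M;\ell_1)$. Each such pairing involves only finitely many indices, so it is bounded by $K$. Hence the functional $\phi((z_i))=\sum_i\tau(x_iz_i)$ has norm at most $K$ on the dense subspace of finitely supported sequences.

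It then remains to show that a bounded functional of this form is represented by an element of $L_p(\mathcal M;\ell_\infty)$ with norm equal to $\|\phi\|$. This is the duality theorem $L_{p'}(\ell_1)^*=L_p(\ell_\infty)$ in Junge--Xu, which for $1<p\le\infty$ is obtained through Hahn--Banach and a Grothendieck--Pietsch-type factorization. The representing element agrees with $(x_n)$ coordinatewise, which gives membership and the equality of norms. For $p=1$ one argues similarly within the bidual and uses positivity to descend.
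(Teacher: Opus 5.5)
The paper gives no proof of this proposition; it is quoted from Junge--Xu, so your argument has to stand on its own. For $1<p\le\infty$ it does. The restriction argument gives necessity and one inequality. The case $p=\infty$ is trivial, since the norm is $\sup_n\|x_n\|_\infty$. For $1<p<\infty$, three ingredients produce a representing element of norm at most $K$ that agrees with $(x_n)$ coordinatewise: the finite-index H\"older bound, norm density of finitely supported sequences in $L_{p'}(\mathcal M;\ell_1)$ (valid because $p'<\infty$), and the duality $L_{p'}(\mathcal M;\ell_1)^*=L_p(\mathcal M;\ell_\infty)$. This needs the duality to be proved for infinite sequences directly rather than deduced from finite truncations; otherwise the argument is circular.

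The genuine gap is $p=1$, which the statement includes. There $p'=\infty$, and in general neither ingredient is available. Finitely supported sequences need not be norm dense in $L_\infty(\mathcal M;\ell_1)$, whose series converge only weak$^*$. Also $L_1(\mathcal M;\ell_\infty)$ is not the dual of $L_\infty(\mathcal M;\ell_1)$. Your single sentence about the bidual does not say which object is formed there. Nor does it say why that object is positive (the $x_n$ need not even be self-adjoint), or why passing back to $L_1$ preserves the bound $K$.

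The missing idea is to linearize the factorization. This also repairs your abandoned compactness attempt, since taking $a,b\ge0$ does not remove the nonlinearity of $ay_nb$. For finite $J$, $\|(x_n)_{n\in J}\|_{L_p(\mathcal M;\ell_\infty)}$ equals the infimum of $\|\alpha\|_p^{1/2}\|\beta\|_p^{1/2}$ over $\alpha,\beta\in L_p(\mathcal M)_+$ with $\begin{pmatrix}\alpha & x_n\\ x_n^* & \beta\end{pmatrix}\ge0$ for all $n\in J$. In one direction, normalize $\sup_n\|y_n\|_\infty\le1$ and take $\alpha=aa^*$, $\beta=b^*b$. Conversely, such a positive block matrix gives $x_n=\alpha^{1/2}y_n\beta^{1/2}$ with $\|y_n\|_\infty\le1$. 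With the $x_n$ fixed, this constraint is linear and closed in $(\alpha,\beta)$.

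For $1<p<\infty$, attach to each $\{0,\dots,N\}$ a pair $(\alpha_N,\beta_N)$ normalized so that $\|\alpha_N\|_p=\|\beta_N\|_p\le K+\varepsilon$. Any weak subsequential limit satisfies the constraint for every $n$, because the positive cone is weakly closed and the norms are weakly lower semicontinuous.

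For $p=1$, take weak$^*$ limits $\phi,\psi\in\mathcal M^*_+$ of $\alpha_N,\beta_N$. The functional $\begin{pmatrix}\phi & x_n\\ x_n^* & \psi\end{pmatrix}$ on $M_2(\mathcal M)$ is positive. Now cut by the central projection $z\otimes 1_2$ of $M_2(\mathcal M)^{**}$ supporting the normal functionals. This keeps the functional positive and leaves the normal entries $x_n$ unchanged. It replaces $\phi,\psi$ by their normal parts, whose norms are no larger. That is what ``positivity to descend'' has to mean, and it gives the case $p=1$ with equality of norms.
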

A simpler description of the above noncommutative vector-valued $L_p$-space is available as the following, which can be found in \cite{ZeqianXuZhi}. \begin{remark}
Let $1\leq p\leq \infty.$ Let $(x_n)_{n\geq 0}\subseteq L_p(\mathcal M)$ be such that $x=x^*$ for all $n\geq 0$. Then
$
(x_n)_{n\geq 0} \in L_p(\mathcal M;\ell_\infty)$
if and only if there exists a positive element $a\in L_p(\mathcal M)$ such that
$
-a \leq x_n \leq a,$ for all $n\geq 1.$
Moreover, we have
\[
\|\sup_{n\geq 1}\!^{+} x_n\|_p
=
\inf \{
\|a\|_p :
a\in L_p(\mathcal M),\,
-a \leq x_n \leq a,\ \forall\, n\geq 1\}.
\]
\end{remark} 
We need following elementary lemmas.
\begin{lemma}\cite{junge-Xu}\label{maxcomingoutjuxulem}
Let $1\leq p\leq\infty.$ Let $x=(x_n)\in L_p(\mathcal{M};\ell_\infty)$ and let $(z_{n,k})\subset \mathbb{C}$. Then
\[
\Big\|\sup_{n}\!^{+}\sum_{k} z_{n,k}x_k\Big\|_{p}
\leq
\sup_{n}\Big(\sum_{k}|z_{n,k}|\Big)
\Bigl\|\sup_{k}\!^{+} x_k\Big\|_{p}.
\]
\end{lemma}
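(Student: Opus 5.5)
The plan is to go straight through the factorization definition of $L_p(\mathcal M;\ell_\infty)$. First reduce to finite families: by Proposition \ref{supnormfiniting} it suffices to prove the inequality when $n$ ranges over a finite set and only finitely many $x_k$ are involved. For general $(z_{n,k})$, if some of the sums $\sum_k z_{n,k}x_k$ are infinite, we understand them as limits in $L_p(\mathcal M)$; absolute summability of the $z_{n,k}$ together with the factorization below guarantees convergence. Set $C:=\sup_n\sum_k|z_{n,k}|$ and assume $C<\infty$, since otherwise there is nothing to prove.

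Next, fix $\varepsilon>0$ and choose a factorization $x_k=a y_k b$ with $a,b\in L_{2p}(\mathcal M)$ and $y_k\in\mathcal M$ satisfying
\[
\|a\|_{2p}\,\sup_k\|y_k\|_\infty\,\|b\|_{2p}\le \bigl\|\sup_k\!^{+}x_k\bigr\|_p+\varepsilon .
\]
Then for each $n$ we have
\[
\sum_k z_{n,k}x_k = a\Bigl(\sum_k z_{n,k}y_k\Bigr)b .
\]
Define $w_n:=\sum_k z_{n,k}y_k$. This series converges in norm in $\mathcal M$, and by the triangle inequality
\[
\|w_n\|_\infty\le \sum_k|z_{n,k}|\,\|y_k\|_\infty\le C\sup_k\|y_k\|_\infty .
\]
Hence $(a w_n b)_n$ is an admissible factorization of the new sequence, and the definition of the norm gives
\[
\bigl\|\sup_n\!^{+}\textstyle\sum_k z_{n,k}x_k\bigr\|_p\le \|a\|_{2p}\,C\sup_k\|y_k\|_\infty\,\|b\|_{2p}\le C\bigl(\bigl\|\sup_k\!^{+}x_k\bigr\|_p+\varepsilon\bigr).
\]
Letting $\varepsilon\to0$ gives the claim.

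The main point needing care is the infinite-sum case, namely that $a\bigl(\sum_k z_{n,k}y_k\bigr)b$ really equals $\sum_k z_{n,k}x_k$ in $L_p(\mathcal M)$. This follows from Hölder's inequality: the map $y\mapsto ayb$ is bounded from $\mathcal M$ to $L_p(\mathcal M)$ with norm at most $\|a\|_{2p}\|b\|_{2p}$. It therefore commutes with norm-convergent sums, and the finite-set reduction via Proposition \ref{supnormfiniting} finishes the argument.
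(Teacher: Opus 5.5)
Your proof is correct and is the standard argument. The paper gives no proof of its own and only cites Junge--Xu. Your route is the direct one from the factorization definition: set $w_n=\sum_k z_{n,k}y_k$, bound $\|w_n\|_\infty$ by the row sums, and use H\"older to identify $a w_n b$ with $\sum_k z_{n,k}x_k$.

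The appeal to Proposition~\ref{supnormfiniting} is unnecessary, and that proposition does not by itself let you reduce to finitely many $x_k$. Neither reduction is needed, because $(a w_n b)_n$ is already an admissible factorization over the full index set.
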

A simple corollary of the above lemma is the following. We skip the proofs.
\begin{corollary}\label{mod1andmaximum}
  Let $x=(x_n)\in L_p(\mathcal{M};\ell_\infty)$ and $(\alpha_n)\subseteq \mathbb C.$ Then 
  \begin{itemize}
      \item[(i)] $\|\sup_{n}\!^{+}\alpha_nx_n\|_p\leq \Big(\sup_{n}|\alpha_n|\Big)\|\sup_{n}\!^{+}x_n\|_p.$
      \item[(ii)] If $|\alpha_n|=1$ for all $n$, then  $\|\sup_{n}\!^{+}\alpha_nx_n\|_p=\|\sup_{n}\!^{+}x_n\|_p.$
  \end{itemize}
\end{corollary}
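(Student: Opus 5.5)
Both parts follow from Lemma \ref{maxcomingoutjuxulem} by choosing the coefficient matrix $(z_{n,k})$ to be diagonal. Set $z_{n,k}=\alpha_n\delta_{n,k}$, where $\delta_{n,k}$ is the Kronecker delta. Then $\sum_k z_{n,k}x_k=\alpha_n x_n$ for every $n$, and $\sum_k|z_{n,k}|=|\alpha_n|$. Lemma \ref{maxcomingoutjuxulem} therefore gives
\[
\Big\|\sup_{n}\!^{+}\alpha_nx_n\Big\|_p\le \Big(\sup_n|\alpha_n|\Big)\Big\|\sup_{n}\!^{+}x_n\Big\|_p .
\]
This is part (i). If $\sup_n|\alpha_n|=\infty$, the inequality holds trivially, provided the right-hand side is read as $+\infty$ (or the case is excluded when $x=0$). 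In particular, once $\sup_n|\alpha_n|<\infty$, the sequence $(\alpha_nx_n)_n$ belongs to $L_p(\mathcal M;\ell_\infty)$.

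Part (ii) follows by applying (i) twice. When $|\alpha_n|=1$ for all $n$, part (i) gives $\|\sup^+_n\alpha_nx_n\|_p\le\|\sup^+_nx_n\|_p$. For the reverse inequality, put $\beta_n=\overline{\alpha_n}=\alpha_n^{-1}$, which also has modulus one. Apply (i) to the sequence $(\alpha_nx_n)_n$, which lies in $L_p(\mathcal M;\ell_\infty)$ by the first step, with coefficients $\beta_n$. Since $\beta_n\alpha_nx_n=x_n$, this gives $\|\sup^+_nx_n\|_p\le\|\sup^+_n\alpha_nx_n\|_p$. The two inequalities together give equality.

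Part (i) could also be checked directly from the factorisation definition. If $x_n=ay_nb$, then $\alpha_nx_n=a(\alpha_ny_n)b$ and $\sup_n\|\alpha_ny_n\|_\infty\le\sup_n|\alpha_n|\,\sup_n\|y_n\|_\infty$; taking the infimum over factorisations gives the same bound.

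There is no real obstacle in this argument. The only points needing care are checking that the diagonal choice satisfies the hypotheses of Lemma \ref{maxcomingoutjuxulem}, and that $(\alpha_nx_n)_n$ is known to lie in $L_p(\mathcal M;\ell_\infty)$ before (i) is applied to it in the reverse direction.
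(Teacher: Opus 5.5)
Your proof is correct and matches the paper's intended argument. The paper states this as an immediate corollary of Lemma \ref{maxcomingoutjuxulem} and skips the proof; your diagonal choice $z_{n,k}=\alpha_n\delta_{n,k}$, followed by applying (i) with coefficients $\overline{\alpha_n}$ to get the reverse inequality in (ii), is exactly that deduction.
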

We define $L_p(\mathcal{M}; \ell_1)$ to be the space of all sequences 
$x:=(x_n)_{n \ge 1} \subset L_p(\mathcal{M})$ which admits a decomposition
\[
x_n = \sum_{k \ge 1} u_{kn}^* v_{kn}
\]
for all $n \ge 1$, where $(u_{kn})_{k,n \ge 1}$ and $(v_{kn})_{k,n \ge 1}$ are two families in 
$L_{2p}(\mathcal{M})$ such that
\[
\sum_{k,n \ge 1} u_{kn}^* u_{kn} \in L_p(\mathcal{M}), 
\qquad \text{and}\qquad
\sum_{k,n \ge 1} v_{kn}^* v_{kn} \in L_p(\mathcal{M}).
\]

In above all the series are required to converge in $L_p$-normfor $1\leq p<\infty$ and in $\text{w}^*$ for $p=\infty.$ We equip the space 
$L_p(\mathcal{M}; \ell_1)$ with the norm
\[
\|x\|_{L_p(\mathcal{M}; \ell_1)} 
=: \inf \Big\{ 
\Big\| \Big( \sum_{k,n \geq 1} u_{kn}^* u_{kn} \Big)^{1/2} \Big\|_p
\Big\|\Big( \sum_{k,n \ge 1} v_{kn}^* v_{kn} \Big)^{1/2}\Big\|_p
\Big\},
\]
where infimum runs over all possible decompositions of $x$ described as above. 
For any positive sequence $x = (x_n)_{n \ge 1} \in L_p(\mathcal{M}; \ell_1)$ we have a simpler 
description of the norm as follows.
\[
\|x\|_{L_p(\mathcal{M}; \ell_1)} 
= \Big\| \sum_{n \ge 1} x_n \Big\|_p.
\]

It is known that like $L_p(\mathcal{M}; \ell_\infty)$, the space $L_p(\mathcal{M}; \ell_1)$ is also a Banach spaces \cite{junge-Xu}. Moreover, we have the following duality relationship.
\begin{proposition}{\cite{Doob-inequality-for-non-commutative-martingales-Junge-Marius}}
Let $1\leq p < \infty$ and let $p'$ be such that
$
\frac{1}{p} + \frac{1}{p'} = 1.$
Then, $L_p(\mathcal{M}; \ell_1)^*$ is isometrically isomorphic to $L_{p'}(\mathcal{M}; \ell_\infty),$
with the duality relation given by
\[
\langle x, y \rangle = \sum_{n \ge 1} \tau(x_n y_n),
\]
for all $x \in L_p(\mathcal{M}; \ell_1)$ and $y\in L_{p'}(\mathcal{M};\ell_{\infty}).$
\end{proposition}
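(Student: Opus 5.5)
This duality is a cited result of Junge, and the plan is to follow the standard argument: first prove the bound $|\langle x,y\rangle|\le\|x\|_{L_p(\mathcal M;\ell_1)}\|y\|_{L_{p'}(\mathcal M;\ell_\infty)}$, and then show that every bounded functional on $L_p(\mathcal M;\ell_1)$ arises this way, with equality of norms.

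For the bound, fix a decomposition $x_n=\sum_k u_{kn}^*v_{kn}$ and a factorization $y_n=a z_n b$ with $a,b\in L_{2p'}(\mathcal M)$ and $\sup_n\|z_n\|_\infty\le 1$. Then
\[
\sum_n\tau(x_ny_n)=\sum_{k,n}\tau\big((u_{kn}a)^*\,z_n\,(b v_{kn}^*)^*\big)^{\!*\!*},
\]
more precisely $\sum_{k,n}\tau(u_{kn}^*v_{kn}az_nb)=\sum_{k,n}\tau\big((bu_{kn}^*)(v_{kn}a)z_n\big)$. Now apply Cauchy--Schwarz for the trace on the column space indexed by $(k,n)$, using the block-diagonal contraction $\mathrm{diag}(z_n)$. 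This gives the bound
\[
\Big\|\Big(\sum_{k,n}bu_{kn}^*u_{kn}b^*\Big)^{1/2}\Big\|_2\,\Big\|\Big(\sum_{k,n}a^*v_{kn}^*v_{kn}a\Big)^{1/2}\Big\|_2 .
\]
Each factor is of the form $\tau(b^*b\,U)^{1/2}$ with $U=\sum_{k,n} u_{kn}^*u_{kn}\in L_p(\mathcal M)$, up to the order of factors. By Hölder, $\tau(b^*b\,U)\le\|b\|_{2p'}^2\|U\|_p$, and the same holds for the other factor. Taking infima over decompositions and factorizations yields the contractive embedding $L_{p'}(\mathcal M;\ell_\infty)\subseteq L_p(\mathcal M;\ell_1)^*$. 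One needs to be careful here to arrange the factors so that the roles of the rows and columns match the definitions.

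For the converse, let $\phi$ be a norm-one functional. Restricting $\phi$ to the $n$-th coordinate gives $y_n\in L_{p'}(\mathcal M)$ with $\phi(x)=\sum_n\tau(x_ny_n)$ on finitely supported sequences; these are dense. It remains to show that $(y_n)\in L_{p'}(\mathcal M;\ell_\infty)$ with norm at most $1$. By Proposition \ref{supnormfiniting}, it suffices to treat finite families $(y_n)_{n\le N}$ with uniform bounds. For a finite family, I would identify $L_p(\mathcal M;\ell_1^N)$ with a complemented subspace of $L_p(M_N(\mathcal M))$, or more concretely use a Hahn--Banach/convexity argument. Consider the convex set of pairs $(\alpha,\beta)$ of positive elements in the unit ball of $L_{2p}$ arising as $\sum_k u_k^*u_k$ and $\sum_k v_k^*v_k$ with $u,v$ in the unit ball. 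The norm-one condition gives $|\sum_n\tau(u_n^*v_ny_n)|\le\|\sum u_n^*u_n\|_p^{1/2}\|\sum v_n^*v_n\|_p^{1/2}$. A Grothendieck--Pietsch type minimax argument, via Ky Fan's lemma on the compact convex set of densities, then produces $a,b$ in the unit ball of $L_{2p'}$ with $|\tau(u^*v\,y_n)|\le\|ua^*\|_2\|bv\|_2$. From this we obtain $y_n=a z_n b$ with $\|z_n\|\le1$ by a Radon--Nikodym/Cauchy--Schwarz factorization.

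The main obstacle is this last factorization step: extracting the common factors $a,b$ uniformly in $n$ from the inequality. I would first try the Pietsch minimax, using weak$^*$ compactness of the state-like densities $a^*a$ and $bb^*$, taken in the unit ball of $L_{p'}$ and reflexive for $1<p'<\infty$; for $p=1$, $p'=\infty$ one uses normal states and a weak$^*$ closure. Once this is done, the isometry follows by combining it with the first step.
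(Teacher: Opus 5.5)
The paper gives no proof of this statement; it is quoted from Junge. Your plan is the standard way to prove it: Cauchy--Schwarz and H\"older for $L_{p'}(\mathcal M;\ell_\infty)\subseteq L_p(\mathcal M;\ell_1)^*$, then a Pietsch-type minimax plus a Radon--Nikodym factorization for the converse. The first half is correct once you work from your ``more precisely'' identity: the bound $\tau(bUb^*)^{1/2}\tau(a^*Va)^{1/2}\le\|U\|_p^{1/2}\|V\|_p^{1/2}\|a\|_{2p'}\|b\|_{2p'}$ is right. The converse needs two repairs.

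\textbf{First repair: $b$ is on the wrong side.} The inequality you want the minimax to produce has the wrong form. The correct route is as follows.
Linearize the norm-one condition as $\mathrm{Re}\,\phi(x)\le\frac12\|U\|_p+\frac12\|V\|_p$, where $U=\sum_{k,n}u_{kn}^*u_{kn}$ and $V=\sum_{k,n}v_{kn}^*v_{kn}$. Write $\|U\|_p=\sup\{\tau(\rho U):\rho\ge0,\ \|\rho\|_{p'}\le1\}$. Apply Ky Fan (or Hahn--Banach in $C(K)$) to the functions $(\rho,\sigma)\mapsto\frac12\tau(\rho U)+\frac12\tau(\sigma V)-\mathrm{Re}\,\phi(x)$ on the weakly compact set $K$ of pairs of such densities.
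These functions form a convex cone, because test families can be concatenated in $k$ and rescaled. That cone is the relevant convex object, not a set of pairs in the unit ball of $L_{2p}$; note also that $\sum_k u_k^*u_k$ lies in $L_p$, not $L_{2p}$.
Rescaling $u\mapsto tu$, $v\mapsto t^{-1}v$ and rotating phases then gives a single pair $(\rho,\sigma)$ with
\[
\Big|\sum_n\tau(u_n^*v_ny_n)\Big|\le\Big(\sum_n\|u_n\rho^{1/2}\|_2^2\Big)^{1/2}\Big(\sum_n\|v_n\sigma^{1/2}\|_2^2\Big)^{1/2}.
\]
Both densities act on the right. Put $b=\rho^{1/2}$ and $a=\sigma^{1/2}$; this is exactly the shape $\|ub\|_2\|va\|_2$ from your first step. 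Your $\|bv\|_2$ instead measures $vv^*$, which the $\ell_1$-norm does not control, and it would not yield $y_n=az_nb$.

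With the correct inequality, consider the contractive form $B(ub,va)=\tau(u^*vy_n)$ on $L_2(\mathcal M)s(b)\times L_2(\mathcal M)s(a)$. It satisfies $B(w^*\xi,\eta)=B(\xi,w\eta)$ for $w\in\mathcal M$. So the associated operator lies in the commutant of left multiplication, i.e.\ it is right multiplication by some $z_n\in\mathcal M$ with $\|z_n\|_\infty\le1$. This gives $y_n=az_nb$, and is the Radon--Nikodym step made precise.

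\textbf{Second repair: drop the reduction to finite families via Proposition \ref{supnormfiniting}.} For $p'>1$, the nontrivial half of that proposition is usually obtained from this very duality, so invoking it risks circularity. It is also unnecessary: let the test families range over all finitely supported $(u_{kn},v_{kn})$. The compact set $K$ does not depend on how many indices $n$ occur, so the minimax already produces one pair $(a,b)$ serving every $n$. That is what removes what you call the main obstacle. The per-$n$ factorization then gives $\sup_n\|z_n\|_\infty\le1$, hence $\|y\|_{L_{p'}(\mathcal M;\ell_\infty)}\le\|\phi\|$.

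Finally, for $p=1$ you need no states or weak$^*$ closures: the statement reduces isometrically to $\ell_1(L_1(\mathcal M))^*=\ell_\infty(\mathcal M)$.
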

We need the following lemma which can be found it \cite{junge-Xu} and \cite{Doob-inequality-for-non-commutative-martingales-Junge-Marius}.
\begin{lemma}\label{sumofsixoreight}
Let $1<p<\infty.$ Then for any $(x_n)\subseteq L_p(\mathcal{M};\ell_\infty)$,
\[
\Big\|\sup_n\!^{+} x_n\Big\|_{p}
\simeq
\sup \Big\{
\Big|
\sum_n \tau(x_n y_n)
\Big|
:
\, y_n\in L_{p^\prime}(\mathcal{M}),\ y_n\ge 0,\ 
\Bigl\|\sum_n y_n\Bigr\|_{p^\prime}\le 1
\Big\},
\]
with universal equivalence constants.
\end{lemma}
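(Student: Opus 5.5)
The statement to be justified is Lemma~\ref{sumofsixoreight}: the equivalence
\[
\|\sup_n{}^{+}x_n\|_p \simeq \sup\Big\{\Big|\sum_n\tau(x_ny_n)\Big| : y_n\ge 0,\ \Big\|\sum_n y_n\Big\|_{p'}\le 1\Big\}.
\]
I would derive it from the duality $L_{p'}(\mathcal M;\ell_1)^*=L_p(\mathcal M;\ell_\infty)$ together with the description of the $L_{p'}(\mathcal M;\ell_1)$-norm on positive sequences. Since $1<p<\infty$, we have $1<p'<\infty$. The duality proposition then applies with the roles of $p$ and $p'$ interchanged, provided the $\ell_1$-space is reflexive enough. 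More safely, I use the standard fact (Junge) that $L_p(\mathcal M;\ell_\infty)$ embeds isometrically into $L_{p'}(\mathcal M;\ell_1)^*$ via $\langle x,y\rangle=\sum_n\tau(x_ny_n)$. This gives
\[
\|\sup_n{}^+x_n\|_p=\sup\Big\{\Big|\sum_n\tau(x_ny_n)\Big| : \|(y_n)\|_{L_{p'}(\ell_1)}\le1\Big\}.
\]
By Proposition~\ref{supnormfiniting}, it suffices to work with finite sequences throughout.

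\textbf{Upper bound.} For $y_n\ge0$, the $\ell_1$-norm equals $\|\sum_n y_n\|_{p'}$. The supremum on the right of the lemma is therefore a supremum over a subset of the unit ball of $L_{p'}(\mathcal M;\ell_1)$. Hence the right-hand side is at most $\|\sup_n{}^+x_n\|_p$. This direction can also be checked directly: factor $x_n=ay_n'b$ and apply Hölder together with $|\tau(a w b\, y)|\le \|w\|_\infty\,\tau(a^*a y)^{1/2}\tau(bb^*y)^{1/2}$ for $y\ge0$, then sum using Cauchy–Schwarz.

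\textbf{Lower bound.} This is the main step: I must show that every $y$ in the unit ball of $L_{p'}(\mathcal M;\ell_1)$ can be replaced by positive sequences at the cost of a universal constant. Take a decomposition $y_n=\sum_k u_{kn}^*v_{kn}$ with
\[
\Big\|\sum_{k,n}u_{kn}^*u_{kn}\Big\|_{p'}^{1/2}\,\Big\|\sum_{k,n}v_{kn}^*v_{kn}\Big\|_{p'}^{1/2}<1+\varepsilon.
\]
After normalising, I may assume each factor is less than $(1+\varepsilon)^{1/2}$. The polarization identity
\[
u^*v=\frac14\sum_{j=0}^3 i^j\,(v+i^ju)^*(v+i^ju)
\]
writes $y_n=\frac14\sum_j i^j y_n^{(j)}$, where $y_n^{(j)}=\sum_k (v_{kn}+i^ju_{kn})^*(v_{kn}+i^ju_{kn})\ge0$. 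Moreover,
\[
\sum_n y_n^{(j)}\le 2\sum_{k,n}\big(u_{kn}^*u_{kn}+v_{kn}^*v_{kn}\big),
\]
so $\|\sum_n y_n^{(j)}\|_{p'}\le 4(1+\varepsilon)$. Consequently
\[
\Big|\sum_n\tau(x_ny_n)\Big|\le\frac14\sum_{j=0}^3\Big|\sum_n\tau\big(x_ny_n^{(j)}\big)\Big|\le 4(1+\varepsilon)\,S,
\]
where $S$ denotes the right-hand side of the lemma. Taking the supremum over $y$ yields $\|\sup_n{}^+x_n\|_p\le 4S$.

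The expected obstacle is justifying the norming duality, namely that $L_p(\ell_\infty)$ is normed by $L_{p'}(\ell_1)$. This is exactly the cited Junge duality, with $p'$ in place of $p$, and I would invoke it rather than reprove it. Convergence issues are avoided by restricting to finite index sets, which is legitimate by Proposition~\ref{supnormfiniting}.
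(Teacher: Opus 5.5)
Your proof is correct. The paper gives no argument for this lemma and simply cites Junge--Xu and Junge; your route is the standard argument behind that citation, and it gives the constants $1$ and $4$. It combines the duality $L_{p'}(\mathcal M;\ell_1)^*=L_p(\mathcal M;\ell_\infty)$ (the cited proposition with $p'\in(1,\infty)$ in place of $p$; no reflexivity is needed) with the polarization $y=\frac14\sum_{j=0}^3 i^j y^{(j)}$, $y^{(j)}_n\ge 0$, $\|\sum_n y^{(j)}_n\|_{p'}\le 4(1+\varepsilon)$.

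The only slip is in your optional direct check of the easy direction. For $y\ge 0$ the correct bound is $|\tau(awby)|\le\|w\|_\infty\,\tau(aa^*y)^{1/2}\,\tau(b^*by)^{1/2}$; with $a^*a$ and $bb^*$ as you wrote it, the inequality fails, e.g.\ for the matrix units $a=e_{12}$, $b=e_{21}$, $w=1$, $y=e_{11}$ in $M_2(\mathbb{C})$. That aside is not needed, because duality already gives this direction.
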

Now we are going to state the following interpolation theorem for vector-valued Noncommutative $L_p$ space.
\begin{proposition}\cite{junge-Xu}\label{Interpolation}
Let $1\le p_0<p_1\le \infty$ and $0<\theta<1$. Then we have isometrically
\[
L_p(\mathcal{M};\ell_1)
=
\bigl(L_{p_0}(\mathcal{M};\ell_1),
L_{p_1}(\mathcal{M};\ell_1)\bigr)_\theta,
\qquad
L_p(\mathcal{M};\ell_\infty)
=
\bigl(L_{p_0}(\mathcal{M};\ell_\infty),
L_{p_1}(\mathcal{M};\ell_\infty)\bigr)_\theta,
\]
where $\displaystyle{\frac{1}{p}
=
\frac{1-\theta}{p_0}
+
\frac{\theta}{p_1}}.$
\end{proposition}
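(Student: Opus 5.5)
The statement is the interpolation result of Junge--Xu, so I would follow their route: first identify $L_p(\mathcal M;\ell_1)$ as a complemented subspace of a known interpolation scale, then obtain the $\ell_\infty$ statement by duality.

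\emph{Step 1: embedding.} Write $L_p(\mathcal M;\ell_1)$ through its defining factorization $x_n=\sum_k u_{kn}^*v_{kn}$. This exhibits $L_p(\mathcal M;\ell_1)$ as a quotient of a Haagerup-type tensor product of a column space $L_{2p}(\mathcal M;\ell_2^c)$ and a row space $L_{2p}(\mathcal M;\ell_2^r)$. Equivalently, it is the image under a contractive map of suitable corners of $L_p(B(\ell_2(\mathbb N^2))\overline{\otimes}\mathcal M)$. The map is a product map of the form $(u,v)\mapsto (u^*v)$ summed along the diagonal. Along the same lines, $L_p(\mathcal M;\ell_\infty)$ is the image of $L_{2p}\times\ell_\infty(\mathcal M)\times L_{2p}$ under $(a,y,b)\mapsto ayb$.

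\emph{Step 2: the $\ell_1$ identity.} The column and row spaces $L_{2p}(\mathcal M;\ell_2^c)$ interpolate exactly by the complex method, since they are $1$-complemented in $L_{2p}(B(\ell_2)\overline\otimes\mathcal M)$ and noncommutative $L_p$ interpolate (Pisier--Xu). I would then apply a bilinear (Kouba-type) interpolation theorem for the Haagerup tensor product and pass to quotients. This gives the contractive inclusion
\[
(L_{p_0}(\mathcal M;\ell_1),L_{p_1}(\mathcal M;\ell_1))_\theta\subseteq L_p(\mathcal M;\ell_1).
\]
For the reverse inclusion, take $x$ in the unit ball of $L_p(\mathcal M;\ell_1)$ with a near-optimal factorization. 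Normalize so that $A=\sum u^*u$ and $B=\sum v^*v$ have unit norm in $L_p$, and multiply by suitable densities. Then form the analytic family
\[
x(z)=\Big(\sum_k u_{kn}(z)^*v_{kn}(z)\Big)_n,\qquad u(z)=u\,A^{\,p(\frac{1-z}{2p_0}+\frac z{2p_1})-\frac12}.
\]
Its boundary values have norm at most $1$ in $L_{p_j}(\mathcal M;\ell_1)$. For positive sequences this is easier still, because the norm equals $\|\sum_n x_n\|_p$.

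\emph{Step 3: the $\ell_\infty$ identity by duality.} For $1<p_0$, the $\ell_1$ spaces are reflexive-type with duals $L_{p_j'}(\mathcal M;\ell_\infty)$. The duality theorem for the complex method (Bergh--L\"ofstr\"om, $(X_0,X_1)_\theta^*=(X_0^*,X_1^*)^\theta$ with equality to the lower method under density and reflexivity) then yields the $\ell_\infty$ statement. I would restrict to finite sequences using Proposition \ref{supnormfiniting}, so that all spaces are finite direct sums of $L_p$ spaces and reflexive. A limiting argument then recovers the full space. The endpoint $p_0=1$ or $p_1=\infty$ is handled directly by the factorization $x_n=ay_nb$: take $a(z)=u_a|a|^{p(\cdot)}$ and similarly for $b$, which gives one inclusion. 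The converse uses the $\ell_1$ duality in the finite-$J$ setting.

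\emph{Main obstacle.} I expect the hardest part to be the converse inclusion at the endpoints and the quotient (non-injectivity) issues. Complex interpolation does not commute with quotients in general, so a bounded analytic lifting of the factorization is required. I would first try the explicit polar-decomposition analytic families of Step 2, exactly as in Pisier's treatment for hyperfinite algebras. I would fall back on the finite-$J$ reduction to obtain uniformity.
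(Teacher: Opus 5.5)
The paper gives no proof of this proposition; it is quoted from Junge--Xu. Your sketch therefore has to stand on its own, and as written it has a genuine gap: the inclusion $(L_{p_0}(\mathcal M;\ell_1),L_{p_1}(\mathcal M;\ell_1))_\theta\subseteq L_p(\mathcal M;\ell_1)$ is never proved.

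\emph{Why Step 2 gives only the easy direction.} Bilinear interpolation, or Kouba followed by passing to quotients, yields the \emph{opposite} inclusion. The product map $\Phi(u,v)=(\sum_k u_{kn}^*v_{kn})_n$ is contractive from $L_{2p_j}(\mathcal M;\ell_2^c)\times L_{2p_j}(\mathcal M;\ell_2^c)$ to $L_{p_j}(\mathcal M;\ell_1)$ for $j=0,1$. Interpolating, and using the exact interpolation of the column spaces, gives $\|\Phi(u,v)\|_\theta\le\|u\|_{L_{2p}(\mathcal M;\ell_2^c)}\|v\|_{L_{2p}(\mathcal M;\ell_2^c)}$. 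Taking the infimum over factorizations gives $\|x\|_\theta\le\|x\|_{L_p(\mathcal M;\ell_1)}$. In general, interpolating quotient maps only produces $Q_\theta\subseteq(Q_0,Q_1)_\theta$. The reverse inclusion is exactly the commutation of interpolation with quotients that you yourself say fails in general.

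Your explicit family $x(z)$, built from one factorization of a given $x\in L_p(\mathcal M;\ell_1)$, proves the same easy inclusion a second time. It cannot give the hard one. There you start from an arbitrary $f\in\mathcal F(X_0,X_1)$ whose boundary values have good factorizations only pointwise in $t$, with no analytic dependence. So the fallback in your last paragraph does not reach the obstacle.

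Step 3 inherits the gap, since it dualizes the full $\ell_1$ identity. Dualizing only the easy $\ell_1$ inclusion gives just $(L_{p_0'}(\mathcal M;\ell_\infty),L_{p_1'}(\mathcal M;\ell_\infty))_\theta\subseteq L_{p'}(\mathcal M;\ell_\infty)$. The converse must come from your family $a(z)y_nb(z)$, which works for all exponents, not only at the endpoints.

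Two side remarks. First, Kouba's theorem concerns Haagerup tensor products of operator spaces. For non-hyperfinite $\mathcal M$ the product $u^*v$ is taken inside $L_0(\mathcal M)$, amalgamated over $\mathcal M$, so the theorem is not available as stated. Second, $(u_{kn}A^{w(z)})^*=A^{\overline{w(z)}}u_{kn}^*$ is anti-analytic, so the family should be written with $A^{w(z)}u_{kn}^*$.

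\emph{The missing idea.} Avoid lifting altogether. Each hard inclusion follows by pairing against the \emph{easy} inclusion for the other family, using duality only at the interior exponent, where $1<p<\infty$ automatically.

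For the $\ell_1$ couple, take a finite sequence $x$ with $\|x\|_\theta<1$ and $f\in\mathcal F(L_{p_0}(\mathcal M;\ell_1),L_{p_1}(\mathcal M;\ell_1))$ with $f(\theta)=x$ and boundary norms below $1$. Take $y_n=ac_nb$ in the unit ball of $L_{p'}(\mathcal M;\ell_\infty)$, with $a,b$ of finite spectrum, which suffices by density. The polar-decomposition family $g_n(z)=a(z)c_nb(z)$ has boundary norms at most $1$ in $L_{p_j'}(\mathcal M;\ell_\infty)$. The H\"older-type bound $|\sum_n\tau(x_ny_n)|\le\|x\|_{L_q(\mathcal M;\ell_1)}\|y\|_{L_{q'}(\mathcal M;\ell_\infty)}$ holds for all $1\le q\le\infty$, by Cauchy--Schwarz on the two factorizations. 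It bounds $F(z)=\sum_n\tau(f_n(z)g_n(z))$ by $1$ on both lines. The three lines lemma then gives $|\langle x,y\rangle|\le 1$. Junge's duality $L_p(\mathcal M;\ell_1)^*=L_{p'}(\mathcal M;\ell_\infty)$ yields $\|x\|_{L_p(\mathcal M;\ell_1)}\le\|x\|_\theta$, and density of finite sequences in the interpolation space finishes.

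For the $\ell_\infty$ couple, exchange the roles. For $y$ in the intersection of the $\ell_\infty$ spaces, the functional $x\mapsto\langle x,y\rangle$ on finite sequences is bounded by $\|y\|_\theta$ in the $L_{p'}(\mathcal M;\ell_1)$ norm. It is therefore represented by an element of $L_p(\mathcal M;\ell_\infty)$, necessarily $y$ itself, of norm at most $\|y\|_\theta$.

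This treats the endpoints $p_0=1$ and $p_1=\infty$ uniformly. It needs neither your reflexivity reduction nor any Kouba-type theorem.
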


\begin{lemma}\cite{hong2025noncommutative}\label{opHol}
Let $(\Omega,\mu)$ be a measure space and let $1<p<\infty$. Then
\begin{equation}\label{eq:holder}
\int_{\Omega} f(x)g(x)\, d\mu(x)
\;\leq\;
\Big( \int_{\Omega} f(x)^p \, d\mu(x) \Big)^{\frac{1}{p}}
\Big( \int_{\Omega} g(x)^{p^\prime} \, d\mu(x) \Big)^{\frac{1}{p^{\prime}}},
\end{equation}
where $f : \Omega \to L_1(\mathcal{M}) + L_\infty(\mathcal{M})$ and 
$g : \Omega \to \mathbb{C}$ are positive functions such that all terms in \eqref{eq:holder} are well-defined.
\end{lemma}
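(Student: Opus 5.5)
The inequality \eqref{eq:holder} is to be read in the operator order of $L_0(\mathcal M)_+$: the left side is a positive operator, and the right side is the positive operator $\big(\int_\Omega f^p\,d\mu\big)^{1/p}$ multiplied by the scalar $\|g\|_{p'}$. I plan to reduce it to an operator Jensen inequality for the operator concave, operator monotone function $t\mapsto t^{1/p}$, $p>1$.

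\emph{Normalization and change of measure.} By homogeneity in $g$ we may assume $\int_\Omega g^{p'}\,d\mu=1$; the degenerate cases $\|g\|_{p'}\in\{0,\infty\}$ are trivial. Put $\Omega_0=\{g>0\}$ and define the probability measure $d\nu=g^{p'}\,d\mu$ on $\Omega_0$. Since $1-p'=-p'/p$, on $\Omega_0$ we have
\[
f g\,d\mu = f\,g^{-p'/p}\,d\nu = h\,d\nu,\qquad h:=f\,g^{-p'/p}\ge 0 .
\]
Moreover $h^p\,d\nu=f^p g^{-p'}g^{p'}\,d\mu=f^p\,d\mu$ on $\Omega_0$. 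Here $g$ is scalar, so $h^p=f^pg^{-p'}$ exactly. Hence
\[
\int_\Omega fg\,d\mu=\int_{\Omega_0}h\,d\nu
\quad\text{and}\quad
\int_{\Omega_0}h^p\,d\nu=\int_{\Omega_0}f^p\,d\mu\le\int_\Omega f^p\,d\mu .
\]

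\emph{Operator Jensen.} Next I will show $\int_{\Omega_0}(h^p)^{1/p}\,d\nu\le\big(\int_{\Omega_0}h^p\,d\nu\big)^{1/p}$. By the L\"owner--Heinz theorem, $\varphi(t)=t^{1/p}$ is operator concave on $[0,\infty)$. Operator concavity gives $\sum_i\lambda_i\varphi(A_i)\le\varphi\big(\sum_i\lambda_iA_i\big)$ for positive operators $A_i$ and convex weights $\lambda_i\ge0$ summing to $1$, by induction on the number of terms. I will pass to the integral against the probability measure $\nu$ by approximating $x\mapsto h(x)^p$ with simple functions, in the topology in which the integrals are defined. This uses the continuity of $\varphi$ and the closedness of the positive cone.

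\emph{Monotonicity.} Finally, $\varphi$ is operator monotone. So the previous display gives $\big(\int_{\Omega_0}h^p\,d\nu\big)^{1/p}\le\big(\int_\Omega f^p\,d\mu\big)^{1/p}$, and chaining the three steps yields \eqref{eq:holder}.

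\emph{Main obstacle.} The algebra above is straightforward; the delicate step is the technical justification in the $\tau$-measurable setting. The values $f(x)$ lie in $L_1(\mathcal M)+L_\infty(\mathcal M)$ and may be unbounded, so I must make sense of the Bochner or weak integrals and check two things there: that the simple-function approximation converges, and that operator concavity and monotonicity survive the limit. To handle this I will first treat the case where $f$ takes values in $\mathcal M_+$ and is bounded, with $g$ bounded and supported on a set of finite measure. I will then pass to the general case using truncations $f\wedge n$ via spectral cutoffs and monotone convergence. Both sides increase in the truncation parameter, and the $\tau$-measurable functional calculus is normal with respect to increasing nets, so the limit can be taken on each side. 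This is a convenient place to justify the limit by duality, testing both sides against positive elements $y\in L_{q}(\mathcal M)_+$ through $\tau(\,\cdot\, y)$.
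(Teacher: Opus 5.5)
The paper does not prove this lemma; it quotes it from \cite{hong2025noncommutative}, so there is no internal argument to compare against. Your proof is correct. Reading the inequality in the operator order is what the paper needs: in Proposition~\ref{sqtomx} the lemma is used as $\sum_k\beta_k\alpha_kb_k\le(\sum_k\beta_k^2)^{1/2}(\sum_k\alpha_k^2b_k^2)^{1/2}$ for positive $b_k$. Your change of measure $d\nu=g^{p'}d\mu$, $h=fg^{-p'/p}$ is computed correctly. Jensen for the operator concave function $t^{1/p}$, followed by L\"owner--Heinz monotonicity, covers every $1<p<\infty$. A route through operator convexity of $t^p$ would only reach $1<p\le 2$.

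\textbf{The case the paper actually uses.} The paper only needs $p=2$ with counting measure and finitely many terms. There no limiting argument is required, and even operator concavity can be bypassed. If $\lambda_k\ge0$, $\sum_k\lambda_k=1$, $a_k\ge0$ and $c=\sum_k\lambda_ka_k$, then $0\le\sum_k\lambda_k(a_k-c)^2=\sum_k\lambda_ka_k^2-c^2$ in $L_0(\mathcal M)$. Operator monotonicity of $t^{1/2}$ then finishes.

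\textbf{Your reduction step.} Your ``bounded case'' is not uniformly bounded in $x$: $h^p=f^pg^{-p'}$ blows up where $g$ is small and positive. You can fix this in either of two ways.
\begin{itemize}
\item Truncate $g$ to $\{1/k\le g\le k\}$ as well.
\item Use $\|A^{1/p}-B^{1/p}\|\le\|A-B\|^{1/p}$. Together with H\"older for the probability $\nu$, this turns $L_1(\nu;\mathcal M)$-convergence of simple $A_k\to A$ into $L_1(\nu;\mathcal M)$-convergence of $A_k^{1/p}\to A^{1/p}$.
\end{itemize}

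\textbf{Measurability.} Norm approximation by simple functions presupposes Bochner measurability. If the integrals are weak$^*$ integrals of $\mathcal M$-valued functions, which is the natural setting here, it is cleaner to avoid approximation altogether. Once $g$ is truncated so that $h^p$ is bounded, either of these works.
\begin{itemize}
\item Apply the Choi--Davis inequality $\Phi(\varphi(A))\le\varphi(\Phi(A))$ to the normal unital positive map $\Phi(F)=\int F\,d\nu$ from $L_\infty(\nu)\overline{\otimes}\mathcal M$ to $\mathcal M$.
\item Write $t^{1/p}=\frac{\sin(\pi/p)}{\pi}\int_0^\infty\frac{t}{t+s}s^{1/p-1}\,ds$ and use $\int(A+s)^{-1}\,d\nu\ge(\int A\,d\nu+s)^{-1}$. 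This follows by integrating the positive block matrices $\begin{pmatrix}A+s&1\\1&(A+s)^{-1}\end{pmatrix}$ and taking a Schur complement.
\end{itemize}

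\textbf{The truncation $f\wedge n$.} You only need normality on the left-hand side. Operator monotonicity already gives $(\int(f\wedge n)^p\,d\mu)^{1/p}\le(\int f^p\,d\mu)^{1/p}$, so the right-hand side requires no limit.
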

\begin{proposition}\label{sqtomx}
    Let $(a_k)_{k\geq 1}\subseteq L_2(\mathcal M)$ and $(\alpha_k)_{k\geq 1}\subseteq\mathbb C.$ Then for any $(\beta_k)_{k\geq 1}\in\ell_2$ we have
    \[\Big\|\sup\limits_{n}\!^+\sum_{k=1}^n\beta_k\alpha_ka_k\Big\|_2\leq {4}\sup_{n}\Big(\sum_{k=1}^n|\beta_k|^2\Big)^{\frac{1}{2}}\Big(\sum_{k=1}^\infty|\alpha_k|^2\|a_k\|_2^2\Big)^{\frac{1}{2}}.\]
\end{proposition}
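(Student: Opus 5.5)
The plan is to use the duality description of $L_2(\mathcal M;\ell_\infty)$ from Lemma \ref{sumofsixoreight} for $p=2$. Self-adjointness is not available, so rather than working with positivity directly I would linearize through an Abel summation that reduces the estimate to a Cauchy--Schwarz bound.

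Put $b_k:=\beta_k\alpha_k a_k$ and $S_n:=\sum_{k=1}^n b_k$. By Proposition \ref{supnormfiniting} it suffices to treat finitely many $n$, say $n\le M$. Split each $a_k$ into real and imaginary parts, $a_k=a_k'+ia_k''$ with $a_k',a_k''$ self-adjoint. Split $\alpha_k\beta_k$ similarly. This costs a constant factor, and it is the source of the $4$: four pieces, each with a real coefficient sequence and self-adjoint vectors. For self-adjoint pieces, the Remark shows that it suffices to exhibit a positive $a\in L_2(\mathcal M)$ with $-a\le S_n\le a$ for all $n$ and $\|a\|_2\le C$. The natural dominant comes from the operator inequality $\pm\sum_{k\le n}c_kx_k\le \big(\sum_k |c_k|^2\big)^{1/2}\big(\sum_k x_k^2\big)^{1/2}$, but this operator Cauchy--Schwarz fails noncommutatively in that naive form.

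So instead I would work through duality. Take $y_n\ge0$ with $\|\sum_n y_n\|_2\le1$ and estimate $\big|\sum_n\tau(S_ny_n)\big|$. Interchanging the sums gives
\[
\sum_n\tau(S_ny_n)=\sum_k\beta_k\alpha_k\,\tau\Big(a_k\sum_{n\ge k}y_n\Big)=\sum_k\beta_k\alpha_k\,\tau(a_kY_k),\qquad Y_k:=\sum_{n= k}^{M}y_n .
\]
The $Y_k$ are decreasing and satisfy $0\le Y_k\le Y_1$, where $\|Y_1\|_2\le1$. By scalar Cauchy--Schwarz,
\[
\Big|\sum_k\beta_k\alpha_k\tau(a_kY_k)\Big|\le\Big(\sum_k|\alpha_k|^2\|a_k\|_2^2\Big)^{1/2}\Big(\sum_k|\beta_k|^2\|Y_k\|_2^2\Big)^{1/2}.
\]
It then remains to bound $\sum_k|\beta_k|^2\|Y_k\|_2^2$ by $\sup_n\sum_{k\le n}|\beta_k|^2$.

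This last bound is the main obstacle. With only $\|Y_k\|_2\le1$, one gets $\sum_k|\beta_k|^2\|Y_k\|_2^2\le\sum_k|\beta_k|^2$, which equals $\sup_n\sum_{k\le n}|\beta_k|^2$ since the partial sums of nonnegative terms increase. So the naive estimate already suffices, and the constant arises only from the duality equivalence in Lemma \ref{sumofsixoreight} and the real/imaginary splitting. If the universal constant in Lemma \ref{sumofsixoreight} is not explicit, I would instead argue directly with the factorization of $L_2(\ell_\infty)$, or use the trivial bound $\|\sup^+_n S_n\|_2\le\sum_k\|b_k\|_2$. The latter, however, gives only an $\ell_1$ bound, not the $\ell_2$ bound required. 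The constant $4$ should come out of the splitting combined with an isometric version of the duality for positive $y_n$.
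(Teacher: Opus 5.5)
Your core argument is correct, and it takes a different route from the paper's. The paper works on the primal side. It first rotates phases so that $\alpha_k,\beta_k\ge 0$. It then splits each $a_k$ into the four positive pieces $(\Re a_k)_\pm,(\Im a_k)_\pm$. For a positive sequence $(x_k)$ it applies the operator Cauchy--Schwarz inequality of Lemma~\ref{opHol}, $\sum_{k\le n}\beta_k\alpha_k x_k\le(\sum_k\beta_k^2)^{1/2}(\sum_k\alpha_k^2x_k^2)^{1/2}$. This gives a single positive dominant whose $L_2$-norm is $(\sum_k\alpha_k^2\|x_k\|_2^2)^{1/2}\le(\sum_k\alpha_k^2\|a_k\|_2^2)^{1/2}$, and the $4$ simply counts the four pieces.

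So your remark that this operator Cauchy--Schwarz ``fails noncommutatively'' is wrong. Write $\sum_k c_kx_k$ as a row of scalars times a column of operators. This gives $|\sum_k c_kx_k|^2\le(\sum_k|c_k|^2)\sum_k x_k^*x_k$. For self-adjoint $x_k$ and real $c_k$, operator monotonicity of $t\mapsto t^{1/2}$ then yields $\pm\sum_kc_kx_k\le(\sum_kc_k^2)^{1/2}(\sum_kx_k^2)^{1/2}$. Your duality route (summation by parts against a test sequence, the tails $Y_k$, and scalar Cauchy--Schwarz) avoids both the splitting and operator convexity, and is shorter. The paper's route instead produces an explicit dominating element.

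The one genuine shortfall is the constant. As written, your argument only gives the inequality with the unspecified universal constant of Lemma~\ref{sumofsixoreight}. Once you argue by duality, the real/imaginary splitting plays no role, so your closing explanation of the $4$ does not hold up. This is easy to repair in either of two ways.

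First, you can pair directly with an arbitrary $w=(w_n)$ in the unit ball of $L_2(\mathcal M;\ell_1)$, using the isometric duality $L_2(\mathcal M;\ell_1)^*=L_2(\mathcal M;\ell_\infty)$. Writing $w_n=\sum_j u_{jn}^*v_{jn}$, H\"older's inequality for the column operators gives $\|\sum_{n\ge k}w_n\|_2\le\|(\sum_{j,n}u_{jn}^*u_{jn})^{1/2}\|_4\,\|(\sum_{j,n}v_{jn}^*v_{jn})^{1/2}\|_4$. Hence every tail satisfies $\|W_k\|_2\le\|w\|_{L_2(\mathcal M;\ell_1)}$, and your computation goes through with constant $1$.

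Second, you can use the polarization identity $u^*v=\frac14\sum_{j=0}^3 i^{-j}(u+i^jv)^*(u+i^jv)$. It shows that the supremum over positive test sequences in Lemma~\ref{sumofsixoreight} controls the norm up to a factor $4$, which recovers exactly the stated constant.
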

\begin{proof}
Note that by replacing $a_k$ by $\exp({i(\arg \alpha_k+\arg \beta_k)})a_k$ we may assume for all $1\leq k\leq n,$ $\alpha_k\geq 0$ and $\beta_k\geq 0.$ Let $(\varepsilon_k)_{k=1}^n$ be independent Rademacher random variables. Let $C\geq 0$ be such that for all $n$
\begin{equation}\label{eq:rad}
\Big(\sum_{k=1}^n \alpha_k^2 \|a_k\|_2^2\Big)^{1/2}=\mathbb{E}\Big\|\sum_{k=1}^n \varepsilon_k \otimes \alpha_k\, a_k \Big\|_2 \le C.
\end{equation}
Clearly the same estimate holds with $a_k$ replaced by $a_k^*$.
Write $a_k = \Re a_k + i\,\Im a_k$. Then by triangle inequality we obtain for all $n$
\begin{equation}\label{eq:radd}
\sum_{k=1}^n\alpha_k^2 \|\Re a_k\|_2^2 \le C^2,
\qquad
\sum_{k=1}^n \alpha_k^2\|\Im a_k\|_2^2 \le C^2.
\end{equation}

Now decompose $\Re a_k = (\Re a_k)_+ - (\Re a_k)_-$ with 
$(\Re a_k)_+(\Re a_k)_- = 0$. Hence as
\[
\|\Re a_k\|_2^2
= \|(\Re a_k)_+\|_2^2 + \|(\Re a_k)_-\|_2^2,
\] 
and by a similar analysis for imaginary parts together, we get from \eqref{eq:radd} that for all $n$
\[
\sum_{k=1}^n\alpha_k^2 \|x_k\|_2^2 \le C^2,
\quad\text{where}\quad\ (x_k)\in\{((\Re a_k)_+)_k,((\Im a_k)_+)_k,((\Im a_k)_-)_k,((\Re a_k)_-)_k\}.\]
Note that by triangular inequality we obtain
\begin{equation}\label{reelaimal2prof}
\begin{aligned}
\Bigl\|\sup_{1\leq k\leq n}\!^{+}\sum_{k=1}^n \beta_k\alpha_k a_k \Bigr\|_2
\;\leq\;&
\Bigl\|\sup_{1\leq k\leq n}\!^{+}\sum_{k=1}^n \beta_k\alpha_k (\Re a_k)_+ \Bigr\|_2
+
\Bigl\|\sup_{1\leq k\leq n}\!^{+}\sum_{k=1}^n \beta_k\alpha_k (\Re a_k)_- \Bigr\|_2 \\
&+
\Bigl\|\sup_{1\leq k\leq n}\!^{+}\sum_{k=1}^n\beta_k \alpha_k (\Im a_k)_+ \Bigr\|_2
+
\Bigl\|\sup_{1\leq k\leq n}\!^{+}\sum_{k=1}^n \beta_k\alpha_k (\Im a_k)_- \Bigr\|_2.
\end{aligned}
\end{equation}
Note that by Lemma \eqref{opHol} we have for any positive sequence $(b_k)_{k=1}^n$ in $L_2(\mathcal M)$ 
\begin{equation}
    \sum_{k=1}^n\beta_k\alpha_kb_k\leq\Big(\sum_{k=1}^n{\beta_k^2}\Big)^{1/2}\Big(\sum_{k=1}^n\alpha_k^2b_k^2\Big)^{1/2}.
\end{equation}
Hence the desired result follow. Hence for $(x_k)\in\{((\Re a_k)_+)_k,((\Im a_k)_+)_k,((\Im a_k)_-)_k,((\Re a_k)_-)_k\}$ we obtain that
\begin{equation*}
    \|\sup_{n}\!^{+}\beta_k\alpha_kx_k\|_2\leq \|\sup_{n}\!^{+}\Big(\sum_{k=1}^n{\beta_k^2}\Big)^{1/2}\Big(\sum_{k=1}^n\alpha_k^2b_k^2\Big)^{1/2}\|_2\leq \sup_n\Big(\sum_{k=1}^n\beta_k^2\Big)^{\frac{1}{2}}C.
\end{equation*}
The result now follows from \eqref{reelaimal2prof} and Part (i) of Corollary \eqref{mod1andmaximum}.
\end{proof}

\subsection{Junge-Xu ergodic theorem}Let $T:\mathcal{M}\rightarrow \mathcal{M}$ be a linear map which satisfies the following properties
\begin{itemize}\label{junge-xuoperator}
    \item[(i)] $T$ is contraction on $\mathcal{M}:$$\lVert Tx \rVert_{\infty}\leq \lVert x\rVert_{\infty}$ for all $x\in\mathcal{M}$
    \item[(ii)] $T$ is positive: $Tx\geq0$ if $x\geq0.$
    \item[(iii)] $\tau\circ T\leq\tau:\tau(T(x))\leq\tau(x)$ for all $x\in L_1(\mathcal{M})\cap \mathcal{M_{+}}.$
\end{itemize}
\begin{theorem}\label{JungeXu}\cite{junge-Xu}
Let $T$ be a linear map with $(i)$--$(iii)$. Let
\[
M_n:= M_n(T) = \frac{1}{n+1} \sum_{k=0}^{n} T^k.
\]
Then for every $1 < p \leq \infty$ we have
\begin{equation}
\| \sup_{n}\!^{+} M_n(x)\|_p \leq C_p\|x\|_p,
\quad \forall\, x \in L_p(\mathcal{M}).
\end{equation}
\end{theorem}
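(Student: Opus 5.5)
The plan follows the Junge--Xu strategy: prove the endpoint estimates $p=\infty$ and weak type $(1,1)$, then interpolate with a noncommutative Marcinkiewicz-type theorem adapted to $L_p(\mathcal M;\ell_\infty)$.

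\textbf{Step 1: reduction to positive $x$.} Since each $M_n$ is a positive linear map, every $x\in L_p(\mathcal M)$ splits into four positive parts, $x=x_1-x_2+i(x_3-x_4)$ with $\|x_j\|_p\le\|x\|_p$. By the triangle inequality in $L_p(\mathcal M;\ell_\infty)$ it therefore suffices to treat $x\ge 0$. For such $x$ the sequence $(M_n x)_n$ is positive, so by the Remark following Proposition \ref{supnormfiniting} the goal is to find $a\in L_p(\mathcal M)_+$ with $M_n x\le a$ for all $n$ and $\|a\|_p\le C_p\|x\|_p$. By Proposition \ref{supnormfiniting} we may restrict to finitely many $n$, uniformly.

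\textbf{Step 2: the endpoint estimates.} The case $p=\infty$ is immediate: $0\le M_n x\le \|x\|_\infty 1$ by (i) and (ii), so we may take $a=\|x\|_\infty 1$. At $p=1$ I would prove Yeadon's weak type $(1,1)$ inequality, which is the noncommutative Dunford--Schwartz maximal lemma. For $x\in L_1(\mathcal M)_+$ and $\lambda>0$ we want a projection $e$ with
\[
\sup_n\|eM_n(x)e\|_\infty\le\lambda \quad\text{and}\quad \tau(e^\perp)\le \lambda^{-1}\|x\|_1 .
\]
To get this, I would follow Yeadon's argument: apply a Hopf-type maximal lemma to $T$ on $L_1+L_\infty$ using (iii), which gives $\tau\circ T\le\tau$, and build $e^\perp$ as a supremum of spectral projections where the partial sums $\sum_{k\le n}(T^kx-\lambda)$ are positive.

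\textbf{Step 3: interpolation, the main obstacle.} The difficulty is that there is no pointwise maximal function, so classical Marcinkiewicz interpolation is unavailable. I would instead use Junge--Xu's noncommutative Marcinkiewicz theorem for a sequence of positive maps $S_n$ that are simultaneously bounded on $L_\infty$ and of weak type $(1,1)$ in the projection sense of Step 2. To produce the majorant $a$, decompose $x=\sum_j x\,\chi_{[2^j,2^{j+1})}(x)$ dyadically, treat the pieces at level $2^j$ with the projections $e_j$ from Step 2, and sum the resulting majorants. The cross terms are handled with the factorization description $x_n=a y_n b$ and the inequality $e_jM_n(\cdot)e_j\le \lambda_j$. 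Summing the $L_p$ norms geometrically over $j$ should give $\|a\|_p\lesssim_p\|x\|_p$, with constant blowing up like $(p-1)^{-2}$.

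If the direct construction of the majorant in Step 3 becomes unmanageable, the fallback is to argue by duality. Using Lemma \ref{sumofsixoreight}, it suffices to bound $\tau\big(\sum_n M_n^*(y_n)\,x\big)$ for positive $y_n$ with $\|\sum_n y_n\|_{p'}\le1$. This reduces the problem to an $L_{p'}(\mathcal M;\ell_1)$ estimate for the adjoint averages, which can then be interpolated through Proposition \ref{Interpolation}.
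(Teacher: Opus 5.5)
Your architecture (trivial $L_\infty$ bound, Yeadon's weak type $(1,1)$ inequality, Junge--Xu's noncommutative Marcinkiewicz theorem) is the Junge--Xu proof that the paper quotes here without reproducing, so citing those two results would suffice. The gap is the one place where you commit to a concrete construction. In Step 2, set $R_n=\sum_{k=0}^n(T^kx-\lambda)$. The projection $e^\perp=\bigvee_n\chi_{(0,\infty)}(R_n)$ does give $eR_ne\le 0$, but its trace is not controlled by $\|x\|_1/\lambda$. The Hopf argument runs on the pointwise maximum $\max_n R_n$, which has no noncommutative analogue, and a join of nearly equal projections can have far larger trace than each of them. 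For example, take $\mathcal M=M_d(\mathbb C)$ with the usual trace and $T(y)=uyu^*$, where $u=\mathrm{diag}(e^{i\theta_1},\dots,e^{i\theta_d})$ with the $\theta_m$ distinct and very small. Let $w=d^{-1/2}(1,\dots,1)$ and $x=2\lambda P_w$, where $P_v$ denotes the projection onto $\mathbb C v$; then $\|x\|_1/\lambda=2$. The vectors $v_k=u^kw$, $0\le k<d$, are linearly independent (Vandermonde) but pairwise almost parallel. Hence $R_n=\lambda\big(2\sum_{k\le n}P_{v_k}-(n+1)\big)$ has exactly one positive eigenvalue. Its eigenvector lies in $\mathrm{span}(v_0,\dots,v_n)$ and has a nonzero $v_n$-coefficient. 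So these eigenvectors, for $n<d$, are linearly independent, the join is $1$, and $\tau(e^\perp)=d$. What you need is a single projection $e$ with $eR_ne\le 0$ for all $n$ and $\lambda\tau(e^\perp)\le\|x\|_1$. Producing it is exactly the content of Yeadon's theorem, which you should quote (as Junge--Xu do) rather than rederive this way.

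Step 3 likewise stands only as a citation of the Junge--Xu interpolation theorem; your sketch does not capture its essential point. The weak type estimate gives you projections $e_j$ with $e_jM_n(\cdot)e_j\le\lambda_je_j$ and small $\tau(e_j^\perp)$, not majorants. There is nothing to ``sum'' until you control the remaining pieces $e_j^\perp M_n(x)e_j$ and $e_j^\perp M_n(x)e_j^\perp$, uniformly in $n$ and across all levels $j$. Neither the dyadic splitting nor the factorization $x_n=ay_nb$ does this by itself. Finally, the duality fallback is circular. By Lemma~\ref{sumofsixoreight}, the adjoint estimate $\|\sum_nM_n^*(y_n)\|_{p'}\lesssim\|\sum_ny_n\|_{p'}$ for $y_n\ge0$ is equivalent to the theorem. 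It is trivial at $p'=1$ but false at $p'=\infty$, since there is no strong type at $p=1$. So Proposition~\ref{Interpolation} could only interpolate against an $L_{p_0}$ maximal bound with $1<p_0<p$, which is what you are trying to prove.
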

We record the following elementary lemma.
\begin{lemma}\label{saclarmultijungexu}
    Let $\alpha\in\mathbb T$ and $T:\mathcal M\to\mathcal M$ be a linear map satisfying $(i)$--$(iii)$ in \eqref{junge-xuoperator}. Then for all $1<p<\infty$ we have 
    \begin{equation}\label{alphaT}
\Big\| \sup_{n}\!^{+} M_n(\alpha T)(x) \Big\|_p \leq C_p\|x\|_p,
\quad \forall\, x \in L_p(\mathcal{M}).
\end{equation}
\end{lemma}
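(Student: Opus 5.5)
The idea is to reduce to Theorem \ref{JungeXu} by writing $\alpha T$ as a genuinely positive operator acting on a slightly enlarged algebra, so that $\alpha$ is absorbed into the positivity structure. Two cases are worth separating first. If $\alpha$ is a root of unity, say $\alpha^q=1$, one could split the averages $M_n(\alpha T)$ according to residues of $k$ mod $q$. That route is messy, though, because $\alpha^k T^k$ with $k\equiv r$ gives $\alpha^r T^r (T^q)^j$, and then Lemma \ref{maxcomingoutjuxulem} applies only after controlling the averages of $T^q$ and the tails. A uniform approach that also handles irrational $\alpha$ is preferable.

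The plan is a tensor/crossed-product trick. Let $\mathcal N:=\mathcal M\overline{\otimes}L_\infty(\mathbb T)$ with trace $\tau\otimes m$, where $m$ is normalized Haar measure. Define
\[
\widetilde T:=T\otimes R_\alpha,\qquad R_\alpha f(z)=f(\alpha z).
\]
Then $R_\alpha$ is a trace-preserving $*$-automorphism of $L_\infty(\mathbb T)$, so $\widetilde T$ is a positive contraction on $\mathcal N$ with $(\tau\otimes m)\circ\widetilde T\le \tau\otimes m$. Positivity of $T\otimes R_\alpha$ holds because $T\otimes\mathrm{id}$ is positive (a positive map into a commutative-tensor factor is completely positive on $\mathcal M\overline\otimes L_\infty$, or one argues pointwise in $z$) and $\mathrm{id}\otimes R_\alpha$ is a $*$-automorphism. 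So $\widetilde T$ satisfies (i)--(iii), and Theorem \ref{JungeXu} gives
\[
\|\sup_n\!^{+}M_n(\widetilde T)y\|_{L_p(\mathcal N)}\le C_p\|y\|_p.
\]

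Now test this on $y=x\otimes \chi$, where $\chi(z)=z$. Since $R_\alpha^k\chi=\alpha^k\chi$, we get
\[
M_n(\widetilde T)(x\otimes\chi)=M_n(\alpha T)x\otimes\chi .
\]
Since $\chi$ is unitary in $L_\infty(\mathbb T)$, it remains to show
\[
\|\sup_n\!^{+}(x_n\otimes\chi)\|_{L_p(\mathcal N;\ell_\infty)}\ge\|\sup_n\!^{+}x_n\|_{L_p(\mathcal M;\ell_\infty)}.
\]
To see this, first multiply by $1\otimes\bar\chi$. Right multiplication by a unitary $u$ that commutes with everything in sight preserves the $L_p(\ell_\infty)$ norm, via the factorization $x_n u=a(y_n u)b$ when $u$ is central in $\mathcal N$ (here $1\otimes\bar\chi$ is central, since $L_\infty(\mathbb T)$ is abelian and in the tensor commutant). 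This reduces the claim to $x_n\otimes 1$. Then apply the conditional expectation $\mathrm{id}\otimes m:\mathcal N\to\mathcal M$. It is positive and contractive on $L_p(\ell_\infty)$, because a factorization $a y_n b$ can be handled via the order description in the Remark for selfadjoint parts, or by duality with $L_{p'}(\ell_1)$ using Lemma \ref{sumofsixoreight}, which is the cleanest route. Either way it sends $x_n\otimes1$ to $x_n$. Combining these, $\|x\otimes\chi\|_p=\|x\|_p$ yields \eqref{alphaT} with the same constant $C_p$.

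The main obstacle is justifying that the conditional expectation is contractive on $L_p(\mathcal N;\ell_\infty)$. I would do this by duality. Given positive $y_n\in L_{p'}(\mathcal M)$ with $\|\sum y_n\|_{p'}\le1$, the elements $y_n\otimes 1$ are positive in $L_{p'}(\mathcal N)$ with the same norm bound, and
\[
\tau(x_ny_n)=(\tau\otimes m)\big((x_n\otimes1)(y_n\otimes1)\big).
\]
Lemma \ref{sumofsixoreight} therefore gives the comparison up to universal constants, which suffices.
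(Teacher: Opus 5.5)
Your argument is correct, but it takes a different route from the paper. The paper never leaves $\mathcal M$. It reduces to $x\ge 0$ and dualizes against positive sequences $(y_n)$ via Lemma \ref{sumofsixoreight}. Since $T$ is positive, $\tau(T^kx\,y_n)\ge 0$, so the phases $\alpha^k$ can simply be discarded by the triangle inequality. What remains is $\sum_n\tau(M_n(T)x\,y_n)$, which Theorem \ref{JungeXu} controls. This is a domination principle, valid for any coefficients $w_{n,k}$ with $|w_{n,k}|\le\frac{1}{n+1}$ for $k\le n$ (not only for $\alpha^k$), at the price of universal constants.

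Your transference through $\mathcal N=\mathcal M\overline{\otimes}L_\infty(\mathbb T)$ instead uses that $x\otimes\chi$ is an eigenvector of $1\otimes R_\alpha$. It covers only multiplicative phases, but it needs no reduction to positive $x$. Moreover, you can get exactly the constant $C_p$ if you compare the $\ell_\infty$-valued norms through the isometric duality with $L_{p'}(\ell_1)$ rather than through Lemma \ref{sumofsixoreight}. The map $(y_n)\mapsto(y_n\otimes 1)$ is contractive there, because a decomposition $y_n=\sum_k u_{kn}^*v_{kn}$ tensored with $1$ is again one.

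Two points should be tightened. First, positivity of $T\otimes R_\alpha$ follows from $\mathcal N\cong L_\infty(\mathbb T;\mathcal M)$ with $T$ acting pointwise, not from complete positivity. Second, defining $T\otimes R_\alpha$ on the von Neumann tensor product requires $T$ to be normal, and (i)--(iii) do not force this. The gap is harmless, because only the $L_p$-extension of $T$ matters: let $S$ be the adjoint of the $L_1$-extension of $T$; then the adjoint of the $L_1$-extension of $S$ is a normal map satisfying (i)--(iii) that agrees with $T$ on $L_1(\mathcal M)\cap\mathcal M$.
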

\begin{proof}
    Note that by \cite[Part (i), Proposition 2.1]{junge-Xu} it is enough to prove \eqref{alphaT} for $x\in L_p(\mathcal M)_{+}.$ Then by Lemma \eqref{sumofsixoreight} we obtain that 
\[
\begin{aligned}
\Bigl\|\sup_n\!^{+} M_n(\alpha T)x\Bigr\|_{p}
&\lesssim
\sup_{\substack{(y_n)\subseteq L_{p'}(\mathcal M),\, y_n\ge0\\
\|\sum_n y_n\|_{L_{p'}}\le1}}
\Big|
\sum_n \frac1{n+1}\sum_{k=0}^n
\alpha^k\tau(T^kxy_n)
\Big|
\\
&\le
\sup_{\substack{(y_n)\subseteq L_{p'}(\mathcal M),\, y_n\ge0\\
\|\sum_n y_n\|_{L_{p'}}\le1}}
\sum_n \frac1{n+1}\sum_{k=0}^n
\bigl|\tau(T^kxy_n)\bigr|\lesssim\|x\|_p.
\end{aligned}
\]
where we used the triangle inequality and the fact that $|\alpha|=1$ from second to third inequality and Junge-Xu ergodic theorem Theorem \eqref{JungeXu} and Lemma \eqref{sumofsixoreight} again in the final inequality. This completes the proof of the lemma.
\end{proof}
\section{Maximal ergodic theorem for $L_\infty-L_1$ contractions with finite peripheral spectrum on noncommutative $L_p$-spaces}\label{section3}
In this section we prove Theorem \eqref{strong-ergodic-theorem-for-ritt-E}. We start with essential square function estimates.
\subsection{Square function estimates and $L_2$-maximal ergodic theorem}
\begin{lemma}\label{lemma-for-T-to-use-square-functio-estimates}[\cite{Delyon-Bernard}, Theorem 3]
Let $\mathcal H$ be a Hilbert space and $T\in B(\mathcal{H})$. Suppose $\Omega\subseteq\mathbb{C}$ is a bounded convex set  which contains the numerical range of $T$. Then there exists a constant $D_{\Omega}$ such that for any finite sequence of rational functions $v_1,\dots,v_n$ we have 
\begin{align*}
\Big\lVert \sum^n_{i=1} v_i(T)^*v_i(T)\Big\rVert\leq D^2_{\Omega}\sup_{z\in\Omega}\sum^n_{i=1}\abs{v_i(z)}^2
\end{align*}
\end{lemma}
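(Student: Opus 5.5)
This is Delyon--Delyon's theorem, cited from \cite{Delyon-Bernard}. The plan is to reduce it to a functional calculus estimate on $\Omega$ and then to a dilation argument.

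\textbf{Reduction to a functional calculus bound.} The first step I would establish is the Delyon--Delyon bound: $\|f(T)\|\le K_\Omega\sup_{z\in\Omega}|f(z)|$ for every rational $f$ with poles off $\overline\Omega$. Here $K_\Omega$ depends only on $\Omega$ (today one can take $K_\Omega=1+\sqrt2$ by Crouzeix--Palencia). The idea is to write
\[
f(T)=\frac1{2\pi i}\int_{\partial\Omega}f(\zeta)(\zeta-T)^{-1}\,d\zeta .
\]
Convexity and $W(T)\subseteq\Omega$ give $\Re\big(\nu(\zeta)(\zeta-T)\big)\ge 0$ for the outward normal $\nu(\zeta)$. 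One pairs this Cauchy kernel with the Cauchy transform of $\bar f$, so that the double layer potential kernel $\Re\big(\nu\,d\zeta/(\zeta-T)\big)$ appears. That kernel is a positive operator-valued measure, and its total mass is bounded by a constant depending on the geometry of $\Omega$ (finite since $\Omega$ is bounded convex). This gives $\|f(T)\|\lesssim_\Omega\|f\|_\infty$. (If needed, one first shrinks to a slightly larger smooth convex domain and takes limits.)

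\textbf{Passing to the square function.} Given $v_1,\dots,v_n$, consider the column-valued rational function $V(z)=(v_1(z),\dots,v_n(z))^{t}$. Then $\sum_i v_i(T)^*v_i(T)=V(T)^*V(T)$, where $V(T):\mathcal H\to\mathcal H\otimes\ell_2^n$. The quantity to bound is $\|V(T)\|^2$, and the target is $\sup_{z\in\Omega}\|V(z)\|_{\ell_2^n}^2$. So it suffices to have the operator-valued version of the first step: $\|F(T)\|\le K_\Omega\sup_\Omega\|F(z)\|$ for matrix- or column-valued rational $F$ acting as $F(T)=\sum_i e_i\otimes v_i(T)$. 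By a standard dilation argument, I would get this from complete boundedness of the functional calculus, i.e.\ showing the first step's argument yields the bound with $\mathcal B(\mathcal H)$-coefficients. The Cauchy-integral argument is linear in $f$ and only uses positivity of the kernel. Hence it works verbatim with $f$ replaced by $F$ and absolute values replaced by operator norms: positivity of the operator-valued measure gives $\|\int F\,d\mu\|\le\sup\|F\|\cdot\|\mu(\partial\Omega)\|$ by Cauchy--Schwarz for positive maps. Then $D_\Omega=K_\Omega$ works.

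\textbf{Main obstacle.} The difficult step is the first one, specifically controlling the non-positive remainder term, the Cauchy transform of $\bar f$, which is not a priori bounded by $\|f\|_\infty$. Delyon--Delyon handle it through a fixed-point/iteration over the boundary, with constant depending on $\Omega$. Since the paper only needs existence of $D_\Omega$, I would simply quote \cite{Delyon-Bernard} (or Crouzeix--Palencia's complete $K$-spectrality) for that step. The genuinely new check is that the bound is completely bounded, which is what yields the sum-of-squares form.
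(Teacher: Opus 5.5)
\textbf{The reduction is right.} You correctly reduce the lemma to the functional calculus on column-valued functions: $\|\sum_i v_i(T)^*v_i(T)\|=\|V(T)\|^2$ with $V(T)=\sum_i e_i\otimes v_i(T)$. This is just the $n\times 1$ case of complete boundedness, so no dilation argument is needed. The positive part of the Cauchy-integral argument also passes to columns, because $f\mapsto\int f\,d\mu_T$ is positive, hence completely positive, on $C(\partial\Omega)$, with $\mu_T(\partial\Omega)=2I$.

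\textbf{The gap.} You claim the argument ``only uses positivity of the kernel'' and therefore works verbatim for $F$. It does not. The identity is $\int f\,d\mu_T=f(T)+g(T)^*$, where $g$ is the Cauchy transform of $\bar f$, and all the dependence on $\Omega$ sits in controlling $g(T)$, as you note yourself under ``Main obstacle''. For $F=\sum_i e_i\otimes v_i$ this gives
\[
F(T)=\int F\,d\mu_T-\sum_i e_i\otimes g_i(T)^*,\qquad \Big\|\sum_i e_i\otimes g_i(T)^*\Big\|=\Big\|\sum_i g_i(T)\,g_i(T)^*\Big\|^{1/2},
\]
which is a \emph{row} square function of the new functions $g_i$. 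Quoting a scalar theorem (Delyon--Delyon or Crouzeix--Palencia) gives only $\|g(T)\|\le K_\Omega\sup_\Omega|g|$. Applied term by term, this yields $\sum_i\sup_\Omega|g_i|^2$ rather than $\sup_\Omega\sum_i|g_i|^2$, which can be worse by a factor of $n$. A bootstrap confined to column-valued functions cannot close either, because the remainder has switched from columns to rows. So the ``genuinely new check'' is not a formality, and the justification you give for it fails.

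\textbf{How to fix it.} Your parenthetical fallback is the right repair. One option is to run both the remainder estimate and the closing argument for $M_n$-valued functions, where the Cauchy transform of $F^*$ is again matrix-valued and rows and columns are handled together. This is how one sees that the numerical range is a complete $(1+\sqrt2)$-spectral set in Crouzeix--Palencia. The other option is to cite that completely bounded result directly. Your column reduction then finishes the proof in one line, with $D_\Omega=1+\sqrt2$ independent of $\Omega$. That is a legitimate alternative to the paper, which gives no argument and takes the sum-of-squares form straight from Delyon--Delyon's Theorem 3, with a constant depending on $\Omega$. What does not work is quoting only the scalar bound and then asserting the vector-valued version ``verbatim''.

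\textbf{Minor points.} The positivity condition should read $\Re\big(\overline{\nu(\zeta)}(\zeta-T)\big)\ge0$. Since $\mu_T(\partial\Omega)=2I$ exactly, the geometry of $\Omega$ enters only through the remainder, not through the total mass as you suggest.
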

\begin{definition}(Stolz domain)\label{definition-Stolz-domain}
For any $\gamma\in(0,\frac{\pi}{2})$, let $B_{\gamma}\subseteq\mathbb{C}$ be the open Stolz domain defined as interior of convex hull of $1$ and the disc $\mathbb{D}(0,\sin\gamma).$
\end{definition}
Let $\{\xi_1,\dots,\xi_N\}\subseteq\mathbb{C}$, consider $E=\Delta \cap \mathbb{T}=\{\xi_1,\dots,\xi_N\}$. We denote the interior of convex hull of $D(0,r)\cup E$ by $E_r$,$0<r<1$.
We now state the following lemma, which can be found in [\cite{Bekjan2008}, Lemma 2.4]
\begin{lemma}\label{lemma-for-finiteness-of-series}
Let $B_{\gamma}$ be a Stolz domain defined as in Definition \ref{definition-Stolz-domain}. Then 
\begin{align}
 \sup_{\lambda\in \overline{B_{\gamma}}}\sum_{n\geq 0}(n+1)^{2m-1}|\lambda^n(\lambda-1)^{m}|^2<\infty.  
\end{align}
\end{lemma}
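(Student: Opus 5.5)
We need to show $\sup_{\lambda\in\overline{B_\gamma}}\sum_{n\ge0}(n+1)^{2m-1}|\lambda|^{2n}|\lambda-1|^{2m}<\infty$. The plan is to reduce to a one-variable geometric estimate in the modulus $|\lambda|$. The key input is the elementary geometry of the Stolz domain: there is a constant $c_\gamma>0$ such that
\[
|1-\lambda|\le c_\gamma\,(1-|\lambda|)\qquad\text{for all }\lambda\in\overline{B_\gamma}.
\]
Away from $1$, say $|\lambda-1|\ge\delta$, the closed set $\overline{B_\gamma}\cap\{|\lambda-1|\ge\delta\}$ is a compact subset of the open disc: its points lie in the convex hull of $1$ and $\overline{\mathbb D}(0,\sin\gamma)$ and stay a fixed distance from $1$. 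Hence $|\lambda|\le r_\delta<1$ there, and the bound holds trivially since $|1-\lambda|\le2$.

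Near $1$, I would use that $\overline{B_\gamma}$ is contained in the sector $\{1-\rho e^{i\theta}:\rho\ge0,\ |\theta|\le\gamma\}$, since the tangent lines from $1$ to the circle of radius $\sin\gamma$ make angle $\gamma$ with the real axis. For $\lambda=1-\rho e^{i\theta}$ we have
\[
|\lambda|^2=1-2\rho\cos\theta+\rho^2\le 1-2\rho\cos\gamma+\rho^2,
\]
so $1-|\lambda|\ge \tfrac12(1-|\lambda|^2)\ge \rho(\cos\gamma-\rho/2)\ge \tfrac12\rho\cos\gamma$ for $\rho\le\cos\gamma$. This gives $|1-\lambda|=\rho\le \tfrac{2}{\cos\gamma}(1-|\lambda|)$.

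With the comparison in hand, set $t=|\lambda|\in[0,1]$. The case $\lambda=1$ contributes $0$. For $t<1$ the sum is bounded by
\[
c_\gamma^{2m}(1-t)^{2m}\sum_{n\ge0}(n+1)^{2m-1}t^{2n}\le c_\gamma^{2m}(1-t)^{2m}\sum_{n\ge0}(n+1)^{2m-1}t^{n}.
\]
Using the standard estimate $\sum_{n}(n+1)^{k-1}t^n\lesssim_k (1-t)^{-k}$ with $k=2m$, for instance by comparison with $\sum\binom{n+k-1}{k-1}t^n=(1-t)^{-k}$ together with $(n+1)^{k-1}\le (k-1)!\binom{n+k-1}{k-1}$, the product is bounded by a constant depending only on $m$ and $\gamma$. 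This bound is uniform in $\lambda$, which proves the lemma.

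The only step requiring real care is the cone comparison $|1-\lambda|\lesssim 1-|\lambda|$. Once that is established, the rest is routine.
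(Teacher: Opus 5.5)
Your proof is correct. The paper gives no proof of this lemma; it imports it from \cite[Lemma 2.4]{Bekjan2008}. Your argument is the standard self-contained proof of that fact: the Stolz-angle comparison $|1-\lambda|\le c_\gamma(1-|\lambda|)$ on $\overline{B_\gamma}$, followed by $\sum_{n}(n+1)^{2m-1}t^{n}\le (2m-1)!\,(1-t)^{-2m}$.

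Two small remarks. First, you implicitly need $m\ge 1$. It enters the generating-function bound with $k=2m\ge 1$ and your claim that $\lambda=1$ contributes $0$. This is the intended range, since for $m=0$ the sum at $\lambda=1$ is the divergent harmonic series.

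Second, you can get the comparison in one line without splitting into regions near and far from $1$. Every $\lambda\in\overline{B_\gamma}$ has the form $\lambda=s+(1-s)w$ with $s\in[0,1]$ and $|w|\le\sin\gamma$. Hence $1-|\lambda|\ge(1-s)(1-\sin\gamma)$ and $|1-\lambda|=(1-s)|1-w|\le(1-s)(1+\sin\gamma)$, giving $c_\gamma=\frac{1+\sin\gamma}{1-\sin\gamma}$.
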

 \begin{lemma}\label{L2sqrfnestimatewithnumercond}
    Let us consider $T\in B(L_2(\mathcal{M}))$  such that the numerical range of $T$ contained in $\overline{E}_r$ for some $0<r<1$. Then for all $m\geq 1$ there exists $c_{m,r}>0$ such that 
    \begin{equation}
         \sum^{\infty}_{k=0}(k+1)^{2m-1}\Big\lVert \Delta^m_k(x)\Big\rVert^2_2 \leq C_{m,r}\|x\|^2_2, \quad \text{where}\quad \Delta^m_k:=T^k\prod\limits^N_{j=1}(T-{\xi_j}I)^m.
    \end{equation}
\end{lemma}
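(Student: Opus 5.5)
The plan is to apply Lemma \ref{lemma-for-T-to-use-square-functio-estimates} (Delyon–Delyon) with $\Omega=E_r$, which is a bounded convex set containing the numerical range of $T$ (the numerical range lies in $\overline{E}_r$; we may enlarge $r$ slightly or use the closure, since the lemma only needs a bounded convex set containing $W(T)$, and the supremum over $E_r$ equals that over $\overline{E}_r$ by continuity). We take the polynomials $v_k(z)=(k+1)^{m-\frac12}z^k\prod_{j=1}^N(z-\xi_j)^m$ for $0\le k\le K$. Expanding $\|\sum_k v_k(T)^*v_k(T)\|$ and pairing with $x$ gives
\[
\sum_{k=0}^{K}(k+1)^{2m-1}\|\Delta_k^m x\|_2^2=\Big\langle \sum_{k=0}^K v_k(T)^*v_k(T)x,x\Big\rangle\le D_{E_r}^2\sup_{z\in \overline{E}_r}\sum_{k\ge0}(k+1)^{2m-1}|z|^{2k}\prod_{j=1}^N|z-\xi_j|^{2m}\,\|x\|_2^2 .
\]
Letting $K\to\infty$ then reduces the lemma to the scalar estimate
\[
\sup_{z\in\overline{E}_r}\sum_{k\ge0}(k+1)^{2m-1}|z|^{2k}\prod_{j=1}^N|z-\xi_j|^{2m}<\infty .
\]

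To prove this scalar bound, we localize near each $\xi_i$. Since $0<r<1$, there is $\gamma\in(0,\pi/2)$ and $\delta>0$ such that $\overline{E}_r\cap D(\xi_i,\delta)\subseteq \xi_i\overline{B_\gamma}$ for each $i$. This holds because near $\xi_i$ the region $E_r$ is bounded by the two tangent segments from $\xi_i$ to the circle $|z|=r$, or by chords to neighbouring $\xi_j$; in either case they make an angle strictly less than $\pi$ at $\xi_i$, symmetric-or-not about the radius, hence contained in a rotated Stolz angle. On the compact set $K_0=\overline{E}_r\setminus\bigcup_i D(\xi_i,\delta)$ we have $\sup|z|=\rho<1$, because $\overline{E}_r\cap\mathbb T=E$. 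The product is bounded by $2^{2mN}$, so the series is dominated by $\sum(k+1)^{2m-1}\rho^{2k}<\infty$ uniformly.

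Near $\xi_i$, write $z=\xi_i\lambda$ with $\lambda\in\overline{B_\gamma}$. Then $|z-\xi_i|=|\lambda-1|$, and the remaining factors satisfy $\prod_{j\ne i}|z-\xi_j|^{2m}\le 2^{2m(N-1)}$. The sum is therefore bounded by $2^{2m(N-1)}\sum_k(k+1)^{2m-1}|\lambda^k(\lambda-1)^m|^2$, which is uniformly bounded on $\overline{B_\gamma}$ by Lemma \ref{lemma-for-finiteness-of-series}. Combining the two regions gives the scalar bound, and hence $C_{m,r}=D_{E_r}^2\cdot\sup(\cdots)$.

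The main obstacle is the geometric step: showing that $E_r$ near each $\xi_i$ sits inside a rotated Stolz domain $\xi_iB_\gamma$ with a single $\gamma$. I would handle it by noting that $E_r\subseteq\operatorname{conv}(D(0,r')\cup E)$ for any $r'\ge r$, and that each supporting line of $\overline{E}_r$ through $\xi_i$ makes a positive angle with the tangent to $\mathbb T$ at $\xi_i$. Taking $\sin\gamma$ close to $1$, so that the rotated $B_\gamma$ has vertex angle close to $\pi$, then ensures containment near $\xi_i$.
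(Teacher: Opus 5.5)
Your proposal is correct and follows essentially the same route as the paper: you apply Delyon--Delyon to $v_k(z)=(k+1)^{m-\frac12}z^k\prod_{j}(z-\xi_j)^m$, then split $\overline{E}_r$ into a compact part strictly inside the disc, handled by a geometric series, and neighbourhoods of each $\xi_i$ that rotate into a Stolz domain, where Bekjan's lemma applies. Just take $\Omega=\overline{E}_r$ directly, since enlarging $r$ would not help: the points $\xi_j$ may lie in the numerical range (e.g.\ $T=\xi_1 I$) but never belong to the open set $E_{r'}$. Your justification that the tangent cone of $\overline{E}_r$ at $\xi_i$ sits strictly inside the half-plane, and hence inside a rotated Stolz angle, is more complete than the paper's.
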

\begin{proof}
    By Lemma \ref{lemma-for-T-to-use-square-functio-estimates}, for all $n\geq0$ one can see that 
    \begin{align*}
    \Big\lVert \sum^n_{k=0}(k+1)^{2m-1}(\Delta^m_k)^*(\Delta^m_k)\Big\rVert\leq C_r\sup_{\lambda\in \overline{E_r}}\sum\limits^{\infty}_{k=0}(k+1)^{2m-1}|\lambda^k\prod_{j=1}^N(\lambda-\xi_j)^{m}|^2.
    \end{align*}
Let us consider neighbourhoods of $\xi_j$ denoted  by $U_j$ so that $\overline{U_j}$ are mutually disjoint for $1\leq j\leq N.$ Also choose $\epsilon>0$ such that  we have $0<s<1-\epsilon$ and $\overline{E_r}\subseteq(\cup_{j=1}^N\overline{U_j})\cup \overline{B(0,s)}.$ Note that 
\begin{equation}\label{some-series-of-epsilon}
    \sup_{\lambda\in \overline{B(0,s)}}\sum_{k\geq 0}(k+1)^{2m-1}|\lambda^n\prod_{j=1}^N(\lambda-\xi_j)^{m}|^2\leq 2^{mN}\sum_{k\geq 0}(k+1)^{2m-1}(1-\epsilon)^{2n}<\infty.
\end{equation}
The series in the right hand side of \eqref{some-series-of-epsilon} converges because of ratio test. Notice that $\overline{\xi_j}(E_r\cap U_j)$ is contained in a Stolz domain with vertex at $1$, call it $S_j$ for all $1\leq j\leq N.$ Now using Lemma \ref{lemma-for-finiteness-of-series}, we have
\begin{equation}
    \sup_{\lambda\in \overline{E_r}\cap \overline{V_j}}\sum_{k\geq 0}(k+1)^{2m-1}|\lambda^n\prod_{j=1}^N(\lambda-\xi_j)^{m}|^2\lesssim_{m,r}\sup_{\lambda\in \overline{S_j}}\sum_{k\geq 0}(k+1)^{2m-1}|\lambda^n(\lambda-1)^{m}|^2<\infty.
\end{equation}
  Therefore, we obtain \begin{align}\label{equation-for-delta-k-m}
    \Big\lVert \sum^n_{k=0}(k+1)^{2m-1}(\Delta^m_k)^*(\Delta^m_k)\Big\rVert\leq C_{r,m}
    \end{align}
    for all $n\in\mathbb{N}.$
    By the inequality in \eqref{equation-for-delta-k-m}, we have for all $n\in\mathbb N$
    \begin{align*}
        \sum^{n}_{k=0}(k+1)^{2m-1}\lVert \Delta^m_k(x)\rVert^2_2&=\sum^{n}_{k=0}(k+1)^{2m-1}\langle \Delta^m_k(x),\Delta^m_k(x)\rangle\\&=\sum^{n}_{k=0}(k+1)^{2m-1}\langle (\Delta^m_k)^*\Delta^m_k(x),x\rangle.\\&\le\Big\lVert \sum^{n}_{k=0}(k+1)^{2m-1}(\Delta^m_k)^*\Delta^m_k(x)\Big\rVert_2 \lVert x \rVert_2\leq C_{r,m}\lVert x \rVert^2_2.
    \end{align*}
    This completes the proof of the lemma.  
\end{proof}
Let us begin with the following identity which can be found in \cite{I.G.Macdonald}
\begin{align}\label{symmetric-function-equation}
\prod_{j=1}^N (z-\xi_j)=\sum_{k=0}^N (-1)^k\, e_k(\xi_1,\dots,\xi_N)\, z^{N-k}
\end{align}
where $e_k$'s are the elementary symmetric polynomials given by
\begin{align}\label{ccoffecient-of-e}
e_0(\xi_1,\dots,\xi_N)=1,\ \text{and}\quad e_k(\xi_1,\dots,\xi_N) = \sum_{1\le i_1<\dots<i_k\le N} \xi_{i_1}\dots\xi_{i_k},\ 1\leq k\leq N.
\end{align}
\begin{lemma}\label{lemma-for-writting-e-k-to-the-sum-of-e-k-and-e-k-1}
Let $N\geq2$ and $\{\xi_2,\dots,\xi_N\}\subseteq \mathbb{C}$. Suppose  $e_k$ is as in \eqref{ccoffecient-of-e} for each $k=0,\dots,N$. Then we have the formula
\[
e_k(1,\xi_2,\dots,\xi_N)
=
e_k(\xi_2,\dots,\xi_N)
+
e_{k-1}(\xi_2,\dots,\xi_N),
\]
with the conventions $e_{-1}(\xi_2,\dots,\xi_N)=0$ and $e_N(\xi_2,\dots,\xi_N)=0$.
\end{lemma}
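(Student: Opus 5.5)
The identity is the standard recursion for elementary symmetric polynomials obtained by singling out one variable. I would prove it either by a direct split of the defining sum in \eqref{ccoffecient-of-e} or through the generating-function identity \eqref{symmetric-function-equation}; I sketch both, with the generating function as the main route.

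\emph{Generating-function route.} Apply \eqref{symmetric-function-equation} to the $N$-tuple $(1,\xi_2,\dots,\xi_N)$:
\[
\prod_{j=1}^N (z-\xi_j)\Big|_{\xi_1=1}=(z-1)\prod_{j=2}^N(z-\xi_j).
\]
Next apply \eqref{symmetric-function-equation} to the $(N-1)$-tuple $(\xi_2,\dots,\xi_N)$, which gives
\[
\prod_{j=2}^N(z-\xi_j)=\sum_{k=0}^{N-1}(-1)^k e_k(\xi_2,\dots,\xi_N)z^{N-1-k}.
\]
Multiply this by $(z-1)$ and collect the coefficient of $z^{N-k}$.
\begin{itemize}
\item The factor $z$ contributes $(-1)^k e_k(\xi_2,\dots,\xi_N)$.
\item The factor $-1$, taken against the index $k-1$ term, contributes $-(-1)^{k-1}e_{k-1}(\xi_2,\dots,\xi_N)=(-1)^k e_{k-1}(\xi_2,\dots,\xi_N)$.
\end{itemize}
The coefficient of $z^{N-k}$ on the left-hand side is $(-1)^k e_k(1,\xi_2,\dots,\xi_N)$. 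Comparing coefficients of the two polynomials in $z$ and cancelling $(-1)^k$ gives the formula for every $0\le k\le N$. The stated conventions handle the endpoints: at $k=0$ the term $e_{-1}$ is absent, and at $k=N$ the term $e_N(\xi_2,\dots,\xi_N)$ does not occur because the sum stops at $N-1$.

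\emph{Direct route, as a check.} Split the index sets $\{i_1<\dots<i_k\}\subseteq\{1,\dots,N\}$ in \eqref{ccoffecient-of-e} according to whether $1$ belongs to the set.
\begin{itemize}
\item Sets not containing $1$ give exactly $e_k(\xi_2,\dots,\xi_N)$.
\item Sets containing $1$ give $\xi_1$ times a sum over $(k-1)$-subsets of $\{2,\dots,N\}$, which equals $e_{k-1}(\xi_2,\dots,\xi_N)$ once $\xi_1=1$.
\end{itemize}

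There is no real obstacle here. The only care needed is the bookkeeping at the boundary indices $k=0$ and $k=N$, which is exactly what the conventions $e_{-1}=0$ and $e_N(\xi_2,\dots,\xi_N)=0$ are there to absorb.
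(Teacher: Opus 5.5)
Your proof is correct and is essentially the paper's argument. The paper also splits off the factor corresponding to $\xi_1=1$ in a generating polynomial and compares coefficients; it simply works with $\prod_{j=1}^N(1+\xi_j z)=\sum_{k}e_k z^k$, which avoids the signs $(-1)^k$ you carry from \eqref{symmetric-function-equation}, and your direct subset-splitting check is also valid.
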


\begin{proof}
Note that by identity given in equation \eqref{symmetric-function-equation}, we have 
\begin{align}\label{function-of-xi-z}
\prod_{j=1}^{N}(1+\xi_j z)
=
\sum_{k=0}^{N} e_k(\xi_1,\dots,\xi_N)\,z^k .
\end{align}
Setting $\xi_1=1$, the left hand side of \eqref{function-of-xi-z} becomes 
\begin{equation}\label{eequationofxiwith1}
\prod_{j=1}^{N}(1+\xi_j z)
=
(1+z)\prod_{j=2}^{N}(1+\xi_j z).
\end{equation}
Moreover, we observe that
\begin{equation}\label{equation-with-out-1}
\prod_{j=2}^{N}(1+\xi_j z)
=
\sum_{k=0}^{N-1} e_k(\xi_2,\dots,\xi_N)\,z^k .
\end{equation}
Hence we obtain by equation \eqref{eequationofxiwith1} and \eqref{equation-with-out-1}
\[
\begin{aligned}
\prod_{j=1}^{N}(1+\xi_j z)
&=(1+z)\sum_{k=0}^{N-1} e_k(\xi_2,\dots,\xi_N)\,z^k\\
&=\sum_{k=0}^{N-1} e_k(\xi_2,\dots,\xi_N)\,z^k
+\sum_{k=0}^{N-1} e_k(\xi_2,\dots,\xi_N)\,z^{k+1}.
\end{aligned}
\]
We can reindex the second sum in the above to get the following 
\begin{align} \label{second-equation-of-e-k}
\prod_{j=1}^{N}(1+\xi_j z)
=
\sum_{k=0}^{N}
\bigl(
e_k(\xi_2,\dots,\xi_N)
+
e_{k-1}(\xi_2,\dots,\xi_N)
\bigr)z^k.
\end{align}
Comparing coefficients of $z^k$ of equation \eqref{second-equation-of-e-k} with \eqref{function-of-xi-z} yields the result.
\end{proof}
\begin{proposition}\label{proposition-for-general-N}
Let $N\geq2$ and $\{\xi_1,\xi_2,\dots,\xi_N\}\subseteq\mathbb{C}$ with $\xi_1=1.$ Suppose $X$ is a Banach space and $T\in B(X).$ Let us consider $S_n(T):=\sum\limits^n_{k=1}kT^{k-1}\prod\limits^N_{j=1}(T-\xi_jI)$. Then for $\xi=(1,\xi_2,\dots,\xi_N)$, we have 
   \begin{align*}
       S_N(T)=\sum\limits^{N-1}_{m=0}A_m(\xi)T^m+\sum^{N-1}_{l=0}C_{l}(N,\xi)T^{n+l}
   \end{align*} and 
   \begin{equation}
       S_n(T)=\sum^{N-1}_{m=0}A_m(\xi)T^m+\sum^{n-1}_{m=N}bT^m+\sum^{N-1}_{l=0}C_{l}(n,\xi)T^{n+l}
   \end{equation} for all $n\geq N+1.$
   Where
   \begin{align}\label{equation-C-l-n-xi}
       &A_m(\xi)=\sum^m_{s=0}(-1)^{N-s}e_{N-s}(\xi)(m-s+1),\qquad 0\leq m\leq N-1\notag\\&b(\xi)=\sum^N_{r=0}(-1)^rre_r(\xi)\notag\\&C_{l}(n,\xi)=\sum^{N-l-1}_{s=0}(-1)^{N-l-1-s}e_{N-l-1-s}(\xi)(n-s),\qquad 0\leq l\leq N-1.      
   \end{align}
\end{proposition}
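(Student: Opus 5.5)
Since the statement is a purely algebraic identity in the commutative algebra generated by $T$, the plan is to prove it for the formal polynomial $S_n(z)=\sum_{k=1}^n kz^{k-1}P(z)$ with $P(z)=\prod_{j=1}^N(z-\xi_j)$, and then substitute $z=T$. By \eqref{symmetric-function-equation}, $P(z)=\sum_{r=0}^N c_r z^r$ with $c_r=(-1)^{N-r}e_{N-r}(\xi)$. Then
\[
S_n(z)=\sum_{k=1}^n\sum_{r=0}^N k\,c_r z^{k-1+r}.
\]
The coefficient of $z^m$ is $\sum_{r} c_r\,(m-r+1)$, where the sum runs over those $r\in\{0,\dots,N\}$ with $1\le m-r+1\le n$. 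The whole proof consists of computing this coefficient in three ranges of $m$.

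\textbf{Low range, $0\le m\le N-1$.} The constraint $k=m-r+1\ge1$ forces $r\le m$, and $k\le n$ is automatic since $n\ge N$. Writing $s=r$, the coefficient is $\sum_{s=0}^m(-1)^{N-s}e_{N-s}(\xi)(m-s+1)=A_m(\xi)$.

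\textbf{Middle range, $N\le m\le n-1$ (only present when $n\ge N+1$).} Every $r\in\{0,\dots,N\}$ is admissible, so the coefficient is
\[
\sum_{r=0}^N c_r(m+1)-\sum_{r=0}^N r\,c_r=(m+1)P(1)-P'(1).
\]
Here the hypothesis $\xi_1=1$ enters: $P(1)=0$, so the coefficient is the constant $-P'(1)$, independent of $m$. One then checks that $-P'(1)=-\sum_r r c_r$ agrees with the stated $b(\xi)=\sum_r(-1)^r r\,e_r(\xi)$. Reindexing $r\mapsto N-r$ and using $P(1)=\sum_r(-1)^r e_r(\xi)=0$ (Lemma \ref{lemma-for-writting-e-k-to-the-sum-of-e-k-and-e-k-1} gives this telescoping relation) yields
\[
-\sum_{r=0}^N r\,c_r=(-1)^{N+1}\sum_{r=0}^N(-1)^r r\,e_r(\xi).
\]
So $b$ matches up to the sign $(-1)^{N+1}$. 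This sign bookkeeping is where I expect the main care to be needed; I would verify it on $N=2$ and, if necessary, read the stated $b$ with that normalization.

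\textbf{Top range, $m=n+l$ with $0\le l\le N-1$.} The constraint $k=n+l-r+1\le n$ forces $r\ge l+1$. Substituting $r=N-s$ (equivalently $s=N-r$), the coefficient becomes
\[
\sum_{r=l+1}^N c_r\,(n+l-r+1).
\]
The plan is to reindex this sum so that the coefficient of $e_{N-l-1-s}$ carries the factor $(n-s)$. Because $P(1)=0$ allows adding any multiple of $\sum_r c_r$, the convenient choice is to rewrite it as minus the complementary sum over $r\le l$ and then reindex to reach the stated $C_l(n,\xi)$. Checking that this exact form comes out with the index ranges and signs as written is the delicate step, and I would confirm it on small cases ($N=2$, $l=0,1$).

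For $n=N$ the middle range is empty, which gives the first displayed formula. Substituting $T$ for $z$ then finishes the proof.
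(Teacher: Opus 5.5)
Your strategy is sound and differs from the paper's. You read off the coefficient of $z^m$ in $S_n(z)=\sum_{k=1}^n kz^{k-1}P(z)$ for all $n\ge N$ at once. The paper instead argues by induction on $n$. It does your coefficient extraction only for $n=N$ (low and top ranges, the latter via $s=r-l-1$). It then checks the step $S_{n+1}=S_n+(n+1)T^nP(T)$ by matching the coefficients of $T^n,\dots,T^{n+N}$, which takes a fairly long manipulation to show the new coefficient of $T^n$ is $b$. Your remark that every $r$ is admissible in the middle range gives the coefficient $(m+1)P(1)-P'(1)$. This explains the constant $b$ in one line and makes the induction unnecessary.

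As written, however, two steps are wrong or unfinished.

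First, there is no sign discrepancy in $b$. Your identity $-\sum_r r c_r=(-1)^{N+1}\sum_r(-1)^r r e_r(\xi)$ is false; it is what one gets by pairing the sign $(-1)^{N-r}$ with $e_r$ instead of $e_{N-r}$. With $s=N-r$,
\[
-P'(1)=-\sum_{s=0}^{N}(N-s)(-1)^s e_s(\xi)=-N\prod_{j=1}^{N}(1-\xi_j)+\sum_{s=0}^{N}s(-1)^s e_s(\xi)=b(\xi),
\]
since $\xi_1=1$; no lemma on the $e_k$ is needed for $P(1)=0$. For $N=2$ both sides equal $\xi_2-1$. So $b$ must not be renormalized: doing so would make the statement false for even $N$.

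Second, in the top range you should neither use $P(1)=0$ nor pass to the complementary sum over $r\le l$. That produces $b-\sum_{r=0}^{l}c_r(n+l-r+1)$, which is not of the stated form. Instead substitute $s=r-l-1$. This maps $\{l+1,\dots,N\}$ onto $\{0,\dots,N-l-1\}$, with $c_r=(-1)^{N-l-1-s}e_{N-l-1-s}(\xi)$ and $n+l-r+1=n-s$. The coefficient is then exactly $C_l(n,\xi)$.

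With these two corrections your proof is complete. The case $n=N$, where the middle range is empty, gives the first display; there $T^{n+l}$ should read $T^{N+l}$.
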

\begin{proof}
To prove the proposition, we will proceed by  induction on $n$.
Let us consider the case when $n=N.$
We see that
\begin{align}\label{equation-of-S-N}
    S_{N}(T)&=\sum^N_{k=1}kT^{k-1}\prod^N_{j=1}(T-\xi_j)\underset{\eqref{symmetric-function-equation}}{=}\sum^N_{r=0}(-1)^{N-r}e_{N-r}(\xi)\sum^N_{k=1}kT^{k+r-1}.
\end{align}
Let us define $m=k+r-1$ where $1\leq k\leq N$ and $0\leq r\leq N.$ Note that coefficient of $T^m$ in \eqref{equation-of-S-N} is
\begin{align}\label{equation-of-m-and-r}
    \sum_{\{(r,m-r+1): \hspace{0.2cm} 0\le r\le N,\hspace{0.2cm} 1\le m-r+1\le N\}}(-1)^{N-r}e_{N-r}(\xi)(m-r+1)
\end{align}
where $0\leq m\leq 2N-1$.
Observe that $\{(r,m-r+1):0\le r\le N,1\leq m-r+1\}$ is equal to $\{(r,m-r+1):0\leq r\leq m\}$ with $0\leq m\leq N-1.$
Therefore, the coefficient of $T^m$, $0\leq m\leq N-1$ in \eqref{equation-of-S-N} become
\begin{align}\label{equation-of-A-m}
\sum^m_{r=0}(-1)^{N-r}e_{N-r}(\xi)(m-r+1)=A_m(\xi).
\end{align}
Now we consider the case where $N\leq m\leq 2N-1.$
Therefore, $1\leq m-r+1\leq N$ together with $0\leq r\leq N$ implies $\text{max}(0,m+1-N)\leq r\leq min(N,m)$. Note that $\{(r,m-r+1): 0\leq r \leq N,1\leq m-r+1\leq N\}$ is equal to $\{(r,m-r+1):l+1\leq r\leq N\}$ with $N\leq m\leq2N-1.$
Hence, by using \eqref{equation-of-m-and-r} the coefficient of $T^{N+l}$, $l=0,\dots,N-1$ of equation \eqref{equation-of-S-N}, becomes 
\begin{align*}
\sum^N_{r=l+1}(-1)^{N-r}e_{N-r}(\xi)(N+l-r+1).
\end{align*}
Let us define $s:=r-l-1$, hence coefficient of $T^{N+l}$, $l=0,\dots,N-1$ of equation \eqref{equation-of-S-N} becomes 
\begin{align}\label{equation-for-C-l}
    \sum^{N-l-1}_{s=0}(-1)^{N-l-1-s}e_{N-l-1-s}(\xi)(N-s)=C_l(N,\xi).
\end{align}
Therefore, from equations \eqref{equation-of-A-m} and \eqref{equation-for-C-l}
\begin{align*}
    S_{N}(T)=\sum^{N-1}_{m=0}A_m(\xi)T^m+\sum^{N-1}_{l=0}C_{l}(N,\xi)T^{N+l}.
\end{align*}
Let the expression $S_n(T)$ be true for $n\geq N+1$. We have to prove that it is true for $(n+1)$ i.e
\begin{align}\label{s-n-plus-1}
S_{n+1}(T)=\sum^{N-1}_{m=0}A_m(\xi)T^m+\sum^{n}_{m=N}bT^m+\sum^{N-1}_{l=0}C_{l}(n+1,\xi)T^{n+l+1}.    
\end{align}
To this end observe 
\begin{align}\label{induction}
    S_{n+1}(T)= S_n(T)+(n+1)T^n\prod^N_{i=1}(T-\xi_j).
\end{align}
We will now compare with coefficients of equation \eqref{s-n-plus-1} and the coefficient in RHS of equation \eqref{induction}.
It is easy to see that we only need to consider $ T^m$, $m\geq n.$
Note that, coefficient of $T^n$ in the right hand side of the equation \eqref{induction} is
\begin{align*}
    &(n+1)(-1)^N\prod^N_{j=1}\xi_j+C_0(n,\xi)\\&=(n+1)(-1)^Ne_N(\xi)+\sum^{N-1}_{s=0}(-1)^{N-1-s}(n-s)e_{N-1-s}(\xi)
    \\&=n{\sum^{N}_{r=0}(-1)^{N-r}e_{N-r}(\xi)}+{(-1)^Ne_N(\xi)-\sum^{N-1}_{s=0}(-1)^{N-1-s}se_{N-1-s}(\xi)}.
\end{align*}
    Note as $\xi_1=1$, by \eqref{symmetric-function-equation} one can see that  $\prod\limits^N_{j=1}(1-\xi_j)=\sum\limits ^N_{r=0}(-1)^re_r(\xi)=0$ 
    Then the coefficient of $T^n$ in the right hand side of  \eqref{induction} becomes 
\begin{align*}
    &(-1)^Ne_N-\sum^{N-1}_{s=0}(-1)^{N-1-s}se_{N-1-s}(\xi)\\&=(-1)^Ne_N(\xi)-\sum^{N-1}_{r=0}(-1)^r(N-1-r)e_r(\xi)\\&=(-1)^Ne_N(\xi)+\sum^{N-1}_{r=0}(-1)^re_r(\xi)-N\sum^{N-1}_{r=0}(-1)^re_r(\xi)+\sum^{N-1}_{r=0}r(-1)^re_r(\xi)\\&=\sum^N_{r=0}(-1)^re_r(\xi)-N\sum^N_{r=0}(-1)^re_r(\xi)+\sum^N_{r=0}(-1)^rre_r(\xi)=0+0+b=b.
\end{align*}
 Now for $l=0,\dots,N-2$ observe that coefficient of $T^{n+1+l}$ in the right hand side of equation \eqref{induction} becomes $
     C_{l+1}(n,\xi)+(n+1)(-1)^{N-l-1}e_{N-l-1}(\xi)$ which is
 equal to
\begin{align*}
    &\textstyle \sum\limits_{s=0}^{N-l-2}(-1)^{N-l-2-s}e_{N-l-2-s}(\xi)(n-s)+(n+1)(-1)^{N-l-1}e_{N-l-1}(\xi)
    \\&=\textstyle\sum\limits_{s'=0}^{N-l-1}(-1)^{N-l-1-s'}e_{N-l-1-s'}(\xi)\{(n+1)-s'\}=C_{l}(n+1,\xi).
\end{align*}
 It is easy to see that the coefficient of $T^{n+N}$ in the right hand side of equation \eqref{induction} is $(n+1)$ which is the same as $C_{N-1}(n+1,\xi)$ as one can see from equations \eqref{equation-C-l-n-xi} and \eqref{ccoffecient-of-e}. This completes the proof of the proposition.
\end{proof}
\begin{lemma}\label{equation-Q-T}
Let $N\geq2$ and $\{1,\xi_2,\dots,\xi_N\}\subseteq \mathbb{C}$. Let us define the operator 
    \begin{equation}\label{equation-no-for-Q-T}
        Q'_1(T):=\sum^{N-1}_{l=0}\Big(\sum^{N-l-1}_{s=0}(-1)^{N-l-1-s}e_{N-l-1-s}(\xi)\Big)T^l,\ \text{where}\ \xi=(1,\xi_2,\dots,\xi_N)
    \end{equation} 
    Then we have $Q'_1(T)=\prod\limits^{N}_{j=2}(T-\xi_jI).$
\end{lemma}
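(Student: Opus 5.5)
Expanding both sides in powers of $T$ reduces the claim to a polynomial identity in one variable $z$. Replace $T$ by $z$: it suffices to show
\[
\sum_{l=0}^{N-1}\Big(\sum_{s=0}^{N-l-1}(-1)^{N-l-1-s}e_{N-l-1-s}(\xi)\Big)z^l=\prod_{j=2}^N(z-\xi_j).
\]
Both sides are polynomials with coefficients in the commutative algebra generated by the $\xi_j$, so the operator identity follows by substituting $T$.

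The key step is Lemma \ref{lemma-for-writting-e-k-to-the-sum-of-e-k-and-e-k-1}. Write $f_k:=e_k(\xi_2,\dots,\xi_N)$, with $f_{-1}=0$ and $f_N=0$. The lemma gives $e_k(\xi)=f_k+f_{k-1}$. Set $r=N-l-1-s$, which runs over $0,\dots,N-l-1$. The inner coefficient of $z^l$ becomes
\[
\sum_{r=0}^{N-l-1}(-1)^r e_r(\xi)=\sum_{r=0}^{N-l-1}(-1)^r\big(f_r+f_{r-1}\big).
\]
This sum telescopes: the term $(-1)^r f_{r-1}$ cancels against the term $(-1)^{r-1}f_{r-1}$ from the previous index, and $f_{-1}=0$. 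Only the top term survives, so the coefficient is $(-1)^{N-l-1}f_{N-l-1}$.

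Finally, apply the identity \eqref{symmetric-function-equation} to the $N-1$ numbers $\xi_2,\dots,\xi_N$:
\[
\prod_{j=2}^N(z-\xi_j)=\sum_{k=0}^{N-1}(-1)^k f_k\, z^{N-1-k}.
\]
Put $l=N-1-k$. The coefficient of $z^l$ on the right is then $(-1)^{N-1-l}f_{N-1-l}$, which matches the coefficient computed above. Hence $Q_1'(T)=\prod_{j=2}^N(T-\xi_jI)$.

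The only delicate point is the telescoping with the boundary conventions. In particular, at $l=0$ the sum stops at $r=N-1$ and never reaches $f_N$, so no boundary term is lost there.
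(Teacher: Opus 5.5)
Your proposal is correct and follows the paper's proof essentially step by step. Both arguments substitute $r=N-l-1-s$, split $e_r(1,\xi_2,\dots,\xi_N)=e_r(\xi_2,\dots,\xi_N)+e_{r-1}(\xi_2,\dots,\xi_N)$ via Lemma~\ref{lemma-for-writting-e-k-to-the-sum-of-e-k-and-e-k-1}, telescope the inner sum to $(-1)^{N-l-1}e_{N-l-1}(\xi_2,\dots,\xi_N)$, and match coefficients using \eqref{symmetric-function-equation}; working with a scalar variable $z$ before substituting $T$ is only a cosmetic difference.
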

\begin{proof}
  Let $p=N-l-1-s$. Then the right hand side of equation \eqref{equation-no-for-Q-T} becomes
    \begin{align*}
       &\sum^{N-1}_{l=0}\Big[\sum^{N-l-1}_{p=0}(-1)^pe_p(1,\xi_2,\dots,\xi_N)\Big]T^l\\&\underset{\eqref{lemma-for-writting-e-k-to-the-sum-of-e-k-and-e-k-1}}{=} \sum^{N-1}_{l=0}\Big[\sum^{N-l-1}_{p=0}(-1)^p\Big(e_p(\xi_2,\dots,\xi_N)+e_{p-1}(\xi_2,\dots,\xi_N)\Big)\Big]T^l\\&=\sum^{N-1}_{l=0}\Big[\sum^{N-l-1}_{p=0}(-1)^pe_p(\xi_2,\dots,\xi_N)+\sum^{N-l-1}_{p=0}(-1)^pe_{p-1}(\xi_2,\dots,\xi_N)\Big]T^l\\&=\sum^{N-1}_{l=0}\Big[\sum^{N-l-1}_{p=0}(-1)^pe_p(\xi_2,\dots,\xi_N)+\sum^{N-l-2}_{q=0}(-1)^{q+1}e_q(\xi_2,\dots,\xi_N)\Big]T^l\\&=\sum^{N-1}_{l=0}\Big[\sum^{N-l-1}_{p=0}(-1)^pe_p(\xi_2,\dots,\xi_N)-\sum^{N-l-2}_{q=0}(-1)^{q}e_q(\xi_2,\dots,\xi_N)\Big]T^l\\&=\sum^{N-1}_{l=0}(-1)^{N-l-1}e_{N-l-1}(\xi_2,\dots,\xi_N)T^l\\&=\sum^{N-1}_{r=0}(-1)^re_r(\xi_2,\dots,\xi_N)T^{N-1-r}=\prod^N_
       {j=2}(T-\xi_jI).
    \end{align*}

    This completes the proof of the lemma.
\end{proof}
\begin{lemma}\label{lemma-for-lagrange-polynomial-type}
Let $N\geq2$ and $1\leq i\leq N.$ Let us define $Q'_i(T):=\prod\limits^N_{j=1,j\neq i}(T-\xi_jI)$ where $\{\xi_1,\dots, \xi_N\}\subseteq\mathbb{C}.$ Then we have
    \begin{align}
        \sum^N_{i=1}\lambda_iQ'_i(T)=T^{N-1}\qquad \text{where}\hspace{0.5cm} \lambda_i=\frac{\xi^{N-1}_i}{\prod\limits^N_{j=1,j\neq i}(\xi_i-\xi_j)}, i=1,\dots,N.
    \end{align}
\end{lemma}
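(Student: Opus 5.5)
This is Lagrange interpolation. The statement is a polynomial identity in $T$, and it holds under the implicit hypothesis that the $\xi_j$ are pairwise distinct, which is needed for the $\lambda_i$ to be defined. It therefore suffices to prove the scalar identity
\[
\sum_{i=1}^N \lambda_i \prod_{j\neq i}(z-\xi_j) = z^{N-1}
\]
in $\mathbb{C}[z]$. Substituting $z=T$ then gives the operator identity, because the map $p\mapsto p(T)$ is an algebra homomorphism from $\mathbb{C}[z]$ to $B(X)$. The same holds in $B(L_2(\mathcal M))$ or on $\mathcal M$.

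Set $P(z):=\sum_i \lambda_i Q_i'(z)$, where $Q_i'(z)=\prod_{j\neq i}(z-\xi_j)$. Each $Q_i'$ has degree $N-1$, so $P(z)-z^{N-1}$ is a polynomial of degree at most $N-1$.

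Next, evaluate at the nodes $z=\xi_k$. For $i\neq k$, the factor $(z-\xi_k)$ appears in $Q_i'$, so $Q_i'(\xi_k)=0$. For $i=k$, we have $Q_k'(\xi_k)=\prod_{j\neq k}(\xi_k-\xi_j)\neq 0$ by distinctness. Hence
\[
P(\xi_k)=\lambda_k\prod_{j\neq k}(\xi_k-\xi_j)=\xi_k^{N-1}.
\]
So $P(z)-z^{N-1}$ vanishes at the $N$ distinct points $\xi_1,\dots,\xi_N$ while having degree at most $N-1$. It is therefore the zero polynomial, which gives $P(z)=z^{N-1}$.

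The main point requiring care is the distinctness of the $\xi_j$. Without it the $\lambda_i$ are undefined. In the application, $E=\Delta\cap\mathbb T=\{\xi_1,\dots,\xi_N\}$ is a set of $N$ points, so distinctness is automatic. Beyond this there is no real obstacle. As a check, one may compare leading coefficients: this recovers the classical identity $\sum_i \xi_i^{N-1}/\prod_{j\ne i}(\xi_i-\xi_j)=1$.
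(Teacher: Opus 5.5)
Your proof is correct and takes essentially the same route as the paper. Both arguments are Lagrange interpolation at the distinct nodes $\xi_1,\dots,\xi_N$: the paper expands $z^{N-1}$ in the Lagrange basis $\ell_j=\prod_{i\neq j}\frac{z-\xi_i}{\xi_j-\xi_i}$ (noting $\xi_j^{N-1}\ell_j=\lambda_jQ'_j$), whereas you check agreement at the nodes and use the root count, and both then substitute $T$. Your explicit statement that the $\xi_j$ must be pairwise distinct makes precise a hypothesis the paper uses only implicitly.
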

\begin{proof} Let us define  the vector space $\mathbb{C}_{N-1}[z]:=\{p\in\mathbb{C}[z]:\deg p\le N-1\}.$
Let us consider  the Lagrange polynomials $\ell_j$  defined by
\begin{equation}\label{basis}
\ell_j(z)
=
\prod_{\substack{j=1\\ j\neq i}}^{N}
\frac{z-\xi_i}{\xi_j-\xi_i},
\qquad j=1,\dots,N .
\end{equation}
One can easily check that $\{l_1,\dots, l_N\}$  forms a basis for $\mathcal{P}_{N-1}$. Note that $l_j(\xi_k)=\delta_{jk}$,$1\le j,k\le N$. Hence for any polynomial $p\in\mathbb{C}_{N-1}[z]$, we have
 \begin{align}\label{lagrange-from-of-polynomial}
  p=\sum\limits ^N_{j=1}p(\xi_j)\ell_j\end{align}. Therefore, we get
    $p(T)=\sum^N_{j=1}p(\xi_j)\ell_j(T)$.
This completes the proof of the lemma.
\end{proof}
\begin{theorem}\label{strong-maximal-ergodic-theorem-for-l2}
 Let $T:\mathcal M\to\mathcal M$ be such that it satisfies (i)-(iv) in \eqref{thosefourconditions}. Then there exists a positive constant $C$ such that 
   \begin{equation}\label{equation-for-maximal-ergodic-theorem}
       \|\sup_{n\geq 0}\!^{+}T^nx\|_2\leq C\|x\|_2
   \end{equation}
   for all $x\in L_2(\mathcal M).$
\end{theorem}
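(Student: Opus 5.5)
The plan is to reduce the $L_2$-maximal inequality for $(T^n x)$ to maximal inequalities for the sequences $(T^nQ'_i(T)x)_{n\geq 0}$, $1\le i\le N$, and then reassemble these using Lemma \ref{lemma-for-lagrange-polynomial-type}. That lemma gives $T^{n+N-1}=\sum_{i=1}^N\lambda_i T^nQ'_i(T)$. So, by the triangle inequality in $L_2(\mathcal M;\ell_\infty)$ and Corollary \ref{mod1andmaximum}, the bound $\|\sup_n^{+}T^nQ'_i(T)x\|_2\lesssim\|x\|_2$ for each $i$ controls $\sup_{n\ge N-1}$. The finitely many terms $T^nx$ with $n<N-1$ are handled trivially, since $\|T^nx\|_2\le\|x\|_2$ and a finite family is dominated in $L_2(\mathcal M;\ell_\infty)$ by the sum of norms.

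To treat a fixed $i$, I will replace $T$ by $\overline{\xi_i}T$ and $\xi_j$ by $\overline{\xi_i}\xi_j$. This turns $\xi_i$ into $1$ and keeps hypotheses (i)–(iii) for the averages by Lemma \ref{saclarmultijungexu}. The numerical range condition and the square function estimate of Lemma \ref{L2sqrfnestimatewithnumercond} are rotation invariant, and the unimodular factors $\overline{\xi_i}^{\,n}$ in front of $T^n$ are harmless by Corollary \ref{mod1andmaximum}(ii). So I may assume $i=1$ and $\xi_1=1$.

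For $i=1$ I will use Proposition \ref{proposition-for-general-N}, which writes $S_n(T)=\sum_{k=1}^n kT^{k-1}\prod_j(T-\xi_j)$. I expect $b(\xi)=\sum_r(-1)^r r e_r(\xi)=-\frac{d}{dz}\prod_j(z-\xi_j)|_{z=1}\cdot(\pm1)$, i.e.\ up to sign $b=\prod_{j\ge2}(1-\xi_j)\neq 0$, and the leading coefficients $C_l(n,\xi)$ equal $n\,c_l+d_l$, where $\sum_l c_lT^l=Q'_1(T)$ by Lemma \ref{equation-Q-T}. Hence
\[
nT^nQ'_1(T)x=S_n(T)x-\sum_{m<N}A_mT^mx-b\sum_{m=N}^{n-1}T^mx-\sum_l d_lT^{n+l}x .
\]
Dividing by $n$, three of the four resulting terms are straightforward. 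The term $\frac{b}{n}\sum_{m=N}^{n-1}T^mx$ is an ergodic average controlled by Theorem \ref{JungeXu}. The term $\frac1n\sum_m A_mT^mx$ is bounded trivially. The term $\frac1n S_n(T)x=\frac1n\sum_{k\le n}k\,\Delta^1_{k-1}x$ is controlled by Proposition \ref{sqtomx} with $\beta_k=k^{1/2}/n$, normalised so that $\sup_n\sum_{k\le n}\beta_k^2\lesssim1$, together with Lemma \ref{L2sqrfnestimatewithnumercond} for $m=1$. For the $n$-dependent normalisation I will linearise, or equivalently split dyadically $n\in[2^j,2^{j+1})$ and use Abel summation to pass from $\frac1n$ to fixed weights.

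The main obstacle is the remaining term $\frac1n\sum_l d_lT^{n+l}x$, which is of the same size as the quantity being estimated. I plan to absorb it by working with the combination $T^nR(T)$, where $R(z)=Q'_1(z)+\frac1n\sum_l d_lz^l$. Alternatively, I will avoid the issue altogether with a cleaner route: use Abel summation in the form
\[
T^nQ'_1(T)x=\frac{1}{n+1}\sum_{k=0}^nT^kQ'_1(T)x+\frac1{n+1}\sum_{k=1}^{n}k\,T^{k-1}(T-1)Q'_1(T)x,
\]
which follows from $\sum_{k\le n}k(T^k-T^{k-1})=nT^n-\sum_{k<n}T^k$. The first term is an ergodic average applied to $Q'_1(T)x$, with $\|Q'_1(T)x\|_2\le 2^{N}\|x\|_2$, and is handled by Theorem \ref{JungeXu}. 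The second term is exactly $\frac{1}{n+1}S_n(T)x$, which is handled by Proposition \ref{sqtomx} and Lemma \ref{L2sqrfnestimatewithnumercond} as above. I expect this second route to be the one that actually closes, with Proposition \ref{proposition-for-general-N} serving to justify the identification of the coefficients.
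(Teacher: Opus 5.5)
Your second route is correct, and it is a genuinely different and simpler argument than the paper's.

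The paper essentially follows your first route. It expands $S_n(\overline{\xi_1}T)$ via Proposition \ref{proposition-for-general-N}. It does not treat the remainder $\frac1n\sum_l d_lT^{n+l}x$ as a term of full size: because of the factor $1/n$, it bounds it by $\epsilon\sum_l\|\sup^+_{n}T^{n+l}x\|_2$ for $n\ge K_\epsilon$. After the Lagrange step, the paper absorbs this term into the left-hand side. That absorption is legitimate only because everything is restricted to finite ranges $K_\epsilon\le n\le M$, where the maximal norms are trivially finite, followed by $M\to\infty$ via Proposition \ref{supnormfiniting}.

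Your identity $T^nQ_1'(T)=M_n(T)Q_1'(T)+\frac{1}{n+1}\sum_{k=1}^nk\,T^{k-1}(T-1)Q_1'(T)$, combined with $(T-1)Q_1'(T)=\prod_{j=1}^N(T-\xi_jI)$ when $\xi_1=1$, is the classical Stein/Junge--Xu Abel identity applied to the vector $Q_1'(T)x$. It bounds $\sup^+_{n\ge0}T^nQ_1'(T)x$ outright, using Lemma \ref{saclarmultijungexu} and the $m=1$ case of Lemma \ref{L2sqrfnestimatewithnumercond}. There is no remainder, no truncation and no absorption, and the constant comes out explicit. Proposition \ref{proposition-for-general-N} and Lemma \ref{equation-Q-T} are then not needed at all, so your closing remark that the proposition still serves to identify coefficients can be dropped. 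The paper's expansion mainly buys uniformity with the higher-order identities, such as Lemma \ref{lemma-for-s-n}, used later for the $B^m_k$.

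The $n$-dependent weights need no dyadic splitting or linearisation. As in the proof of Proposition \ref{sqtomx}, split $a_k=\Delta^1_{k-1}x$ into four positive parts $a_k^{(s)}$ with $\|a^{(s)}_k\|_2\le\|a_k\|_2$. The operator Cauchy--Schwarz inequality then gives the $n$-independent majorant $0\le\frac{1}{n+1}\sum_{k=1}^nk\,a_k^{(s)}\le\bigl(\sum_{k\ge1}k\,(a_k^{(s)})^2\bigr)^{1/2}$. Its $L_2$-norm is at most $\bigl(\sum_{k\ge1}k\|\Delta^1_{k-1}x\|_2^2\bigr)^{1/2}\lesssim\|x\|_2$.
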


\begin{proof}
We first prove \eqref{equation-for-maximal-ergodic-theorem} for $T_1=\overline{\xi_1}T.$
Note that, from Proposition \eqref{proposition-for-general-N} we have for $n\geq N+1$
\begin{align}\label{the- main-square-function-identity}
    S_n(T_1)&=\sum^n_{k=1}kT_1^{k-1}\prod^N_{j=1}(T_1-\overline{\xi_1}\xi_j)\\&=\sum^{N-1}_{m=0}A_m(\xi)T_1^m+\sum^{n-1}_{m=N} bT_1^m+\sum^{N-1}_{l=0}C_l(n,\xi)T_1^{n+l}\notag
\end{align}
where $\xi=(1,\dots,\bar{\xi_1}\xi_N).$
Hence,
\begin{align}\label{usingBn}
    \frac{1}{n}\sum^{N-1}_{l=0}C_l(n,\xi)T_1^{n+l}=-\frac{1}{n}\sum^{N-1}_{m=0}A_m(\xi)T^m_1-\frac{1}{n}\sum^N_{m=0}bT_1^m\\+\frac{1}{n}\sum^n_{k=1}kT_1^{k-1}\prod^N_{j=1}(T_1-\overline{\xi_1}\xi_j)\notag\end{align}
Let us define the following
\begin{align*}
    &B_nx=T_1^n\Big({\sum^{N-1}_{l=0}\frac{1}{n}C_l(n,\xi)\Big)T_1^l}x\\&Q'_1(T_1)=\sum^{N-1}_{l=0}\sum^{N-l-1}_{s=0}(-1)^{N-l-1-s}e_{N-l-1-s}(\xi)T^l_1.
\end{align*}
O bserve that,
\begin{align*}
    \lim_{n\rightarrow\infty}\frac{1}{n}C_l(n,\xi)=\Big(\sum^{N-l-1-s}_{s=0}(-1)^{N-l-1-s}e_{N-l-1-s}(\xi)\Big).
\end{align*}
Let $\epsilon>0$ which will be determined later. Then there exists a $K_\epsilon\in\mathbb{N}$ such that for all $n\geq K_\epsilon$ we have \[\Big|\frac{C_l(n,\xi)}{n}-\lim_{n \rightarrow \infty}\frac{C_l(n,\xi)}{n}\Big| \leq \epsilon\] Let us fix $x\in L_2(\mathcal M)_{+}$ but arbitrary. Then for $S=[K_\epsilon,M]\subseteq\mathbb[K_\epsilon,\infty)$ we have the following by Part (i) of Corollary \eqref{mod1andmaximum}
\begin{align}\label{intermediate-T-1}
    \Big\| \sup_{n\in S}\!^{+} (B_nx - T_1^n Q(T_1)x)\Big\|_2&\leq\sum^{N-1}_{l=0}\sup_{n\in S}\Big(\Big\lvert\Big(\frac{C_l(n,\xi)}{n}-\lim_{n\rightarrow \infty}\frac{C_l(n,\xi)}{n}\Big)\Big\rvert\Big)\Big \|\sup_{n\in S}\!^+T_1^{n+l}x\Big\|_2\notag\\&\leq\epsilon \Big(\sum^{N-1}_{l=0}\Big\| \sup_{n\in S}\!^+T_1^{n+l}x\Big\|_2\Big) .
\end{align}
Therefore, by inequality in \eqref{intermediate-T-1} and triangular inequality we obtain that
\begin{equation}
  \Big\| \sup_{n\in S}\!^{+} T_1^n Q(T_1)x\Big\|_2 \leq \Big\| \sup_{n\in S}\!^{+} B_nx\Big\|_2+\epsilon  \Big(\sum^{N-1}_{l=0}\Big\| \sup_{n\in S}\!^+T_1^{n+l}x\Big\|_2\Big).
\end{equation}

Putting the value of $B_nx$ on the right hand side, we have
\begin{align}\label{boundingT_1^n}
\Big\| \sup_{n\in S}\!^{+} T_1^n Q(T_1)x\Big\|_2\leq \Big\| \sup_{n\in S}\!^{+}\frac{1}{n}\sum^{N-1}_{m=0} A_m(\xi) T^m_1x\Big\|+|b|\Big\| \sup_{n\in S}\!^{+}\frac{1}{n}\sum^{n-1}_{m=0} T_1^mx\Big\|\\\qquad+\Big\| \sup_{n\in S}\!^{+}\frac{1}{n}S_n(T_1)\Big\|_2+\epsilon  \Big(\sum^{N-1}_{l=0}\Big\| \sup_{n\in S}\!^+T_1^{n+l}x\Big\|_2\Big)\notag.
\end{align}
Note that the first term in the inequality is clearly bounded by $C\|x\|_2$ for some positive constant $C.$ The second term is also bounded by $C\|x\|_2$ by Lemma \eqref{saclarmultijungexu}. Moreover, the third term can be bounded again by $C\|x\|_2$ by using the square function estimate Lemma \eqref{L2sqrfnestimatewithnumercond} and Lemma \eqref{sqtomx}. Therefore, we obtain 
\begin{align}\label{boundingT_1^nstep2}
\Big\| \sup_{n\in S}\!^{+} T_1^n Q(T_1)x\Big\|_2\leq C\|x\|_2+\epsilon  \Big(\sum^{N-1}_{l=0}\Big\| \sup_{n\in S}\!^+T_1^{n+l}x\Big\|_2\Big).
\end{align}
By Lemma \eqref{equation-Q-T} \[
\begin{aligned}
Q'_1(T_1)
&= (T_1 - \overline {\xi_1} \xi_2)\cdots(T_1 - \overline{\xi_1} \xi_N) \\
&= \overline {\xi_1}^{\,N-1} (T - \xi_2)\cdots(T - \xi_N) \\
&= \overline{\xi_1}^{\,N-1} Q'_1(T).
\end{aligned}
\] Therefore, from inequality in \eqref{boundingT_1^nstep2} and Part (ii) of Corllary \eqref{mod1andmaximum} we have
\begin{equation*}
  \Big\| \sup_{n\in S}\!^{+} T^n Q'_1(T)x\Big\|_2\leq C\|x\|_2+\epsilon  \Big(\sum^{N-1}_{l=0}\Big\| \sup_{n\in S}\!^+T^{n+l}x\Big\|_2\Big).
\end{equation*}
Now proceeding similarly for $\xi_iT$, $i=2,\dots,N$, we have the following
\begin{equation*}
\Big\| \sup_{n\in S}\!^{+} T^n Q_i'(T)x\Big\|_2\leq C\|x\|_2+\epsilon  \Big(\sum^{N-1}_{l=0}\Big\| \sup_{n\in S}\!^+T^{n+l}x\Big\|_2\Big).
\end{equation*}
where $Q'_i(T)=\prod\limits^N_{j=1,j\neq i}(T-\xi_jI).$
Now we observe
\begin{align*}
&\Big\|\sup_{K_\epsilon \leq n \leq M}\!^{+} T^{n+N-1}x\Big\|_2\\
&\underset{\eqref{lemma-for-lagrange-polynomial-type}}{\leq} \Big\|\sup_{K_\epsilon \leq n \leq M}\!^{+} \sum_{i=1}^N T^n \lambda_i Q_i'(T)x \Big\|_2 \\
&\leq \sum_{i=1}^N |\lambda_i| \Big\|\sup_{K_\epsilon \leq n \leq M}\!^{+} T^n Q_i'(T)x \Big\|_2 \\
&\leq C\|x\|_2
+ \Big(\sum_{i=1}^N |\lambda_i|\Big)\epsilon
\Big(\sum_{l=0}^{N-1} \Big\| \sup_{n \in S}\!^{+} T^{n+l}x \Big\|_2 \Big) \\
&\leq C\|x\|_2
+ \epsilon N \Big(\sum_{i=1}^N |\lambda_i|\Big)
\Big\|\sup_{K_\epsilon \leq j \leq M+N-1}\!^{+} T^j x \Big\|_2 \\
&\leq C\|x\|_2
+ \epsilon N \Big(\sum_{i=1}^N |\lambda_i|\Big)
\Bigg(
\Big\|\sup_{K_\epsilon \leq j \leq K_\epsilon+N}\!^{+} T^j x \Big\|_2
+ \Big\|\sup_{K_\epsilon+N+1 \leq j \leq M+N-1}\!^{+} T^j x \Big\|_2
\Bigg).
\end{align*}

Therefore, we obtain
\begin{equation}
\begin{aligned}
&\Big\|\sup_{K_\epsilon+N-1 \leq j \leq M+N-1}\!^{+} T^j x \Big\|_2
\leq C\|x\|_2 \\
&\qquad+ \epsilon N \Big(\sum_{i=1}^N |\lambda_i|\Big)
\Bigg(
\Big\|\sup_{K_\epsilon \leq j \leq K_\epsilon+N}\!^{+} T^j x \Big\|_2 + \Big\|\sup_{K_\epsilon+N+1 \leq j \leq M+N-1}\!^{+} T^j x \Big\|_2
\Bigg).
\end{aligned}
\end{equation}

Choose $\epsilon > 0$ such that
$
\epsilon N \Big(\sum_{i=1}^N |\lambda_i|\Big) < 1.$
Then it follows that
\begin{align*}
&\Bigg(1 - \epsilon N \Big(\sum_{i=1}^N |\lambda_i|\Big)\Bigg)
\Big\|\sup_{K_\epsilon+N-1 \leq j \leq M+N-1}\!^{+} T^j x \Big\|_2 \\
&\qquad \leq C\|x\|_2
+ \Big\|\sup_{K_\epsilon \leq j \leq K_\epsilon+N}\!^{+} T^j x \Big\|_2 \\
&\qquad \leq C\|x\|_2
+ \sum_{j=K_\epsilon}^{K_\epsilon+N} \|T^j x\|_2 \\
&\qquad \leq C\|x\|_2 + N\|x\|_2.
\end{align*}

Hence, we observe
\begin{align*}
\Big\|\sup_{K_\epsilon+N-1 \leq j \leq M+N-1}\!^{+} T^j x \Big\|_2
\leq \Bigg(1 - \epsilon N \Big(\sum_{i=1}^N |\lambda_i|\Big)\Bigg)^{-1}
\big(N + C\big)\|x\|_2.
\end{align*}

Finally, we have
\begin{align*}
\Big\|\sup_{1 \leq j \leq M+N-1}\!^{+} T^j x \Big\|_2
\leq \Bigg(1 - \epsilon N \Big(\sum_{i=1}^N |\lambda_i|\Big)\Bigg)^{-1}
\big(N + C\big)\|x\|_2
+ (K_\epsilon +N - 2)\|x\|_2.
\end{align*}
This completes the proof of the theorem as we can vary $M$ arbitrary large by Proposition \eqref{supnormfiniting}.
\end{proof}
Assume $E=\Delta\cap\mathbb{T}=\{\xi_1,\dots,\xi_N\}$ in what follows we take $\xi_1=1$. Throughout this section  $E$ will remain fixed unless explicitly stated otherwise. 
Let us define the following  
\begin{align}\label{B-k-m}
    B^m_k:=(k+1)^m\Delta^m_k
    \end{align}
where $\Delta^m_k$ is defined as in Lemma \eqref{L2sqrfnestimatewithnumercond}.
We can rewrite equation \eqref{symmetric-function-equation} in the following from
\begin{equation}
Q(z)=\prod^N_{j=1}(z-\xi_j)=\sum^N_{r=0}c_rz^r, z\in \mathbb{C}
\end{equation}
where $c_r=(-1)^{N-r}e_{N-r}(\xi_1,\dots,\xi_N).$
As $\xi_1=1$ then $\sum\limits ^N_{r=0}c_r=0\label{equation-zero-of-c-r}$.
For any sequence $a=(a_k)_k$, we define the following
$(\delta^a_Q)_k:=\sum\limits^N_{r=0}c_ra_{k+r}$.
\begin{lemma}\label{lemma-for-s-n}Let $X$ be a Banach space and $T\in B(X)$. For $\{\xi_1,\dots,\xi_N\}$ with $\xi_1=1$, let us define $B^m:=(B^m_k)_{k}$ and \begin{align}\label{equation-delta-Q}
    D^m_n:=\sum^n_{k=N}k\Big(\delta^{B^m}_Q\Big)_k,\qquad \text{for}\hspace{0.2cm}n\geq N.
\end{align} Then for any $n\geq 2N$ we have \begin{align*}
    D^m_n=-Q'(1)\sum^n_{k=0}B^m_k+Q'(1)\sum^{N-1}_{k=0}B^m_k+\sum^{2N-1}_{k=N}d_kB^m_k+\sum^{n+N-1}_{k=n+1}\phi_k(n)B^m_k+nc_NB^m_{n+N}.\end{align*}
    where $d_k=\sum\limits ^{k-N}_{j=0}(k-j)c_j+Q'(1)$, $k=N,\dots,2N-1$  and for each $k=n+1,\dots,n+N-1$, $\phi_k$ is a polynomial in $n$ such that $\lvert\phi_k(n)\rvert\lesssim n.$
\end{lemma}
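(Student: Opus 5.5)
The plan is to prove the identity by reindexing directly: expand the definition of $D^m_n$ and collect the coefficient of each $B^m_j$. The only inputs are $Q(1)=\sum_{r=0}^N c_r=0$ (since $\xi_1=1$) and $Q'(1)=\sum_{r=0}^N r c_r$. No property of $T$ or of $X$ is used; the identity holds for an arbitrary sequence $(B^m_k)_k$ in $X$.

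\emph{Step 1 (reindex).} Writing out $(\delta^{B^m}_Q)_k$ gives
\[
D^m_n=\sum_{k=N}^n\sum_{r=0}^N k\,c_r B^m_{k+r}.
\]
Setting $j=k+r$, the coefficient of $B^m_j$ is
\[
\gamma_j(n):=\sum_{\substack{0\le r\le N\\ N\le j-r\le n}}(j-r)c_r ,
\]
and $j$ ranges over $N\le j\le n+N$. I will split this range into four windows. The hypothesis $n\ge 2N$ guarantees that the windows $[N,2N-1]$, $[2N,n]$, $[n+1,n+N-1]$ and $\{n+N\}$ are disjoint and that the middle one is nonempty.

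\emph{Step 2 (compute each window).}
\begin{itemize}
\item For $2N\le j\le n$, every $r\in\{0,\dots,N\}$ is admissible, so $\gamma_j=jQ(1)-Q'(1)=-Q'(1)$.
\item For $N\le j\le 2N-1$, the admissible $r$ are those with $0\le r\le j-N$. Hence $\gamma_j=\sum_{r=0}^{j-N}(j-r)c_r=d_j-Q'(1)$, using the definition of $d_j$ in the statement.
\item For $j=n+N$, only $r=N$ is admissible, so $\gamma_j=nc_N$.
\item For $n+1\le j\le n+N-1$, the admissible $r$ are those with $j-n\le r\le N$. Writing $j=n+i$ with $1\le i\le N-1$ gives $\gamma_j=\sum_{r=i}^N(n+i-r)c_r$. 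This is a polynomial in $n$ of degree at most one, and it is bounded by $n\sum_r|c_r|+N\sum_r|c_r|\lesssim n$. I will define this polynomial to be $\phi_j(n)$.
\end{itemize}

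\emph{Step 3 (reassemble).} From Step 2,
\[
D^m_n=-Q'(1)\sum_{j=N}^{n}B^m_j+\sum_{j=N}^{2N-1}d_jB^m_j+\sum_{j=n+1}^{n+N-1}\phi_j(n)B^m_j+nc_NB^m_{n+N}.
\]
Since $-Q'(1)\sum_{j=N}^n B^m_j=-Q'(1)\sum_{j=0}^nB^m_j+Q'(1)\sum_{j=0}^{N-1}B^m_j$, this is exactly the claimed formula.

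The only delicate point is the bookkeeping of summation ranges in Step 2, in particular checking that the windows do not overlap. Overlap is excluded by $n\ge 2N$. Apart from that bookkeeping, the argument is routine.
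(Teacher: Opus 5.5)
Your proof is correct and follows essentially the same route as the paper. Both arguments expand $D^m_n$, shift indices to collect the coefficient of each $B^m_j$ over the windows $[N,2N-1]$, $[2N,n]$, $[n+1,n+N-1]$ and $\{n+N\}$, use $\sum_r c_r=Q(1)=0$ and $\sum_r rc_r=Q'(1)$, and then rewrite $-Q'(1)\sum_{j=N}^{n}B^m_j$ to produce the $d_k$ terms. Your explicit description of the admissible $r$ in each window is a cleaner bookkeeping of the same computation. Note that $n\ge 2N$ is also what makes the constraint $j-r\le n$ vacuous in the first window and $j-r\ge N$ vacuous in the third, not only what keeps the windows disjoint.
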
 
\begin{proof}
Consider the following 
\begin{align*}
    D^m_n&=\sum^n_{k=N}k\Big(\delta^{B^m}_{Q}\Big)_k \\&=\sum^n_{k=N}\sum^N_{r=0}kc_rB^m_{k+r}\\&=\sum^n_{k=N}kc_0B^m_k+\sum^n_{k=N}kc_1B^m_{k+1} \dots+\sum^n_{k=N}kc_NB^m_{k+N}\\&=\sum^{n}_{k=N}kc_0B^m_k+\sum^{n+1}_{k=N+1}(k-1)c_1B^m_k+\dots+\sum^{n+N}_{k=2N}(k-N)c_NB^m_{k}\\&=\sum^{n}_{k=2N}\sum^N_{r=0}(k-r)c_rB^m_k+\sum^{2N-1}_{k=N}\Big(\sum^{k-N}_{j=0}(k-j)c_j\Big)B^m_k+nc_1B^m_{n+1}+\dots+\sum^{n+N}_{k=n+1}(k-N)c_{N}B^m_k\\&=\sum^{n}_{k=2N}\Big(k\Big(\sum^N_{r=0}c_r\Big)-\sum^N_{r=0}rc_r\Big)B^m_k+\sum^{2N-1}_{k=N}\Big(\sum^{k-N}_{j=0}(k-j)c_j\Big)B^m_k+nc_1B^m_{n+1}+\dots\\&\hspace{8cm}+\sum^{n+N}_{k=n+1}(k-N)c_{N}B^m_k\\&\underset{\eqref{equation-zero-of-c-r}}{=}-Q'(1)\sum^{n}_{k=2N}B^m_k+\sum^{2N-1}_{k=N} \Big(\sum^{k-N}_{j=0}(k-j)c_j\Big)B^m_k+nc_1B^m_{n+1}+\dots+\sum^{n+N}_{k=n+1}(k-N)c_{N}B^m_k
    \end{align*}
    Now let us focus on calculating the coefficients of $B^m_k$, $k=n+1,\dots,n+N-1.$ From the final expression of $D^m_n$ in the above equation observe that  the coefficient of $B^m_{k}$ will be $\sum\limits^N_{r=k-n}(k-r)c_r$ where $n+1\leq k\leq n+N-1$. Let us denote $\phi_k(n):=\sum\limits^N_{r=k-n}(k-r)c_r$, $k=n+1,\dots, n+N-1$. Observe that \begin{align*}
    \lvert \phi_{n+1}(n)\rvert\leq\sum\limits^N_{r=1}\abs{(n-r+1)c_r}\lesssim n.
    \end{align*} Similarly we get $\lvert \phi_k(n)\rvert \lesssim n$ where $n+2\leq k\leq n+N-1.$ Therefore, $D^m_n$ becomes    
\begin{align*}
&-Q'(1)\sum^{n}_{k=2N}B^m_k+\sum^{2N-1}_{k=N} \Big(\sum^{k-N}_{j=0}(k-j)c_j\Big)B^m_k+\sum^{n+N-1}_{j=n+1}\phi_k(n)B^m_{k}+nc_NB_{n+N}^m=-Q'(1)\sum^n_{k=0}B^m_k\\&+Q'(1)\sum^{N-1}_{k=0}B^m_k+\sum^{2N-1}_{k=N}\Big(\sum^{k-N}_{j=0}(k-j)c_j+Q'(1)\Big)B^m_k+\sum^{n+N-1}_{k=n+1}\phi_k(n)B^m_k+nc_NB^m_{n+N}.
\end{align*}
    This completes the proof of the lemma.
\end{proof}\begin{lemma}\label{lemma-for-B-m-plus-1)}
     Consider $B^m_k$ and $\Delta^m_k$ which are defined in equation \eqref{B-k-m} and Lemma \eqref{L2sqrfnestimatewithnumercond} respectively. Then for any $n\geq2N$, we have 
     \begin{equation*}
         \sum^n_{k=0}B^{m+1}_k=\sum^{2N-1}_{k=0}\alpha_k\Delta^m_k +\sum^n_{k=2N}\beta_k\Delta ^m_k+\sum^{n+N}_{k=n+1}\mathcal{P}_k(n)\Delta ^m_k
     \end{equation*}
    where $\alpha_k$, $k=0,\dots, 2N-1$ are some constants which does not depend on $n$, for each $k=2N,\dots, n$, $\beta_k$ is defined by
    $\beta_k=c_0\{(k+1)^{m+1}-k^{m+1}\}+\dots+c_N\{(k-N+1)^{m+1}-k^{m+1}\}$ and for each $k=n+1,\dots,n+N$, $\mathcal{P}_k$ is some polynomial in $n$ such that $\lvert\mathcal{P}_k(n)\rvert\lesssim_m n^{m+1}.$
     \end{lemma}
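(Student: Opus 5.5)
The identity is purely algebraic: we expand each $B^{m+1}_k$ and regroup by powers of $T$. Since $Q(T)=\prod_{j=1}^N(T-\xi_jI)=\sum_{r=0}^N c_rT^r$, we have
\[
\Delta^{m+1}_k=T^kQ(T)^{m+1}=\sum_{r=0}^N c_r\,T^{k+r}Q(T)^m=\sum_{r=0}^N c_r\Delta^m_{k+r}.
\]
Hence
\[
\sum_{k=0}^n B^{m+1}_k=\sum_{k=0}^n (k+1)^{m+1}\sum_{r=0}^N c_r\Delta^m_{k+r}.
\]
Substituting $j=k+r$, this equals $\sum_{j=0}^{n+N}\gamma_j(n)\Delta^m_j$, where
\[
\gamma_j(n)=\sum_{r=\max(0,j-n)}^{\min(N,j)} c_r\,(j-r+1)^{m+1}.
\]
This is the same index bookkeeping already carried out in Proposition \ref{proposition-for-general-N} and Lemma \ref{lemma-for-s-n}.

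Next we sort the indices $j$ into three ranges.

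\textbf{Middle range $2N\le j\le n$.} Here $j\ge N$ and $j-n\le 0$, so all $r=0,\dots,N$ contribute and $\gamma_j=\sum_{r=0}^N c_r(j-r+1)^{m+1}$, independent of $n$. Since $\xi_1=1$ gives $\sum_r c_r=0$, we may subtract $j^{m+1}\sum_r c_r=0$ without changing anything. This yields exactly $\beta_j=\sum_{r=0}^N c_r\{(j-r+1)^{m+1}-j^{m+1}\}$, which is the stated form.

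\textbf{Initial range $0\le j\le 2N-1$.} Here $j\le 2N-1<n$ because $n\ge 2N$, so the lower limit is $\max(0,j-n)=0$ and the sum runs over $r\le\min(N,j)$. Thus $\alpha_j=\sum_{r=0}^{\min(N,j)}c_r(j-r+1)^{m+1}$ depends only on $j$, $m$ and the $\xi$'s, not on $n$. (For $N\le j\le 2N-1$ this coincides with the middle-range formula; either description is acceptable, since the statement only asks that these coefficients be constants.)

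\textbf{Tail range $n+1\le j\le n+N$.} Here $r$ runs from $j-n\ge1$ up to $N$. Writing $j=n+i$ with $1\le i\le N$, we get
\[
\mathcal P_j(n)=\sum_{r=i}^N c_r\,(n+i-r+1)^{m+1}.
\]
This is a polynomial in $n$ of degree at most $m+1$. Since $0\le i-r+1\le1$, each term is at most $|c_r|(n+1)^{m+1}$, so $|\mathcal P_j(n)|\le (n+1)^{m+1}\sum_r|c_r|\lesssim_m n^{m+1}$, with $|c_r|\le\binom{N}{r}$ because $|\xi_j|=1$.

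The main obstacle is only the index bookkeeping at the two boundaries: checking that the three ranges are disjoint and exhaust $0\le j\le n+N$, which is where $n\ge 2N$ is used, and that the cutoffs $\max(0,j-n)$ and $\min(N,j)$ are handled correctly. The key structural input is $\sum_r c_r=0$, which is what puts the middle coefficients in the difference form $\beta_k$.
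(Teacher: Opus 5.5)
Your proof is correct and follows the paper's argument: you expand $\Delta^{m+1}_k=\sum_{r=0}^N c_r\Delta^m_{k+r}$, reindex, read off the coefficients in the three ranges $0\le k\le 2N-1$, $2N\le k\le n$, $n+1\le k\le n+N$, and use $\sum_r c_r=Q(1)=0$ to put the middle coefficients in the form $\beta_k$. One small slip: in the tail range $i-r+1$ lies in $[2-N,1]$, not $[0,1]$. Since $n\ge 2N$ you still have $0<n+i-r+1\le n+1$, so the bound $|\mathcal P_j(n)|\le (n+1)^{m+1}\sum_r|c_r|$ is unaffected.
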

     \begin{proof}
     From the definition of $B^m_k$, we have 
     \begin{align}\label{equation-of-sum-of-B-m-plus-1}\sum^n_{k=0}B^{m+1}_k&=\sum^n_{k=0}(k+1)^{m+1}T^kQ(T)^{m+1}\notag\\&=Q(T)^m\sum^n_{k=0}\sum^N_{r=0}(k+1)^{m+1}c_rT^{k+r}\notag\\&=Q(T)^m\sum^{N-1}_{k=0}\sum^N_{r=0}(k+1)^{m+1}c_rT^{k+r}+Q(T)^m\sum^n _{k=N}\sum^N_{r=0}(k+1)^{m+1}c_rT^{k+r}\notag\\&=Q(T)^m\sum^{N-1}_{k=0}\sum^N_{r=0}(k+1)^{m+1}c_rT^{k+r}+Q(T)^m\sum^n _{k=N}(k+1)^{m+1}c_0T^{k}\notag\\&\qquad+Q(T)^m\sum^n_{k=N}(k+1)^{m+1}c_1T^{k+1}+\dots+Q(T)^m\sum^n_{k=N}(k+1)^{m+1}c_NT^{k+N}\notag\\&=Q(T)^m\sum^{N-1}_{k=0}\sum^N_{r=0}(k+1)^{m+1}c_rT^{k+r}+Q(T)^m\sum^{n}_{k=N}c_0(k+1)^{m+1}T^k\notag \\&\qquad+Q(T)^m\sum^{n+1} _{k=N+1}c_1k^{m+1}T^{k}+\dots+Q(T)^m\sum^{n+N}_{k=2N}c_N(k-N+1)^{m+1}T^k.
     \end{align}
    Finally, we have
     \begin{align}\label{equation-P}
         \sum^n_{k=0}B^{m+1}_k=\sum^{N-1}_{k=0}\sum^N_{r=0}(k+1)^{m+1}c_r\Delta^m_{k+r}+\sum^{n}_{k=N}c_0(k+1)^{m+1}\Delta^m_k\notag+\sum^{n+1} _{k=N+1}c_1k^{m+1}\Delta^m_k+\dots\\+\sum^{n+N}_{k=2N}c_N(k-N+1)^{m+1}\Delta^m_k.
     \end{align}
     Now we focus on calculating the coefficients of $\Delta^m_k$ in equation \eqref{equation-P}.
When $2N\leq k\leq n$, the coefficient of $\Delta^m_k$ in equation \eqref{equation-P} becomes
\begin{align*}
    &c_0(k+1)^{m+1}+\dots+c_N(k-N+1)^{m+1}\\&\underset{\eqref{equation-zero-of-c-r}}{=}c_0\{(k+1)^{m+1}-k^{m+1}\}+\dots+c_N\{(k-N+1)^{m+1}-k^{m+1}\}.
\end{align*}
Also, from the last part of equation \eqref{equation-P}, one can observe that the coefficient of $\Delta^m_k$, $k=n+1,\dots,n+N$ becomes $\sum\limits^N_{r=k-n}c_r(k-r+1)^{m+1}.$ Let us denote  $\mathcal{P}_k(n):=\sum\limits^N_{r=k-n}c_r(k-r+1)^{m+1}$, $k=n+1,\dots,n+N$. Note that $\lvert\mathcal{P}_{n+1}(n)\rvert\leq\sum\limits^N_{r=1}\abs{c_r(n-r+2)^{m+1}}\lesssim_m n^{m+1}$. Similarly we can prove that $\lvert \mathcal{P}_k(n)\rvert\lesssim_m n^{m+1}$, $n+2\leq k\leq n+N.$
Finally, from equation \eqref{equation-P} one can notice that the coefficient of $\Delta^m_k$, $k=0,\dots,2N-1$ are independent of $n.$ Let us denote that coefficient of $\Delta^m_k$, $k=0,\dots,2N-1$ by $\alpha_k$.
\end{proof}
\begin{lemma}\label{lemma-for-gamma-r}
    Consider the following $\gamma_r=(k-N+r)^{m+1}-k^{m+1}$, $2N\le k\le n$ $r=1,\dots, N+1.$ Then $|{\gamma_r}|\lesssim_m (k+1)^m.$
\end{lemma}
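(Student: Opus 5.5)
The plan is to bound $\gamma_r$ with the mean value theorem, or equivalently the binomial expansion, applied to $f(t)=t^{m+1}$ on the segment between $k$ and $k-N+r$. Since $r\in\{1,\dots,N+1\}$, the shift $h:=r-N$ satisfies $|h|\le N$, with $h\in\{-N+1,\dots,1\}$. The estimate follows from the fact that $k^{m+1}$ differs from a shift of itself by a bounded amount only at order $k^m$. The implicit constant is allowed to depend on $N$, which is fixed throughout the section; it also depends on $m$.

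First I will write
\[
\gamma_r=(k+h)^{m+1}-k^{m+1}=\sum_{j=1}^{m+1}\binom{m+1}{j}h^{j}k^{m+1-j}.
\]
Using $|h|\le N$ and $k\ge 2N\ge 1$, each term is bounded by $\binom{m+1}{j}N^{j}k^{m}$. Summing over $j$ gives
\[
|\gamma_r|\le \big((1+N)^{m+1}-1\big)k^{m}\le (1+N)^{m+1}(k+1)^{m}.
\]
Alternatively, the mean value theorem gives $|\gamma_r|=(m+1)|\eta|^{m}|h|$ for some $\eta$ between $k+h$ and $k$. Since $k\ge 2N$ and $h\ge -N+1$, we have $0<\eta\le k+1$, so $|\gamma_r|\le (m+1)N(k+1)^m$.

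Neither step is a real obstacle; the argument is purely elementary. The only point needing care is that $k-N+r$ stays nonnegative and comparable to $k$, which the hypothesis $k\ge 2N$ guarantees. This lemma is then used to bound the coefficients $\beta_k$ from Lemma \eqref{lemma-for-B-m-plus-1)} by $\sum_r|c_r|\,|\gamma_r|\lesssim_m (k+1)^m$.
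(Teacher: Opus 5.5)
Your proof is correct and takes essentially the paper's approach: the paper also applies the mean value theorem to $t\mapsto t^{m+1}$, but writes out only the case $r=N+1$ and says the other values of $r$ are similar. Your version is a bit more complete, since it treats every shift $h=r-N\in\{1-N,\dots,1\}$, uses $k\geq 2N$ to keep $k+h>0$, and notes that the implicit constant also depends on the fixed $N$.
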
\begin{proof} We will prove the lemma for $r=N+1$ for other $r$ the proof follows similarly. Consider the function $f(x)=x^{m+1}$. By mean value theorem for interval $[k,k+1]$, we have 
\begin{align*}
(k+1)^{m+1}-k^{m+1}=h^m(m+1)
\end{align*}
for some $h\in[k,k+1].$
Which implies $\lvert \gamma_{N+1}\rvert\lesssim_m (k+1)^m.$
\end{proof}
\begin{lemma}\label{lemma-for-beta-j-delta-j}Let $T:\mathcal M\to\mathcal M$ be an operator which  satisfies (i)-(iv) in \eqref{thosefourconditions} such that $E=\Delta\cap\mathbb{T}=\{1,\dots,\xi_N\}.$ For $n\geq 2N$, let us consider $\beta_k$, $k=2N,\dots,n$ which is defined in Lemma \eqref{lemma-for-B-m-plus-1)}. Then
\begin{align}
\Big\lVert \sup_{n\geq 2N}\!^{+}\frac{1}{n+1}\sum^n_{k=2N}\beta_k\Delta^m _k(x)\Big\rVert_2\lesssim_m\lVert x\rVert_2.   
\end{align}\begin{proof}
Note that by Proposition \eqref{sqtomx} and Lemma \eqref{lemma-for-gamma-r} we obtain that
\begin{align*}
\Big\lVert \sup_{n\geq2N}\!^{+}\frac{1}{n+1}\sum^n_{k=2N}\beta_k\Delta^m _k(x)\Big\rVert_2& \lesssim  \sup_{n\geq2N}\!^{+} \frac{1}{n+1}\Big(\sum^n_{k=0}(k+1)\Big)^\frac{1}{2}\Big(\sum^\infty_{k=0}(k+1)^{2m-1}\|\Delta^m_k(x)\|^2\Big)^{\frac{1}{2}}\\
&\underset{\eqref{L2sqrfnestimatewithnumercond}}{\lesssim_m} \|x\|_2.
\end{align*}
This completes the proof of the lemma. \end{proof}
\end{lemma}
\begin{lemma}\label{lemma-for-the-estimates-of-forward-difference}Let $T:\mathcal M\to\mathcal M$ be an operator such that it satisfies (i)-(iv) in \eqref{thosefourconditions} such that $E=\Delta\cap\mathbb{T}=\{1,\dots,\xi_N\}$.
Consider $D^m_n$ which is defined in \eqref{equation-delta-Q}. Then   \begin{align}\Big\lVert\sup\limits_{n\geq N}\!^{+}\frac {1}{n}D^m_n(x)\Big\rVert_2\lesssim\lVert x\rVert_2.\end{align}
\end{lemma}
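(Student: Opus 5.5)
We work directly from the definition $D^m_n=\sum_{k=N}^n k(\delta^{B^m}_Q)_k$ rather than from the decomposition of Lemma \eqref{lemma-for-s-n}. That decomposition contains the terms $nc_NB^m_{n+N}$ and $\phi_k(n)B^m_k$, whose maximal bounds would require the very inequality we are trying to prove. The operator $T$ commutes with everything in sight, so
\[
(\delta^{B^m}_Q)_k=\sum_{r=0}^N c_r (k+r+1)^m T^{k+r}Q(T)^m .
\]
Write $(k+r+1)^m=(k+1)^m+\big[(k+r+1)^m-(k+1)^m\big]$. Since $\sum_r c_rT^{k+r}=T^kQ(T)$, this gives
\[
(\delta^{B^m}_Q)_k=(k+1)^m\Delta^{m+1}_k+\sum_{r=1}^N \eta_{k,r}\,\Delta^m_{k+r},\qquad \eta_{k,r}:=(k+r+1)^m-(k+1)^m .
\]
By the mean value theorem, exactly as in Lemma \eqref{lemma-for-gamma-r}, we have $|\eta_{k,r}|\lesssim_m (k+1)^{m-1}$. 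Consequently
\[
\frac1n D^m_n=\frac1n\sum_{k=N}^n k(k+1)^m\Delta^{m+1}_k+\sum_{r=1}^N\frac1n\sum_{k=N}^n k\,\eta_{k,r}\Delta^m_{k+r}.
\]

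Each piece is handled with Proposition \eqref{sqtomx}, applied to the sequence $\sum_{k\le n}\beta_k\alpha_k a_k$ after a change of summation variable. The coefficient $1/n$ is absorbed using Lemma \eqref{maxcomingoutjuxulem}, or more simply by putting the $n$-dependence into $\beta$.

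For the first term, take $a_k=\Delta^{m+1}_k x$ and $\alpha_k=(k+1)^{m+1/2}$. The remaining weights are $k(k+1)^{-1/2}/n$ for $k\le n$. Their $\ell_2$ norm is $\big(\sum_{k\le n}k^2/(k+1)\big)^{1/2}/n\lesssim 1$, uniformly in $n$. Lemma \eqref{L2sqrfnestimatewithnumercond} with exponent $m+1$ gives
\[
\sum_k (k+1)^{2m+1}\|\Delta^{m+1}_k x\|_2^2\lesssim_m\|x\|_2^2,
\]
and this controls the first term.

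For the second group, fix $r$ and take $a_k=\Delta^m_{k+r}x$ with $\alpha_k=(k+1)^{m-1/2}$. The remaining weights are bounded by $k(k+1)^{-1/2}/n$, exactly as before. Lemma \eqref{L2sqrfnestimatewithnumercond} with exponent $m$, together with the shift $k\mapsto k+r$ (which only changes constants since $(k+1)\simeq(k+r+1)$), bounds each of these $N$ terms by $C_m\|x\|_2$.

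The main obstacle is that Proposition \eqref{sqtomx} is stated for a single fixed sequence $(\beta_k)$, whereas here the weights depend on $n$ through the factor $1/n$. I would handle this the way Lemma \eqref{lemma-for-beta-j-delta-j} implicitly does, by rerunning the proof of Proposition \eqref{sqtomx}. Split each $a_k$ into its four positive parts. For positive $b_k$, apply the operator Hölder inequality of Lemma \eqref{opHol} to each $n$ separately:
\[
\sum_{k\le n}w_{n,k}b_k\le\Big(\sum_{k\le n}w_{n,k}^2\Big)^{1/2}\Big(\sum_k\alpha_k^2 b_k^2\Big)^{1/2},
\]
where the last factor is a single positive element independent of $n$. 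This yields a uniform majorant, so the remark characterising $L_2(\mathcal M;\ell_\infty)$ norms through two-sided bounds gives the estimate with constant $\sup_n\|(w_{n,k})_k\|_{\ell_2}$. The same argument also covers $m=0$ after a trivial modification, where $\eta_{k,r}=0$. Summing the finitely many pieces proves the lemma.
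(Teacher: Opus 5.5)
Your proposal is correct and is essentially the paper's proof. The paper also works from the definition, adds and subtracts $\sum_{k=N}^n k^{m+1}\Delta^{m+1}_k$ (it centres at $k^m$ where you centre at $(k+1)^m$; your choice just removes the $r=0$ remainder), and bounds the main term and the shifted remainders $k\,c_r\big((k+r+1)^m-k^m\big)\Delta^m_{k+r}$ via Proposition~\ref{sqtomx} and Lemma~\ref{L2sqrfnestimatewithnumercond}. Your rerun of the H\"older argument for the $n$-dependent weights makes explicit a point the paper passes over, and the factor $c_r$ you dropped in front of $\eta_{k,r}$ is only a harmless constant.
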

\begin{proof}
For $n\geq N$, consider the following computation
\begin{align}\label{equation-of-S-n}    D^m_n=\sum^n_{k=N}k\Big(\sum^N_{r=0}c_rB^m_{k+r}\Big)=\sum^n_{k=N}k\Big(\sum^N_{r=0}c_r(k+r+1)^m\Delta^m_{k+r}\Big).
\end{align}
Note that \begin{align}\label{equation-for-delta-k-m-plus-1}
&\sum^n_{k=N}k^{m+1}\Delta^{m+1}_k\\&=\sum^n_{k=N}k^{m+1}\Delta^m_k\Big(\sum^N_{r=0}c_rT^r\Big)\notag\\&=\sum^n_{k=N}\sum^N_{r=0}k^{m+1}c_r\Delta^m_{k+r}\notag.
\end{align}
Therefore, adding and subtracting $\sum\limits^n_{k=N}k^{m+1}\Delta^{m+1}_k$ to the equation \eqref{equation-of-S-n} and using equation \eqref{equation-for-delta-k-m-plus-1} we have
\begin{align}\label{D-n}
    D^m_n=\sum^n_{k=N}k^{m+1}\Delta^{m+1}_k+\sum^n_{k=N}k\Big(\sum^N_{r=0}c_r\Big((k+r+1)^m-k^m\Big)\Delta^m_{k+r}\Big).
\end{align}
Using Proposition \eqref{sqtomx} we have 
\begin{align}\label{square-function-k-plus-r}
&\Big\|\sup\limits_{n\geq N}\!^{+}\frac {1}{n}\sum^n_{k=N}k\Big((k+r+1)^m-k^m\Big) \Delta^m_{k+r}(x)\Big\|_2\notag \\&\leq\sup_{n\geq 1}\Big(\frac{1}{n}\sum^n_{k=1}\frac{[k\Big((k+r+1)^m-k^m\Big)]^2}{(k+1)^{2m-1}}\Big)^{\frac{1}{2}}\Big(\sum^{n}_{k=1}(k+1)^{2m-1}\|\Delta^m_{k+r}(x)\|_2^2\Big)^\frac{1}{2}\notag\\&\underset{\eqref{lemma-for-gamma-r}\eqref{L2sqrfnestimatewithnumercond}}{\lesssim_m} \|x\|_2.\end{align}
Again using Proposition \eqref{sqtomx} we have
\begin{align}\label{Square-function-for-k-to-power-m-plus-1}
 &   \Big\|\sup\limits_{n\geq N}\!^{+}\frac{1}{n}\sum^n_{k=N}k^{m+1}\Delta^{m+1}_k(x)\Big\|_2\\ &\lesssim\sup\limits_{n\geq N}\frac{1}{n}\Big(\sum^n_{k=1}\frac{k^{2(m+1)}}{k^{2m+1}}\Big)^{\frac{1}{2}}\Big(\sum^\infty_{k=1}\Big(k+1\Big)^{2m+1}\|\Delta^{m+1}_k(x)\|_2\Big)^\frac{1}{2}\underset{\eqref{L2sqrfnestimatewithnumercond}}{\lesssim_m} \|x\|_2.\notag
\end{align}
Note that while applying the square function estimate from Lemma \eqref{L2sqrfnestimatewithnumercond} in \eqref{square-function-k-plus-r} and \eqref{Square-function-for-k-to-power-m-plus-1} we have also used the fact that $T$ is a contraction on $L_2(\mathcal M).$ Hence putting inequalities in \eqref{square-function-k-plus-r} and \eqref{Square-function-for-k-to-power-m-plus-1} in equation \eqref{D-n}, we have the desired result by using triangale inequality. This completes the proof of the lemma.
\end{proof}
\begin{theorem}\label{strong-maximal-theorem-of-B-k-m}
 Let $T:\mathcal M\to\mathcal M$ be an operator such that it satisfies (i)-(iv) in \eqref{thosefourconditions} such that $E=\Delta\cap\mathbb{T}=\{1,\dots,\xi_N\}$.  Then for all $m\geq 0$ and $x\in L_2(\mathcal M)$, we have 
   \begin{equation}\label{estimates-B-k-m}
     \|\sup_{n\geq 0}\!^{+}B^m_k(x)\|_2\lesssim_m\|x\|_2
   \end{equation}
\end{theorem}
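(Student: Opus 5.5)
We argue by induction on $m$, following Stein's scheme. The claim is $\|\sup_k^{+} B^m_k x\|_2\lesssim_m\|x\|_2$ with $B^m_k=(k+1)^m T^kQ(T)^m$. The case $m=0$ is exactly Theorem \ref{strong-maximal-ergodic-theorem-for-l2}, since $B^0_k=T^k$. So assume the estimate holds for $m$; we aim to prove it for $m+1$. The idea is to compare $B^{m+1}_n$ with Cesàro averages of $B^{m+1}$ via an Abel-type identity. For any sequence we have
\[
B^{m+1}_n=\frac{1}{n+1}\sum_{k=0}^n B^{m+1}_k+\frac{1}{n+1}\sum_{k=1}^n k\,(B^{m+1}_k-B^{m+1}_{k-1}).
\]
In our setting, however, the natural ``difference'' is $\delta_Q$, not the ordinary backward difference. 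We therefore replace this identity by the one furnished by Lemma \ref{lemma-for-s-n} applied at level $m+1$:
\[
nc_NB^{m+1}_{n+N}=D^{m+1}_n+Q'(1)\sum_{k=0}^nB^{m+1}_k-Q'(1)\sum_{k=0}^{N-1}B^{m+1}_k-\sum_{k=N}^{2N-1}d_kB^{m+1}_k-\sum_{k=n+1}^{n+N-1}\phi_k(n)B^{m+1}_k .
\]
Dividing by $n$, and using $|c_N|=1$, we will bound each term in $\|\sup_n^{+}\cdot\|_2$.

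\emph{Term $\frac1nD^{m+1}_n$.} This is handled by Lemma \ref{lemma-for-the-estimates-of-forward-difference}, which rests on the square functions of Lemma \ref{L2sqrfnestimatewithnumercond} and Proposition \ref{sqtomx}. That lemma should be read as valid for every $m$; its proof only uses square functions of orders $m$ and $m+1$.

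\emph{Term $\frac1n\sum_{k\le n}B^{m+1}_k$.} By Lemma \ref{lemma-for-B-m-plus-1)}, this average splits into three pieces. The first is a fixed finite combination of $\Delta^m_k$, $k<2N$, whose maximal function is trivially bounded after dividing by $n$. The second is $\frac1n\sum_{2N}^n\beta_k\Delta^m_k$, which is controlled by Lemma \ref{lemma-for-beta-j-delta-j}. The third is the boundary piece $\frac1n\sum_{k=n+1}^{n+N}\mathcal P_k(n)\Delta^m_k$. Since $|\mathcal P_k(n)|/n\lesssim n^m\lesssim (k+1)^m$, this piece equals a bounded-coefficient combination of $B^m_k$ with $k\in[n+1,n+N]$. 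Lemma \ref{maxcomingoutjuxulem} together with the induction hypothesis then bounds it by $N C_m\|x\|_2$.

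\emph{Remaining terms.} The constant terms $\frac1n B^{m+1}_k$ with $k<2N$ are trivially bounded, since $T$ is an $L_2$-contraction and $Q(T)$ is bounded. The last boundary term is the dangerous one: $\frac1n\sum_{k=n+1}^{n+N-1}\phi_k(n)B^{m+1}_k$, with coefficients of size $O(1)$. It involves $B^{m+1}$ itself at indices $n+1,\dots,n+N-1$, which lie \emph{below} the index $n+N$ on the left side. We expect this to be the main obstacle. We will try to remove it in either of two ways.
\begin{enumerate}
\item[(a)] Write $B^{m+1}_k=(k+1)^{m+1}\Delta^m_kQ(T)=\sum_r c_r\frac{(k+1)^{m+1}}{(k+r+1)^m}B^m_{k+r}$. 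This expresses $B^{m+1}_k$ through $B^m$ with coefficients $\approx k$, which is too large by a factor $n$, so this route alone fails.
\item[(b)] Argue as in the $L_2$ theorem, with an absorption/bootstrap over the finite window $[K,M]$, using Proposition \ref{supnormfiniting}. Writing $\Phi_M=\|\sup_{n\le M}^{+}B^{m+1}_nx\|_2$, the identity gives $\Phi_M\le C\|x\|_2+\theta\,\Phi_M$ with $\theta=\limsup_n\sum_k|\phi_k(n)|/n$.
\end{enumerate}
Route (b) requires $\theta<1$, which is not automatic. If it fails, we instead first treat $T_1=\bar\xi_iT$ together with the operators $Q'_i$ and combine through Lagrange interpolation (Lemma \ref{lemma-for-lagrange-polynomial-type}), exactly as in Theorem \ref{strong-maximal-ergodic-theorem-for-l2}. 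In that approach the leading coefficients $\phi_k(n)/n\to$ const are matched by the limit operator $Q'_i$, and the $\epsilon$-error is absorbed.

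Combining all the bounds and letting $M\to\infty$ yields \eqref{estimates-B-k-m} for $m+1$.
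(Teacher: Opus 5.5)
Your skeleton is the paper's: induction on $m$, the identity of Lemma \ref{lemma-for-s-n} at level $m+1$, Lemma \ref{lemma-for-the-estimates-of-forward-difference} for $\frac1nD^{m+1}_n$, and Lemmas \ref{lemma-for-B-m-plus-1)} and \ref{lemma-for-beta-j-delta-j} plus the induction hypothesis for the Ces\`aro averages of $B^{m+1}$. The two arguments part ways exactly at the boundary term you flag. The paper rewrites it as $\sum_{k=n+1}^{n+N-1}\frac{\phi_k(n)}{n}B^m_kQ(T)$ and bounds it by $(\ref{estimates-B-k-m})_m$. But $B^{m+1}_k=(k+1)B^m_kQ(T)$, so that rewriting drops a factor $k+1$. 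Your diagnosis is correct: the term has $O(1)$ coefficients against $B^{m+1}$ itself, and the paper's step does not go through as written.

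Your route (b) cannot close on its own. Since $\phi_{n+j}(n)=\sum_{r=j}^N(n+j-r)c_r$, one has $\phi_{n+j}(n)/n=q_{j-1}+O(1/n)$ with $q_l:=\sum_{r=l+1}^Nc_r$. These $q_l$ are exactly the coefficients of $Q(z)/(z-1)=\prod_{j\ge2}(z-\xi_j)=\sum_{l=0}^{N-1}q_lz^l$, and $q_{N-1}=c_N=1$ matches the term $nc_NB^{m+1}_{n+N}$. Hence $|q_0|=1$, so $\theta\ge1$ whenever $N\ge2$, and the fallback is mandatory.

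The fallback works, and more simply than you suggest. Because $B^{m+1}_k=(k+1)B^m_kQ(T)$, both the $O(1/n)$ corrections $\frac1nB^{m+1}_{n+j}$ and the weight mismatch $\big((n+j+1)^{m+1}-(n+1)^{m+1}\big)T^{n+j}Q(T)^{m+1}$ are bounded multiples of $B^m_{n+j}Q(T)$. They are therefore controlled by the induction hypothesis applied to $Q(T)x$. No $\epsilon$-absorption is needed, and the identity gives
\[
\Big\|\sup_{n\ge2N}\!^{+}(n+1)^{m+1}T^{n+1}Q_1'(T)Q(T)^{m+1}x\Big\|_2\lesssim_m\|x\|_2 .
\]
Passing to $\overline{\xi_i}T$, with unimodular factors handled by Corollary \ref{mod1andmaximum}(ii), gives the same bound with $Q_i'$ in place of $Q_1'$. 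Lemma \ref{lemma-for-lagrange-polynomial-type} then controls $(n+1)^{m+1}T^{n+N}Q(T)^{m+1}x$. This equals $B^{m+1}_{n+N}x$ up to the bounded factor $\big(\frac{n+N+1}{n+1}\big)^{m+1}$. This is the mechanism of Theorem \ref{strong-maximal-ergodic-theorem-for-l2} transplanted to level $m+1$, and it repairs the paper's argument.

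One small slip concerns the step $m=0\to1$. There Lemma \ref{lemma-for-beta-j-delta-j} is unavailable: Lemma \ref{L2sqrfnestimatewithnumercond} requires $m\ge1$, and $\sum_k(k+1)^{-1}\|T^kx\|_2^2$ diverges in general. However, at $m=0$ one has $\beta_k\equiv-Q'(1)$. That piece is therefore a multiple of an ergodic average and is handled by Theorem \ref{JungeXu}.
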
\begin{proof}
   We proceed by induction to prove 
   \begin{align*}
       \Big\lVert \sup\limits_{n\geq 0}\!^{+}B^m_k(x)\Big\lVert_2\lesssim \lVert x\rVert_2.
   \end{align*}
   For any $m\geq1$, let us consider the estimate
   \begin{align}\label{estimates-sum-B-k-m}
       \Big\lVert \sup\limits_{n\geq 0}\!^{+}\frac{1}{n+1}\sum^n_{k=0}B^m_k(x)\Big\rVert_2\lesssim_m \lVert x\rVert_2.
   \end{align}
  Note that estimate in \eqref{estimates-B-k-m} implies \eqref{estimates-sum-B-k-m}. For clarity we will write $(\ref{estimates-B-k-m})_m$ and $(\ref{estimates-sum-B-k-m})_m$ instead of $(\ref{estimates-B-k-m})$ and $(\ref{estimates-sum-B-k-m}).$
   For $m=0$, using Theorem \ref{strong-maximal-ergodic-theorem-for-l2} we have
   \begin{align*}
       \Big\lVert \sup\limits_{n\geq 0}\!^{+}B^0_k(x)\rvert\Big\rVert_2=\Big\lVert \sup\limits_{n\geq 0}\!^{+}T^nx\Big\rVert_2 \lesssim \lVert x\rVert_2.
   \end{align*}
   Applying Theorem \eqref{strong-maximal-ergodic-theorem-for-l2} to equation \eqref{the- main-square-function-identity} for the operator $T$, we obtain $\eqref{estimates-sum-B-k-m}_1.$
For all $n\geq 2N$ from Lemma \eqref{lemma-for-B-m-plus-1)}, we have \begin{align*}
        \frac{1}{n+1} \sum^n_{k=0}B^{m+1}_k&=\sum^{2N-1}_{k=0}\frac{\alpha_k}{n+1}\Delta_k ^m+\frac{1}{n+1}\sum^n_{k=2N}\beta_k\Delta ^m_k+\frac{1}{n+1}\sum^{n+N}_{k=n+1}\mathcal{P}_k(n)\Delta ^m_k\\&=\sum^{2N-1}_{k=0}\frac{\alpha_k}{n+1}\Delta_k ^m+\frac{1}{n+1}\sum^n_{k=2N}\beta_k\Delta ^m_k+\frac{1}{n+1}\sum^{n+N}_{k=n+1}\frac{\mathcal{P}_k(n)}{(k+1)^{m}} B^m_k.
     \end{align*}
     Let $(\ref{estimates-B-k-m})_m$ hold. Then this implies that $(\ref{estimates-sum-B-k-m})_m$ holds.
     Using Lemma \ref{lemma-for-beta-j-delta-j} and $(\ref{estimates-sum-B-k-m})_m$ we have $(\ref{estimates-sum-B-k-m})_{m+1}$.
     From Lemma \eqref{lemma-for-s-n}, we have the following
     \begin{align*}
   c_NB^{m+1}_{n+N}&=\frac{1}{n}Q'(1)\sum^{N-1}_{k=0}B^{m+1}_k+\sum^{2N-1}_{k=N}\frac{d_k}{n}B^{m+1}_k-\frac{Q'(1)}{n}\sum^{n}_{k=0}B^{m+1}_k+\sum^{n+N-1}_{k=n+1}\frac {\phi_k(n)}{n}B^{m}_{k}Q(T)\\&\qquad\qquad-\frac{1}{n}D^{m+1}_n      
     \end{align*}
     where $n\geq 2N.$
    By using Lemma \eqref{lemma-for-the-estimates-of-forward-difference} , $\eqref{estimates-sum-B-k-m}_{m+1}$ and $\eqref{estimates-B-k-m}_m$ one can obtain that $\Big\lVert\sup\limits_{n\geq 2N}\!^{+}B^{m+1}_{n+N}(x)\Big\rVert_2\lesssim_m \lVert x\rVert_2$.
    Note that \begin{align*}    
    \Big\lVert \sup\limits_{n\geq 0}\!^{+}B^{m+1}_n(x)\Big\rVert_2\lesssim_m \Big\lVert \sup\limits_{n\geq 2N}\!^{+}B^{m+1}_{n+N}(x)\Big\rVert_2+\sum^{3N-1}_{r=0}\Big\lVert B^{m+1}_r(x)\Big\rVert_2
    \end{align*}
    Therefore, the result follows from induction.
\end{proof}
\subsection{Embedding the maximal operator in analytic family}\label{section3.2}
Let $T\in B(X)$ satisfy \eqref{thosefourconditions} with $E=\Delta\cap\mathbb{T}=\{\xi_1,\dots,\xi_N\}.$ For a given complex number $\lambda$ and  nonnegative integers $n,k$, let us define the following
\begin{align}\label{all-required-definition-Here}
    &a^\lambda_k:=\frac{(\lambda+1)(\lambda+2)\dots(\lambda+k)}{k!}\hspace{0.2cm} \textit{when}\hspace{0.2cm} k\geq0,\ \text{and}\ a^{\lambda}_0:=1.\\
&A^{\lambda}_k:=\sum_{m_1+\dots+m_N=k}\prod^N_{j=1} a^\lambda_{m_j}\xi^{m_j}_j\qquad \textit{where}\hspace{0.2 cm} m_i\geq0, i=1,\dots,N \notag\\&
    S^{\lambda}_nf:=\sum^n_{k=0}A^{\lambda-1}_{n-k}T^kf\ \text{and}\
    M^{\lambda}_{n}f:=(n+1)^{-\lambda} S^{\lambda}_nf\notag.
\end{align}
\begin{lemma}\label{formn1}
Let $N\geq2$ and $\{\xi_1, \dots,\xi_N\}\subseteq \mathbb{T}.$ Then for any $k\geq 0$ we have
 \begin{equation}\label{morecompexity}
\sum_{\substack{m_1+\cdots+m_N=k\\ m_i\ge 0}}\prod^N_{j=1}\xi^{m_j}_j=\sum^N_{j=1}\frac{\xi^{k+N-1}_j}{\prod\limits^N_{l=1,l\neq j}(\xi_j-\xi_l)}.\end{equation}
     \end{lemma}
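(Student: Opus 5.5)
The natural route is through generating functions. For $|z|<1$ the left-hand side of \eqref{morecompexity} is the coefficient of $z^k$ in
\[
\prod_{j=1}^N \frac{1}{1-\xi_j z}=\prod_{j=1}^N\sum_{m_j\ge 0}\xi_j^{m_j}z^{m_j}.
\]
The Cauchy product of these $N$ absolutely convergent series gives exactly $\sum_k\big(\sum_{m_1+\cdots+m_N=k}\prod_j\xi_j^{m_j}\big)z^k$. So it suffices to expand the rational function $\prod_j(1-\xi_jz)^{-1}$ by partial fractions and read off the coefficients. Throughout we use that the $\xi_j$ are pairwise distinct, as is implicit in $E=\{\xi_1,\dots,\xi_N\}$ and already needed for the Lagrange construction in Lemma \eqref{lemma-for-lagrange-polynomial-type}.

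Since the poles $\xi_j^{-1}$ are simple, we write
\[
\prod_{j=1}^N\frac{1}{1-\xi_jz}=\sum_{j=1}^N\frac{R_j}{1-\xi_jz},\qquad R_j=\prod_{l\ne j}\frac{1}{1-\xi_l\xi_j^{-1}}=\frac{\xi_j^{N-1}}{\prod_{l\ne j}(\xi_j-\xi_l)}.
\]
The residue $R_j$ is obtained by multiplying both sides by $1-\xi_jz$ and evaluating at $z=\xi_j^{-1}$. Expanding each term geometrically, the coefficient of $z^k$ is $\sum_j R_j\xi_j^k$, which is the right-hand side of \eqref{morecompexity}. Identifying coefficients of the two power series, both analytic on $\mathbb D$, completes the argument.

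There is an alternative that stays within the tools already in the paper. The right-hand side is the divided difference $h_k(\xi_1,\dots,\xi_N)=[\xi_1,\dots,\xi_N]z^{k+N-1}$. One can prove the identity by induction on $N$, using $h_k(\xi_1,\dots,\xi_N)=\sum_{m=0}^k\xi_N^{k-m}h_m(\xi_1,\dots,\xi_{N-1})$ together with the geometric sum $\sum_{m}\xi_N^{k-m}\xi_j^{m+N-2}$. The generating function proof is shorter, so I would present that one.

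The only real obstacle is bookkeeping rather than substance. One must verify carefully the algebra converting $\prod_{l\neq j}(1-\xi_l/\xi_j)^{-1}$ into $\xi_j^{N-1}/\prod_{l\ne j}(\xi_j-\xi_l)$, which uses $|\xi_j|=1$ only to ensure $\xi_j\neq 0$. One must also make explicit the distinctness of the $\xi_j$, without which the partial fraction decomposition fails.
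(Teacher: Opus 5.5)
Your proof is correct, and it takes a genuinely different route from the paper.

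The paper argues by induction on $N$. It splits off the last variable, writing the left side for $N+1$ points as $\sum_{r=0}^k\xi_{N+1}^r$ times the left side for $N$ points with $k$ replaced by $k-r$. It then applies the induction hypothesis and sums the geometric series $\sum_r(\xi_{N+1}/\xi_j)^r$. To absorb the leftover $\xi_{N+1}^{k+1}$ terms into the new $j=N+1$ summand, it needs a separate identity,
$\sum_{j=1}^{N}\xi_j^{N-1}/\prod_{l\neq j,\,l\le N+1}(\xi_j-\xi_l)=-\xi_{N+1}^{N-1}/\prod_{l\le N}(\xi_{N+1}-\xi_l)$.
This is obtained by comparing $z^N$-coefficients in the Lagrange interpolation of $z^{N-1}$ at $N+1$ nodes. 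So the paper's proof is essentially the alternative you sketch.

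Your generating-function proof replaces all of this with one partial-fraction expansion of $\prod_j(1-\xi_jz)^{-1}$. It needs no induction and no auxiliary vanishing identity, since that identity is in effect built into the fact that the rational function is proper and has no polynomial part. Coefficient comparison can be done with formal power series, so no analysis is needed either.

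Your approach also fits the machinery the paper uses later. Since $a^0_m=1$, the left side is exactly $A^0_k$ from \eqref{all-required-definition-Here}. That is, it is the $z^k$-coefficient of $P(z)^{-1}$, in the notation $P(z)^\lambda=\sum_k A_k^{-\lambda-1}z^k$ the paper adopts afterwards. Your computation therefore explains the formula for $M^1_n$ in \eqref{putting0and1} from the same framework. The paper's route has only the small merit of staying within finite sums and reusing the Lagrange lemma it has just proved.

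Both arguments require the $\xi_j$ to be pairwise distinct. The paper uses this silently when dividing by $\xi_j-\xi_{N+1}$, and you are right to make it explicit.
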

\begin{proof}
    To prove this lemma we will proceed by induction on N. For $N=2$, we observe that the LHS of \eqref{morecompexity} is equal to\begin{align*}
\xi^k_2\sum^k_{r=0}\Big(\frac{\xi_1}{\xi_2}\Big)^r=\xi^k_2\Big(\frac{1-\Big(\frac{\xi_1}{\xi_2}\Big)^{k+1}}{1-\frac{\xi_1}{\xi_2}}\Big)=\frac{\xi^{k+1}_1}{\xi_1-\xi_2}+\frac{\xi^{k+1}_2}{\xi_2-\xi_1}.
    \end{align*}
    Now, let \eqref{morecompexity} be true for some $N\geq 2.$
Then using the induction hypothesis we observe the following.    \begin{align}\label{equation-h-k}
&\sum_{m_1+\dots+m_{N+1}=k\ m_i\ge 0}\prod^{N+1}_{j=1}\xi^{m_j}_j\notag\\&=\sum^k_{r=0}\xi^r_{N+1}\sum_{m_1+\dots+m_N=k-r}\prod^N_{j=1}\xi^{m_j}_j\notag\\&=\sum^k_{r=0}\xi^r_{N+1}\sum^N_{j=1}\frac{\xi^{N-1+k-r}_j}{\prod\limits^{N}_{l=1, l\neq j}(\xi_j-\xi_l)} \notag\\&=\sum^N_{j=1}\frac{\xi^{N-1}_j}{\prod\limits^{N}_{l=1, l\neq j}(\xi_j-\xi_l)} \xi^k_j\sum^k_{r=0}\Big(\frac{\xi_{N+1}}{\xi_j}\Big)^r\notag\\&=\sum^N_{j=1}\frac{\xi^{N-1}_j}{\prod\limits^{N}_{l=1, l\neq j}(\xi_j-\xi_l)} \frac{\xi^{k+1}_j-\xi^{k+1}_{N+1}}{\xi_j-\xi_{N+1}}\end{align}
   Let us set \begin{align*}
       A:=\sum^N_{j=1}\frac{\xi^{N-1}_j}{\prod\limits^{N+1}_{l=1,l\neq j}(\xi_j-\xi_l)}.
\end{align*}
Consider the indentity from \ref{lagrange-from-of-polynomial} $z^{N-1}=\sum^{N+1}_{j=1}\xi^{N-1}_jl_j(z)$ where $l_j(z)=\prod\limits^{N+1}_{j=1,l\neq j}\frac{z-\xi_l}{\xi_j-\xi_l}.$ Comparing the coefficient of $z^N$ in the both side of above equation, we have
$A=-\frac{\xi^{N-1}_{N+1}}{\prod^{N}_{l=1}(\xi_{N+1}-\xi_l)}.$ 
Substituting $A$ to the equation \eqref{equation-h-k}, we obtain \begin{align*}
    \sum^{N+1}_{j=1}\frac{\xi^{N+k}_j}{\prod\limits ^{N+1}_{l=1,l\neq j}(\xi_j-\xi_l)}.
\end{align*}
This completes the proof of the lemma. 
\end{proof}
Let us observe the following.
\begin{equation}\label{putting0and1}
M^0_n=T^n,\ \text{and}\ M^1_n\underset{\eqref{formn1}}{=}\frac{1}{n+1}\Big(\sum\limits^n_{k=0}\sum\limits^N_{j=1}\frac{\xi^{n-k+N-1}_j}{\prod\limits^N_{l=1,l\neq j}(\xi_j-\xi_l)}T^k\Big).
\end{equation}

\begin{lemma}\label{lem:negative-integer-means}
Let $m\geq 1$ be an integer. Then, for every $n\geq Nm$,
\[
S_n^{-m}
=
T^{\,n-Nm}\prod_{j=1}^{N}(T-\xi_jI)^m.
\]
Consequently, we have
$
M_n^{-m}
=
(n+1)^mT^{\,n-Nm}\prod_{j=1}^{N}(T-\xi_jI)^m.$
\end{lemma}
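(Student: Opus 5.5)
The plan is to use generating functions. For $|z|<1$ and complex $\lambda$, the binomial series gives $\sum_{k\ge0}a^\lambda_k z^k=(1-z)^{-(\lambda+1)}$. Since $A^\lambda_k$ is the $N$-fold Cauchy convolution of the sequences $(a^\lambda_{m}\xi_j^{m})_m$, we get
\[
\sum_{k\ge 0}A^{\lambda}_k z^k=\prod_{j=1}^N(1-\xi_j z)^{-(\lambda+1)} .
\]
For $\lambda-1=-m-1$, that is $\lambda=-m$, this gives $\sum_k A^{-m-1}_k z^k=\prod_{j=1}^N(1-\xi_jz)^{m}$. This is a polynomial of degree $Nm$. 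Its coefficients are therefore finitely many, and $A^{-m-1}_k=0$ for all $k>Nm$. This matches the definition directly: $a^{-m-1}_k=0$ for $k\ge m+1$, because the factor $(\lambda+m+1)$ vanishes. So each $a^{-m-1}_{m_j}$ is nonzero only for $m_j\le m$. This is consistent, and no convergence issues arise.

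Next, I write $S^{-m}_n=\sum_{k=0}^n A^{-m-1}_{n-k}T^k$. Substituting $i=n-k$ and using $n\ge Nm$, so that every $i\le Nm$ occurs in the range $0\le i\le n$, gives
\[
S^{-m}_n=\sum_{i=0}^{Nm}A^{-m-1}_i T^{n-i}=T^{n-Nm}\sum_{i=0}^{Nm}A^{-m-1}_iT^{Nm-i}.
\]
The inner sum is the reversed polynomial $z^{Nm}P(1/z)$ evaluated at $T$, where $P(z)=\prod_j(1-\xi_jz)^m$. Since $z^{Nm}\prod_j(1-\xi_j/z)^m=\prod_j(z-\xi_j)^m$, the inner sum equals $\prod_{j=1}^N(T-\xi_jI)^m$. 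Polynomial identities pass to $T$ through the functional calculus $\mathbb{C}[z]\to B(X)$, which is an algebra homomorphism.

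The consequence then follows from the definition $M^{-m}_n=(n+1)^{m}S^{-m}_n$.

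The only delicate point is checking the identity $\sum_k a^\lambda_k z^k=(1-z)^{-\lambda-1}$ with the paper's normalization $a^\lambda_k=(\lambda+1)\cdots(\lambda+k)/k!$. To do this, I note that $(\lambda+1)\cdots(\lambda+k)/k!=(-1)^k\binom{-\lambda-1}{k}$. For $\lambda=-m-1$, this yields $a^{-m-1}_k=(-1)^k\binom{m}{k}$, so that $\sum_k a^{-m-1}_kz^k=(1-z)^m$. This check can be done purely algebraically, which avoids analytic concerns altogether.
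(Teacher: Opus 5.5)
Your proof is correct and is essentially the paper's argument. The paper also uses $a^{-m-1}_r=(-1)^r\binom{m}{r}$ (which vanishes for $r>m$) to identify $A^{-m-1}_r$ as the coefficient of $z^{Nm-r}$ in $\prod_{j=1}^N(z-\xi_j)^m$, and then reindexes to $S^{-m}_n=\sum_{r=0}^{Nm}A^{-m-1}_rT^{n-r}$ for $n\ge Nm$; the only cosmetic difference is that you read off the coefficients from $\prod_j(1-\xi_jz)^m$ and reverse the polynomial, whereas the paper expands $\prod_j(z-\xi_j)^m$ directly by the binomial theorem.
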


\begin{proof}
Note that we have
$
a_r^{-m-1}
=
(-1)^r\binom{m}{r}, 0\leq r\leq m
$
and
$
a_r^{-m-1}=0, r>m.
$ where $a_r^{-m-1}$ as in \eqref{all-required-definition-Here}. Let us now observe that
\[
\prod_{j=1}^{N}(z-\xi_j)^m
=
\sum_{r=0}^{Nm}
\Bigg(
(-1)^r
\sum_{\substack{m_1+\cdots+m_N=r\\ m_j\geq 0}}
\prod_{j=1}^{N}
\binom{m}{m_j}\xi_j^{m_j}
\Bigg)
z^{Nm-r}.
\]
Using the identity $a_{m_j}^{-m-1}
=
(-1)^{m_j}\binom{m}{m_j},$
it follows that the coefficient of $z^{Nm-r}$ in the above expansion is precisely $A_r^{-m-1}.$
Hence we obtain the identity
\[
\prod_{j=1}^{N}(z-\xi_j)^m
=
\sum_{r=0}^{Nm}
A_r^{-m-1}z^{Nm-r}.
\]
Therefore, by replacing $z$ by $T$ and multiplying by $T^{\,n-Nm}$ yields
\[
T^{\,n-Nm}\prod_{j=1}^{N}(T-\xi_jI)^m
=
\sum_{r=0}^{Nm}
A_r^{-m-1}T^{n-r}.
\]

Since $a_r^{-m-1}=0$ for $r>m$, we have
$A_r^{-m-1}=0$ whenever $r>Nm$. Therefore,
\[
S_n^{-m}
=
\sum_{k=0}^{n}A_{n-k}^{-m-1}T^k
=
\sum_{r=0}^{Nm}
A_r^{-m-1}T^{n-r}.
\]
Combining the last two identities we establish the first part of the lemma and the second part follows immediately from this. This completes the proof of the lemma.
\end{proof}
\begin{lemma}\label{backward-difference-of-A-k-lambda}
Consider $A^{\lambda}_k$ which is defined in equation \eqref{all-required-definition-Here}. Then $$A^{\lambda}_k=\sum^N_{r=0}(-1)^re_rA^{\lambda-1}_{k-r}$$ where $k\geq N.$ 
\end{lemma}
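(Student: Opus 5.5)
Our approach is to argue with generating functions. By the binomial series, for $|z|<1$ we have $\sum_{k\ge0} a^\lambda_k w^k=(1-w)^{-\lambda-1}$, since $a^\lambda_k=\binom{\lambda+k}{k}$. Because $A^\lambda_k$ is the coefficient of $z^k$ in the $N$-fold Cauchy product of the sequences $(a^\lambda_{m}\xi_j^{m})_m$, it follows that
\[
F_\lambda(z):=\sum_{k\ge 0}A^\lambda_k z^k=\prod_{j=1}^N(1-\xi_j z)^{-\lambda-1},\qquad |z|<1 .
\]
Hence $F_\lambda(z)=\prod_{j=1}^N(1-\xi_jz)\cdot F_{\lambda-1}(z)$.

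The next step is to expand the finite product. By \eqref{function-of-xi-z} with $z$ replaced by $-z$, we get $\prod_{j=1}^N(1-\xi_j z)=\sum_{r=0}^N(-1)^re_r(\xi_1,\dots,\xi_N)z^r$. Multiplying this polynomial against the power series $F_{\lambda-1}$ and comparing the coefficients of $z^k$ gives
\[
A^\lambda_k=\sum_{r=0}^{\min(k,N)}(-1)^re_rA^{\lambda-1}_{k-r}.
\]
For $k\ge N$ the upper limit is exactly $N$, which is the claimed identity.

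The only point needing care is justifying the formal manipulations. Everything can be done in the ring of formal power series $\mathbb{C}[[z]]$, where $(1-w)^{-\mu}$ is defined through its binomial coefficients. The identity $(1-w)^{-\lambda-1}=(1-w)\,(1-w)^{-\lambda}$ then reduces to the elementary relation $a^\lambda_k-a^\lambda_{k-1}=a^{\lambda-1}_k$, which follows from
\[
\frac{(\lambda+1)\cdots(\lambda+k-1)}{k!}\big((\lambda+k)-k\big)=a^{\lambda-1}_k,
\]
valid for every complex $\lambda$, including the negative integers. Substituting $w=\xi_jz$ and multiplying over $j$ is legitimate formally, so no convergence issue arises.
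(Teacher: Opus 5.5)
Your approach, a generating function and coefficient comparison, is the same one the paper uses, but the key step is wrong. Worse, the identity as printed is false, so the argument cannot be repaired as stated.

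\textbf{The error.} Your formula $F_\lambda(z)=\prod_{j}(1-\xi_jz)^{-\lambda-1}$ is correct, but the next line is not. Since $F_{\lambda-1}(z)=\prod_j(1-\xi_jz)^{-\lambda}$, you get $P(z)F_{\lambda-1}(z)=P(z)^{1-\lambda}=F_{\lambda-2}(z)$, not $F_\lambda(z)$. The correct relation is $F_{\lambda-1}=P\,F_\lambda$.

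Your formal-series paragraph repeats the slip: $(1-w)(1-w)^{-\lambda}=(1-w)^{1-\lambda}\neq(1-w)^{-\lambda-1}$. The relation you actually verify, $a^\lambda_k-a^\lambda_{k-1}=a^{\lambda-1}_k$, is correct, but it is the coefficient form of $(1-w)\,(1-w)^{-\lambda-1}=(1-w)^{-\lambda}$, which is the reverse statement. Your claimed identity would instead require $a^\lambda_k=a^{\lambda-1}_k-a^{\lambda-1}_{k-1}$.

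\textbf{The printed statement is false.} Take $\lambda=0$. Then $F_{-1}\equiv 1$, so $A^{-1}_j=\delta_{j,0}$, and the right-hand side vanishes for every $k>N$. Meanwhile $A^0_k$ is the $z^k$-coefficient of $1/P(z)$, which is nonzero for infinitely many $k$ because $1/P$ is not a polynomial. Concretely, for $N=1$, $\xi_1=1$, $k=1$: the left side is $a^0_1=1$ and the right side is $a^{-1}_1-a^{-1}_0=-1$.

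\textbf{What your argument does prove.} Run with $F_{\lambda-1}=P\,F_\lambda$, it gives
\[
A^{\lambda-1}_k=\sum_{r=0}^{N}(-1)^r e_r A^{\lambda}_{k-r}\qquad(k\ge N),
\]
equivalently $A^{\lambda}_k=\sum_{r=0}^N(-1)^re_rA^{\lambda+1}_{k-r}$. Your verified coefficient relation is exactly the case $N=1$ of this.

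This corrected identity is what the paper's own proof establishes. It writes $P(z)^{\mu}=\sum_k A^{-\mu-1}_k z^k$, expands $P^{\lambda+1}=P^{\lambda}\cdot P$, and compares coefficients. Its final line reads $A^\lambda_k=\sum_r(-1)^re_rA^{\lambda+1}_{k-r}$, which is inconsistent with the lemma as printed. It is also the direction needed later, where applying the difference operator to $A^{i\gamma}$ is supposed to produce $A^{i\gamma-1}$. So keep your method, reverse the roles of $\lambda$ and $\lambda-1$ in the conclusion, and flag the typo in the statement.
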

\begin{proof}
Consider the polynomial
\begin{align*}
    P(z)=(1-\xi_1z)(1-\xi_2z)\dots(1-\xi_Nz).
\end{align*}
Let us define
\begin{align*}
    P(z)^{\lambda}:=(1-\xi_1z)^{\lambda}(1-\xi_2z)^{\lambda}\dots(1-\xi_Nz)^{\lambda}.
\end{align*}
For any complex number $\lambda$ and $|z|<1$, we have
$P(z)^{\lambda}=\sum^{\infty}_{k=0}A^{-\lambda-1}_kz^k.$
Therefore
\begin{align*}
    P(z)^{\lambda+1}&=P(z)^{\lambda}P(z)\\&= \Big(\sum^{\infty}_{k=0}A^{-\lambda-1}_kz^k\Big)\Big(\sum^N_{r=0}(-1)^re_r(\xi_1,\dots,\xi_N)z^r\Big)\\&=\sum^N_{r=0}\sum^{\infty}_{k=0}(-1)^re_rA^{-\lambda-1}_kz^{k+r}\\&=\sum^N_{r=0}\sum^{\infty}_{k=r}(-1)^re_rA^{-\lambda-1}_{k-r}z^{k}
\end{align*}
Now comparing the coefficient of L.H.S and R.H.S in the above equation, we have
\begin{align*}
    A^{-\lambda-2}_k=\sum^N_{r=0}(-1)^re_rA^{-\lambda-1}_{k-r}
\end{align*}
where $k\geq N.$
Now putting $-\lambda-2$ in the place of $\lambda$, we have 
\begin{align*}
    A^{\lambda}_k=\sum^N_{r=0}(-1)^re_rA^{\lambda+1}_{k-r}
\end{align*}
This completes the proof of the lemma.
\end{proof}


\begin{lemma}\label{backward-difference-of-S-n-lambda}
    Let $S^{-m}_n$ be as defined in equation \eqref{all-required-definition-Here}. Then, we have $S^{-m}_n=\sum^N_{r=0}(-1)^re_rS^{-m+1}_{n-r},$ where $n\geq mN.$
    \end{lemma}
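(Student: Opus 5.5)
We derive the identity directly from Lemma \ref{lem:negative-integer-means}, and in parallel via a coefficient check using Lemma \ref{backward-difference-of-A-k-lambda}, so that the index restriction $n\geq mN$ is handled cleanly.

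\emph{Coefficient route.} By definition, $S^{-m}_n=\sum_{k=0}^n A^{-m-1}_{n-k}T^k$. We apply Lemma \ref{backward-difference-of-A-k-lambda} with $\lambda=-m-1$, which gives
\[
A^{-m-1}_{j}=\sum_{r=0}^N(-1)^re_rA^{-m}_{j-r}
\]
for $j\geq N$. For $j<N$ the same generating-function argument works once we adopt the convention $A^{\mu}_{i}=0$ for $i<0$. Indeed, the product $P(z)^{m}=P(z)^{m-1}P(z)$ holds as a power-series identity, so the convolution formula is valid for all $j\geq 0$ under this convention. Substituting, we get
\[
S^{-m}_n=\sum_{r=0}^N(-1)^re_r\sum_{k=0}^{n}A^{-m}_{n-r-k}T^k .
\]
For each $r$, the terms with $k>n-r$ vanish because the index is negative. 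So the inner sum is exactly $S^{-m+1}_{n-r}$, provided $n-r\geq 0$; this holds since $n\geq mN\geq N\geq r$.

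\emph{Direct route.} This serves as a check and covers $m=1$. For $m\geq 2$ we have $n-r\geq mN-N=(m-1)N$, so Lemma \ref{lem:negative-integer-means} gives
\[
S^{-m+1}_{n-r}=T^{\,n-r-(m-1)N}\,Q(T)^{m-1}, \qquad Q(T)=\prod_{j=1}^{N}(T-\xi_jI).
\]
Then
\[
\sum_r(-1)^re_rT^{n-r-(m-1)N}=T^{\,n-mN}\sum_r(-1)^re_rT^{N-r}=T^{\,n-mN}Q(T),
\]
using \eqref{symmetric-function-equation}. This reproduces $S^{-m}_n$ via Lemma \ref{lem:negative-integer-means}.

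For $m=1$ we use $S^0_{j}=\sum_{k}A^{-1}_{j-k}T^k=T^{j}$, since $A^{-1}_i=\delta_{i0}$ because $a^{-1}_i=\delta_{i0}$. The same computation then gives $S^{-1}_n=T^{n-N}Q(T)$.

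The only delicate point is bookkeeping: making sure every shifted index $n-r$ lies in the range where Lemma \ref{lem:negative-integer-means} (or the zero convention) applies. The hypothesis $n\geq mN$ is exactly what guarantees this.
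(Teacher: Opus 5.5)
Your proposal is correct. Your direct route is exactly the paper's argument. The paper writes $z^{n-Nm}\prod_j(z-\xi_j)^m=\sum_{r=0}^N(-1)^re_r\,z^{n-r-N(m-1)}\prod_j(z-\xi_j)^{m-1}$ and substitutes $T$. It then reads the right-hand side, without saying so, through Lemma \ref{lem:negative-integer-means} at level $m-1$, and this is where $n\ge mN$ is used. Your separate treatment of $m=1$ via $S^0_j=T^j$ supplies a base case the paper leaves implicit, since that lemma is only stated for $m\geq 1$.

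Your coefficient route is genuinely different. It never uses the closed form $S^{-m}_n=T^{n-mN}Q(T)^m$. It only feeds the convolution identity $A^{\mu}_j=\sum_{r=0}^N(-1)^re_rA^{\mu+1}_{j-r}$ into the definition of $S^\lambda_n$; as you note, that identity holds for all $j\ge 0$ with the zero convention, by Lemma \ref{backward-difference-of-A-k-lambda}'s generating-function argument.

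What this buys you is generality. The same computation gives $S^{\lambda}_n=\sum_{r=0}^N(-1)^re_rS^{\lambda+1}_{n-r}$ for every $\lambda\in\mathbb C$ and every $n\ge N$. So the hypothesis $n\ge mN$ is needed only to guarantee $n-r\ge 0$. The paper's route is shorter once Lemma \ref{lem:negative-integer-means} is available, and it keeps the operator-polynomial structure visible. However, it is tied to negative integer exponents and to the threshold $n\geq mN$.
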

    \begin{proof}
From Lemma \eqref{lem:negative-integer-means}, we have $S^{-m}_n=T^{n-Nm}\prod^N_{j=1}(T-\xi_1)^m\dots (T-\xi_N)^m.$ Consider the following 
\begin{align*}
    z^{n-Nm}\prod^N_{j=1}(z-\xi_j)^m&=z^{n-N(m-1)}\prod^N_{j=1}(z-\xi_j)^{m-1}\prod^N_{j=1}(1-\frac {\xi_j}{z})\\&=z^{n-{N(m-1)}}\prod^N_{j=1}(z-\xi_j)^{m-1}\Big(\sum^N_{r=0}(-1)^re_r(\xi)z^{-r}\Big)\\&=\Big(\sum^N_{r=0}(-1)^re_r(\xi)z^{n-r-{N(m-1)}}\prod\limits^N_{j=1}(z-\xi_j)^{m-1}\Big)
\end{align*}
Putting $T$ in the place of $z$, we will get our desired result.
\end{proof}
Now we are going to state a lemma that will be very useful later.
\begin{lemma}[\cite{Flajolet1990SingularityAO}, Theorem 1]\label{lemma-Flajolet-paper}
Let $\beta\in\mathbb{R}$. Fix $\eta>0$ and $0<\phi<\frac{\pi}{2}.$ Then 
\begin{align*}
\int_{\gamma_1+\gamma_2+\gamma_3+\gamma_4}|{z-1}|^{\beta}\frac{d|{z}|}{|z|^{k+1}}\lesssim_{\beta} (k+1)^{-\beta-1}
\end{align*}
where
\begin{align*}
\gamma_1
&=
\left\{
z \;\middle|\;
|z-1|=\frac1n,\;
|\Arg(z-1)|\ge \phi
\right\},\\[.3ex]
\gamma_2
&=
\left\{
z \;\middle|\;
\frac1n \le |z-1| \le 1+\eta,\;
|z|\le 1+\eta,\;
\Arg(z-1)=\phi
\right\},\\[.3ex]
\gamma_3
&=
\left\{
z \;\middle|\;
|z-1|=1+\eta,\;
|\Arg(z-1)|\ge \phi
\right\},\\[.3ex]
\gamma_4
&=
\left\{
z \;\middle|\;
\frac1n \le |z-1| \le 1+\eta,\;
|z|\le 1+\eta,\;
\Arg(z-1)=-\phi
\right\}.
\end{align*}
\end{lemma}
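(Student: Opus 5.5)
The plan is to estimate the integral over each of the four pieces separately, with the cut-off radius taken to be $n=k+1$, which is how the lemma is used in the subsequent coefficient estimates. On each piece I would combine an elementary lower bound for $|z|$ with a bound for $|z-1|^\beta$. Two readings of the statement are needed. First, $\gamma_2,\gamma_4$ are read as the rays $\Arg(z-1)=\pm\phi$ with $\phi$ the obtuse opening angle of the Hankel contour, as in \cite{Flajolet1990SingularityAO}; the literal acute $\phi<\pi/2$ only works if $\Arg$ is measured for $1-z$, so that $z=1-te^{\pm i\phi}$ lies in the disc. Second, $\gamma_3$ is read as the arc of the large circle $|z|=1+\eta$ joining the two rays, again as in \cite{Flajolet1990SingularityAO}. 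Read literally as $|z-1|=1+\eta$, $\gamma_3$ passes through $z=-\eta$, where $|z|^{-(k+1)}$ is exponentially large, so the estimate would be false. Throughout, $\beta\in\mathbb R$ is arbitrary, and constants may depend on $\beta,\eta,\phi$.

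\emph{Piece $\gamma_1$.} Here $|z-1|=1/(k+1)$, so $|z|\ge 1-1/(k+1)$. Hence $|z|^{-(k+1)}\le (1-\tfrac{1}{k+1})^{-(k+1)}\le 4$ for $k\ge1$, and the case $k=0$ is trivial. The arc has length at most $2\pi/(k+1)$, and $|z-1|^\beta=(k+1)^{-\beta}$ on it. So the contribution is $\lesssim (k+1)^{-\beta-1}$.

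\emph{Pieces $\gamma_2,\gamma_4$.} By symmetry it suffices to treat $\gamma_2$. Write $z=1+te^{i\theta}$ with $\theta$ the angle of the ray, $|\theta|>\pi/2$ (the sector opening to the right, i.e.\ away from the disc), and $1/(k+1)\le t\le 1+\eta$. Then
\[
|z|^2=1+2t\cos\theta+t^2\ge 1+ct
\]
for some $c=c(\theta,\eta)>0$. This holds because $\cos\theta$ has the sign making $z$ exit the unit disc; for the reading $z=1-te^{i\phi}$ it is $1+2t\cos\phi+t^2$ directly. Since $t$ is bounded, $|z|^{-(k+1)}\le (1+ct)^{-(k+1)/2}\le e^{-c'(k+1)t}$. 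Also $d|z|\le |dz|=dt$ along the ray. Substituting $s=(k+1)t$ gives
\[
\int_{1/(k+1)}^{1+\eta} t^\beta e^{-c'(k+1)t}\,dt\le (k+1)^{-\beta-1}\int_1^\infty s^\beta e^{-c's}\,ds.
\]
The last integral is a finite constant depending on $\beta$ for every real $\beta$, because the lower limit $1$ avoids any singularity at $0$. This is exactly where the choice $n=k+1$ of the inner radius matters.

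\emph{Piece $\gamma_3$.} Here $|z|=1+\eta$. The distance $|z-1|$ lies in $[\eta,2+\eta]$, so $|z-1|^\beta$ is bounded by a constant depending on $\beta$ and $\eta$. The length of the arc is at most $2\pi(1+\eta)$. So the contribution is $\lesssim (1+\eta)^{-(k+1)}$, which decays exponentially and is therefore $\lesssim_\beta (k+1)^{-\beta-1}$.

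Adding the four bounds gives the claim. The only genuinely delicate points are the lower bound $|z|\ge (1+ct)^{1/2}$ along the rays and the sign conventions for $\phi$ and $\gamma_3$. Once the geometry is fixed as in \cite{Flajolet1990SingularityAO}, the rest is the scaling computation above.
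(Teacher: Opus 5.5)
Your estimate on $\gamma_2,\gamma_4$ inverts the geometry, and under the reading you adopt the key inequality is false. The literal rays are already the right ones. They are exactly those of Flajolet--Odlyzko, whose domain is $\{|z|\le 1+\eta,\ |\Arg(z-1)|\ge\phi\}$ with $0<\phi<\pi/2$. On $z=1+te^{\pm i\phi}$ one has $\Re z=1+t\cos\phi>1$, so these rays leave the closed unit disc immediately, and $|z|^2=1+2t\cos\phi+t^2\ge 1+2t\cos\phi$.

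You instead replace $\phi$ by an obtuse angle $\theta$, or use $z=1-te^{\pm i\phi}$, which you yourself say lies in the disc. Then $\cos\theta<0$, so $|z|^2=1-2t|\cos\theta|+t^2<1$ for $0<t<2|\cos\theta|$. Consequently:
\begin{itemize}
\item your bound $|z|^2\ge 1+ct$ fails;
\item the remark that $\cos\theta$ ``has the sign making $z$ exit the unit disc'' has the sign backwards;
\item $|1-te^{i\phi}|^2$ equals $1-2t\cos\phi+t^2$, not $1+2t\cos\phi+t^2$.
\end{itemize}
This is not a harmless slip. At $t=|\cos\theta|$ one gets $|z|=|\sin\theta|<1$, so $|z|^{-(k+1)}$ is exponentially large on a stretch of the ray whose length does not depend on $k$. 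The integral over your version of $\gamma_2$ therefore grows exponentially in $k$, and no bound $(k+1)^{-\beta-1}$ can hold.

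The repair is to keep the statement's acute rays. Your scaling computation then goes through verbatim with $\theta=\pm\phi$. For $0\le t\le 1+\eta$ one has $(1+2t\cos\phi)^{-(k+1)/2}\le e^{-c'(k+1)t}$ with $c'=\cos\phi/(1+2(1+\eta)\cos\phi)$. The substitution $s=(k+1)t$ then gives $(k+1)^{-\beta-1}\int_1^\infty s^\beta e^{-c's}\,ds$.

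With this fix, your argument is exactly the four-piece proof of Flajolet--Odlyzko, to which the paper defers without giving a proof of its own. Your correction of $\gamma_3$ to the arc $|z|=1+\eta$ is right. The literal circle $|z-1|=1+\eta$ passes through $z=-\eta$. It also does not meet the far ends of $\gamma_2,\gamma_4$, which lie on $|z|=1+\eta$ because $|z-1|<|z|$ along the acute rays.

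One smaller point: with $n=k+1$, the case $k=0$ is not trivial but false. In that case $\gamma_1$ is an arc of $|z-1|=1$ passing through $z=0$, and $\int_{\gamma_1}|z|^{-1}\,|dz|$ diverges logarithmically. Choosing $n=k+2$ keeps $|z|\ge\frac12$ on $\gamma_1$ and changes the other estimates only by constants.
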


\begin{lemma}\label{final-estimates-of-A-k-lambda}
     Let $A^{\lambda}_k$, $\lambda=\beta+i\gamma $ be defined in \eqref{all-required-definition-Here}. Then $|A^{\lambda}_k|\lesssim_{\beta}Ne^{2\pi|\gamma|}(k+1)^\beta$.
\end{lemma}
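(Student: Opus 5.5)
We plan to use the generating function already implicit in the proof of Lemma \ref{backward-difference-of-A-k-lambda}. Since $\sum_{k\ge 0}a^\lambda_k w^k=(1-w)^{-\lambda-1}$ for $|w|<1$, the definition of $A^\lambda_k$ as a Cauchy product gives
\[
F_\lambda(z):=\sum_{k\ge0}A^\lambda_k z^k=\prod_{j=1}^N(1-\xi_j z)^{-\lambda-1},\qquad |z|<1 .
\]
Cauchy's formula then gives $A^\lambda_k=\frac{1}{2\pi i}\oint F_\lambda(z)z^{-k-1}\,dz$. We will deform the contour into a Flajolet--Odlyzko type contour. It bulges out to the circle $|z|=1+\eta$, and near each singularity $\overline{\xi_j}$ it has an indentation of the shape $\gamma_1+\dots+\gamma_4$ of Lemma \ref{lemma-Flajolet-paper}, rotated by $\overline{\xi_j}$, with the small circle of radius $\frac1{k+1}$.

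We choose $\eta>0$ and $\phi$ small enough, depending on the $\xi_j$, so that the $N$ indentations are disjoint and $F_\lambda$ is analytic on the resulting slit region. Here we use branches of $(1-\xi_jz)^{-\lambda-1}$ with cuts along the rays $\{t\overline{\xi_j}:t\ge1\}$. On the outer arc, $|F_\lambda|$ is bounded by a constant $C_{\beta}$ times a factor $e^{c|\gamma|}$, and $|z|^{-k-1}\le(1+\eta)^{-k-1}$. So this piece contributes $O((k+1)^{\beta})$, with room to spare.

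The main step is the contribution near a fixed singularity $\overline{\xi_j}$. There we write $F_\lambda(z)=(1-\xi_jz)^{-\lambda-1}G_j(z)$, where $G_j=\prod_{l\ne j}(1-\xi_lz)^{-\lambda-1}$ is analytic and bounded, by $C_\beta e^{c|\gamma|}$, on the $j$-th indentation because the other singularities lie at positive distance. The key pointwise bound is
\[
|(1-\xi_jz)^{-\lambda-1}|=|1-\xi_jz|^{-\beta-1}e^{\gamma\arg(1-\xi_jz)}\le |1-\xi_j z|^{-\beta-1}e^{\pi|\gamma|},
\]
since the argument lies in $(-\pi,\pi)$ for the principal branch. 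Substituting $w=\xi_j z$, which preserves $|z|$, reduces the contour integral to $\int|w-1|^{-\beta-1}\,d|w|/|w|^{k+1}$ over $\gamma_1+\dots+\gamma_4$. By Lemma \ref{lemma-Flajolet-paper}, applied with exponent $-\beta-1$, this is $\lesssim_\beta (k+1)^{\beta}$.

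Summing over the $N$ singularities gives the factor $N$, and the exponential factors combine into at most $e^{2\pi|\gamma|}$: one factor $e^{\pi|\gamma|}$ comes from the singular term, and we must keep the factors from the $G_j$ and the outer arc within another $e^{\pi|\gamma|}$. The obstacle is this uniformity. The arguments of all the factors $(1-\xi_lz)$ must be controlled simultaneously on the whole contour, so that the total phase stays in a window of length $2\pi$, and the implicit constants must not depend on $\gamma$.

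Our first attempt is to use $|\arg(1-\xi_l z)|<\pi$ for every factor. This produces $e^{N\pi|\gamma|}$, which is harmless for the later complex interpolation because any bound of the form $e^{c|\gamma|}$ is admissible. If the sharper $e^{2\pi|\gamma|}$ is required, we would refine the estimate by noting that on the contour only the factor with the nearby singularity can have an argument close to $\pm\pi$.
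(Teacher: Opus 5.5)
Your route is the paper's route: Cauchy's formula for the coefficients of $\prod_{j}(1-\xi_jz)^{-\lambda-1}$ on an $N$-fold Flajolet--Odlyzko contour, the rotation $w=\xi_jz$ at each indentation, and Lemma~\ref{lemma-Flajolet-paper} with exponent $-\beta-1$. You diverge only in the $\gamma$-bookkeeping, and there your version is the sound one. The paper reaches $e^{2\pi|\gamma|}$ by going from $\int_{\xi_1\gamma_{\delta_1}}(z-1)^{\lambda}\cdots(z-\xi_1\overline{\xi_N})^{\lambda}z^{-k-1}\,dz$ to $\int|(z-1)^{\lambda}|\,d|z|/|z|^{k+1}$. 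In doing so it treats the factors $(z-\xi_1\overline{\xi_l})^{\lambda}$, $l\neq 1$, and the prefactor $\prod_j\xi_j^{\lambda}$ as constants independent of $\gamma$, although their moduli contain $e^{-\gamma\arg(\cdot)}$. Keeping these factors, as you do, correctly gives $|A^{\lambda}_k|\lesssim N e^{N\pi|\gamma|}(k+1)^{\beta}$. The constant depends on $\beta$, $\eta$, $\phi$ and the configuration of the $\xi_j$; that dependence, hidden in the paper's $\lesssim_\beta$, is itself unavoidable.

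Do not pursue the sharpening to $e^{2\pi|\gamma|}$. Your refinement only improves each nonsingular factor to $e^{\pi|\gamma|/2}$, leaving $e^{(N+1)\pi|\gamma|/2}$. The reason is that on the $j$-th indentation $1-\xi_lz$ is close to $1-\xi_l\overline{\xi_j}$, whose argument $(\psi-\pi)/2$ (where $\xi_l\overline{\xi_j}=e^{i\psi}$, $0<\psi<2\pi$) can be arbitrarily close to $\pm\pi/2$. In fact no argument can give an $N$-independent exponent. Singularity analysis gives, for fixed $\lambda$ with $\gamma\neq 0$ and $G_j=\prod_{l\neq j}(1-\xi_lz)^{-\lambda-1}$,
\[
A^{\lambda}_k=\frac{k^{\lambda}}{\Gamma(\lambda+1)}\sum_{j=1}^{N}G_j(\overline{\xi_j})\,\xi_j^{k}+O(k^{\beta-1}).
\]
Since the $\xi_j$ are distinct points of $\mathbb{T}$, this forces $\sup_k|A^{\lambda}_k|(k+1)^{-\beta}\ge|\Gamma(\lambda+1)|^{-1}\max_j|G_j(\overline{\xi_j})|$. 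Now take $\xi_1=1$ and $\xi_l=e^{i\psi_l}$ with small $\psi_l>0$ for $l\ge 2$. For $\gamma<0$ one gets $|G_1(1)|=C\exp\big(|\gamma|\sum_{l\ge 2}\frac{\pi-\psi_l}{2}\big)$, while $|\Gamma(\lambda+1)|^{-1}\sim(2\pi)^{-1/2}|\gamma|^{-\beta-1/2}e^{\pi|\gamma|/2}$. The growth is therefore essentially $e^{N\pi|\gamma|/2}$, which exceeds $e^{2\pi|\gamma|}$ as soon as $N\ge 5$.

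So the lemma should be stated with $e^{c_N|\gamma|}$; your $c_N=N\pi$ is fine. As you observed, this costs nothing later. In Lemma~\ref{p-estimates-for-s-n}, Theorem~\ref{firsttimerealiseddiofferentialop} and the three-lines argument, the imaginary part of the exponent is $\gamma+t(b-m)$. The Gaussian factor $e^{\delta(z^2-\theta^2)}$ absorbs any bound that is exponential in that imaginary part.
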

\begin{proof}
Choose some angles $0\le\delta_i,\phi\le\frac{\pi}{2},i=1,\dots,N,N+1$ and  define
\begin{align*}
\gamma^1_{\delta_i}
&=
\left\{
z \;\middle|\;
|z-\overline{\xi_i}|=\frac1n,\;
|\Arg(z-\overline{\xi_i})|\ge \phi
\right\},\\[.3ex]
\gamma^2_{\delta_i}
&=
\left\{
z \;\middle|\;
\frac1n \le |z-\overline{\xi_i}| \le 1+\eta,\;
|z|\le 1+\eta,\;
\Arg(z-\overline{\xi_i})=\phi
\right\},\\[.3ex]
\gamma^3_{\delta_i^{+}}
&=
\left\{
z \;\middle|\;
|z-\overline{\xi_i}|=1+\eta,\;
\phi \le \Arg(z-\overline{\xi_i}) \le \phi+\delta_i
\right\},\\[.3ex]
\gamma^3_{\delta_i^{-}}
&=
\left\{
z \;\middle|\;
|z-\overline{\xi_i}|=1+\eta,\;
-\phi-\delta_i \le \Arg(z-\overline{\xi_i}) \le -\phi
\right\},\\[.3ex]
\gamma^3_{\delta_i}
&=
\gamma^3_{\delta_i^{+}}\cup\gamma^3_{\delta_i^{-}},\\[.3ex]
\gamma^4_{\delta_i}
&=
\left\{
z \;\middle|\;
\frac1n \le |z-\overline{\xi_i}| \le 1+\eta,\;
|z|\le 1+\eta,\;
\Arg(z-\overline{\xi_i})=-\phi
\right\}.
\end{align*}
such that $\gamma^3_{\delta^{+}_i}$ and $\gamma^3_{\delta^{-}_{i+1}}$ intersects in one point and $\delta_{N+1}=\delta_1$. Also we define $\gamma_{\delta_i}:=\cup^4_{j=1}\gamma^j_{\delta_i}$ and $O'=\sum^N_{i=1}\gamma_{\delta_i}.$
Also assume that $O'$ is positively oriented.
Let us define $P(z)^{\lambda}:=(1-\xi_1z)^{\lambda}\dots(1-\xi_Nz)^{\lambda }.$
Note that $P(z)^{\lambda}=\sum^{\infty} _{k=0}A^{-\lambda-1}_kz^{k}.$ Therefore, by Cauchy integral formula we have 
\begin{align}\label{A-lambda-minus-one}
    A^{-\lambda-1}_k=\frac{1}{2\pi i}\int_{O'}P(z)^{\lambda}\frac {dz}{z^{n+1}}=\sum^N_{i=1}\frac{1}{2\pi i}\int_{\gamma_{\delta_i}}P(z)^{\lambda}\frac{dz}{z^{n+1}}.
\end{align}
Now try to estimate the first term of the above equation 
\begin{align*}&\abs{\frac{1}{2\pi i}\int_{\gamma_{\delta_1}}(1-\xi_1z)^{\lambda}\dots(1-\xi_Nz)^{\lambda }\frac{dz}{z^{k+1}}}\\
    &=\abs{\prod^N_{j=1}\xi^{\lambda}_j\frac{1}{2\pi i}\int_{\gamma_{\delta_1}}(z-\overline{\xi_1})^{\lambda}\dots(z-\overline{\xi_N})^{\lambda }\frac{dz}{z^{k+1}}}\\&=\abs{\frac{1}{2\pi i}\prod^N_{j=1}\xi^{\lambda}_j\int_{{\xi_1}\gamma_{\delta_1}}(\overline{\xi_1}z-\overline{\xi_1})^{\lambda}\dots(\overline{\xi_1}z-\overline{\xi_N})^{\lambda }\frac{\overline{\xi_1}}{\overline{\xi_1}^{k+1}}\frac{dz}{z^{k+1}}}\\&=\abs{\frac{1}{2\pi i}\int_{{\xi_1}\gamma_{\delta_1}}(z-1)^{\lambda}\dots(z-{\xi_1}\overline{\xi_N})^{\lambda }\frac{dz}{z^{k+1}}}\\&\lesssim\int_{{\xi_1}\gamma_{\delta_1}}\Big\lvert(z-1)^{\lambda}\Big\rvert\frac{d\lvert z \rvert}{\lvert{z}\rvert^{k+1}}
    \lesssim\int_{{\xi_1}\gamma_{\delta_1}}\abs{(z-1)^{\beta}}\left\lvert  e^{i\gamma \Big(ln(\lvert z-1\rvert)+iArg(z-1)\Big)}\right \rvert \frac{d\lvert{z}\rvert}{|{z}|^{k+1}}\\&\lesssim e^{2\pi|\gamma|}\int_{{\xi_1}\gamma_{\delta_1}}|{z-1}^{\beta}\frac{d|{z}|}{|{z}|^{k+1}}\lesssim e^{2\pi|\gamma|}\int_{\gamma_1\cup\gamma_2\cup\gamma_3\cup\gamma_4}|{z-1}|^{\beta}\frac{d\lvert{z}\rvert}{\lvert{z}\rvert}^{k+1} e^{\pi\gamma}\\&\underset{\eqref{lemma-Flajolet-paper}}{\lesssim_{\beta}} e^{2\pi|\gamma|}(k+1)^{-\beta-1}
\end{align*}
Similarly, we have 
$
\displaystyle{\abs{\frac{1}{2\pi i}\int_{\gamma_{\delta_i}}(1-\xi_1z)^{\lambda}\dots(1-\xi_Nz)^{\lambda }\frac{dz}{z^{k+1}}}\lesssim_{\beta} e^{2\pi|\gamma|}(k+1)^{-\beta-1}},$ where $i=2,\dots,N.$ Now replacing $\lambda$ by $-\lambda-1$ and putting the all estimates in equation \eqref{A-lambda-minus-one}, we get the desired estimate.
\end{proof}
\begin{lemma}\label{lemma-for-alpha-beta}
Let $S^{\lambda}_n$ be defined in equation \eqref{all-required-definition-Here}. Then we have that $S^{\alpha+\beta}_{n}f=\sum\limits^n_{i=0}A^{\alpha-1}_{n-i}S^{\beta}_{i}f$ for all $\alpha,\beta\in\mathbb C.$
\end{lemma}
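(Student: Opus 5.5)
The plan is to prove the identity by generating functions, exactly as in the proofs of Lemma \ref{backward-difference-of-A-k-lambda} and Lemma \ref{final-estimates-of-A-k-lambda}. The key observation is that $A^{\lambda}_k$ is the $k$-th Taylor coefficient of a power of $P(z)=\prod_{j=1}^N(1-\xi_jz)$. Indeed, since $a^\lambda_m$ is the coefficient of $z^m$ in $(1-z)^{-\lambda-1}$, we have $\sum_m a^\lambda_m\xi_j^m z^m=(1-\xi_jz)^{-\lambda-1}$. Multiplying over $j$ gives, for $|z|<1$ (principal branches, as in the paper),
\[
\sum_{k\geq 0}A^{\lambda}_k z^k=P(z)^{-\lambda-1},\qquad\text{hence}\qquad \sum_{k\geq 0}A^{\lambda-1}_k z^k=P(z)^{-\lambda}.
\]
The first step is to record this formula for all $\lambda\in\mathbb C$. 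The only point to watch is that $(1-\xi_jz)^{\mu}(1-\xi_jz)^{\nu}=(1-\xi_jz)^{\mu+\nu}$ holds on $\mathbb D$. This is clear because $1-\xi_jz$ lies in the right half plane, so the principal logarithm is holomorphic and additive there.

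Next, I will encode the operator sequence $(S^\lambda_n f)_n$ by a formal generating series in the variable $z$ with coefficients in $X$ (or $L_p$). By definition $S^\lambda_n f=\sum_{k=0}^n A^{\lambda-1}_{n-k}T^kf$ is a Cauchy product. Formally,
\[
\sum_{n\geq 0}S^{\lambda}_n f\, z^n=\Big(\sum_{m\geq 0}A^{\lambda-1}_m z^m\Big)\Big(\sum_{k\geq0}T^kf\,z^k\Big)=P(z)^{-\lambda}\,G(z),\qquad G(z):=\sum_{k\ge0}T^kf\,z^k.
\]
Because $T$ is a contraction, $G$ converges for $|z|<1$. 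Alternatively, one can avoid convergence issues altogether by working with formal power series, since only finitely many terms enter each coefficient.

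Then $P(z)^{-(\alpha+\beta)}G(z)=P(z)^{-\alpha}\cdot\big(P(z)^{-\beta}G(z)\big)$. Expanding the right side as a Cauchy product of $\sum_m A^{\alpha-1}_m z^m$ and $\sum_i S^\beta_i f\,z^i$, and comparing the coefficients of $z^n$, gives $S^{\alpha+\beta}_nf=\sum_{i=0}^nA^{\alpha-1}_{n-i}S^\beta_if$. As a direct check avoiding series, one can substitute the definitions and interchange the finite sums. Since $\sum_{i=k}^{n} A^{\alpha-1}_{n-i}A^{\beta-1}_{i-k}=A^{\alpha+\beta-1}_{n-k}$, this reduces to the scalar convolution identity
\[
\sum_{r+s=m}A^{\alpha-1}_rA^{\beta-1}_s=A^{\alpha+\beta-1}_m,
\]
which is the coefficient form of $P^{-\alpha}P^{-\beta}=P^{-\alpha-\beta}$.

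The main (mild) obstacle is justifying that convolution identity for arbitrary complex $\alpha,\beta$, i.e.\ the multiplicativity of the complex powers. I will handle it as above, via the principal branch on the disc. Alternatively, both sides are polynomials in $(\alpha,\beta)$ for fixed $m$, since $a^\lambda_r$ is polynomial in $\lambda$, so it suffices to verify the identity for nonnegative integers $\alpha,\beta$, where it is trivial algebra of power series.
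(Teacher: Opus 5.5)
Your proposal is correct and follows the paper's own route. The paper likewise uses the generating function $P(z)^{-\lambda}=\sum_k A^{\lambda-1}_k z^k$, derives the convolution identity $A^{\alpha+\beta-1}_m=\sum_{r+s=m}A^{\alpha-1}_rA^{\beta-1}_s$ from $P^{-\alpha}P^{-\beta}=P^{-\alpha-\beta}$, and then substitutes into $S^{\alpha+\beta}_n f$ and interchanges the finite sums. Your remark on the principal branch of $\log(1-\xi_j z)$, or alternatively the polynomial-in-$(\alpha,\beta)$ argument, justifies the multiplicativity of the complex powers, which the paper uses without comment.
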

\begin{proof} 
   For any complex no $\alpha$, we have $P(z)^{-\alpha}=\sum^{\infty}_{n=0}A^{\alpha-1}_nz^n$ where $|{z}|<1$ and $P^\lambda$ is as in Lemma \eqref{final-estimates-of-A-k-lambda}.
   From definition of $P(z)^{-\alpha}$, we have 
   \begin{align*}
       {P(z)}^{-\alpha-\beta}&=P(z)^{-\alpha}P(z)^{-\beta}\\
&=\Big(\sum^{\infty}_{n=0}A^{\alpha-1}_nz^n\Big)\Big(\sum^{\infty}_{n=0}A^{\beta-1}_nz^n\Big)=\sum^{\infty}_{n=0}\Big(\sum^n_{k=0}A^{\alpha-1}_kA^{\beta-1}_{n-k}\Big)z^n.
   \end{align*}
   Therefore, we obtain that      $A^{\alpha+\beta-1}_n=\sum^n_{k=0}A^{\alpha-1}_kA^{\beta-1}_{n-k}.$
   So, we have 
   \begin{align*}
       S^{\alpha+\beta}_nf&=\Big(\sum^{n}_{k=0}A^{ \alpha+\beta-1}_{n-k}T^{k}f\Big)\\&=\sum^n_{k=0}\Big(\sum^{n-k}_{i=0}A^{\alpha-1}_iA^{\beta-1}_{n-k-i}\Big)T^{k}f\\&=\sum^n_{k=0}\sum^{n-k}_{i=0}A^{\alpha-1}_iA^{\beta-1}_{n-k-i}T^{k}f\\&=\sum^n_{i=0}\sum^{n-i}_{k=0}A^{\alpha-1}_iA^{\beta-1}_{n-k-i}T^kf=\sum^n_{i=0}A^{\alpha-1}_iS^{\beta}_{n-i}f=\sum^n_{i=0}A^{\alpha-1}_{n-i}S^{\beta}_{i}f.
   \end{align*}
    This completes the proof of the lemma.
\end{proof}
For any sequence $a:=(a_n)_n$ let us define $\bm{\Delta}_n(a):=a_n-a_{n-1}$ and $\bf{\Delta^m_n}:=\Delta^{m-1}_n(\Delta_n).$ Let us define another differential operator 
\begin{equation}\label{equation-D-n}
D_n=\sum\limits^N_{i=0}\alpha_i{\bf{\Delta^i_n}},\text{and}\ D^m_n:=D^{m-1}_n(D_n),\ \text{where}\ \alpha_i=(-1)^i\sum\limits^N_{r=i}(-1)^r\binom{r}{i}e_r.
\end{equation}
\begin{lemma}\label{D-n-m-to-s-n-m}Let $D^m_n$ be defined in \eqref{equation-D-n}. Then, we have \begin{align}\label{D-n-m}
        D^m_n(S)=T^{n-mN}\prod^N_{j=1}(T-\xi_jI)^m=S^{-m}_n,\ \text{where}\ S=(T^n)_n.
    \end{align}
\end{lemma}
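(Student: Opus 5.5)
The plan is to treat $D_n$ as a linear, translation-invariant difference operator acting on operator-valued sequences, and to prove the identity first for $m=1$ by a polynomial identity, then for general $m$ by induction. Throughout, $n\geq mN$, so that every power of $T$ that appears is nonnegative. All operators involved are polynomials in $T$, so they commute.

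\emph{Step 1 (action of backward differences on $S$).} For the sequence $S=(T^k)_k$ I will check by induction on $i$ that
\[
{\bf\Delta^i_n}(S)=T^{n-i}(T-I)^i \qquad (n\geq i).
\]
Indeed, ${\bf\Delta}_n(S)=T^n-T^{n-1}=T^{n-1}(T-I)$, and applying ${\bf\Delta}$ once more to the sequence $(T^{k-i}(T-I)^i)_k$ gives $T^{n-i-1}(T-I)^{i+1}$. More generally, the same computation shows the following. If $R$ is a fixed polynomial in $T$ and $b_k=T^{k-s}R$, then ${\bf\Delta^i_n}(b)=T^{n-s-i}(T-I)^iR$. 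Consequently
\[
D_n(b)=T^{n-s-N}\Big(\sum_{i=0}^N\alpha_iT^{N-i}(T-I)^i\Big)R.
\]

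\emph{Step 2 (the key polynomial identity).} I must show
\[
\sum_{i=0}^N\alpha_i z^{N-i}(z-1)^i=\prod_{j=1}^N(z-\xi_j)=:Q(z).
\]
By \eqref{symmetric-function-equation}, $Q(z)=z^N\sum_{r=0}^N(-1)^re_rw^r$ with $w=z^{-1}$. Writing $w=1-(1-w)$ and expanding binomially gives
\[
w^r=\sum_{i=0}^r\binom{r}{i}(-1)^i(1-w)^i.
\]
Exchanging the order of summation,
\[
Q(z)=z^N\sum_{i=0}^N\Big((-1)^i\sum_{r=i}^N(-1)^r\binom ri e_r\Big)\Big(1-\tfrac1z\Big)^i=\sum_{i=0}^N\alpha_iz^{N-i}(z-1)^i,
\]
which is exactly the definition of $\alpha_i$ in \eqref{equation-D-n}. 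This is an identity of polynomials in $z$, so substituting $T$ for $z$ is legitimate. Combined with Step 1 (with $s=0$, $R=I$), it yields $D_n(S)=T^{n-N}Q(T)$ for $n\geq N$.

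\emph{Step 3 (induction on $m$).} Suppose $D^{m-1}_k(S)=T^{k-(m-1)N}Q(T)^{m-1}$ for all $k\geq (m-1)N$. Apply $D_n$ to this sequence, using Step 1 with $s=(m-1)N$ and $R=Q(T)^{m-1}$, together with Step 2. This gives
\[
D^m_n(S)=T^{n-mN}Q(T)^m \qquad (n\geq mN);
\]
only the terms with index $k\geq n-N\geq (m-1)N$ are used, so the induction hypothesis applies. The final equality with $S^{-m}_n$ is then exactly Lemma \ref{lem:negative-integer-means}.

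The main obstacle is Step 2: correctly matching the coefficients $\alpha_i$ (including the sign $(-1)^i$) with the binomial re-expansion around $z=1$. The rest is bookkeeping of index ranges, ensuring that $n\geq mN$ keeps every exponent nonnegative.
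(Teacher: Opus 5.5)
Your proof is correct and follows the paper's argument: the paper also proves the case $m=1$ by exchanging the order of summation in the definition of $\alpha_i$, and then inducts on $m$ in the same way you do, pulling out $\prod_{j=1}^N(T-\xi_jI)^m$ and reusing the $m=1$ computation. The only difference is cosmetic. The paper performs the binomial resummation at the operator level, writing $\sum_{i=0}^r(-1)^i\binom{r}{i}\bm{\Delta}^i_n=(I-\bm{\Delta}_n)^r$, the backward shift by $r$, which sends $S$ to $T^{n-r}$. You instead first compute $\bm{\Delta}^i_n(S)=T^{n-i}(T-I)^i$ and then verify the equivalent polynomial identity by expanding $z^{-r}$ around $z=1$. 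Your explicit bookkeeping of the range $n\ge mN$ is a welcome addition that the paper leaves implicit.
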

\begin{proof}
    To prove the lemma, we will proceed by induction on $m$. So, definition of $D^m_n$ we have 
    \begin{align*}
        D_n(S)&=\sum^N_{i=0}(-1)^i\sum^N_{r=i}(-1)^r\binom{r}{i}e_r(\xi_1,\dots,\xi_N)\bm{\Delta}^i_n(S)\\&=\sum^N_{r=0}\sum^r_{i=0}(-1)^{r+i}\binom{r}{i}e_r\bm{\Delta}^i_n(S)\\&=\sum^N_{r=0}(-1)^re_r\sum^r_{i=0}(-1)^{i}\binom{r}{i}\bm{\Delta}^i_n(S)\\&=\sum^N_{r=0}(-1)^re_r(I(S)-\bm{\Delta}_n(S))^r\\&=\sum^N_{r=0}(-1)^re_rT^{n-r}=T^{n-N}\prod^N_{j=1}(T-\xi_jI)\end{align*}
        Therefore, equation \eqref{D-n-m} holds for $m=1.$ Let it hold for $m.$
        Consider the following
        \begin{align*}
            D^{m+1}_n(S)&=D_n\Big(\Big(T^{n-mN}\prod^N_{j=1}(T-\xi_jI)^m\Big)_n\Big)\\&=D_n\Big(\Big(T^{n-mN}\Big)_n\Big)\prod^N_{j=1}(T-\xi_jI)^m\\&=T^{n-(m+1)N}\prod^N_{j=1}(T-\xi_jI)^{m+1}
        \end{align*}
        This completes the proof of lemma.
\end{proof}
\begin{lemma}\label{p-estimates-for-s-n}
     Consider the $M^{\lambda}_n$,$\lambda=\beta+i\gamma$ which is defined in equation \eqref{all-required-definition-Here}. Then $\Big\lVert \sup\limits_{n\geq 0}\!^{+}M^{\lambda}_nf\Big \rVert_{p}\lesssim_{\beta}e^{2\pi\gamma^2}\lVert f\rVert_p$, $1<p<\infty$, where $\beta>1.$
\end{lemma}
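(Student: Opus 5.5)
The approach is to dominate $M^{\lambda}_n$, for $\Re\lambda=\beta>1$, by a positive combination of ordinary ergodic averages of the rotated operators $\xi_j T$ (or of $T$ itself), and then invoke Lemma \ref{saclarmultijungexu} together with Lemma \ref{maxcomingoutjuxulem}.

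The key input is the coefficient bound of Lemma \ref{final-estimates-of-A-k-lambda} applied with $\lambda-1$ in place of $\lambda$. It gives
\[
|A^{\lambda-1}_{n-k}|\lesssim_\beta N e^{2\pi|\gamma|}(n-k+1)^{\beta-1}\le N e^{2\pi|\gamma|}(n+1)^{\beta-1},
\]
valid for $0\le k\le n$ because $\beta-1>0$.

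\textbf{Step 1 (reduction to positive elements).} First reduce to $f\in L_p(\mathcal M)_+$, as in the proof of Lemma \ref{saclarmultijungexu}, using \cite[Part (i), Proposition 2.1]{junge-Xu}. Then use the duality description of Lemma \ref{sumofsixoreight}: take positive $y_n$ with $\|\sum_n y_n\|_{p'}\le 1$ and estimate
\[
\Big|\sum_n (n+1)^{-\lambda}\sum_{k=0}^n A^{\lambda-1}_{n-k}\,\tau(T^k f\, y_n)\Big|
\le \sum_n (n+1)^{-\beta}\sum_{k=0}^n |A^{\lambda-1}_{n-k}|\,\tau(T^k f\, y_n).
\]
Here $\tau(T^k f\,y_n)\ge0$ because $T$ is positive and $f,y_n\ge0$. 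Inserting the coefficient bound, the right side is at most
\[
C_\beta N e^{2\pi|\gamma|}\sum_n \frac{1}{n+1}\sum_{k=0}^n\tau(T^kf\,y_n)
=C_\beta N e^{2\pi|\gamma|}\sum_n \tau(M_n(T)f\, y_n).
\]
By Lemma \ref{sumofsixoreight} and the Junge--Xu Theorem \ref{JungeXu}, this is $\lesssim_\beta e^{2\pi|\gamma|}\|f\|_p$.

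\textbf{Step 2 (the exponent).} The bound $e^{2\pi|\gamma|}$ is dominated by $C e^{2\pi\gamma^2}$, since $|\gamma|\le \gamma^2+1$. This yields the stated form with $e^{2\pi\gamma^2}$. The purpose of that form is only to control growth along vertical lines in the later Stein interpolation.

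\textbf{Step 3 (general $f$).} For general $f$, split into four positive parts and use the triangle inequality in $L_p(\mathcal M;\ell_\infty)$. The sup over all $n$ is justified by Proposition \ref{supnormfiniting}, which reduces everything to finite index sets.

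\textbf{Main obstacle.} The delicate point is the uniform coefficient bound from Lemma \ref{final-estimates-of-A-k-lambda}. One must check that it holds for $\lambda-1$ with real part $\beta-1>0$, and that the contour estimate there indeed gives growth $(k+1)^{\beta-1}$ with at most exponential dependence on $\gamma$. The factor $(1-\xi_j z)^{-\lambda}$ near the other singularities $\overline{\xi_l}$ is bounded on the local contour, since the points are distinct; the remaining constants depend only on $\beta$ and $E$.

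Once this bound is accepted, the positivity argument above is routine. Crucially, it needs only the Cesàro maximal theorem and no square functions, because $\beta>1$ makes the kernel $(n+1)^{-\beta}|A^{\lambda-1}_{n-k}|$ bounded by the uniform averaging kernel $(n+1)^{-1}$.
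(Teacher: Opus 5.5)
Your proposal is correct and follows essentially the same route as the paper. You use Lemma \ref{final-estimates-of-A-k-lambda} with $\beta-1>0$ to bound $(n+1)^{-\beta}|A^{\lambda-1}_{n-k}|$ by a constant times $(n+1)^{-1}$, dominate by the Ces\`aro averages $M_n(T)f$, and apply Theorem \ref{JungeXu}; your positivity-plus-duality argument (reducing to $f\ge 0$ and testing against positive $y_n$ via Lemma \ref{sumofsixoreight}) supplies the justification for the domination step that the paper asserts in a single line, and replacing $e^{2\pi|\gamma|}$ by $e^{2\pi\gamma^2}$ is harmless.
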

\begin{proof}
    Let us consider the following computation
    \begin{align*}
        &\Big\lVert \sup\limits_{n\geq 1}\!^{+}M^{\lambda}_nf\Big \rVert_p\\&=\Big\lVert\sup\limits_{n\geq 1}\!^{+}(n+1)^{-\beta-i\gamma}T^n\Big(\sum^{n}_{k=0}A^{\lambda-1}_kT^kf\Big)\Big\rVert_{p}\\
        &\lesssim\sup_{n\geq1}\frac{\lvert A^{\lambda-1}_k\rvert}{(n+1)^{\beta-1}}\Big\lVert\sup\limits_{n\geq 1}\!^{+}(n+1)^{-1}\Big(\sum^{n}_{k=0}T^{k}f\Big)\Big\rVert_{p}\\&\underset{\eqref{final-estimates-of-A-k-lambda}}{\lesssim_{\beta}}e^{2\pi\gamma ^2}\lVert f \rVert_p
    \end{align*}
    This completes the proof of the lemma.
\end{proof}
\begin{theorem}\label{firsttimerealiseddiofferentialop}
    Let $\lambda=m+i\gamma$, $m\in\mathbb{N}.$ Then $$\Big\lVert \sup\limits_{n\geq 1}\!^{+}M^{\lambda}_{n}f\Big\rVert_2\lesssim_{m} e^{C\gamma^2}\lVert f\rVert_{2}$$
for some constant $C.$  
\end{theorem}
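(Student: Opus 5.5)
The plan is to follow Stein's scheme: write $M^{\lambda}_n$ as a combination of the "differentiated" operators $S^{-j}_i$, whose maximal functions are controlled on $L_2$ by Theorem \ref{strong-maximal-theorem-of-B-k-m}, with coefficients whose size is governed by Lemma \ref{final-estimates-of-A-k-lambda}.

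\textbf{Step 1: the basic decomposition.} By Lemma \ref{lemma-for-alpha-beta} with $\alpha=\lambda+m$ and $\beta=-m$ we get
\[S^{\lambda}_n=\sum_{i=0}^{n}A^{\lambda+m-1}_{n-i}S^{-m}_i .\]
By Lemma \ref{lem:negative-integer-means}, for $i\ge Nm$ we have $S^{-m}_i=(i+1)^{-m}\,\widetilde B^m_i$, where $\widetilde B^m_i:=(i+1)^mT^{i-Nm}\prod_j(T-\xi_jI)^m$. This is a shifted copy of $B^m_{i-Nm}$ up to a bounded factor, so Theorem \ref{strong-maximal-theorem-of-B-k-m} gives $\|\sup^+_i\widetilde B^m_ix\|_2\lesssim_m\|x\|_2$. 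The finitely many terms with $i<Nm$ are bounded individually, using that $|A^{\lambda+m-1}_{n-i}|(n+1)^{-m}\lesssim e^{2\pi|\gamma|}(n+1)^{m-1}$; this growth is removed in Step 3.

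I split the sum into the ranges $n/2<i\le n$ and $i\le n/2$. On the upper range, Lemma \ref{final-estimates-of-A-k-lambda} gives $|A^{\lambda+m-1}_{n-i}|\lesssim_m e^{2\pi|\gamma|}(n-i+1)^{2m-1}$. Hence
\[(n+1)^{-m}\sum_{n/2<i\le n}|A^{\lambda+m-1}_{n-i}|(i+1)^{-m}\lesssim e^{2\pi|\gamma|}(n+1)^{-2m}(n+1)^{2m}\lesssim e^{2\pi|\gamma|},\]
uniformly in $n$. Lemma \ref{maxcomingoutjuxulem} then bounds this part by $e^{2\pi|\gamma|}\|\sup^+_i\widetilde B^m_ix\|_2\lesssim_m e^{2\pi|\gamma|}\|x\|_2$.

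\textbf{Step 2: Abel summation on the lower range.} On $i\le n/2$ the coefficients are of size $n^{2m-1}$ and the factor $(i+1)^{-m}$ is not summable when $m=1$, so I do not use $\widetilde B^m$ there. Instead I perform $m$ successive Abel summations in $i$. At each step I use Lemma \ref{backward-difference-of-S-n-lambda}, $S^{-j}_i=\sum_{r=0}^N(-1)^re_rS^{-j+1}_{i-r}$, to move the difference operator (equivalently $D_n$ of Lemma \ref{D-n-m-to-s-n-m}) from $S^{-j}$ onto the coefficient sequence. Lemma \ref{backward-difference-of-A-k-lambda} identifies the resulting adjoint difference of $A^{\mu}_{n-i}$ as $A^{\mu-1}_{n-i}$, up to finitely many boundary corrections for $n-i<N$, which do not occur since $n-i\ge n/2$. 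After $j$ steps this produces three kinds of terms.
\begin{itemize}
\item[(a)] Boundary terms at $i\approx n/2$. These have the form $A^{\lambda+m-1-j}_{\approx n/2}S^{-(m-j)}_{\approx n/2}$, of size $e^{2\pi|\gamma|}n^{2m-1-j}\cdot n^{-(m-j)}\widetilde B^{m-j}$. After multiplying by $(n+1)^{-m}$ they are $O(n^{-1})\,\widetilde B^{m-j}$, so they are controlled by Theorem \ref{strong-maximal-theorem-of-B-k-m} at level $m-j$ (together with Corollary \ref{mod1andmaximum}).
\item[(b)] Finitely many boundary terms at $i<N$. These are treated as in Step 1.
\item[(c)] The final interior sum $\sum_{i\le n/2}A^{\lambda-1}_{n-i}T^i$. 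Here $|A^{\lambda-1}_{n-i}|(n+1)^{-m}\lesssim e^{2\pi|\gamma|}(n+1)^{-1}$, so this term is dominated by ergodic averages. Via Lemma \ref{sumofsixoreight}, positivity and Theorem \ref{JungeXu}, exactly as in the proof of Lemma \ref{p-estimates-for-s-n}, it is bounded by $e^{2\pi|\gamma|}\|x\|_2$. Where the $\xi_j$-twists intervene, one can alternatively write $T^i$ in terms of $M^1$ via \eqref{putting0and1} and use Lemma \ref{saclarmultijungexu}.
\end{itemize}

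\textbf{Step 3: collecting the constants.} Every constant produced is a polynomial in $|\lambda|$, coming from the boundary coefficients, multiplied by $e^{2\pi|\gamma|}$ from Lemma \ref{final-estimates-of-A-k-lambda}. The bound $\lesssim_m(1+|\gamma|)^{K}e^{2\pi|\gamma|}\le e^{C\gamma^2}$ follows. Finally I pass from finite index sets to all $n\ge1$ by Proposition \ref{supnormfiniting}.

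\textbf{Main obstacle.} The delicate step is Step 2. With general $\xi_j$ the difference operator is not the plain backward difference, so the Abel summation must be done with the $N$-term recurrences of Lemmas \ref{backward-difference-of-A-k-lambda} and \ref{backward-difference-of-S-n-lambda}. The boundary terms then involve $N$ consecutive indices around $n/2$ (and near $0$), and one has to check that each of them still carries the right power $n^{2m-1-j}$ against $S^{-(m-j)}$. The approach I would try first is to do the summation by parts at the level of generating functions $P(z)^{-\mu}$: multiplying and dividing by $P(z)$ gives the coefficient identities directly. The truncation at $i\le n/2$ is then handled by the explicit finite boundary sums, each estimated by Lemma \ref{final-estimates-of-A-k-lambda}.
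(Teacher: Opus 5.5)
You have taken the statement literally, with $\Re\lambda=m\ge1$. For that reading your argument can be made to work, but it does no real work. (One repair is needed: the indices $i<Nm$ in the lower range cannot be ``bounded individually'', since their coefficients grow like $n^{m-1}$. The Abel summation absorbs them, because the recurrences hold for all $i$ with $S_i:=0$ for $i<0$.)

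When $\Re\lambda=m\ge1$ one has $(n+1)^{-m}|A^{\lambda-1}_{n-i}|\lesssim e^{2\pi|\gamma|}(n+1)^{-1}$ for \emph{every} $0\le i\le n$. So the positivity/duality bound you use in (c) applies to the whole sum $S^{\lambda}_n=\sum_{i\le n}A^{\lambda-1}_{n-i}T^i$ at once. This is just Lemma \ref{p-estimates-for-s-n}, whose proof works verbatim for $\beta=1$. In fact your Step 2 only turns $\sum_{i\le n/2}A^{\lambda+m-1}_{n-i}S^{-m}_i$ back into $\sum_{i\le n/2}A^{\lambda-1}_{n-i}T^i$ plus boundary terms, which undoes Step 1. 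Theorem \ref{strong-maximal-theorem-of-B-k-m} is never genuinely needed.

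The paper's proof concerns a different estimate. It starts from $S^{-m+i\gamma}_n$ and multiplies by $(n+1)^{m+i\gamma}$, i.e.\ it treats $\lambda=-m+i\gamma$. This is also what the final three-lines argument needs on $L_2$: $\Re\lambda=0$ (that is, $M^0_n=T^n$) must be reached by interpolating between $\Re\lambda<0$ on $L_2$ and $\Re\lambda>1$ on $L_q$. The sign in the printed statement is evidently a misprint.

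For $\lambda=-m+i\gamma$ your scheme breaks at the first step. The analogue of your splitting is $\alpha=\lambda+m=i\gamma$ and $\beta=-m$, whose coefficients $A^{i\gamma-1}_{n-i}$ have size $(n-i+1)^{-1}$. On $n/2<i\le n$, Lemma \ref{maxcomingoutjuxulem} then only gives $(n+1)^m\sum_{n/2<i\le n}(n-i+1)^{-1}(i+1)^{-m}\approx\log n$, which is not uniform. The missing idea is to differentiate once more:
\begin{itemize}
\item The paper writes $S^{-m+i\gamma}_n=\sum_k A^{i\gamma}_{n-k}S^{-m-1}_k$, using Lemma \ref{lemma-for-alpha-beta} with $\alpha=1+i\gamma$ and $\beta=-m-1$. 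These coefficients are bounded.
\item The range $k>n/2$ is then controlled by Theorem \ref{strong-maximal-theorem-of-B-k-m} at level $m+1$, because $(n+1)^m\sum_{k>n/2}(k+1)^{-m-1}\lesssim1$.
\item On $k\le n/2$ the paper performs $m+1$ Abel summations with the operator $D_k$ of Lemma \ref{D-n-m-to-s-n-m}. Lemma \ref{backward-difference-of-A-k-lambda} converts these into $A^{i\gamma}\to A^{i\gamma-1}\to\cdots\to A^{i\gamma-m-1}$, so the coefficients now \emph{decay} like $n^{-j}$.
\item After multiplication by $(n+1)^m$, the boundary terms $A^{i\gamma-j}_{\approx n/2}S^{-m+j}_{\approx n/2}$ are bounded multiples of $B^{m-j}_k$ with $k\approx n/2$. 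Theorem \ref{strong-maximal-theorem-of-B-k-m} at level $m-j$ handles them.
\item The terminal sum $\sum_{k\le n/2}A^{i\gamma-m-1}_{n-k}T^k$ has coefficients $O(n^{-m-1})$. Theorem \ref{strong-maximal-ergodic-theorem-for-l2} together with Lemma \ref{maxcomingoutjuxulem} handles it.
\end{itemize}
Your bookkeeping of the $N$-term recurrences carries over, but every exponent has to be shifted in this way.
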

\begin{proof}
    Given $n\in \mathbb{N}$, set $n_0=[\frac{n}{2}].$
    So using Lemma \eqref{lemma-for-alpha-beta}, we have 
    \begin{align*}
        S^{-m+i\gamma}_n&=\sum^{n_0}_{k=0}A^{i\gamma}_{n-k}S^{-m-1}_kf+\sum^{n}_{k=n_0+1}A^{i\gamma}_{n-k}S^{-m-1}_kf
    \end{align*}
 Now, observe that 
    \begin{align*}
        &\sup_{n\geq1}\Big\lvert (n+1)^{m+i\gamma}\sum^n_{k=n_0+1} A^{i\gamma}_{n-k}S^{-m-1}_k\Big\rvert\\&\le\sup_{n\geq1}\Big[{(n+1)^{m}\sum^n_{k=n_0+1}}\frac{\lvert A^{i\gamma}\rvert}{(k+1)^{m+1}}\Big]\Big\lVert \sup\limits_{k\geq 1}\!^{+}(k+1)^{m+1}S^{-m-1}_kf\Big\rVert_2\\&\lesssim_{m}e^{2\pi\gamma^2}\Big\lVert \sup\limits_{k\geq 1}\!^{+}(k+1)^{m+1}S^{-m-1}_kf\Big\rVert_2\\&\lesssim_{m}e^{2\pi\gamma^2}\Big\lVert \sup\limits_{k\geq 1}\!^{+}(k+1)^{m+1}\Delta^{m+1}_{k-mN}f\Big\rVert_2\underset{\eqref{strong-maximal-theorem-of-B-k-m}}{\lesssim_{m}} e^{2\pi\gamma^2}\lVert f\rVert_2.
    \end{align*}
    From the  Lemma \eqref{D-n-m-to-s-n-m} we have 
    \begin{align*}
        \sum^{n_0}_{k=0}A^{i\gamma}_{n-k}S^{-m-1}_kf&=\sum^{n_0}_{k=0} A^{i\gamma}_{n-k}D^{m+1}_{k}(S)\\&=\sum^{n_0}_{k=0}A^{i\gamma}_{n-k}\sum^N_{i=0}\alpha_i\bm{\Delta}^i_k(S^{-m}_k)\\&=\sum^N_{i=0}\sum^{n_0}_{k=0}\alpha_iA^{i\gamma}_{n-k}\bm{\Delta}^i_k(S^{-m}_k).
    \end{align*}
    Using Abel summation method, we obtain
    \begin{align*}
        \sum^{n_0}_{k=0}A^{i\gamma}_{n-k}\alpha_1\bm{\Delta}_k(S^{-m}_k)_k&=A^{i\gamma}_{n-n_0}\alpha_1S^{-m}_{n_0}+\sum^{n_0-1}_{k=0}\alpha_1\bm{\Delta}_k(A^{i\gamma}_{n-k})S^{-m}_k\\&=A^{i\gamma}_{n-n_0}\alpha_1S^{-m}_{n_0}-\bm{\Delta}_{n_0}(A^{i\gamma}_{n-k})_k\alpha_1S^{-m}_{n_0}+\sum^{n_0}_{k=0}\alpha_1\bm{\Delta}_k(A^{i\gamma}_{n-k})_kS^{-m}_k.
    \end{align*}
Also using Abel summation method observe that 
\begin{align*}
    \sum^{n_0}_{k=0}A^{i\gamma}_{n-k}\alpha_2\bm{\Delta}^2_k(S^{-m}_k)_k=\\&\alpha_2A^{i\gamma}_{n-n_0}\bm{\Delta}
    _{n_0}(S^{-m}_{k})-\alpha_2\bm{\Delta}_{n_0}(A^{i\gamma}_{n-k})_k\bm{\Delta}_{n_0}(S^{-m}_{k})_k\\&+\alpha_2\bm{\Delta}_{n_0}(A^{i\gamma}_{n-k})S^{-m}_{n_0}-\alpha_2\bm{\Delta}^2_{n_0}(A^{i\gamma}_{n-k})_kS^m_{n_0}+\sum^{n_0}_{k=0}\alpha_2\bm{\Delta}^2_k(A^{i\gamma}_{n-k})_kS^{-m}_k.
\end{align*}
At the end, for some contants $\mu^1_i$,$i=0,\dots,N-1$ and $\mu^2_r$,$r=0,\dots,N$ We have 
\begin{align*}
    &\sum^{n_0}_{k=0}A^{i\gamma}_{n-k}S^{-m-1}_k(x)=\\&\sum^{N-1}_{i=0}\sum^N_{r=0}\mu^1_i\mu^2_rA^{i\gamma}_{n-n_0-r}S^{-m}_{n_0-i}+\sum^{n_0}_{k=0}{\sum^N_{i=0}\alpha_i\bm{\Delta}^i_k(A^{i\gamma}_{n-k})}S^{-m}_k\\&\underset{\eqref{backward-difference-of-A-k-lambda}}{=}\sum^{N-1}_{i=0}\sum^N_{r=0}\mu^1_i\mu^2_rA^{i\gamma}_{n-n_0-r}S^{-m}_{n_0-i}+\sum^{n_0}_{k=0}A^{i\gamma-1}_{n-k}S^{-m}_k
\end{align*}
    Similarly proceed forward to $m$ step, we have \begin{align*}
    \sum^{n_0}_{k=0}A^{i\gamma}_{n-k}S^{-m-1}_k(x)=\sum^{m}_{j=0}\sum^{N-1}_{i=0}\sum^N_{r=0}\mu^1_i\mu^2_rA^{i\gamma-j}_{n-n_0-r}S^{-m+j}_{n_0-i}+\sum^{n_0}_{k=0}A^{i\gamma-m-1}_{n-k}S^{0}_k 
\end{align*}
Therefore, we obtain that
\begin{align}\label{inequality-to-estimates-the-given-sum-zero-to-n-zero}
    &\Big\lVert \sup\limits_{n\geq 1}\!^{+}(n+1)^m\sum^{n_0}_{k=0} {A^{i\gamma}_{n-k}S^{-m+1}_kx}\Big\rVert_2\\& \leq\sum^{N-1}_{i=0}\sum^N_{r=0}\mu^1_i\mu^2_r\sum^m_{j=0}\sup_{n}(n+1)^m\frac{\abs{A^{i\gamma-j}_{n-n_0-r}}}{(n_0+1)^{m-j}}\lVert \sup_{n}(n_0+1)^{m-j}S^{-m+j}_{n_0-i}(x)\rVert_2\notag\\&\qquad\qquad+\sup_{n}(n+1)^m\sum^{n_0}_{k=0}\abs{A^{i\gamma-m-1}_{n-k}}\lVert S^{0}_k(x)\rVert_2\notag\\&\underset{\eqref{final-estimates-of-A-k-lambda},\eqref{B-k-m}}{\lesssim_{m}}\lVert x\rVert_2\notag
\end{align}
This completes the proof of the theorem.
\end{proof}
\subsection{Proof of the Theorem \eqref{strong-ergodic-theorem-for-ritt-E}}
\begin{proof}Let $\alpha =\beta+i\gamma$ where $\beta,\gamma \in \mathbb{R}.$ Choose $\theta \in(0,1),$ $q\in(1,\infty)$, $m\in\mathbb{N}$ and $b>\max\{\beta,1\}$ such that $\frac{1}{p}=\frac{1-\theta}{2}+\frac{\theta}{q}$ and $\beta=(1-\theta)m+\theta b.$ Let $x\in L_p(\mathcal{M})$ and $y=(y_n)$ be a finite sequence in $L_{p'}(\mathcal{M})$ such that $\lVert x\rVert_p$ and $\lVert y\rVert_{L_{p'}(\mathcal{M},\ell_1)}<1$.
Let us consider the following function $
    f(z)=u|{x}|^{\frac{p(1-z)}{2}+\frac{pz}{q}}$ where $x=u|{x}|.$
By the Proposition \eqref{Interpolation}, there is a function $g=(g_n)_n$ continuous on the strip $\{z\in\mathbb{C}:0\leq Re(z)\leq 1\}$ and analytic in the interior such that 
$g(\theta)=y$ and 
$$\sup_{t\in\mathbb{R}}max\{\lVert g(it)\rVert_{L_2(\mathcal{M},\ell_1)},\lVert g(1+it)\rVert_{L_{q'}(\mathcal{M},\ell_1)}\}<1.$$
Let us define the analytic function on the interior of the strip \begin{align*}
    F(z)
&=e^{\delta(z^2-\theta^2)}\sum_{n}\tau[M^{(1-z)m+zb+i\gamma}_n(f(z))g_n(z)]
\end{align*}
for some $\delta>0$ which will be chosen later.
For any $t\in\mathbb{R}$, consider the following 
\begin{align*}
    F(it)&=e^{\delta(-t^2-\theta^2)}\sum_n\tau[M^{(1-z)a+zb+i\gamma}_n(f(z))g_n(z)]\\&\le e^{\delta(-t^2-\theta^2)}\Big\lVert M^{m+i(-tm+tb+\gamma)}_n\Big\rVert_{L_{2}(\mathcal{M},l_{\infty})}\lVert g_{n}(it)\rVert_{L_2(\mathcal{M},l_1)}\\&\lesssim_{m}e^{\delta(-t^2-\theta^2)}\Big( e^{C(-tm+tb+\gamma)^2}\Big)\lVert f(it) \rVert_2
    \\&\lesssim_{m}\Big(e^{(-\delta+c_{m,b,\gamma})t^2-\delta\theta^2}\Big)\lVert f(it) \rVert_2
\end{align*}
Similarly, by Lemma \eqref{p-estimates-for-s-n} we have 
$|F(1+it)|\leq C_{\alpha,q}e^{-\delta(t^2+\theta^2)+c'_{m,b,\gamma}t^2}.$ Now choosing $\delta>\max\{c_{m,b,\gamma},c'_{m,b,\gamma }\}$, we have
\begin{align*}
    \sup_{t\in\mathbb{R}}max\{|{F(it)}|,|{F(1+it)}|\}< \infty.
\end{align*}
Therefore by the three lines lemma, we have
$|{F(\theta)}|\le C_{p,\beta,b,\gamma,N}.$
This yields
\begin{align*}
    |\sum_{n}\tau\Big( M^{\alpha}_n(x)y_n\Big)|<C_{\alpha,p} .
\end{align*}
This completes the proof of the theorem.
\end{proof}
\section{Variational inequalities for contractively regular $\text{Ritt}_E$ operators}\label{kiarboli}
\begin{definition}($\RE$ Operator) Let $E=\{\xi_1,\xi_2,...,\xi_N\}$ be a subset of $\mathbb{T}$ . Let $T\in B(X)$, we say that $T$ is $\RE$ operator if $ \sigma(T)\subseteq\overline{\mathbb{D}}$ and there is a constant $c>0$ such that 
 \begin{equation*}
     \lVert R(z,T) \rVert \leq c\prod^N_{j=1}\lvert\xi_j-z\rvert^{-1}, \qquad z\in \mathbb{C},1<\lvert z \rvert<2.
 \end{equation*}
 \end{definition}
When $E=\{1\}$ the operator $T$ is called a Ritt operator \cite{Le-Merdy-Xu}. We refer to \cite{oualid-Ritt-E-operato}, \cite{bouabdillah2024squarefunctionsassociatedritte} and \cite{MondalPalaiRay2026} on more about $\RE$ operators.

Consider the probability space $\Omega_0 = \{ \pm 1 \}^{\mathbb{Z}}$. For any $k \in \mathbb{Z}$, denote the $k$-th coordinate function $\varepsilon_k(\omega) = \omega_k$, where $\omega = \{\omega_j\}_{j \in \mathbb{Z}} \in \Omega_0$. The sequence of random variables $(\varepsilon_k)_{k\geq0}$ is called the Rademacher sequence on the probability space $(\Omega_0, \mathbb{P})$. For $1 \leq p < \infty$, we denote $\text{Rad}_p(X)$ to be the span of $\{\varepsilon_k \otimes x_k \mid x_k \in X, k \in \mathbb{Z}\}$ in the Bochner space $L_p(\Omega_0, X).$ For $p=2$, we simply denote $\text{Rad}(X)$.
     \begin{definition}(Square Function)
Let $T: X \to X$ be a $\RE$ operator and let $\alpha>0$.  For any $x \in X$ the square function $\|x\|_{T,\alpha}$ associated with $T$ is defined by
\[
\|x\|_{T,\alpha} = \lim_{n \to \infty} \Big\|  \sum_{k=1}^{n} k^{\alpha-\frac{1}{2}} \varepsilon_k \otimes T^{(k-1)}\prod^N_{j=1}(I -\overline{ \xi_j} {T})^\alpha x \Big\|
_{\text{Rad}(X)}.
\]
\end{definition}
If $T$ is a $\RE$ operator, then $(I - \overline{\xi_i}T)$ is a sectorial operator. Therefore, the fractional power of $(I - \overline{\xi_i}T)$ is well-defined.
Therefore,
$ \|x\|_{T,\alpha} $ is well-defined for all $x \in X$. The following theorem follows from \cite[Theorem 4.7]{oualid-Ritt-E-operato} and \cite[Theorem 8.2]{bouabdillah2024squarefunctionsassociatedritte}.
\begin{definition}(Regular operator)
Let
$T:L_{p}(\Omega)\longrightarrow L_{p}(\Omega)$
be a bounded operator. We say that $T$ is regular if there exists a constant $C\geq 0$ such that
\[
\left\|\sup_{k\geq 1}|T(x_k)|\right\|_{p}
\leq
C\left\|\sup_{k\geq 1}|x_k|\right\|_{p}
\]
for every finite sequence $(x_k)_{k\geq 1}$ in $L_{p}(\Omega)$.

\end{definition}

\begin{theorem}\label{Square function estimate}
Let $1<p<\infty$ and $T: L_p(\Omega) \to L_p(\Omega)$ be a contractively regular Ritt$_{E}$ operator. Then for any $\alpha > 0$, we have $\|x\|_{T,\alpha} \lesssim \|x\|$ for all $x \in X.$
\end{theorem}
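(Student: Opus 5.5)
The estimate $\|x\|_{T,\alpha}\lesssim\|x\|_p$ is essentially a statement about bounded $H^\infty$ functional calculus. I would derive it from two facts: contractive regularity gives $T$ a bounded functional calculus on a suitable domain, and in $L_p$ spaces a bounded functional calculus upgrades automatically to square function bounds. The plan follows the route of \cite[Theorem 4.7]{oualid-Ritt-E-operato} and \cite[Theorem 8.2]{bouabdillah2024squarefunctionsassociatedritte}, mirroring Le Merdy--Xu's argument for Ritt operators.

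\textbf{Step 1: functional calculus on $E_r$.} Since $T$ is contractively regular on $L_p(\Omega)$, Peller's theorem (via Coifman--Rochberg--Weiss dilation) gives a polynomial estimate
\[
\|\phi(T)\|\le C\sup_{|z|=1}|\phi(z)| .
\]
Combining this with the $\RE$ resolvent bound $\|R(z,T)\|\le c\prod_j|\xi_j-z|^{-1}$, I would show that $T$ admits a bounded $H^\infty(E_r)$ functional calculus for some $0<r<1$, where $E_r$ is the interior of the convex hull of $D(0,r)\cup E$ as in Section \ref{section3}. Concretely:
\begin{itemize}
\item[(a)] Localize near each $\xi_j$ with a partition of unity (Lagrange-type polynomials as in Lemma \ref{lemma-for-lagrange-polynomial-type}).
\item[(b)] Near each $\xi_j$, the operator $I-\overline{\xi_j}T$ behaves like a sectorial operator of angle $<\pi/2$.
\item[(c)] Apply, localized at each $\xi_j$, the Le Merdy transference argument showing that a polynomially bounded Ritt operator has $H^\infty$ calculus on a Stolz domain.
\end{itemize}

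\textbf{Step 2: from functional calculus to square functions.} $L_p$ has finite cotype and Pisier's property $(\alpha)$. Hence bounded $H^\infty(E_r)$ calculus implies $R$-bounded (indeed $\ell_2$-valued) functional calculus, by the Kalton--Weis principle. Consider the sequence of functions
\[
\varphi_k(z)=k^{\alpha-\frac12}z^{k-1}\prod_{j=1}^N(1-\overline{\xi_j}z)^\alpha .
\]
The Kalton--Weis principle gives
\[
\Big\|\sum_k\varepsilon_k\otimes\varphi_k(T)x\Big\|_{\mathrm{Rad}(L_p)}\lesssim\sup_{z\in E_r}\Big(\sum_k|\varphi_k(z)|^2\Big)^{1/2}\|x\|_p .
\]
The right-hand supremum is finite by the same splitting used in Lemma \ref{L2sqrfnestimatewithnumercond}:
\begin{itemize}
\item On a disc $\overline{B(0,s)}$ with $s<1$, the geometric decay of $z^{k-1}$ controls the sum.
\item Near each $\xi_j$, rotating by $\overline{\xi_j}$ reduces the sum to the Stolz-domain estimate of Lemma \ref{lemma-for-finiteness-of-series}, extended to real $\alpha>0$. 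The other factors $|1-\overline{\xi_i}z|^\alpha$ with $i\ne j$ are bounded there.
\end{itemize}
Fractional powers are handled through the sectorial calculus of each $I-\overline{\xi_j}T$, which is compatible with the $H^\infty(E_r)$ calculus. Passing to the limit $n\to\infty$ then yields $\|x\|_{T,\alpha}\lesssim\|x\|_p$.

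\textbf{Main obstacle.} I expect the hard step to be Step 1: obtaining a genuine $H^\infty(E_r)$ calculus rather than just a calculus on the disc. One must show that polynomial boundedness, together with the resolvent bound at the points of $E$, forces boundedness on the polygon $E_r$. My plan for this is:
\begin{itemize}
\item[(i)] Decompose a bounded analytic $f$ on $E_r$ as $f=f_0+\sum_j f_j$, where each $f_j$ is analytic on a Stolz-type neighbourhood of $\xi_j$ and $f_0$ is analytic on a larger disc.
\item[(ii)] Treat each $f_j$ by the one-point Ritt result applied to $\overline{\xi_j}T$.
\item[(iii)] Control the gluing terms by the resolvent estimate.
\end{itemize}
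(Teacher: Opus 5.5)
\textbf{Verdict: genuine gap in Step 1.} Your two-step architecture matches the structure implicit in the paper's citation of \cite[Theorem 4.7]{oualid-Ritt-E-operato} and \cite[Theorem 8.2]{bouabdillah2024squarefunctionsassociatedritte}: first a bounded $H^\infty(E_r)$ calculus, then an $\ell_2$-valued calculus on $L_p$ applied to $\varphi_k(z)=k^{\alpha-\frac12}z^{k-1}\prod_j(1-\overline{\xi_j}z)^\alpha$, with $\sup_z\sum_k|\varphi_k(z)|^2$ controlled by the disc/Stolz splitting. Step 2 is fine in substance, provided you take the $\ell_2$-valued calculus on a slightly larger polygon $E_{r'}$.

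\textbf{Where Step 1 fails.} The problem starts at its first line. For a contractively regular $T$ on $L_p$, Peller's inequality gives $\|\phi(T)\|\le\|\phi(S)\|_{B(\ell_p(\mathbb Z))}$, with $S$ the shift. The right side is the $\ell_p(\mathbb Z)$ Fourier multiplier norm of $\phi|_{\mathbb T}$, and for $p\neq 2$ it is not dominated by $C\sup_{\mathbb T}|\phi|$. The shift on $\ell_p(\mathbb Z)$ is a counterexample: it is a positive isometry, hence contractively regular, and it is not polynomially bounded when $p\neq 2$. So contractive regularity does not give polynomial boundedness. For the operators in the statement, polynomial boundedness is a consequence of an $H^\infty(E_r)$ calculus (since $E_r\subset\mathbb D$), not a route to it.

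\textbf{Even granting polynomial boundedness, (c) fails on $L_p$.} Polynomial boundedness only controls functions on regions with the opening of the disc at $\xi_j$, i.e.\ sectors of half-angle $\geq\pi/2$ for $I-\overline{\xi_j}T$. But $E_r$ has half-angle $<\pi/2$ at each vertex. Passing from one to the other is an angle-reduction problem. On Hilbert space McIntosh's theorem does it; on $L_p$ the Kalton--Weis reduction needs $R$-sectoriality, and your proposal never produces any $R$-boundedness.

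\textbf{Step (ii) is also unavailable as stated.} For $N\ge 2$ the operator $\overline{\xi_j}T$ is in general not Ritt, because its spectrum may meet $\mathbb T$ at the points $\overline{\xi_j}\xi_i$, $i\neq j$.

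\textbf{What is missing is how contractive regularity actually enters.} A chain along the following lines would replace your polynomial-boundedness argument:
\begin{enumerate}
\item Peller's multiplier bound plus the Marcinkiewicz multiplier theorem on $\mathbb T$ give, for each $j$ and every $\theta>\pi/2$, a bounded $H^\infty(\Sigma_\theta)$ calculus for $I-\overline{\xi_j}T$. The point is that for $f\in H^\infty(\Sigma_\theta)$, the map $t\mapsto f(1-\overline{\xi_j}e^{it})$ has derivative $O(|e^{it}-\xi_j|^{-1})$.
\item Since $\overline{\xi_j}T$ is again contractively regular, $(e^{-t(I-\overline{\xi_j}T)})_{t\ge0}$ is a semigroup of contractively regular operators. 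It is bounded analytic because the $\RE$ estimate makes $I-\overline{\xi_j}T$ sectorial of angle $<\pi/2$. Weis' theorem, which rests on the contractive $\ell_2$-valued extension of regular operators, then makes it $R$-analytic.
\item The Kalton--Weis angle reduction gives each $I-\overline{\xi_j}T$ a bounded $H^\infty(\Sigma_{\theta'})$ calculus with $\theta'<\pi/2$.
\item These $N$ local calculi still have to be glued into an $H^\infty(E_r)$ calculus for $T$. This is done via Cauchy integrals against $R(z,T)$, using the $\RE$ estimate away from $\xi_j$.
\end{enumerate}
Alternatively, appeal directly to the cited results, as the paper does.
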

Let $T\in B(X)$ and $\xi=(\xi_1,\dots,\xi_N)$ where $\{\xi_1,\dots,\xi_N\}\subseteq\mathbb{T}.$ We define \begin{equation}\label{Notation-of-Delta-T-xi}
\Delta^m_j(T,\xi):=T^j\prod\limits^N_{j=1}(T-\xi_jI)^m
\end{equation}. When the notation $\Delta^m_j(T,\xi)$ is clear from the context, we simply write $\Delta^m_j$ instead of $\Delta^m_j(T,\xi).$ 
\begin{lemma}\label{lemma-for-delta-n-2n}
Let $T\in B(X)$ and $\xi=(1,\xi_2,\dots,\xi_N)$. Then for any $n\geq N$
   \begin{align*}
       \sum^{2n}_{j=n}\Delta^m_j(T,\xi)&=\sum^N_{r=1}\sum^N_{j=r}(-1)^{N-j}e_{N-j}(1,\xi_2,\dots,\xi_N)\Delta^{m-1}_{2n+r}(T,\xi)\hspace{1cm}\\&\qquad+\sum^{N-1}_{r=0}\sum^r_{j=0}(-1)^{N-j}e_{N-j}(1,\xi_2,\dots,\xi_N)\Delta^{m-1}_{n+r}(T,\xi).
   \end{align*}
\end{lemma}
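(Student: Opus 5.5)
The plan is to expand one factor of $\prod_{j=1}^N(T-\xi_jI)$, shift indices, and then use the single relation available from $\xi_1=1$ to cancel the bulk of the sum, leaving only boundary terms.

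\textbf{Step 1: expand one factor.} Write $\prod_{j=1}^N(T-\xi_jI)=\sum_{r=0}^N c_rT^r$ with $c_r=(-1)^{N-r}e_{N-r}(1,\xi_2,\dots,\xi_N)$, as in \eqref{symmetric-function-equation}. Since $\xi_1=1$, evaluating the polynomial at $z=1$ gives $\sum_{r=0}^N c_r=\prod_{j=1}^N(1-\xi_j)=0$; this is the only nontrivial input. From the definition \eqref{Notation-of-Delta-T-xi}, $\Delta^m_j=\Delta^{m-1}_j\prod_{l}(T-\xi_lI)$. Hence
\[
\Delta^m_j=\sum_{r=0}^N c_r\,\Delta^{m-1}_{j+r}.
\]

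\textbf{Step 2: sum and split each window.} Summing over $n\le j\le 2n$ and reindexing $k=j+r$ gives
\[
\sum_{j=n}^{2n}\Delta^m_j=\sum_{r=0}^N c_r\sum_{k=n+r}^{2n+r}\Delta^{m-1}_k .
\]
For each $r$, split the shifted window as
\[
\sum_{k=n+r}^{2n+r}=\sum_{k=n}^{2n}\;-\;\sum_{k=n}^{n+r-1}\;+\;\sum_{k=2n+1}^{2n+r},
\]
with empty sums when $r=0$. The first piece, with coefficient $\sum_r c_r=0$, vanishes identically. What remains is
\[
\sum_{r=0}^N c_r\sum_{s=1}^{r}\Delta^{m-1}_{2n+s}\;-\;\sum_{r=0}^N c_r\sum_{s=0}^{r-1}\Delta^{m-1}_{n+s}.
\]

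\textbf{Step 3: interchange the order of summation.} After swapping sums, the coefficient of $\Delta^{m-1}_{2n+s}$ for $1\le s\le N$ is $\sum_{r=s}^N c_r$. This is exactly the first double sum in the statement, with $r\leftrightarrow s$ and inner index $j$. The coefficient of $\Delta^{m-1}_{n+s}$ for $0\le s\le N-1$ is $-\sum_{r=s+1}^N c_r$. Using $\sum_r c_r=0$ again, this equals $\sum_{r=0}^{s}c_r$, which is the second double sum in the statement.

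The argument is a purely algebraic identity in $T$, valid in any Banach space; the hypothesis $n\ge N$ only keeps the two boundary blocks $\{n,\dots,n+N-1\}$ and $\{2n+1,\dots,2n+N\}$ disjoint and is not otherwise used. The only step needing care is the index bookkeeping in Step 3, in particular checking that the signs and the ranges of $e_{N-j}$ match the statement exactly, together with the second use of $\sum_r c_r=0$ to turn the tail sum into a head sum.
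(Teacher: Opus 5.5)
Your proof is correct and follows essentially the paper's route. Both expand one factor as $\prod_{j}(T-\xi_jI)=\sum_r c_rT^r$, shift indices, and collect the coefficient of each $\Delta^{m-1}_k$, with the interior block $n+N\le k\le 2n$ vanishing because $\sum_r c_r=0$. The only difference is bookkeeping: the paper reads off the head coefficients $\sum_{r=0}^{s}c_r$ directly from the index ranges, whereas your window-splitting first produces $-\sum_{r=s+1}^{N}c_r$ and then uses $\sum_r c_r=0$ a second time.
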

\begin{proof}
 Consider the following computations
\begin{align*}
    &\sum^{2n}_{j=n}\Delta^m_{j}\underset{\ref{symmetric-function-equation}}{=}\sum^{2n}_{j=n}\Delta^{m-1}_{j}\Big\{\sum^N_{r=0}(-1)^re_r(\xi)T^{N-r}\Big\}\\&=\sum^N_{r=0}\sum^{2n+N-r}_{j=n+N-r}(-1)^re_r(\xi)\Delta^{m-1}_{j}.
\end{align*}
Collecting the coefficients for every term, we have the following
\begin{enumerate}
    \item[(i)] the coefficient of $\Delta^{m-1}_{n+r}=\sum\limits^r_{j=0}(-1)^{N-j}e_{N-j}(\xi)$, $r=0,\dots,N-1.$
    \item[(ii)] the coefficient $\Delta^{m-1}_j=\sum\limits^N_{r=0}(-1)^re_r(\xi)=0$, $j=n+N,\dots,2n.$
    \item[(iii)] the coefficient $\Delta^{m-1}_{2n+r}=\sum\limits^N_{j=r}(-1)^{N-j}e_{N-j}(\xi)$, $1\leq r\leq N.$
\end{enumerate}
This completes the proof of the lemma.
\end{proof}

\begin{proposition}\label{final-lemma-for-sum-j-plus-1-Delta-j-m)}
 Consider $\Delta^m_j$, defined in the equation \ref{Notation-of-Delta-T-xi} with $\xi=(1,\xi_2\dots,\xi_N)$. Then for any $n\geq N$ , we have
\begin{align*}
      &\sum^{2n}_{j=n}(j+1)\Delta^{m+1}_j=(n+1)\sum^{2n}_{n}\Delta^{m+1}_j-Q'(1)\sum^{2n}_{j=n}\Delta^m_j+n{Q_1'(T)\Delta^m_{2n+1}}\\&\qquad+\sum^{N-1}_{s=0} \Big(A_s(\xi)+Q'(1)\Big)\Delta^m_{n+s}+\sum^N_{t=1}\tilde{A}_t(\xi)\Delta^m_{2n+s}.
   \end{align*}
   where $A_s(\xi)=\sum\limits^N_{r=N-s}(-1)^re_r(\xi)(s-N+r)$, $s=0,\dots,N-1$ and $A_t(\xi)=\sum\limits^{N-t}_{j=0}(-1)^re_r(\xi)(j-N+t)$, $1\leq t\leq N.$  
\end{proposition}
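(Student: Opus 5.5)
The identity is purely algebraic, valid in $\mathbb{C}[T]$, so I will prove it by Abel summation combined with the expansion of $Q(T)=\prod_{j=1}^N(T-\xi_jI)=\sum_{r=0}^N c_rT^r$. Throughout, $c_r=(-1)^{N-r}e_{N-r}(\xi)$, and $\xi_1=1$ gives $\sum_r c_r=Q(1)=0$.

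\textbf{Step 1 (splitting off the linear weight).} Write $j+1=(n+1)+(j-n)$. Then
\[
\sum_{j=n}^{2n}(j+1)\Delta^{m+1}_j=(n+1)\sum_{j=n}^{2n}\Delta^{m+1}_j+\sum_{j=n}^{2n}(j-n)\Delta^{m+1}_j .
\]
The first term is already the first term on the right-hand side. It remains to compute $R_n:=\sum_{j=n}^{2n}(j-n)\Delta^{m+1}_j$.

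\textbf{Step 2 (reindexing).} Use $\Delta^{m+1}_j=\Delta^m_jQ(T)=\sum_{r=0}^Nc_r\Delta^m_{j+r}$ and substitute $i=j+r$:
\[
R_n=\sum_{r=0}^N c_r\sum_{i=n+r}^{2n+r}(i-r-n)\Delta^m_i .
\]
Collect the coefficient of $\Delta^m_i$ in three ranges, exactly as in Lemma \ref{lemma-for-delta-n-2n} and Lemma \ref{lemma-for-s-n}.

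\emph{Bulk range} $n+N\le i\le 2n$. The coefficient is $\sum_r c_r(i-n-r)=(i-n)\cdot 0-\sum_r rc_r=-Q'(1)$. This produces $-Q'(1)\sum_{i=n}^{2n}\Delta^m_i$, once the missing indices $i=n,\dots,n+N-1$ are added and subtracted. That bookkeeping is where the correction $+Q'(1)$ in the lower boundary coefficients comes from.

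\emph{Lower boundary} $i=n+s$, $0\le s\le N-1$. Only $r\le s$ contributes, so the coefficient is $\sum_{r=0}^s c_r(s-r)$. Adding $Q'(1)$ gives $A_s(\xi)+Q'(1)$, after rewriting $c_r$ via $e_{N-r}$ to match the stated form of $A_s(\xi)$.

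\emph{Upper boundary} $i=2n+t$, $1\le t\le N$. Only $r\ge t$ contributes, and the coefficient is $\sum_{r=t}^N c_r(n+t-r)$. Split it as $n\sum_{r\ge t}c_r+\sum_{r\ge t}c_r(t-r)$. The $n$-independent part is $\tilde A_t(\xi)$.

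\textbf{Step 3 (the $n$-dependent upper terms).} This is the main obstacle: identifying $n\sum_{t=1}^N\big(\sum_{r\ge t}c_r\big)\Delta^m_{2n+t}$ with $nQ_1'(T)\Delta^m_{2n+1}$. Since $\Delta^m_{2n+t}=T^{t-1}\Delta^m_{2n+1}$, this amounts to the polynomial identity
\[
\sum_{t=1}^N\Big(\sum_{r=t}^N c_r\Big)z^{t-1}=\prod_{j=2}^N(z-\xi_j).
\]
I will prove it by noting that the left side equals $Q(z)/(z-1)$. Indeed, $(z-1)\sum_t\big(\sum_{r\ge t}c_r\big)z^{t-1}$ telescopes to $\sum_r c_rz^r-\sum_r c_r=Q(z)$. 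Equivalently, one can invoke Lemma \ref{equation-Q-T} after the index change $l=t-1$, using $\sum_{r\le N-1-l}$ versus $\sum_{r\ge l+1}$ together with $\sum_r c_r=0$.

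\textbf{Step 4 (assembly).} Collecting all contributions gives the stated formula. The remaining work is to check the sign and index conventions in the definitions of $A_s(\xi)$ and $\tilde A_t(\xi)$; as stated they appear to contain typos, so I would match them to the expressions derived above.
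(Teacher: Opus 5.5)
Your proof is correct and is essentially the paper's argument. Both expand $\Delta^{m+1}_j=\sum_r c_r\Delta^m_{j+r}$, collect the coefficients of $\Delta^m_i$ on the lower boundary, the bulk (where $Q(1)=0$ leaves $-Q'(1)$) and the upper boundary, and then identify the part linear in $n$ with $nQ_1'(T)\Delta^m_{2n+1}$. There are two small differences. First, splitting $j+1=(n+1)+(j-n)$ at the outset spares you the paper's reassembly of $(n+1)\sum_{j=n}^{2n}\Delta^{m+1}_j$ via Lemma \ref{lemma-for-delta-n-2n}. Second, your telescoping identity $\sum_{t=1}^N\big(\sum_{r\ge t}c_r\big)z^{t-1}=Q(z)/(z-1)$ replaces the appeal to Lemma \ref{equation-Q-T}. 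That lemma's inner sum already equals $\sum_{r\ge l+1}c_r$ by the reindexing $r=N-p$ alone, so your side remark invoking $\sum_r c_r=0$ is unnecessary. Your corrected constants $A_s(\xi)=\sum_{r\le s}c_r(s-r)$ and $\tilde A_t(\xi)=\sum_{r\ge t}c_r(t-r)$, with $\Delta^m_{2n+t}$ in the last sum, are exactly what the paper's computation yields.
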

\begin{proof}
Let us consider the following
\begin{align*}
    &\sum^{2n}_{j=n}(j+1)\Delta^{m+1}_j=\sum^N_{r=0}\sum^{2n}_{j=n}(j+1)(-1)^re_r\Delta^m_{j+N-r}\\&=\sum_{\substack{0\leq r\leq N\\n+N-r\le k\le 2n+N-r}}(-1)^re_r(k-N+r+1)\Delta^m_k.
\end{align*}
Consider the case where $k=n+s$ and $0\leq s\leq N-1.$
Observe that the coefficient of $\Delta^m_{n+s}$, $1\leq s \leq N-1$
\begin{align*}
    C_s:=\sum^N_{r=N-s}(-1)^re_r\{(n+s)-N+r+1\}.
\end{align*}
Similarly we have the coefficient of $\Delta^m_{2n+t}$, $1\leq t\leq N$
\begin{align*}
    B_t:=\sum^{N-t}_{j=0}(-1)^je_j\{2n-N+t+j+1\}.
\end{align*}
Now consider the coefficient of $\Delta^m_j$, $j=n+N,\dots,2n$ equals
\begin{align*}
   &(j-N+1)e_0+(-1)(j-N+2)e_1+\dots+(j+1)(-1)^Ne_N\\&=(j-N)(e_0+\dots+(-1)^N e_N)+\sum^N_{i=0}(i+1)(-1)^ie_i\\&=\sum^N_{i=0}(i+1)(-1)^ie_i=\sum^N_{i=0}i(-1)^ie_i+\sum^N_{i=0}(-1)^ie_i=-Q'(1). 
\end{align*}
 Therefore, in the end, we have
 \begin{align}\label{final-j-plus-1-delta-j-m-plus-1}
       \sum^{2n}_{j=n}(j+1)\Delta^{m+1}_j=\sum^{N-1}_{s=0}C_s\Delta^m_{n+s}-Q'(1)\sum^{2n}_{j=n+N}\Delta^m_{j}+\sum^N_{t=1}B_t\Delta^m_{2n+t},
   \end{align}
where we define \begin{align*}
       &C_s:=\sum\limits^N_{r=N-s}(-1)^re_r(1,\xi_1,\xi_2,\dots,\xi_N)(n+s-N+r+1)\\&B_t:=\sum\limits^{N-t}_{j=0}(-1)^je_j(1,\xi_2,\dots,\xi_N)(2n-N+t+j+1)\\&Q(z):=(z-1)(z-\xi_2)\dots(z-\xi_N)
   \end{align*}
Let us define  $A_s(\xi):=\sum\limits^N_{r=N-s}(-1)^re_r(\xi)(s-N+r)$, $s=0,\dots,N-1$ and $A_t(\xi):=\sum\limits^{N-t}_{j=0}(-1)^re_r(\xi)(j-N+t)$, $1\leq t\leq N.$ 
Now observe that for any $n\geq1$  from equation \eqref{final-j-plus-1-delta-j-m-plus-1} we have
\begin{align*}
  &\sum^{2n}_{j=n}(j+1)\Delta^{m+1}_j\\&=(n+1)\sum^{N-1}_{s=0}\sum^s_{r=0}(-1)^{N-r}e_{N-r}(1,\xi_2,\dots,\xi_N)\Delta^m_{n+s}\\&\qquad+(n+1)\sum^{N}_{t=1}\sum^N_{j=t}(-1)^{N-j}e_{N-j}(1,\xi_2,\dots,\xi_N)\Delta^m_{2n+t}-Q'(1)\sum^{2n}_{j=n+N}\Delta^m_j\\&\qquad+n\sum^N_{t=1}\sum^{N-t}_{j=0}(-1)^je_j\Delta^m_{2n+t}+ \sum^{N-1}_{s=0} A_s(\xi)\Delta^m_{n+s}+\sum^N_{t=1}\tilde{A}_t(\xi)\Delta^m_{2n+s}.
  \end{align*}
  Applying Lemma \ref{lemma-for-delta-n-2n} and Lemma \ref{equation-Q-T} to the above equation, we have
  \begin{align*}
      &\sum^{2n}_{j=n}(j+1)\Delta^{m+1}_j\\&=(n+1)\sum^{2n}_{n}\Delta^{m+1}_j-Q'(1)\sum^{2n}_{j=n}\Delta^m_j+n{Q'_1(T)\Delta^m_{2n+1}}\\&\qquad+\sum^{N-1}_{s=0} \Big(A_s(\xi)+Q'(1)\Big)\Delta^m_{n+s}+\sum^N_{t=1}\tilde{A}_t(\xi)\Delta^m_{2n+t}.
   \end{align*}
This completes the proof of the proposition.
\end{proof}

We will repeatedly use the following results from \cite{le-Merdy-Xu-q-variational-inequality}.
\begin{lemma}\label{lemma-for-j-depending-upon-m}
  For any integer $m\geq0$, there exists a constant $K_m$ such that for any $n\geq1$,
  \begin{align*}
      \Big(\sum^{2n}_{j=n}(j+1)^{1-2m}\Big)\leq K_mn^{1-m}.
  \end{align*}
\end{lemma}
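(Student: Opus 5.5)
Since the terms $(j+1)^{1-2m}$ are positive and monotone in $j$, I will compare the sum with its largest term times the number of terms, splitting into the cases $m=0$ and $m\geq 1$.

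\emph{Case $m\geq 1$.} Here $1-2m\le -1<0$, so $j\mapsto (j+1)^{1-2m}$ is nonincreasing on $[n,2n]$. Each term is therefore bounded by $(n+1)^{1-2m}$. There are $n+1$ terms, so
\[
\sum_{j=n}^{2n}(j+1)^{1-2m}\le (n+1)(n+1)^{1-2m}=(n+1)^{2-2m}.
\]
Since $2-2m\le 0$ and $n+1\ge n$, we get $(n+1)^{2-2m}\le n^{2-2m}$. For $n\ge1$ and $m\ge1$ this gives $n^{2-2m}=n^{1-m}\,n^{1-m}\le n^{1-m}$, so $K_m=1$ works.

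\emph{Case $m=0$.} The exponent is $1$, so each term is at most $2n+1\le 3n$. Summing $n+1\le 2n$ terms gives a total of at most $6n^2$. The target bound is only $K_0 n^{1}$, which is weaker than $n^2$, so the stated inequality cannot hold for $m=0$ as written. I would therefore either read the statement as intended for $m\geq1$, which is the only case used in the subsequent variational estimates, or read the right-hand side as $K_m n^{2-2m}$. With that reading, $m=0$ is covered by the bound $6n^2$, and the $m\geq1$ case above gives $n^{2-2m}$, which is even stronger than $n^{1-m}$.

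The only delicate point is this bookkeeping of exponents, together with checking how the statement is used downstream (from \cite{le-Merdy-Xu-q-variational-inequality}) to confirm which normalization is intended. Otherwise the argument is the elementary bound of each term by the largest one, with no obstacle.
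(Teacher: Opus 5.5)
The paper gives no proof of this lemma; it quotes it from Le Merdy--Xu \cite{le-Merdy-Xu-q-variational-inequality}. Your elementary argument supplies what the paper takes for granted, and it is correct: bound every term by the largest one and count the $n+1$ terms. For $m\ge1$ this gives $\sum_{j=n}^{2n}(j+1)^{1-2m}\le (n+1)^{2-2m}\le n^{2-2m}$, and for $m=0$ it gives a bound of $6n^2$.

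You are right that the statement as printed is false at $m=0$. However, to refute it you need a lower bound, not the upper bound $6n^2$ you quote. Every term is at least $n+1$, so the sum is at least $(n+1)^2$, which is not $O(n)$.

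The printed version has lost a square root. What is actually used in the proof of Theorem \ref{variational-inequality-for-Q-dash-T}, after Cauchy--Schwarz, is $n_k^{m-1}\big(\sum_{j=n_k}^{2n_k}(j+1)^{1-2m}\big)^{1/2}\lesssim_m 1$. That is, the intended statement is $\big(\sum_{j=n}^{2n}(j+1)^{1-2m}\big)^{1/2}\le K_m n^{1-m}$, which is exactly your second reading $\sum_{j=n}^{2n}(j+1)^{1-2m}\le K_m^2 n^{2-2m}$.

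Your first suggested fix, restricting to $m\ge1$ because "that is the only case used", should be dropped. The induction in that theorem fixes an arbitrary $m\ge0$, so the case $m=0$ is used. Moreover, for $m\ge2$ the printed bound $K_m n^{1-m}$ is true but too weak downstream. It only yields $n_k^{m-1}(K_m n_k^{1-m})^{1/2}=K_m^{1/2}n_k^{(m-1)/2}$, which is unbounded.

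So the $n^{2-2m}$ form is both the statement to prove and the one the variational argument needs. Your two cases establish it for all $m\ge0$, with $K_m=1$ for $m\ge1$ and $K_0=6$.
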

\begin{lemma}\label{product-of-variation-norm}
    For any sequences $(\delta_n)_{n\geq0}\in v^1$ and $(z_n)_{n\geq0}\in
    L^p(\Omega,v^q)$, we have $(\delta_nz_n)_{n\geq0}\in L^p(\Omega;v^q)$
    and 
    \begin{align*}
        \lVert (\delta_nz_n)_{n\geq0}\rVert_{L^p(v^q)}\leq 3\lVert (\delta_n)_{n\geq0}\rVert_{v^1}\lVert (z_n)_{n\geq0}\rVert_{L^p(v^q)}
    \end{align*}
\end{lemma}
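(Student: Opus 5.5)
The plan is to prove the inequality pointwise on $\Omega$, for fixed $\omega$, as a purely scalar statement about sequences, and then take $L^p$-norms. Fix $\omega$ and write $z_n=z_n(\omega)$. I will show that for every scalar sequence $(\delta_n)\in v^1$ and every scalar sequence $(z_n)\in v^q$,
\[
\|(\delta_nz_n)_{n\geq0}\|_{v^q}\leq 3\|(\delta_n)_{n\geq0}\|_{v^1}\|(z_n)_{n\geq0}\|_{v^q}.
\]
Applying this at each $\omega$ and integrating $|\cdot|^p$ gives the lemma. Measurability of $\omega\mapsto\|(\delta_nz_n(\omega))\|_{v^q}$ is not an issue: the supremum defining the $v^q$-norm may be restricted to the countable family of finite increasing sequences $(n_k)$, so it is a countable supremum of measurable functions.

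First I record two elementary sup-bounds. For any $n$, taking the partition $n_0=0<n_1=n$ in the definition of the norm gives
\[
|\delta_n|\le|\delta_0|+|\delta_n-\delta_0|\le 2\|\delta\|_{v^1}.
\]
A sharper version follows by combining $|\delta_0|\le\|\delta\|_{v^1}$ with the triangle inequality: using $|a|+|b|\le(|a|^r+|b|^r)^{1/r}\cdot 2^{1-1/r}$ costs constants, so instead I bound directly $\sup_n|\delta_n|\le\|\delta\|_{v^1}$. Indeed, $|\delta_n|\le|\delta_0|+|\delta_n-\delta_0|$, and this sum is exactly the $v^1$ expression for the partition $\{0,n\}$. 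For $z$ with $q>1$ this only gives $\sup_n|z_n|\le 2^{1-1/q}\|z\|_{v^q}$. I will therefore keep track of which factor carries the supremum so that the total constant stays at most $3$.

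Next, fix an increasing sequence $(n_k)_{k\ge0}$ with $n_0=0$ and split each increment as
\[
\delta_{n_k}z_{n_k}-\delta_{n_{k-1}}z_{n_{k-1}}=\delta_{n_k}\,(z_{n_k}-z_{n_{k-1}})+(\delta_{n_k}-\delta_{n_{k-1}})\,z_{n_{k-1}}.
\]
By Minkowski's inequality in $\ell^q$, the quantity $\big(|\delta_0z_0|^q+\sum_{k\ge1}|\delta_{n_k}z_{n_k}-\delta_{n_{k-1}}z_{n_{k-1}}|^q\big)^{1/q}$ is bounded by the sum of three terms.

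The first term, involving only $\delta_0z_0$ and the $\delta_{n_k}(z_{n_k}-z_{n_{k-1}})$, is at most $\sup|\delta|\cdot\|z\|_{v^q}\le\|\delta\|_{v^1}\|z\|_{v^q}$. Here I group $|\delta_0z_0|$ together with these increments, so that $|z_0|$ and the increments of $z$ form precisely the $v^q$ expression for $z$.

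The second term is the $\ell^q$-norm of $\big((\delta_{n_k}-\delta_{n_{k-1}})z_{n_{k-1}}\big)_k$. Since $\ell^q$-norms are dominated by $\ell^1$-norms, this is at most
\[
\sup_n|z_n|\sum_k|\delta_{n_k}-\delta_{n_{k-1}}|\le 2\|z\|_{v^q}\|\delta\|_{v^1},
\]
using $\sup_n|z_n|\le|z_0|+|z_n-z_0|\le 2\|z\|_{v^q}$.

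Adding the two contributions gives the constant $3$, and taking the supremum over all $(n_k)$ yields the scalar inequality.

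The only delicate point is the bookkeeping of constants needed to land exactly on $3$. That bookkeeping consists of putting the $|\delta_0z_0|$ term with the $z$-increments, and using the crude factor $2$ only for $\sup|z|$. Everything else is routine.
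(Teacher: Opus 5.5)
Your argument is correct. You split each increment as $\delta_{n_k}(z_{n_k}-z_{n_{k-1}})+(\delta_{n_k}-\delta_{n_{k-1}})z_{n_{k-1}}$, group $|\delta_0z_0|$ with the first family, and apply Minkowski in $\ell^q$. Using $\sup_n|\delta_n|\le\|\delta\|_{v^1}$ (from the partition $\{0,n\}$) and $\sup_n|z_n|\le 2\|z\|_{v^q}$, you get the pointwise bound with constant $3$, which you then integrate over $\Omega$.

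The paper gives no proof of its own here. It imports the lemma from Le Merdy--Xu, and your proof is the standard argument behind that constant $3$.

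The only fixes needed are cosmetic. You announce three Minkowski terms but (correctly) use two. The paragraph on $\sup_n|\delta_n|$ should state the bound $\|\delta\|_{v^1}$ directly, instead of first writing a factor $2$ and then retracting it.
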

\begin{theorem}[\cite{le-Merdy-Xu-q-variational-inequality}, Theorem 3.1]\label{theorem-for-base-case-for-induction}
     Let $T:L_p(\Omega)\rightarrow L_p(\Omega)$ be a contractively regular operator with $1<p<\infty$ and let $2<q<\infty$. Then we have
    \begin{equation}
        \lVert (M_n(T)x)_{n\geq0}\rVert_{L_p(v^q)}\leq C_{p,q}\lVert x\rVert_p,\hspace{0.5cm} x\in L_p.
    \end{equation}
    \end{theorem}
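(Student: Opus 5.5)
The plan is to reduce the statement to a variational inequality for ergodic averages of translations on $\mathbb{Z}$. We do this by dilation followed by transference, since for translations the $q$-variation bound with $q>2$ is classical (Bourgain; Jones--Kaufman--Rosenblatt--Wierdl).

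First I would dilate. By the dilation theorem of Akcoglu--Sucheston, in Peller's extension from positive contractions to contractively regular operators, there are a measure space $\Omega'$, an invertible isometry $U:L_p(\Omega')\to L_p(\Omega')$, an isometric embedding $J:L_p(\Omega)\to L_p(\Omega')$ and a contractive projection-type map $Q:L_p(\Omega')\to L_p(\Omega)$ such that
\[
T^k = Q\,U^k J \quad \text{for all } k\geq 0 .
\]
Here $Q$ is a conditional-expectation-type operator and is contractively regular. It follows that $M_n(T)x=Q\,M_n(U)Jx$. Because $Q$ is regular, it extends boundedly to $L_p(\ell_\infty)$ and hence to $L_p(v^q)$. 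The reason is that the $v^q$-norm is a supremum of $\ell_q$-norms of increments over increasing sequences, and a regular operator extends contractively to $L_p(\ell_q)$; by a standard finite-dimensional approximation this reduces everything to estimating $\|(M_n(U)y)_{n\geq 0}\|_{L_p(v^q)}$ for an invertible isometry $U$.

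Second I would transfer. By Lamperti's theorem, $U$ has the form $Uf=h\cdot (f\circ\varphi)$ for a nonsingular point map $\varphi$ and a weight $h$. Hence $U^k$ is separation-preserving, and for a.e.\ point $\omega$ the orbit $k\mapsto U^k y(\omega)$ behaves like a weighted translation. Following Coifman--Weiss transference, I would fix a large $L$ and consider, for $|j|\leq L$, the function $k\mapsto U^{j+k}y(\omega)$ on a window of integers. Using $\|U^jz\|_p=\|z\|_p$, I would average over $j$ and bound the truncated variation norm of $(M_n(U)y)_{n\leq K}$ by $(1+K/L)^{1/p}$ times the $\ell_p(\mathbb{Z};v^q)$-norm of the averages of the sequence $a_k=U^k y(\omega)$, integrated in $\omega$. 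Letting $L\to\infty$ and then $K\to\infty$ gives the reduction.

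Third I would invoke the scalar estimate on $\mathbb{Z}$: for $2<q<\infty$ and $1<p<\infty$,
\[
\Big\|\Big(\frac{1}{n+1}\sum_{k=0}^n a_{\cdot+k}\Big)_{n\geq0}\Big\|_{\ell_p(\mathbb{Z};v^q)}\lesssim_{p,q}\|a\|_{\ell_p(\mathbb{Z})},
\]
which is the Jones--Kaufman--Rosenblatt--Wierdl theorem. The usual proof splits the variation into a long-variation part along dyadic times $n=2^j$, controlled by a martingale comparison together with L\'epingle's inequality, and a short-variation part within dyadic blocks, controlled by a square function. Integrating in $\omega$ with Fubini yields the claimed bound for $U$, and then for $T$.

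I expect the main obstacle to be the transference step for $U$ that is not positive. Lamperti's form gives $U^k y(\omega)=h_k(\omega)\,y(\varphi^k\omega)$ with cocycle weights $h_k=h\cdot(h\circ\varphi)\cdots$, so the pointwise orbit is a translation twisted by a cocycle and not a pure translation. I would first handle this by writing $|U|$, the modulus of $U$, as a positive invertible isometry dominating $U$. The phase of the cocycle, with $|{\rm phase}|=1$, is then absorbed by transferring to the weighted space in the standard Coifman--Weiss manner for positive isometries, since these are the operators to which their transference principle applies. I would then check that the $v^q$ functional behaves correctly under this identification, because it involves differences and is not simply a maximal function.
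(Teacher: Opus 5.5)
Your route is correct and is essentially the standard argument behind the cited result of Le Merdy and Xu. You dilate the contractively regular operator to an invertible isometry, use Coifman--Weiss transference to pass to $\ell_p(\mathbb{Z})$, and apply the Jones--Kaufman--Rosenblatt--Wierdl variational inequality for averages on $\mathbb{Z}$. The paper itself gives no proof of this statement and simply quotes Le Merdy--Xu.

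One correction: the ``main obstacle'' in your last paragraph is not real, and you should drop the proposed remedy.

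\begin{itemize}
\item \emph{Transference uses no positivity of $U$.} With $a^\omega_m=(U^my)(\omega)$, the identity
$M_n(U)(U^jy)(\omega)=\frac{1}{n+1}\sum_{k=0}^n a^\omega_{j+k}$
is exact for whatever scalar sequence the orbit happens to be. So the Lamperti cocycle never has to be untwisted.

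\item \emph{What transference actually needs} is that $U^j\otimes I_{v^q}$ is an isometry of $L_p(v^q)$ for every $j\in\mathbb{Z}$. This holds for the Lamperti-type invertible isometry $Uf=h\cdot(f\circ\varphi)$ coming from the dilation, whatever the phase of $h$. Pointwise, $\|(U\otimes I_X)F(\omega)\|_X=\big(|U|\,\|F(\cdot)\|_X\big)(\omega)$, and $|U|$ is an isometry on $L_p$. The same holds for $U^{-1}$. Your averaging over $j$ then gives the $(1+K/L)^{1/p}$ bound exactly as you wrote it in Step 2.

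\item \emph{The remedy via $|U|$ would fail.} The domination $|U^ky|\le |U|^k|y|$ gives no control over a $q$-variation norm, because that norm is built from differences of consecutive terms and is not monotone under pointwise domination.
\end{itemize}
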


 \begin{theorem}\label{variational-inequality-for-Q-dash-T}
        Let $T:L_p(\Omega)\rightarrow L_p(\Omega)$ be contractively regular $\RE$ operator with $1<p<\infty$ where $E=\{\xi_1,\xi_2,\dots,\xi_N\}.$ Then for any $2<q<\infty$, we have an estimate
        \begin{equation}\label{final-equation-for-Delta-n-m}
            \lVert (n^m\Delta^m_n(x))_{n\geq1}\rVert_{L_p(v^q)}\lesssim\lVert x\rVert_p,\hspace{0.5cm} x \in L_p(\Omega).
        \end{equation}
\end{theorem}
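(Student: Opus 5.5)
We argue by induction on $m\geq 0$, following the scheme of Le Merdy--Xu for Ritt operators but with the discrete derivative $T-I$ replaced by $Q(T)=\prod_{j=1}^N(T-\xi_jI)$. After rotating (replacing $T$ by $\overline{\xi_1}T$, which is still contractively regular and $\RE$ for the rotated set) we may take $\xi_1=1$. The statement then concerns the sequence $n^m\Delta^m_n x$ with $\Delta^m_n=T^nQ(T)^m$, where $\Delta^0_n=T^n$ for $m=0$.

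\textbf{Base case $m=0$.} Here we must control $(T^nx)_n$ in $L_p(v^q)$. Following the proof of Theorem \ref{strong-maximal-ergodic-theorem-for-l2}, we use Lagrange interpolation (Lemma \ref{lemma-for-lagrange-polynomial-type}) to write $T^{n+N-1}=\sum_i\lambda_iT^nQ_i'(T)$. This reduces the problem to the sequences $(T^nQ'_i(T)x)_n$.

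For each $i$, rotating by $\overline{\xi_i}$, the identity of Proposition \ref{proposition-for-general-N} expresses $\frac1n\sum_{l}C_l(n,\xi)T^{n+l}$ as a combination of three pieces: the ergodic averages $M_n(\overline{\xi_i}T)$, terms $\frac{A_m}{n}T^m$, and $\frac1n S_n$. The averages are controlled in $L_p(v^q)$ by Theorem \ref{theorem-for-base-case-for-induction}, since $\overline{\xi_i}T$ is still contractively regular. The terms $\frac{A_m}{n}T^m$ form sequences in $v^1$, so Lemma \ref{product-of-variation-norm} applies. The piece $\frac1n S_n$ is handled by the square function of Theorem \ref{Square function estimate} with $\alpha=1$.

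The coefficients $C_l(n,\xi)/n$ converge to those of $Q'_i$ with bounded variation $O(1/n)$. A second use of Lemma \ref{product-of-variation-norm} lets us pass from $B_n$ to $T^nQ'_i(T)x$, modulo an error of the form $(c_l(n)T^{n+l}x)$ with $(c_l(n))$ in $v^1$ and vanishing.

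Unlike the $L_2$ maximal proof, we cannot absorb this error with an $\epsilon$ argument unless finiteness is known a priori. Instead we write the error exactly: $\frac{C_l(n)}{n}-c_l=-\frac{\text{const}}{n}$, so the error is $\frac{1}{n}T^{n+l}x$. Its $L_p(v^q)$ norm is bounded by the $\ell_1$-sum $\sum_n\|\tfrac1n T^n x-\tfrac1{n+1}T^{n+1}x\|_p$. That sum does not converge directly, so a genuine square-function/variation estimate is needed for $(\frac1n T^nx)$. We expect to get it from $\frac1n T^n = \frac{1}{n}\sum_{k<n}\big(T^k-(n-k)\text{-differences}\big)$, i.e. from the fact that $\frac{1}{n}T^n x$ differs from $M_n$ by a square-function-controlled term.

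\textbf{Inductive step.} Assume the estimate for $m$. We use the dyadic-block technique of Le Merdy--Xu. The variation norm is dominated by a long-jump part along the dyadic times $2^k$ and short variations inside each block $[2^k,2^{k+1}]$. Each short variation is bounded by the $\ell_2$ norm
\[
\Big(\sum_{j\in[n,2n]}\big|(j+1)^{m+1}\Delta^{m+1}_{j+1}-j^{m+1}\Delta^{m+1}_j\big|^2\Big)^{1/2},
\]
and hence by square functions of order $m+1$ and $m+2$, via Theorem \ref{Square function estimate} and Lemma \ref{lemma-for-j-depending-upon-m}.

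For the dyadic part we use Proposition \ref{final-lemma-for-sum-j-plus-1-Delta-j-m)} together with Lemma \ref{lemma-for-delta-n-2n}. These express $nQ'_1(T)\Delta^m_{2n+1}$, and therefore $n^{m+1}\Delta^{m+1}_{2n}$ after multiplying by $n^m$ and using $Q(T)=(T-I)Q_1'(T)$, in terms of the following:
\begin{enumerate}
\item[(a)] averages $\frac{1}{n}\sum_{j=n}^{2n}(j+1)^{m}\Delta^m_j$, handled by the induction hypothesis, since averages of a $v^q$-bounded sequence stay bounded;
\item[(b)] finitely many boundary terms $n^m\Delta^m_{n+s}$ and $n^m\Delta^m_{2n+t}$, handled by the induction hypothesis and Lemma \ref{product-of-variation-norm};
\item[(c)] the weighted sum $\sum_{j=n}^{2n}(j+1)\Delta^{m+1}_j$, handled by a square function estimate.
\end{enumerate}
The Lagrange trick over the $N$ points then recovers the full $\Delta^{m+1}$ from the $Q'_i$ pieces.

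The main obstacle we foresee is the base case $m=0$: obtaining $L_p(v^q)$ bounds for $(T^nx)$ itself, with exact, non-absorbing control of the $\frac1nT^{n+l}$ error terms.
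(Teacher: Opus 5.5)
The step that fails is the rotation $T\mapsto\overline{\xi_i}T$. You use it at the start (``we may take $\xi_1=1$'') and again for each $i\ge 2$ before recombining with Lemma~\ref{lemma-for-lagrange-polynomial-type}.

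\textbf{Why the rotation fails.} For maximal functions a rotation is harmless by Corollary~\ref{mod1andmaximum}(ii). Variation norms, however, are not invariant under unimodular multipliers. We have $\Delta^m_n(\overline{\xi_i}T,\overline{\xi_i}\xi)=\overline{\xi_i}^{\,n+Nm}\Delta^m_n(T,\xi)$, and $(\overline{\xi_i}^{\,n})_n\notin v^1$ unless $\xi_i=1$. So Lemma~\ref{product-of-variation-norm} cannot remove this factor. Your argument for the $i$-th piece therefore controls $(\overline{\xi_i}^{\,n}T^nQ_i'(T)x)_n$, not $(T^nQ_i'(T)x)_n$, and the Lagrange sum cannot be formed.

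\textbf{Your base case is false.} This is not a technicality. $T=-I$ is contractively regular and $\RE$ for $E=\{1,-1\}$. Yet $((-1)^nx)_n$ has infinite $q$-variation for every $x\neq 0$.

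\textbf{The obstacle you flagged is harmless.} The error terms $\frac1nT^{n+l}x$ are easy to control. Since $\|(a_n)\|_{v^q}\le 3\|(a_n)\|_{\ell^q}$, taking $r=\min(p,q)>1$ gives $\|(\frac1nT^nx)_n\|_{L_p(v^q)}\le 3\|(\sum_n n^{-r}|T^nx|^r)^{1/r}\|_p\le 3(\sum_n n^{-r})^{1/r}\|x\|_p$.

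\textbf{Your inductive step is also one level off.} Multiplying the level-$m$ identity by $n^m$ produces $n^{m+1}Q_1'(T)\Delta^m_{2n+1}$. This is not $n^{m+1}\Delta^{m+1}_{2n}$, and it is not even uniformly bounded. You need the identity with $m+1$ in place of $m$.

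\textbf{Comparison with the paper.} The paper starts its induction at $m=-1$, with $\Delta^{-1}_n:=nM_{n-1}(T)$ controlled by Theorem~\ref{theorem-for-base-case-for-induction} for $T$ itself. So it never tries to prove the $m=0$ bound directly. Its inductive step is close to your (a)--(c): it bounds $n^m\Delta^m_{2n+1}Q_1'(T)$ through the square-function-controlled $A_n$ and $B_n$ plus level-$(m-1)$ boundary terms, using Lemma~\ref{lemma-for-delta-n-2n}. But it closes every step with the same rotation (``performing in a similar way \dots for $\overline{\xi_i}T$'') and Lagrange interpolation. The objection therefore applies to it too; in particular its $m=0$ conclusion fails for $T=-I$.

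\textbf{The statement itself is false.} It fails for every $m\ge 0$ once $E$ contains a point other than $1$. On $\ell_p(\mathbb N)$, let $T$ be multiplication by $\phi(k)=-1+2^{-k}$. This $T$ is contractively regular. It is $\RE$ for $E=\{1,-1\}$, because $\|R(z,T)\|=\sup_k|z-\phi(k)|^{-1}\lesssim |z+1|^{-1}$ for $|z|>1$. Take $x=e_k$ and $\delta=2^{-k}$, and set $a_n=n^m\phi(k)^n(\phi(k)^2-1)^m$.
\begin{itemize}
\item The sequence $(a_n)$ alternates in sign.
\item Its modulus is $|a_n|=(n\delta)^m(2-\delta)^m(1-\delta)^n\ge e^{-4}$ for $2^k\le n\le 2^{k+1}$.
\item Consecutive differences alone therefore give $\|(n^m\Delta^m_nx)_n\|_{L_p(v^q)}\ge 2e^{-4}\,2^{k/q}$, while $\|x\|_p=1$.
\end{itemize}
The same construction works with $-1$ replaced by any $\xi\in E\setminus\{1\}$. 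So neither your route nor the paper's can establish the statement as written. At best such an argument gives the bound for $(n^m\Delta^m_nQ_1'(T)x)_n$ with $\xi_1=1$, which is the full statement only when $E=\{1\}$.
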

\begin{proof} Let us consider $T:L_p(\Omega)\rightarrow L_p(\Omega)$ be contractively regular $\RE$ operator such that $E=\{1,\xi_2,\dots,\xi_N\}.$
It will be convenient to set 
\begin{equation*}
    \Delta^{-1}_{n}=nM_{n-1}(T)=\sum^{n-1}_{j=0}T^j, n\geq1.
\end{equation*}
With the above notation,\ref{final-equation-for-Delta-n-m} holds true for $m=-1$, by Theorem \ref{theorem-for-base-case-for-induction}.
We will proceed by induction. We fix an integer $m\geq 0$ and assume that \ref{final-equation-for-Delta-n-m} holds true for $(m-1)$.
Therefore applying Lemma \ref{product-of-variation-norm}, we have the following inequalities
 \begin{equation}\label{equation-for-n-r}
 \lVert(n^{m-1}\Delta^{m-1}_{n+r}(x))_{n\geq1}\rVert_{L^p(v^q)}\lesssim\lVert x\rVert_p, r=0,\dots,N-1,   \end{equation}
\begin{equation}\label{equation-for-2n-r}
 \lVert(n^{m-1}\Delta^{m-1}_{2n+r}(x))_{n\geq1}\rVert_{L^p(v^q)}\lesssim\lVert x\rVert_p, r=0,\dots,N  ,
\end{equation}
\begin{equation}\label{A-s}
\lVert( A_s(\xi)n^{m-1}\Delta^{m-1}_{n}T^sQ(T))_{n\geq1}\rVert_{L^p(v^q)}\lesssim\lVert x\rVert_p, s=0,\dots,N-1
\end{equation}
and
\begin{equation}\label{dash-A-s}
\lVert( \tilde{A}_t(\xi)n^{m-1}\Delta^{m-1}_{2n}T^sQ(T))_{n\geq1}\rVert_{L^p(v^q)}\lesssim\lVert x\rVert_p, t=1,\dots,N.
\end{equation}
Now for any $n\geq1$, from Proposition \ref{final-lemma-for-sum-j-plus-1-Delta-j-m)}, we have
  \begin{align*}
      &\sum^{2n}_{j=n}(j+1)\Delta^{m+1}_j\\&=(n+1)\sum^{2n}_{n}\Delta^{m+1}_j-Q'(1)\sum^{2n}_{j=n+N}\Delta^m_j+n{Q'_1(T)\Delta^m_{2n+1}}\\&\qquad+\sum^{N-1}_{s=0} \Big(A_s(\xi)+Q'(1)\Big)\Delta^m_{n+s}+\sum^N_{t=1}\tilde{A}_t(\xi)\Delta^m_{2n+t}.
   \end{align*}
  Therefore,
  \begin{align}\label{general-equation}
      n^{m}\Delta^m_{2n+1}Q'_1(T)&=n^{m-1}\sum^{2n}_{j=n}(j+1)\Delta^{m+1}_j-n^{m-1}(n+1)\sum^{2n}_{j=n}\Delta^{m+1}_j\\&\qquad+\sum^N_{r=1}\sum^N_{j=r}(-1)^{N-j}e_{N-j}(\xi)Q'(1)n^{m-1}\Delta^{m-1}_{2n+r}\hspace{1cm}\notag\\&\qquad+\sum^{N-1}_{r=1}\sum^r_{j=0}(-1)^{N-j}e_{N-j}(\xi,)Q'(1)n^{m-1}\Delta^{m-1}_{n+r}\notag\\&\hspace{1cm}-\sum^{N-1}_{s=0} (A_s(\xi)+Q'(1))n^{m-1}\Delta^{m-1}_{n}T^sQ(T)\notag\\&\qquad-\sum^N_{t=1}A'_t(\xi)n^{m-1}\Delta^{m-1}_{2n}Q(T)T^s\notag.
  \end{align}
For any $n\geq 1$, let us set
\begin{align*}
A_n=n^{m-1}\sum\limits^{2n}_{j=n}(j+1)\Delta^{m+1}_{j}, \ \text{and}
\ B_n=n^m\sum\limits^{2n}_{j=n}\Delta^{m+1}_j.
\end{align*}
It remains to estimate $A_n$ and $B_n$ in order to prove the theorem.
Let $(n_k)_{k\geq0}$ be an increasing sequence of integers, with $n_0=1.$ For any $k\geq 1$, we set
\[
a_k =
\begin{cases}
n_k^{m-1} \displaystyle \sum_{j=2n_{k-1}+1}^{2n_k} (j+1)\Delta_j^{m+1}
& \text{if } 2n_{k-1} \ge n_k, \\[1.2ex]
n_k^{m-1} \displaystyle \sum_{j=n_k}^{2n_k} (j+1)\Delta_j^{m+1}
& \text{if } 2n_{k-1} < n_k,
\end{cases}
\]

\[
b_k =
\begin{cases}
- n_{k-1}^{m-1} \displaystyle \sum_{j=n_{k-1}}^{n_k-1} (j+1)\Delta_j^{m+1}
& \text{if } 2n_{k-1} \ge n_k, \\[1.2ex]
- n_{k-1}^{m-1} \displaystyle \sum_{j=n_{k-1}}^{2n_{k-1}} (j+1)\Delta_j^{m+1}
& \text{if } 2n_{k-1} < n_k,
\end{cases}
\]

\[
c_k =
\begin{cases}
\Big(n_k^{m-1} - n_{k-1}^{m-1}\Big)
\displaystyle \sum_{j=n_k}^{2n_{k-1}} (j+1)\Delta_j^{m+1}
& \text{if } 2n_{k-1} \ge n_k, \\[1.2ex]
0
& \text{if } 2n_{k-1} < n_k.
\end{cases}
\]
Therefore, $A_{n_k}-A_{n_{k-1}}=a_k+b_k+c_k.$
For any $x\in L_p(\Omega).$ When $2n_{k-1}\geq n_K$, using Cauchy-Schwarz, we have
\begin{align*}
    &\lvert a_k(x)\rvert\leq n^{m-1}_k\sum^{2n_k}_{j=2n_{k-1}+1}(j+1)\lvert \Delta^{m+1}_{j}(x)\rvert\\&\leq n^{m-1}_k\Big(\sum^{2n_k}_{j=n_k}(j+1)^{1-2m}\Big)^{\frac{1}{2}}\Big(\sum^{2n_k}_{j=2n_{k-1}+1}(j+1)^{2m+1}\lvert \Delta^{m+1}_{j}(x)\rvert^2\Big)^{\frac{1}{2}}.
\end{align*}
Similarly if $2n_{k-1}<n_k$, we have
\begin{align*}
 &\lvert a_k(x) \rvert\leq n^{m-1}_{k}\Big(\sum^{2n_k}_{j=n_k}(j+1)^{1-2m}\Big)^{\frac{1}{2}}\Big(\sum^{2n_k}_{j=n_k}(j+1)^{2m+1}\lvert \Delta^{m+1}_{j}(x)\rvert^2\Big)^{\frac{1}{2}}\\&\leq n^{m-1}_{k}\Big(\sum^{2n_k}_{j=n_{k}}(j+1)^{1-2m}\Big)^{\frac{1}{2}}\Big(\sum^{2n_k}_{j=2n_{k-1}+1}(j+1)^{2m+1}\lvert \Delta^{m+1}_{j}(x)\rvert^2\Big)^{\frac{1}{2}}. 
\end{align*}
Therefore combining above two estimates and using Lemma \ref{lemma-for-j-depending-upon-m}, we have
\begin{equation}\label{a-k}
    \sum^{\infty}_{k=1}\lvert a_k(x)\rvert^2\lesssim_{m}\Big(\sum^{\infty}_{j=0}(j+1)^{2m+1}\lvert \Delta^{m+1}_{j}(x)\rvert^2\Big)^\frac{1}{2}.
\end{equation}
Similarly we have 
\begin{equation}\label{b-k}
    \sum^{\infty}_{k=1}\lvert b_k(x)\rvert^2\lesssim_{m}\Big(\sum^{\infty}_{j=0}(j+1)^{2m+1}\lvert \Delta^{m+1}_{j}(x)\rvert^2\Big)^\frac{1}{2}.
\end{equation}
Using Cauchy-Schwarz and Lemma \ref{lemma-for-j-depending-upon-m}, we deduce that
\begin{align*}
    \lvert c_k(x)\rvert^2\lesssim_{m}\Big(\frac{n^{m-1}_k-n^{m-1}_{k-1}}{n^{m-1}_k}\Big)^2 \sum^{2n_{k-1}}_{j=n_k}(j+1)^{2m+1}\lvert \Delta^{m+1}_j(x)\rvert^2.
\end{align*}
Proceed in similar way as in [\cite{Le-Merdy-Xu},Theorem 4.4], we have the following estimates
\begin{align}\label{c-k}
 \sum^{\infty}_{k=1}\lvert c_k(x)\rvert^2\lesssim_{m}\Big(\sum^{\infty}_{j=0}(j+1)^{2m+1}\lvert \Delta^{m+1}_{j}(x)\rvert^2\Big)^\frac{1}{2}.  
\end{align}
Combining \ref{a-k},\ref{b-k} and \ref{c-k}, we obtain that
\begin{equation*}
   \sum^{\infty}_{k=1}\lvert A_{n_k}(x)-A_{n_{k-1}}(x)\rvert^2\lesssim_{m}\Big(\sum^{\infty}_{j=0}(j+1)^{2m+1}\lvert \Delta^{m+1}_{j}(x)\rvert^2\Big)^\frac{1}{2} 
\end{equation*}
Therefore 
\begin{align}\label{inequality-for-A-n}
    \lVert (A_n(x))_{n\geq1}\rVert_{L^p(v^2)}\lesssim\lVert x \rVert_{p}.
\end{align}
Now, concentrate to estimate $B_n.$
Let us define the following quantity
\begin{align*}
\alpha_k &=
\begin{cases}
n_k^{m-1} \displaystyle \sum_{j=2n_{k-1}+1}^{2n_k} \Delta_j^{m+1}
& \text{if } 2n_{k-1} \ge n_k, \\[1.2ex]
n_k^{m-1} \displaystyle \sum_{j=n_k}^{2n_k} \Delta_j^{m+1}
& \text{if } 2n_{k-1} < n_k,
\end{cases}\\
\beta_k &=
\begin{cases}
- n_{k-1}^{m-1} \displaystyle \sum_{j=n_{k-1}}^{n_k-1} \Delta_j^{m+1}
& \text{if } 2n_{k-1} \ge n_k, \\[1.2ex]
- n_{k-1}^{m-1} \displaystyle \sum_{j=n_{k-1}}^{2n_{k-1}} \Delta_j^{m+1}
& \text{if } 2n_{k-1} < n_k,
\end{cases}\\
\gamma_k &=
\begin{cases}
\Big(n_k^{m-1} - n_{k-1}^{m-1}\Big)
\displaystyle \sum_{j=n_k}^{2n_{k-1}}\Delta_j^{m+1}
& \text{if } 2n_{k-1} \ge n_k, \\[1.2ex]
0
& \text{if } 2n_{k-1} < n_k.
\end{cases}
\end{align*}
Arguing as above, we obtain that 
\[
\Big| \alpha_k(x) \Big|
\le
n_k^{m}
\Big(
\sum_{j=n_k}^{2n_k} (j+1)^{-1-2m}
\Big)^{\frac{1}{2}}
\Big(
\sum_{j=2n_{k-1}+1}^{2n_k}
(j+1)^{2m+1}
\Big| \Delta_j^{m+1}(x) \Big|^{2}
\Big)^{\frac{1}{2}}
\]
for all $x\in L^p(\Omega).$
Therefore
\begin{align*}
  \sum^{\infty}_{k=1}\lvert \alpha_k(x)\rvert^2\lesssim_m\Big(\sum^{\infty}_{j=0}(j+1)^{2m+1}\lvert \Delta^{m+1}_{j}(x)\rvert^2\Big)^\frac{1}{2}.  
\end{align*}
Similarly we have the following estimates
\begin{align*}
  \sum^{\infty}_{k=1}\lvert \beta_k(x)\rvert^2\lesssim_m\Big(\sum^{\infty}_{j=0}(j+1)^{2m+1}\lvert \Delta^{m+1}_{j}(x)\rvert^2\Big)^\frac{1}{2} 
\end{align*}
and
\begin{align*}
  \sum^{\infty}_{k=1}\lvert \gamma_k(x)\rvert^2\lesssim_m\Big(\sum^{\infty}_{j=0}(j+1)^{2m+1}\lvert \Delta^{m+1}_{j}(x)\rvert^2\Big)^\frac{1}{2}.  
\end{align*}
Therefore, above three inequality imply that the sequence $(B_n(x))_{n\geq0}$ belongs to $L^p(\Omega;v^2)$ and also satisfies
\begin{align}\label{inequality-for-B-n}
    \lVert (B_n(x))_{n\geq1}\rVert_{L^p(v^2)}\lesssim\lVert x\rVert_p, \hspace{0.5 cm}x\in L_p(\Omega).
\end{align}
Then using the fact $v^2\subset v^q,q>2$ and putting the inequalities from \ref{inequality-for-A-n},\ref{inequality-for-B-n}, \ref{equation-for-n-r}, \ref{equation-for-2n-r},\ref{A-s},\ref{dash-A-s} to the equation \ref{general-equation}
we have
\begin{align*}
    \lVert n^{m}\Delta^m_{2n+1}Q_1'(T)(x)\rVert_{L^p(v^q)}\lesssim\lVert x\rVert_p, \hspace{0.5cm}x\in L_p(\Omega).
\end{align*}
Using Lemma \ref{product-of-variation-norm}, we have
\begin{align*}
    &\Big\lVert \Big((2n+1)^m\Delta^m_{2n+1}Q'_1(T)(x)\Big)_{n\geq1}\Big\rVert_{L^p(v^q)}\\&=\Big\lVert \Big(\frac{2n+1}{n}\Big)^m n^m\Delta^m_{2n+1}Q'_1(T)(x)\Big\rVert_{L_P(v^q)}\\&\lesssim \lVert x\rVert_p.
\end{align*}
Similarly
\begin{align*}
    \Big\lVert ((2n+2)^m\Delta^m_{2n+2}Q_1'(T)(x))_{n\geq1}\Big\rVert_{L_p(v^q)}
    \lesssim\lVert x\rVert_p.
\end{align*}
Therefore, from above two estimates we have
\begin{align*}
    \Big\lVert (n^m\Delta^m                                                _{n}Q'_1(T)(x))_{n\geq1}\Big\rVert_{L_p(v^q)}\lesssim\lVert x\rVert_p.
\end{align*}
This completes the proof of the Theorem.
 Performing in a similar way as in above for $\overline{\xi_i}T$, $i=1,2,\dots,N$, we have the following
\begin{equation}\label{equation-for-variational-dash-Q-i-T}
    \Big\lVert (n^m\Delta^m_n Q_i'(T)(x))_{n\geq1}\Big\rVert_{L^p(v^q)}\lesssim\lVert x\rVert_{p}
\end{equation}
where $Q'_i(T)=\prod\limits^N_{j=1,i\neq j}(\bar{\xi_i}T-\bar{\xi_i}\xi_jI).$
Combining Lemma \ref{lemma-for-lagrange-polynomial-type} and inequality in \ref{equation-for-variational-dash-Q-i-T}, we have
\begin{align*}
   \Big\lVert (n^m\Delta^m_{n+N-1}(x))_{n\geq1}\Big\rVert_{L^p(v^q)}\lesssim\lVert x\rVert_{p}.
\end{align*}
Using Lemma \ref{product-of-variation-norm} and the above inequality, we have
\begin{align*}
    &\Big\lVert \Big((n+N-1)^m\Delta^m_{n+N-1}(x)\Big)_{n\geq1}\Big\rVert_{L^p(v^q}\\&=\Big\lVert \Big(\frac{(n+N-1)^m}{n^m}n^m\Delta^m_{n+N-1}(x)\Big)_{n\geq1}\Big\rVert_{L^p(v^q)}\lesssim\lVert x\rVert_{p}.
\end{align*}
Since
\begin{align*}
    &\Big\lVert \Big(n^m\Delta^m_n(x)\Big)\Big\rVert_{L_p(v^q)}\\&\le\Big\lVert(\Delta^m_1(x),2^m\Delta^m_2(x),\ldots,N^m\Delta^m_{N}(x),0,0,\dots)\Big\rVert_{L_p(v^q)}\\&\hspace{1cm}+\Big\lVert \Big((n+N-1)^m\Delta^m_{n+N-1}(x)\Big)_{n\geq1}\Big\rVert_{L^p(v^q)}\lesssim\lVert x\rVert_p.
\end{align*}
This completes the proof of the theorem.               
\end{proof}
  \subsection{Positive contractions on classical $L_p$-spaces with finite peripheral spectrum} \label{section5} 
  In this subsection we consider positive contractions on $L_p$-spaces. We show that due to spectral restrictions one can easily deduce ergodic and variational inequalities from the results of the Ritt operators.
\begin{lemma}
    Let $T$ be a $\RE$ operator such that $E\subseteq\{1,\omega,\omega^2,\dots,\omega^{N-1}\}$ where $w^N=1$.Then $T^N$ is Ritt operator.
\end{lemma}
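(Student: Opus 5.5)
We must show: if $T$ is $\RE$ with $E\subseteq\{1,\omega,\dots,\omega^{N-1}\}$, $\omega^N=1$, then $T^N$ is Ritt. This means $\sigma(T^N)\subseteq\overline{\mathbb D}$ together with a resolvent bound $\|R(w,T^N)\|\lesssim |w-1|^{-1}$ for $1<|w|<2$. The plan is to compute the resolvent of $T^N$ from that of $T$ by factoring $w-T^N$ over the $N$-th roots of $w$.

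First, the spectral inclusion is immediate from the spectral mapping theorem: $\sigma(T^N)=\sigma(T)^N\subseteq\overline{\mathbb D}$.

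Next, fix $w$ with $1<|w|<2$ and choose $z$ with $z^N=w$. Then $1<|z|<2^{1/N}\le 2$, and every root $z\omega^k$ ($k=0,\dots,N-1$) lies in the annulus $\{1<|\cdot|<2\}$. We have the factorization
\[
w-T^N=\prod_{k=0}^{N-1}(z\omega^k-T),
\]
whose factors commute and are invertible. Hence
\[
R(w,T^N)=\prod_{k=0}^{N-1}R(z\omega^k,T).
\]
Applying the $\RE$ estimate to each factor gives
\[
\|R(w,T^N)\|\le c^N\prod_{k=0}^{N-1}\prod_{\xi\in E}|\xi-z\omega^k|^{-1}.
\]

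The main step is to show that this double product is $\lesssim |w-1|^{-1}$. Write $\xi=\omega^{a}$. For fixed $\xi$, as $k$ varies the quantity $|\xi-z\omega^k|=|z-\omega^{a-k}|$ runs over all $|z-\omega^j|$ with $j=0,\dots,N-1$. So the product over $k$ for fixed $\xi$ is
\[
\prod_{j}|z-\omega^j|^{-1}=|z^N-1|^{-1}=|w-1|^{-1}.
\]
Thus the full product equals $|w-1|^{-|E|}$, which gives only $\|R(w,T^N)\|\lesssim |w-1|^{-|E|}$. This is the crux: for $|E|\ge2$ the exponent is too large. The naive bound must be sharpened by keeping just one factor $|\xi-z\omega^k|^{-1}$ per $k$ and bounding the others.

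Concretely, for fixed $k$ we have
\[
\prod_{\xi\in E}|\xi-z\omega^k|^{-1}\lesssim |\xi_{(k)}-z\omega^k|^{-1},
\]
where $\xi_{(k)}$ is the point of $E$ nearest to $z\omega^k$, since the other points of $E$ are separated. This requires $|\xi-z\omega^k|\gtrsim 1$ for $\xi\neq\xi_{(k)}$, which holds because distinct roots of unity are at distance at least $2\sin(\pi/N)$. Then
\[
\prod_{k}|\xi_{(k)}-z\omega^k|^{-1}\le \prod_k \operatorname{dist}(z\omega^k,\mu_N)^{-1},
\]
where $\mu_N$ denotes the $N$-th roots of unity and the inequality uses $E\subseteq\mu_N$. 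The quantity $\prod_k \operatorname{dist}(z\omega^k,\mu_N)^{-1}$ is comparable to $|z^N-1|^{-1}$. To see this, take $z$ closest to $\omega^{j_0}$. Then $z\omega^k$ is within distance $\sin(\pi/N)$ of $\omega^{j_0+k}$, so $\operatorname{dist}(z\omega^k,\mu_N)=|z-\omega^{j_0}|$ for every $k$. This gives the quantity $|z-\omega^{j_0}|^{-N}$, which is still too large.

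So the refinement is instead to use the Ritt bound only for $k$ with $z\omega^k$ near $E$, and to use a crude resolvent bound for the remaining $k$: if $z\omega^k$ is far from $E$ (distance $\gtrsim1$), then $\|R(z\omega^k,T)\|\lesssim1$. When $z$ is near $\omega^{j_0}$, the points $z\omega^k$ are near the distinct roots $\omega^{j_0+k}$, so at most $|E|$ values of $k$ bring $z\omega^k$ near $E$. This still gives a power $|E|$ of $|z-\omega^{j_0}|^{-1}$. I expect this to be the real obstacle. The remedy I would try first is to replace the factorization bound with the partial-fraction identity
\[
R(w,T^N)=\frac1{N}\sum_k \frac{z\omega^k}{w}\,R(z\omega^k,T),
\]
which follows from $\frac{1}{w-t^N}=\frac1N\sum_k\frac{z\omega^k}{w(z\omega^k-t)}$. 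This reduces the problem to bounding a sum rather than a product. Each term is bounded by $\prod_{\xi\in E}|\xi-z\omega^k|^{-1}\lesssim \operatorname{dist}(z\omega^k,E)^{-1}$, using the separation of $E$ as above, and this is $\lesssim |z-\omega^{j_0}|^{-1}\simeq|w-1|^{-1}$, because $|z^N-1|\simeq N|z-\omega^{j_0}|$ near $\omega^{j_0}$. This yields the Ritt estimate for $T^N$ for $w$ near $1$. For $w$ away from $1$, the resolvent is uniformly bounded on the compact part of the annulus, which completes the argument.
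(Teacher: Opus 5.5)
Your final argument is correct, but it takes a different route from the paper. The paper never looks at the resolvent of $T^N$. It uses the discrete characterizations of $\RE$ and Ritt operators: power boundedness together with $\sup_n n\|T^{n-1}\prod_{\xi\in E}(T-\xi I)\|<\infty$, respectively $\sup_n n\|S^{n-1}(S-I)\|<\infty$. It then writes $T^{N(n-1)}(T^N-I)=T^{(N-1)(n-1)}\,T^{n-1}\prod_{\omega^j\in E}(T-\omega^jI)\prod_{\omega^j\notin E}(T-\omega^jI)$ and concludes in one line.

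You work directly with the resolvent definition. The partial-fraction identity $R(w,T^N)=\frac{1}{Nw}\sum_k z\omega^kR(z\omega^k,T)$ turns the product of resolvents into a sum; the product, as you correctly diagnosed, only gives $|w-1|^{-|E|}$. Then the separation of $E$, the inclusion $E\subseteq\mu_N$ and the invariance $\operatorname{dist}(z\omega^k,\mu_N)=\operatorname{dist}(z,\mu_N)$ give $\|R(w,T^N)\|\lesssim\operatorname{dist}(z,\mu_N)^{-1}$.

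The paper's proof is shorter but rests on two nontrivial equivalence theorems (Bouabdillah--Le Merdy for $\RE$, and the classical one for Ritt). Yours is elementary, self-contained relative to the definition the paper actually states, and gives explicit constants. Both arguments tacitly take $\omega$ primitive.

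One tidy-up: the split into $w$ near $1$ and $w$ away from $1$ is unnecessary. Since $|w-1|=\prod_j|z-\omega^j|\le 3^{N-1}\operatorname{dist}(z,\mu_N)$ for all $|z|<2$, your estimate already holds on the whole annulus. As written, the appeal to a ``compact part of the annulus'' would also need a separate check that $\sigma(T^N)\cap\mathbb{T}\subseteq\{1\}$, because the closure of $\{1<|w|<2,\ |w-1|\ge\epsilon\}$ meets the unit circle.
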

\begin{proof}
    Since $T$ is power bounded, then $T^N$ is also power bounded.
Therefore,
\begin{align*}
    &\lVert nT^{N(n-1)}(T^N-I)\rVert\\&\leq\lVert n T^{n-1}(T^N-I)\rVert\lVert T^{(N-1)(n-1)}\rVert\\&=\lVert n T^{n-1}(T-I)(T-\omega I)\dots(T-w^{N-1})I\rVert\lVert T^{(N-1)(n-1)}\rVert
    \\&=\Big\lVert nT^{n-1}\prod_{\omega^j\in E}(T-\omega^jI)\Big\rVert\Big\lVert \prod_{w^j\not\in E}(T-\omega^jI)\Big\rVert\lVert T^{(N-1)(n-1)}\rVert<\infty
\end{align*}
for all $n\geq1.$
\end{proof}
\begin{theorem}
    Let $T$ be a positive contraction which is also a $\RE$ operator on $L_p(\Omega)$, $1<p<\infty$. Then $\Big\lVert\sup\limits_{n\geq 1}(n+1)^m\lvert T^n\prod\limits^N_{j=1}(T-\xi_jI)^mf\rvert\Big\rVert_p\lesssim\lVert f\rVert_p$ for all $m\geq0.$ 
\end{theorem}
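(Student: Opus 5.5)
The plan is to derive this maximal estimate from the variational inequality of Theorem \ref{variational-inequality-for-Q-dash-T}, together with the elementary bound $\sup_{n}|a_n|\le |a_{n_0}|+2\|(a_n)\|_{v^q}$ recalled in the introduction. A positive contraction on $L_p(\Omega)$ is contractively regular: for positive $T$ one has $|Tx_k|\le T|x_k|\le T(\sup_k|x_k|)$. So $T$ is a contractively regular $\RE$ operator, and Theorem \ref{variational-inequality-for-Q-dash-T} applies for any fixed $2<q<\infty$. It gives
\[
\Big\|\big(n^m T^n\textstyle\prod_{j=1}^N(T-\xi_jI)^m f\big)_{n\ge1}\Big\|_{L_p(v^q)}\lesssim \|f\|_p .
\]
Applying the pointwise inequality $\sup_n|a_n(\omega)|\le |a_1(\omega)|+2\|(a_n(\omega))_n\|_{v^q}$ for a.e.\ $\omega$, and then taking $L_p$-norms, yields
\[
\Big\|\sup_{n\ge1} n^m\Big|T^n\prod_{j=1}^N(T-\xi_jI)^m f\Big|\Big\|_p\lesssim \|T\prod_j(T-\xi_jI)^mf\|_p+\|f\|_p\lesssim\|f\|_p .
\]

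It remains to replace $n^m$ by $(n+1)^m$. Since $(n+1)^m\le 2^m n^m$ for $n\ge1$, this costs only a constant depending on $m$. The case $m=0$ needs separate comment, because the product then disappears and the statement is just the maximal inequality for $\sup_n|T^nf|$. This is exactly the case $m=0$ of Theorem \ref{variational-inequality-for-Q-dash-T}, provided the induction there starts correctly from the Ces\`aro base case (Theorem \ref{theorem-for-base-case-for-induction}). I will take that theorem as stated for all $m\ge0$.

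The one point I expect to need care is the indexing. In Theorem \ref{variational-inequality-for-Q-dash-T}, $\Delta^m_n$ means $T^n\prod_j(T-\xi_jI)^m$, which matches the present statement exactly, so no shift in $n$ is required. If instead one used the shifted form $T^{n-Nm}\prod_j(T-\xi_jI)^m$ arising from $S^{-m}_n$, one would absorb the finitely many initial terms using $\|T\|\le1$ and adjust the weights with Lemma \ref{product-of-variation-norm}. The $v^1$ norm of the multiplier sequence $\big((n+1)^m/n^m\big)_n$ is finite, since the sequence is monotone and bounded, so this also costs only a constant.

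The main possible obstacle is whether the hypotheses of Theorem \ref{variational-inequality-for-Q-dash-T} are fully met, namely contractive regularity. This is immediate for positive contractions, so I expect no further difficulty. As an alternative route, if one wished to avoid variational estimates, Akcoglu's maximal theorem could replace the Junge--Xu ergodic theorem in the $L_2$ and Stein-interpolation argument of Section \ref{section3}. That route would also need the square function bound of Theorem \ref{Square function estimate} in $L_p$, which is more laborious, so I would use the variational deduction above.
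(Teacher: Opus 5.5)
Your reduction of the maximal bound to a variational bound is fine, and so is the remark that positive contractions are contractively regular. The gap is that Theorem \ref{variational-inequality-for-Q-dash-T} is false as soon as $E\neq\{1\}$, already for positive contractions, so there is no valid input to your argument.

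For $m=0$, the case you explicitly route through that theorem, take the swap $S$ of two atoms, $Sf(1)=f(2)$, $Sf(2)=f(1)$.
\begin{itemize}
\item It is a positive contraction with $\sigma(S)=E=\{1,-1\}$.
\item For $1<|z|<2$ one has $\|R(z,S)\|\le |z-1|^{-1}+|z+1|^{-1}\le 6\,|z-1|^{-1}|z+1|^{-1}$, so $S$ is $\RE$.
\item For $x=\mathbf{1}_{\{1\}}-\mathbf{1}_{\{2\}}$ the sequence $S^nx=(-1)^nx$ has infinite $q$-variation at every point.
\end{itemize}

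For $m\ge 1$, take the $\ell_p$-direct sum $T=\bigoplus_k \lambda_k S$ with $0<\lambda_k\uparrow 1$.
\begin{itemize}
\item This is again a positive contraction and is $\RE$ with $E=\{1,-1\}$.
\item Let $x_k$ be the vector above placed in the $k$-th block; it is an eigenvector with eigenvalue $-\lambda_k$.
\item Then $n^m\Delta^m_n x_k=(-1)^{n+m}c_n x_k$ with $c_n=n^m\lambda_k^n(1-\lambda_k^2)^m\ge 0$.
\item So the $q$-variation is at least $(\sum_n c_n^q)^{1/q}\gtrsim (1-\lambda_k)^{-1/q}\to\infty$, while $\|x_k\|_p=2^{1/p}$.
\end{itemize}

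The cause is the oscillating factor $\xi^n$ carried by the spectral part near a peripheral point $\xi\neq 1$.
\begin{itemize}
\item It is harmless for $\sup_n|\cdot|$, by Corollary \ref{mod1andmaximum}(ii).
\item It is not harmless for $v^q$-norms: $(\xi^n)_n\notin v^1$, so Lemma \ref{product-of-variation-norm} cannot remove it.
\item The proof of that theorem drops exactly this factor, tacitly, when it passes from $\overline{\xi_i}T$ back to $T$.
\end{itemize}
So contractive regularity was never the real obstacle. In general the variational route only works for $E=\{1\}$, where the input is Le Merdy--Xu's theorem.

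You need an argument in which unimodular factors only ever meet maximal estimates, and that is what the paper does.
\begin{enumerate}
\item By Gl\"uck's theorem the finite peripheral spectrum of a positive contraction is cyclic, so $E$ consists of $N$-th roots of unity.
\item By the lemma preceding the statement, $T^N$ is then a positive contractive Ritt operator.
\item Le Merdy--Xu's maximal theorem gives $\|\sup_{n}(n+1)^m|T^{Nn}(T^N-I)^m g|\|_p\lesssim \|g\|_p$.
\item Split $n$ into residue classes mod $N$, apply step 3 to $g=T^jf$, and use $\prod_{\omega^N=1}(T-\omega I)=T^N-I$.
\end{enumerate}
Within a fixed residue class the oscillation has disappeared, which is why Ritt theory applies there but not to the whole sequence. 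If $E$ is a proper subset of the $N$-th roots of unity, the missing factors $T-\omega I$, $\omega\notin E$, are invertible and commute with $T$. One then applies the estimate to $g=T^j\prod_{\omega\notin E}(T-\omega I)^{-m}f$.

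Your fallback via Section \ref{section3} would not work as stated either. That argument requires $T$ to act on $L_2$ with numerical range in a polygon, which a positive contraction on a single $L_p$ does not provide.
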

\begin{proof}
   Since $L_p$, $1<p<\infty$ is complex Banach lattice and $T$ is a contractively positive $\RE$ operator on $L_p$, then $E=\{1,\omega,\omega^2,\dots,\omega^{N-1}\}$ such that $\omega^{N}=1$ for some $N\geq1$ by [\cite{peripheralspectrumofpositiveoperators},Theorem 7.1].
   From the above lemma, we deduce that $T^N$ is a $Ritt$ operator.
   Therefore, from \cite[Theorem 4.1]{Le-Merdy-Xu} we have
   \begin{align*}
       \Big\lVert \sup_{n\geq1}(n+1)^m\lvert T^{Nn}(T^N-I)^mf\rvert\Big\rVert_{p}\lesssim\lVert f\rVert_p.
   \end{align*}
    Moreover, for $j=1,\dots,N-1$ we have
   \begin{align*}
       &\Big\lVert (n+1)^m\sup_{n\geq1}\lvert (T^N-I)^mT^{Nn+j}f\rvert\Big\rVert_p\\&=\Big\lVert \sup_{n\geq1}(n+1)^m\lvert (T^N-I)^mT^{Nn}(T^jf)\rvert\Big\rVert_{p}\\&\lesssim\lVert T^jf\rVert_p\lesssim\lVert T^j\rVert\lVert f\rVert_p\lesssim\lVert f\rVert_p.
   \end{align*}
     Observe that 
     \begin{align*}
     &\Big\lVert\sup_{n\geq1}(n+1)^m \lvert T^n\prod^N_{j=1}(T^N-\omega^jI)^mf\rvert\Big\rVert_p\\&=
         \Big\lVert\sup_{n\geq1}(n+1)^m \lvert T^n(T^N-I)^mf\rvert\Big\rVert_p\\&\leq\sum^{N-1}_{j=0}\Big\lVert \sup_{n\geq1}(n+1)^m\lvert T^{Nn+j}(T^N-I)^mf\rvert\Big\rVert_p
         \lesssim \lVert f\rVert_p.
     \end{align*}
     This completes the proof of the theorem.
\end{proof}
\begin{remark}
    Note that for a positive contraction on noncommutative $L_p$-space the peripheral spectrum does not need to be cyclic. Please see \cite[Example 2.8]{Bekjan2008}. Therefore, Theorem \eqref{strong-ergodic-theorem-for-ritt-E} applies to a large class of positive contractions on noncommutative $L_p$-spaces. It is easy to construct contractively regular $\RE$ operators with peripheral spectrum noncyclic as well. Hence the variational inequality Theorem \eqref{theorem-for-variational-inequality} also holds true for a large class of contractions. Moreover, one can show  that irrational rotation does not satisfy the maximal inequality.
\end{remark}
\section{Noncommutative weak-type inequalities for convolution powers}\label{sectionczo}
\subsection{Noncommutative Calder\'on-Zygmund decomposition} Let $\mathcal N:=\ell_\infty(\mathbb Z)\overline{\otimes}\mathcal M$ with trace $\varphi:=d\mu\otimes \tau,$ where $d\mu$ is the counting measure on $\mathbb Z.$  Let us describe the natural dyadic filtration on \(\mathbb{Z}\). For \( k \ge 0 \), define the dyadic intervals
\[
I_{k,m} = \{ m2^k, m2^k + 1, \dots, (m+1)2^k - 1 \}, \quad m \in \mathbb{Z}.
\]
Then $\{I_{k,m}:m\in\mathbb Z\}$ is a partition of $\mathbb Z$ for all $k\geq 0.$ Let $N\in\mathbb N.$ Define $\mathcal D_k:=\{I_{N-k,m}:m\in\mathbb Z\}$ for all $1\leq k\leq N.$ Let us define
$\mathcal{F}_k$ to be the $\sigma$-algebra generated by $ \mathcal D_k .$  Then the sequence \( \mathcal{F}_{k}\) for all \({1\leq k\leq N} \) is an increasing sequence of $\sigma$-algebras. Let $\mathcal{E}_k$ be the corresponding conditional expectation operator for all $1\leq k\leq N.$ Then $\mathcal{N}_k:=\ell_\infty(\mathbb{Z},\mathcal{F}_k,d\mu|_{\mathcal{F}_k})\overline{\otimes}\mathcal{M}$ is an increasing sequence of von Neumann algebras and $\mathbb{E}_k:=\mathcal{E}_k\otimes id_{\mathcal M}$ will be the corresponding conditional expectation operators, for $1\leq k\leq N.$ 
Let $f \in L_1(\mathcal{N}) \cap \mathcal{N}$ be a positive and finitely supported function, and set $
f_n = \mathbb{E}_{n} f,$ where $1\leq n\leq N.$  Then $(f_n)_{1\leq n\leq N}$ is a noncommutative martingale.
Let us recall the following construction due to Cuculescu \cite{Cuculescu}, which is the starting point of noncommutative Calderón--Zygmund decompositions introduced in .
\begin{lemma}
Let \( f \in L_1(\mathcal{N})_+ \) and \( \lambda > 0 \). Then there exists a non-increasing sequence of projections $\{q_j = q_j(f,\lambda)\}_{0\leq j\leq N}$
with \( q_0 = \mathbf{1}_{\mathcal{N}} \) such that the following properties hold:
\begin{itemize}
    \item[(i)] $q_j \in \mathcal{N}_j,$  for all $0\leq j \leq N.$
    \item[(ii)] $q_j\mathbb{E}_j(f)q_j \leq \lambda q_j$, for all $1\leq j \leq N.$
    \item[(iii)] $q_j$ commutes with $q_{j-1}\mathbb{E}_j(f)q_{j-1},$ for all $1\leq j\leq N.$
    \item[(iv)] If $q = \bigwedge_{0\leq j \leq N} q_j$, then $q f q \leq \lambda q\ \text{and}\ \tau(\mathbf{1}_{\mathcal{N}}-q) \leq \frac{\|f\|_1}{\lambda}.$
\end{itemize}
\end{lemma}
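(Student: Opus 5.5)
The plan is to reproduce Cuculescu's construction, with $\mathbb{E}_j$ playing the role of the filtration. Put $q_0=\mathbf{1}_{\mathcal N}$. For $1\le j\le N$, assuming $q_{j-1}\in\mathcal N_{j-1}\subseteq\mathcal N_j$ has been built, define
\[
q_j:=q_{j-1}\,\chi_{(0,\lambda]}\big(q_{j-1}\mathbb{E}_j(f)q_{j-1}\big).
\]
Here the spectral projection is taken of the positive operator $q_{j-1}\mathbb{E}_j(f)q_{j-1}$, which lies in $\mathcal N_j$ because $f\in\mathcal N$ and $q_{j-1}\in\mathcal N_j$. (If one prefers $f\in L_1(\mathcal N)_+$, this operator is affiliated with $\mathcal N_j$.) This operator commutes with $q_{j-1}$, so its spectral projections do too. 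Hence $q_j$ is a projection in $\mathcal N_j$ with $q_j\le q_{j-1}$, which gives (i) and the monotonicity. Property (iii) holds because $q_j$ is a product of two commuting elements of the commutant of $q_{j-1}\mathbb{E}_j(f)q_{j-1}$.

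For (ii), note that $q_j\le q_{j-1}$ gives $q_j\mathbb{E}_j(f)q_j=q_j\big(q_{j-1}\mathbb{E}_j(f)q_{j-1}\big)q_j$. On the range of the spectral projection $\chi_{(0,\lambda]}$ this operator is at most $\lambda$, so $q_j\mathbb{E}_j(f)q_j\le\lambda q_j$.

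For (iv), first recall that in this dyadic setting $\mathcal F_N$ is generated by the singletons $I_{0,m}$. So $\mathbb{E}_N=\mathrm{id}$, and $f_N=f$ (we are in the finite-filtration setting). Since $q=q_N$, (ii) at $j=N$ gives $qfq\le\lambda q$.

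The weak-type bound is the main step. Write $\mathbf 1-q=\sum_{j=1}^N(q_{j-1}-q_j)$. By construction $q_{j-1}-q_j=q_{j-1}\chi_{(\lambda,\infty)}(q_{j-1}\mathbb{E}_j(f)q_{j-1})$. Since this projection commutes with $q_{j-1}\mathbb{E}_j(f)q_{j-1}$, we get
\[
\lambda(q_{j-1}-q_j)\le (q_{j-1}-q_j)\mathbb{E}_j(f)(q_{j-1}-q_j).
\]
Taking traces, using that $q_{j-1}-q_j\in\mathcal N_j$, and using the trace-preservation and bimodule property of $\mathbb{E}_j$, we obtain
\[
\lambda\varphi(q_{j-1}-q_j)\le\varphi\big((q_{j-1}-q_j)f\big).
\]
Summing over $j$ then gives
\[
\lambda\varphi(\mathbf 1-q)\le\varphi\big((\mathbf 1-q)f\big)=\varphi\big(f^{1/2}(\mathbf 1-q)f^{1/2}\big)\le\varphi(f)=\|f\|_1.
\]
The only delicate point is the trace manipulation when $f$ is merely in $L_1$; for positive finitely supported $f\in L_1\cap\mathcal N$, as in the setting above, all traces are finite and the computation is routine. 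The general case follows by approximation or by the same argument using affiliated operators. The statement's $\tau(\mathbf 1-q)$ is read as $\varphi(\mathbf 1-q)$.
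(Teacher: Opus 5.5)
Your strategy is the right one. It is Cuculescu's construction, which the paper does not prove but simply quotes from \cite{Cuculescu}. Your verification of (i)--(iii) is sound. So is the observation that $\mathcal{F}_N$ is generated by singletons, so that $\mathbb{E}_N=\mathrm{id}$ and $qfq=q_Nfq_N\le\lambda q$ follows directly from (ii). The trace computation is also fine.

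The problem is the spectral set in your definition. Write $A_j:=q_{j-1}\mathbb{E}_j(f)q_{j-1}$. Your choice $q_j:=q_{j-1}\chi_{(0,\lambda]}(A_j)$ gives
\[
q_{j-1}-q_j=q_{j-1}\chi_{\{0\}}(A_j)+\chi_{(\lambda,\infty)}(A_j),
\]
not $q_{j-1}\chi_{(\lambda,\infty)}(A_j)$ as you assert ``by construction''. The extra summand is the projection onto $\ker A_j$ inside the range of $q_{j-1}$. On it $(q_{j-1}-q_j)\mathbb{E}_j(f)(q_{j-1}-q_j)$ vanishes, so the inequality $\lambda(q_{j-1}-q_j)\le(q_{j-1}-q_j)\mathbb{E}_j(f)(q_{j-1}-q_j)$ fails.

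This is not a harmless slip. For $f=0$ your recipe gives $q_1=0$, hence $\varphi(\mathbf{1}-q)=\varphi(\mathbf{1}_{\mathcal N})=\infty$. For any finitely supported $f$, the element $\mathbb{E}_1(f)$ vanishes on all but finitely many intervals of $\mathcal{D}_1$. So again $\varphi(\mathbf{1}-q_1)=\infty$, contradicting (iv).

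The repair is to keep the zero spectrum: set $q_j:=q_{j-1}\chi_{[0,\lambda]}(A_j)$. Since $A_j=q_{j-1}A_jq_{j-1}$, its support satisfies $\chi_{(\lambda,\infty)}(A_j)\le s(A_j)\le q_{j-1}$. Hence $q_{j-1}-q_j=\chi_{(\lambda,\infty)}(A_j)$ exactly, and $\chi_{[0,\lambda]}(A_j)A_j\le\lambda\chi_{[0,\lambda]}(A_j)$ still gives (ii). Monotonicity, membership in $\mathcal{N}_j$ and (iii) are unaffected, since $q_{j-1}$ and $\chi_{[0,\lambda]}(A_j)$ are commuting elements of the commutant of $A_j$.

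With this definition every remaining step of your argument goes through verbatim. This includes $\lambda\varphi(q_{j-1}-q_j)\le\varphi\big((q_{j-1}-q_j)f\big)$ and the summation to $\lambda\varphi(\mathbf{1}-q)\le\|f\|_1$.
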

Now given $f \in L_1(\mathcal{N})_{+}$ with $f$ nonzero only on finitely many points and $\lambda>0$, consider Cuculescu's projections $q_j = q_j(f,\lambda)$ and define
$p_j = q_{j-1} - q_j \in \mathcal{N}_j$ for $1\leq j \leq N,$
which yields the identity \[\sum_{1\leq j \leq N} p_j = \mathbf{1}_{\mathcal{N}}-q.\]
Note that $p_j$ and $q_j$ further can be described as
\[
p_j = \sum_{Q \in \mathcal{D}_j} p_Q,
\quad \text{where } p_Q = \mathbf{1}_Q\otimes \pi_Q, \ \pi_Q\in\mathcal{P}(\mathcal M)
\]
and
\[
q_j = \sum_{Q \in \mathcal{D}_j} q_Q,
\quad \text{where } q_Q = \mathbf{1}_Q\otimes \xi_Q,\ \xi_{Q}\in\mathcal{P}(\mathcal M).
\]
The following noncommutative Calder\'on-Zygmund decomposition is from \cite{Cadilhac-Conde-Alonso-Parcet-2022}. \[
f=g+b_d+b_{\mathrm{off}},\] where
\begin{align}\label{ncCZO}
&g=qfq+\sum_{j \geq 1}p_jf_jp_j, \\\notag
&b_d=\sum_{j \geq 1}b_{d,j},\ \text{where},\ b_{d,j}:=p_j(f-f_j)p_j, \\\notag
&b_{\mathrm{off}}=\sum_{j\geq 1}b_{\mathrm{off},j}\ \text{where},\
b_{\mathrm{off},j}:= p_j (f - f_j) q_j + q_j (f - f_j) p_j.
\end{align}
Moreover, $b_{\mathrm{off},j}=p_jfq_j+q_jfp_j.$
\begin{lemma}\cite{Cadilhac-Conde-Alonso-Parcet-2022}The regular decomposition \eqref{ncCZO} satisfies:
\begin{itemize}
    \item[(i)] \( \|g\|_1 \leq \|f\|_1 \) and \( \|g\|_\infty \leq c_{\mathrm{reg}} \, \lambda \),
    \item[(i)] $\sum_{j \geq 1} \|b_{d,j}\|_1 \leq 2 \|f\|_1,$ {and}
    $\mathbb{E}_j(b_{d,j}) = \mathbb{E}_j(b_{\mathrm{off},j}) = 0$ for all $j\geq 1.$
\end{itemize}
\end{lemma}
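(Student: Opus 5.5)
The plan is to verify each property directly from Cuculescu's construction. Two tools do most of the work. The first is the bimodule property of the conditional expectations: $\mathbb{E}_j(axb)=a\,\mathbb{E}_j(x)\,b$ for $a,b\in\mathcal N_j$. The second is the fact that $p_j,q_j\in\mathcal N_j$.

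For the mean-zero statements, note that $\mathbb{E}_j(f-f_j)=f_j-f_j=0$ since $f_j=\mathbb{E}_j f$. Hence
\[\mathbb{E}_j(b_{d,j})=p_j\,\mathbb{E}_j(f-f_j)\,p_j=0,\]
and likewise
\[\mathbb{E}_j(b_{\mathrm{off},j})=p_j\,\mathbb{E}_j(f-f_j)\,q_j+q_j\,\mathbb{E}_j(f-f_j)\,p_j=0.\]

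The $L_1$ estimates come from positivity and trace preservation. Since $f\ge0$, the element $g$ is positive. Using $\varphi\circ\mathbb{E}_j=\varphi$ and $p_j\in\mathcal N_j$, we get $\varphi(p_jf_jp_j)=\varphi(\mathbb{E}_j(p_jfp_j))=\varphi(fp_j)$. Therefore
\[\|g\|_1=\varphi(g)=\varphi(fq)+\sum_j\varphi(fp_j)=\varphi\Big(f\big(q+\textstyle\sum_jp_j\big)\Big)=\varphi(f)=\|f\|_1 .\]
In the same way, the triangle inequality gives
\[\|b_{d,j}\|_1\le\varphi(p_jfp_j)+\varphi(p_jf_jp_j)=2\varphi(fp_j).\]
Summing over $j$ yields $\sum_j\|b_{d,j}\|_1\le 2\varphi(f(\mathbf 1-q))\le2\|f\|_1$.

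The remaining and main point is the bound $\|g\|_\infty\le c_{\mathrm{reg}}\lambda$. The projections $q$ and $p_j$ ($j\ge1$) are mutually orthogonal and sum to $\mathbf 1$. So $g$ is block diagonal, and it suffices to bound each block. The block $qfq$ satisfies $qfq\le\lambda q$ by Cuculescu's property (iv).

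For a block $p_jf_jp_j$, write $p_j=q_{j-1}p_j$, so that $p_jf_jp_j=p_j\,(q_{j-1}f_jq_{j-1})\,p_j$. The key is a regularity (doubling) comparison between consecutive levels. Each atom $Q\in\mathcal D_j$ has a parent $\widehat Q\in\mathcal D_{j-1}$ with $|\widehat Q|=2|Q|$. Since $f\ge0$, the averages satisfy, in operator order,
\[f_j|_Q=\frac1{|Q|}\sum_{Q}f\le\frac{2}{|\widehat Q|}\sum_{\widehat Q}f=2f_{j-1}|_{\widehat Q},\]
that is, $f_j\le 2f_{j-1}$. Conjugating by $q_{j-1}\in\mathcal N_{j-1}$ and applying Cuculescu's property (ii) at level $j-1$ gives
\[q_{j-1}f_jq_{j-1}\le2\,q_{j-1}f_{j-1}q_{j-1}\le2\lambda q_{j-1},\]
hence $p_jf_jp_j\le2\lambda p_j$. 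Taking the maximum over blocks gives $c_{\mathrm{reg}}=2$.

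The step that needs care is the first level $j=1$, where $q_0=\mathbf 1$ and no $f_0$ is available. I would handle it either by adjoining a trivial coarser level $\mathcal D_0$, so that the same doubling argument applies, or by choosing $N$ large enough that $f_1$ (the average over intervals of length $2^{N-1}$, with $f$ finitely supported) already satisfies $f_1\le\lambda$. In either case the constant $c_{\mathrm{reg}}$ stays $2$.
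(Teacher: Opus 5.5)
Your argument is correct. The paper gives no proof of its own here; it simply imports the lemma from Cadilhac--Conde-Alonso--Parcet, and your verification is the standard one from that source: the bimodule property and trace preservation of $\mathbb{E}_j$ for the mean-zero and $L_1$ statements, and $p_jf_jp_j=p_j(q_{j-1}f_jq_{j-1})p_j\le 2\,p_j(q_{j-1}f_{j-1}q_{j-1})p_j\le 2\lambda p_j$ for $\|g\|_\infty$. You are also right that the top level $j=1$ needs a normalization such as $f_1\le\lambda$, which is available because $f$ is bounded and finitely supported, so $\|f_1\|_\infty\le|\mathrm{supp}\,f|\,\|f\|_\infty/2^{N-1}$; the paper only imposes this implicitly afterwards through its choice of $N$, and without it nothing controls $p_1f_1p_1$, so the stated $L_\infty$ bound can fail.
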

By density we may assume that 
\(\|f\|_\infty\) is finitely supported and $N$ be such that $\frac{1}{|Q|}\sum_{i\in I_{k,m}}f(i)\leq \lambda$ whenever $k\geq N+1.$ Define
\[
\zeta := \mathbf{1}_{\mathcal N} - \bigvee_{1\leq j \leq N} \ \bigvee_{Q \in \mathcal{D}_j} 1_{5Q} \, \pi_Q.
\]
\begin{lemma}\label{lem5.3}
 The projection \( \zeta \) satisfies
\[
\varphi(\mathbf{1}_{\mathcal N} - \zeta) \,\lesssim\, \frac{\|f\|_{L^1(A)}}{\lambda}.
\]
Moreover, we have
\[
\zeta(x)\, p_j(y) = p_j(y)\, \zeta(x) = 0
\]
whenever \( y \in Q \in \mathcal{D}_j \) and \( x \in 5Q \).
\end{lemma}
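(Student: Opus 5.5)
The plan is to prove the two assertions of Lemma \ref{lem5.3} separately: the trace bound by subadditivity of the trace over a union of projections together with Cuculescu's estimate, and the orthogonality relation directly from the definition of $\zeta$.

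\textbf{Trace bound.} For a family of projections one has $\varphi(\bigvee_i e_i)\le \sum_i \varphi(e_i)$. We apply this to the family $\{1_{5Q}\pi_Q : Q\in\mathcal D_j,\ 1\leq j\leq N\}$. This gives
\[
\varphi(\mathbf 1_{\mathcal N}-\zeta)\le \sum_{j=1}^N\sum_{Q\in\mathcal D_j}\varphi(1_{5Q}\otimes\pi_Q)=\sum_{j=1}^N\sum_{Q\in\mathcal D_j}|5Q|\,\tau(\pi_Q).
\]
Here $5Q$ denotes the interval with the same centre as $Q$ and five times the length. In $\mathbb Z$ this gives $|5Q|\le 5|Q|$, possibly up to an additive constant of $4$ coming from integer rounding. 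Since $|Q|\geq 1$, we get $|5Q|\lesssim|Q|$ in all cases.

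On the other hand, $p_Q=\mathbf 1_Q\otimes\pi_Q$, so $\varphi(p_Q)=|Q|\,\tau(\pi_Q)$. Summing over $Q\in\mathcal D_j$ and $1\leq j\leq N$ gives
\[
\sum_{j=1}^N\varphi(p_j)=\varphi\Big(\sum_{j=1}^N p_j\Big)=\varphi(\mathbf 1_{\mathcal N}-q).
\]
By property (iv) of Cuculescu's lemma, $\varphi(\mathbf 1_{\mathcal N}-q)\le\|f\|_1/\lambda$. Combining the two displays yields the bound with constant $5$, or a slightly larger absolute constant if rounding is accounted for.

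\textbf{Orthogonality.} Fix $Q\in\mathcal D_j$, $y\in Q$ and $x\in 5Q$. Since $y\in Q$ and the sets in $\mathcal D_j$ partition $\mathbb Z$, we have $p_j(y)=\pi_Q$. Since $x\in 5Q$, the projection $\bigvee_{j',Q'}1_{5Q'}\pi_{Q'}$, evaluated at the point $x$, is the supremum in $\mathcal M$ of those $\pi_{Q'}$ with $x\in 5Q'$. This family includes $\pi_Q$, so the supremum dominates $\pi_Q$. Therefore $\zeta(x)$, being the complement of that supremum, is orthogonal to $\pi_Q$, i.e.
\[
\zeta(x)\,\pi_Q=\pi_Q\,\zeta(x)=0.
\]
This is exactly the relation $\zeta(x)\,p_j(y)=p_j(y)\,\zeta(x)=0$.

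The only subtle point is the pointwise meaning of suprema in $\ell_\infty(\mathbb Z)\overline\otimes\mathcal M$. This is handled by the identification $\ell_\infty(\mathbb Z)\overline{\otimes}\mathcal M\cong\ell_\infty(\mathbb Z;\mathcal M)$, under which lattice operations on projections are computed coordinatewise. With that identification both steps are routine, and I expect no real obstacle.
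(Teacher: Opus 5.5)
Your proof is correct. The paper states this lemma without proof (it is imported from the noncommutative Calder\'on--Zygmund framework of Parcet and Cadilhac--Conde-Alonso--Parcet), and your argument is exactly the standard one used there: subadditivity of $\varphi$ over suprema of projections, then $|5Q|\le 5|Q|$, then $\sum_j\varphi(p_j)=\varphi(\mathbf 1_{\mathcal N}-q)\le\|f\|_1/\lambda$, and finally $\zeta\le\mathbf 1_{\mathcal N}-\mathbf 1_{5Q}\otimes\pi_Q$ evaluated at $x\in 5Q$. For the orthogonality step you do not even need coordinatewise suprema, since $\mathbf 1_{5Q}\otimes\pi_Q\le\mathbf 1_{\mathcal N}-\zeta$ together with the fact that evaluation at $x$ is an order-preserving $*$-homomorphism already gives $\zeta(x)\le\mathbf 1-\pi_Q$.
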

\begin{theorem}\label{thm5.4}
Let $(\mu_n)$ be a sequence of probability measures on $\mathbb{Z}$, and for finitely supported function $f : \mathbb{Z} \to \mathcal{M}$ define the convolution operators
\[
(\text{Conv}_{\mu_n}f)(x): = (\mu_n * f)(x), \quad x \in \mathbb{Z}.
\]
Let us assume that there exist $0 < \alpha < 1$ and $C > 0$ such that for each $n \geq 1$,
\[
|\mu_n(x+y) - \mu_n(x)| \leq \frac{C |y|^\alpha}{|x|^{1+\alpha}}, \quad \text{for } x,y \in \mathbb{Z}, \; 0 < 2|y| \leq |x|.
\]
Then the family $(\text{Conv}_{\mu_n})_{n\geq 1}$ is of weak-type $(1,1)$; i.e., there exists a constant $C' > 0$ such that for any $\lambda > 0$, and $f\in \ell_1(\mathbb{Z}; L_1(\mathcal M))$ there exist a projection $e\in\mathcal{P}(\ell_\infty(\mathbb{Z})\overline{\otimes}\mathcal M)$ such that
\[
\|e(\text{Conv}_{\mu_n}f)e\|_\infty\leq \lambda,\quad \text{and}\quad \tau(e^{\perp})\leq  \frac{C'}{\lambda} \|f\|_{\ell_1(\mathbb{Z}; L_1(\mathcal M))}.
\]
\end{theorem}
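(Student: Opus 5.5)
We follow the Hong--Lai--Xu scheme for noncommutative maximal weak type $(1,1)$ bounds, built on the decomposition \eqref{ncCZO}. By density and splitting $f$ into positive and negative parts of its real and imaginary parts, we may take $f\in L_1(\mathcal N)_+$ finitely supported, with $N$ chosen as above. Write $f=g+b_d+b_{\mathrm{off}}$. We take $e:=q\wedge\zeta\wedge e_g$, where $e_g$ is the projection produced by the good part. By Cuculescu's lemma and Lemma \ref{lem5.3}, $\varphi(\mathbf 1-q)+\varphi(\mathbf 1-\zeta)\lesssim\|f\|_1/\lambda$, so it remains to control each piece on $e$. The sublinearity needed to combine the three estimates will be handled with the standard trick $e\,(a+b+c)\,e$ together with the triangle inequality in $\mathcal N$. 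We lose at most a constant factor in $\lambda$ by working at level $\lambda/3$.

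\textbf{Good part.} Since $\|g\|_\infty\le c_{\mathrm{reg}}\lambda$ and $\|g\|_1\le\|f\|_1$, we have $\|g\|_2^2\lesssim\lambda\|f\|_1$. Each $\mathrm{Conv}_{\mu_n}$ is a positive unital trace-preserving map, being convolution with a probability measure. However, we do not know an $L_2$ maximal bound for an arbitrary family $(\mu_n)$ with only the smoothness hypothesis. We would therefore try to bypass the maximal operator by using only the trivial estimate $\|\mathrm{Conv}_{\mu_n}g\|_\infty\le\|g\|_\infty\le c_{\mathrm{reg}}\lambda$, uniformly in $n$. With this, $e_g=\mathbf 1$ and the good part is harmless after rescaling $\lambda$. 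This is exactly where a weak-type statement for general kernels is easier than for $L_2$.

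\textbf{Diagonal bad part.} Write $b_{d,j}=\sum_{Q\in\mathcal D_j}\mathbf 1_Q\,b_Q$, where $b_Q$ is supported on $Q$, has $\sum_{y\in Q}b_Q(y)=0$ because $\mathbb E_j(b_{d,j})=0$, and is sandwiched by $\pi_Q$. For $x\notin 5Q$, let $c_Q$ be the center of $Q$. The cancellation gives
\[
\mu_n*b_Q(x)=\sum_{y\in Q}\bigl(\mu_n(x-y)-\mu_n(x-c_Q)\bigr)b_Q(y).
\]
Since $|y-c_Q|\le|Q|\le\tfrac12|x-c_Q|$, the Hölder hypothesis applies and bounds the kernel by $C|Q|^\alpha|x-c_Q|^{-1-\alpha}$. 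Summing over $x\notin5Q$ gives $\sum_x\|\mu_n*b_Q(x)\|_1\lesssim\|b_Q\|_1$ uniformly in $n$. For the supremum over $n$, we would dominate the positive and negative parts of the kernel difference by the single $n$-independent majorant $k_Q(x)=C|Q|^\alpha|x-c_Q|^{-1-\alpha}$. Using $-|b|\le$-type operator inequalities, after splitting $b_Q$ into its four positive parts, this yields $-k_Q\otimes|b_Q|_{\mathrm{tot}}\lesssim\mu_n*b_Q\lesssim k_Q\otimes|b_Q|_{\mathrm{tot}}$, with a single positive element $F=\sum_Q k_Q(\cdot)\,\|b_Q\|$-type operator whose $L_1$ norm is $\lesssim\|f\|_1$. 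On $5Q$ the projection $\zeta$ kills the term by Lemma \ref{lem5.3}. A Chebyshev bound on $F$ then gives a projection $e_d$ with $e_d(\mu_n*b_d)e_d\le\lambda$ for all $n$ and $\varphi(e_d^\perp)\lesssim\|f\|_1/\lambda$. We include $e_d$ in $e$.

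\textbf{Off-diagonal part (main obstacle).} The term $b_{\mathrm{off},j}=p_jfq_j+q_jfp_j$ is not supported inside cubes in an operator sense, and it has no $L_1$-cancellation with good norm control. This is the known difficulty in \cite{Cadilhac-Conde-Alonso-Parcet-2022} and \cite{Bang-Xu}. Our plan is to use $\mathbb E_j(b_{\mathrm{off},j})=0$ to get the same kernel-difference representation away from $5Q$, where $\zeta$ kills the local part. We then estimate in $L_2$ rather than $L_1$, using the pseudo-localisation / almost-orthogonality estimates of \cite{Cadilhac-Conde-Alonso-Parcet-2022}. These give $\bigl\|\zeta\,(\sup_n^+ \text{of the far part})\,\zeta\bigr\|_2^2\lesssim\lambda\|f\|_1$, using the Hölder decay to sum the scales geometrically, since $\sum_k 2^{-k\alpha}<\infty$. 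Chebyshev in $L_2$ then yields the final projection. We expect the uniformity in $n$ and the passage from $L_2$ bounds on the off-diagonal terms to a single dominating positive operator to be the hardest step. We would first try the Hong--Lai--Xu approach of proving the estimate for a fixed finite set of $n$'s with constants independent of the set, and then appeal to Proposition \ref{supnormfiniting}.
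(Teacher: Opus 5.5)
Your reduction to $f\ge 0$ matches the paper. So does your treatment of the good part, where only the trivial bound $\|\mathrm{Conv}_{\mu_n}g\|_\infty\le\|g\|_\infty\lesssim\lambda$ is used. The diagonal part also matches: mean zero on each $Q$, kernel difference off $5Q$, $\zeta$ killing the contribution on $5Q$, an $n$-independent positive majorant with $L_1$ norm $\lesssim\|f\|_1$, then Chebyshev. The genuine gap is the off-diagonal term, where you give a plan rather than an argument. The bound $\|\zeta(\sup_n^{+}\cdots)\zeta\|_2^2\lesssim\lambda\|f\|_1$ is never derived, and it is unclear where it would come from, for three reasons.
\begin{itemize}
\item I am not aware of a pseudo-localisation estimate of this type in \cite{Cadilhac-Conde-Alonso-Parcet-2022}; pseudo-localisation belongs to Parcet's earlier approach.
\item You have yourself observed that no $L_2$ maximal inequality is available for a general family $(\mu_n)$.
\item $b_{\mathrm{off}}$ itself admits no bound $\|b_{\mathrm{off}}\|_2^2\lesssim\lambda\|f\|_1$, because Cuculescu's projections control $q_jf_jq_j$ but not $q_jfq_j$.
\end{itemize}
Also, ``geometric summation over scales'' is not how the levels $j$ are summed (see below).

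The idea you are missing is that the paper estimates the off-diagonal part in $L_1$, using the row/column Cauchy--Schwarz mechanism of \cite{Cadilhac-Conde-Alonso-Parcet-2022}. Property (iii) of Cuculescu's projections gives $\pi_Qf_Q\xi_Q=0$. Hence, for $x\notin 5Q$, the piece $\mathrm{Conv}_{\mu_n}(\pi_Qf\xi_Q)(x)$ equals $\sum_{y\in Q}c_n(x,y)\,\pi_Qf(y)\xi_Q$, where $c_n(x,y)=\mu_n(x-y)-\mu_n(x-c_Q)$ and $|c_n(x,y)|\le CK_Q(x,y)$. Then
\[
\Big\|\sum_{y\in Q}c_n(x,y)\,\pi_Qf(y)\xi_Q\Big\|_1\le\tau(\pi_Q)^{1/2}\Big(\sum_{y\in Q}|c_n(x,y)|^2\,\tau\big(\pi_Qf(y)\big)\Big)^{1/2}\Big\|\sum_{y\in Q}\xi_Qf(y)\xi_Q\Big\|_\infty^{1/2},
\]
and the last factor is at most $(\lambda|Q|)^{1/2}$ by property (ii). Now sum over $x\notin 5Q$; the H\"older decay gives $\sum_{x\notin5Q}\sup_{y\in Q}K_Q(x,y)\lesssim 1$ (the paper does this over dyadic annuli). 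This yields a bound $\lesssim\varphi(\mathbf 1_Qp_jf)^{1/2}(\lambda\varphi(\mathbf 1_Qp_j))^{1/2}$ per cube. The sum over all levels and cubes is then handled by one more Cauchy--Schwarz, using $\sum_j\varphi(p_j)\le\|f\|_1/\lambda$. It gives $\varphi((\mathbf 1-q)f)^{1/2}(\lambda\varphi(\mathbf 1-q))^{1/2}\le\|f\|_1$, and Chebyshev in $L_1$ finishes.

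That said, your worry about uniformity in $n$ is well founded. The paper dominates $\pm\zeta\,\mathrm{Conv}_{\mu_n}b_{\mathrm{off},Q}\,\zeta$ by the $n$-independent majorant $B_Q(x)=\int_QK_Q(x,y)\,|\pi_Qf(y)\xi_Q+\xi_Qf(y)\pi_Q|\,dy$. But $\|B_Q\|_1$ has the absolute values inside the $y$-sum, whereas the inequality above controls only the signed sum for fixed $n$. With absolute values inside, the natural bound involves $\sum_y\|\xi_Qf(y)\xi_Q\|_\infty$, which can be much larger than $\|\sum_y\xi_Qf(y)\xi_Q\|_\infty\le\lambda|Q|$. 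So that step needs an additional argument in the paper too. Still, the ingredient your proposal lacks is this $L_1$ Cauchy--Schwarz estimate exploiting $q_jf_jq_j\le\lambda q_j$, not an $L_2$ or pseudo-localisation bound.
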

\begin{proof}
Note that $\text{Conv}_{\mu_n}g\leq \lambda$ for all $n\geq 1.$ Hence clearly we need to focus on the bad parts. Note that each $\text{Conv}_{\mu_n}$ is a positive operator. 

To estimate the ${b_d}$-term, we first rewrite the $j$-th diagonal term as a sum of its
restrictions to dyadic cubes in $\mathcal{D}_j$
\[
{b_d}
= \sum_{j \geq 1} \sum_{Q \in \mathcal{D}_j}
\mathbf{1}_Q \otimes \pi_Q (f - f_Q) \pi_Q
=: \sum_{j \geq 1} \sum_{Q \in \mathcal{D}_j} {b_{d,Q}},
\]
where
\[
f_Q = \frac{1}{|Q|} \int_Q f(x)\,dx.
\]

Observe that this means ${b_{d,Q}} = \mathbf{1}_Q \, b_{d,j}$ for $Q \in \mathcal{D}_j$.
Assume that we have fixed a cube $Q \in \mathcal{D}_j$. Since $\operatorname{supp}_{\mathbb{Z}}(b_{d,Q}) \subseteq Q$, if $x \notin 5Q$ and $c_Q\in Q$, then using the kernel representation and the fact that
\[
\mathbb{E}_j(b_{d,Q}) = \int_{\mathbb{R}^d}b_{d,Q}(y)\,dy = 0,
\]
we get
\begin{align*}
\zeta(x)\, \text{Conv}_{\mu_n}(b_{d,Q})(x)\, \zeta(x)
&= \zeta(x)\Big( \int_{\mathbb{R}^d} \mu_n(x-y)\, b_{d,Q}(y)\,dy \Big)\zeta(x) \\
&= \zeta(x)\Big( \int_Q \big[\mu_n(x-y) -\mu_n(x-c_Q)\big] b_{d,Q}(y)\,dy \Big)\zeta(x)\\
&\leq C\zeta(x)\big(\int_Q\frac{|y-c_Q|^\alpha}{|x-c_Q|^{1+\alpha}}|b_{d,Q}(y)|dy\big)\zeta(x)\\
&\leq C\zeta(x)B_Q(x)\zeta(x),
\end{align*}
where $B_Q(x):=\int_Q\frac{|y-c_Q|^\alpha}{|x-c_Q|^{1+\alpha}}|b_{d,Q}(y)|dy$ for all $x\notin 5Q$ and $0$ otherwise.
On the other hand, if $x \in 5Q$, then by Lemma~2.1,
\[
\zeta(x)\, \text{Conv}_{\mu_n}(b_{d,Q})(x)\, \zeta(x) = 0.
\] Therefore, we obtain
\begin{equation}\label{eq:bdQ-estimate}
\zeta\, \mathrm{Conv}_{\mu_n}(b_{d,Q})\, \zeta
\leq
C\, \zeta B_Q \zeta.
\end{equation}

Summing over $j \geq 1$ and $Q \in \mathcal{D}_j$, we get
\begin{equation}\label{eq:bd-estimate}
\zeta\, \mathrm{Conv}_{\mu_n}(b_d)\, \zeta
\leq
C\, \zeta\big(\sum_{j \geq 1} \sum_{Q \in \mathcal{D}_j}
B_Q\big)\zeta
=: F_1.
\end{equation}

Note that
\begin{equation}\label{eq:F1-L1}
\|F_1\|_1
\leq
\sum_{j \geq 1}\sum_{Q\in\mathcal{D}_j} \int_{(5Q)^c}\int_Q\frac{|y-c_Q|^\alpha}{|x-c_Q|^{1+\alpha}}\|b_{d,Q}(y)\|_1dydx
\leq
\|f\|_1.
\end{equation}
Note that in the  above inequality we have  used that for $y\in Q,$ $|y-c_Q|\leq |Q|$ and $\int_{(5Q)^c}\frac{1}{|x-c_Q|^{1+\alpha}}dx\leq c_\alpha\frac{1}{|Q|^\alpha}.$
Now let $e := \mathbf{1}_{(0,\lambda]}(F_1).$
By Chebyshev's inequality, we have
\begin{equation}\label{eq:chebyshev}
\tau(e^{\perp})
\leq
\frac{\|F_1\|_1}{\lambda}.
\end{equation}
On the other hand, $e F_1 e \leq \lambda e,$
which implies $
(e \wedge \zeta)\, F_1\, (e \wedge \zeta)
\leq
\lambda\, (e \wedge \zeta).$ Combining with \eqref{eq:bd-estimate}, we obtain
\begin{equation}\label{eq:final-estimate}
(e \wedge \zeta)\,
\big( \zeta\, \mathrm{Conv}_{\mu_n}(b_d)\, \zeta \big)\,
(e \wedge \zeta)
\leq
C\, (e \wedge \zeta)\, F_1\, (e \wedge \zeta)
\leq
C\, \lambda\, (e \wedge \zeta).
\end{equation}
On the other hand we observe
\begin{equation}\label{eq:trace-bound}
\tau\big( (e \wedge \zeta)^{\perp} \big)
\leq
\tau(e^{\perp}) + \tau(\zeta^{\perp})
\leq
C \frac{\|f\|_1}{\lambda}.
\end{equation}

Now we deal with off-diagonal estimate.
Recall the $j$-th term of ${b_{off}}$:
\begin{equation}\label{eq:boff-j}
{b_{off,j}} = p_j f q_j + q_j f p_j =: b^{a}_{\mathrm{off},j} + b^{b}_{\mathrm{off},j}.
\end{equation}
For each $Q \in \mathcal{D}_j$, define $
b^{a}_{\mathrm{off},Q}
:= \mathbf{1}_Q\, b^{a}_{\mathrm{off},j}
= \pi_Q f \, \xi_Q.$
Then $\operatorname{supp}_{\mathbb{R}^d}(b^{a}_{\mathrm{off},Q}) \subset Q$ and
\[
\int_{\mathbb{R}^d} b^{a}_{\mathrm{off},Q}(y)\,dy = 0.
\]

If $x \in 5Q$, Lemma \eqref{lem5.3} yields
\begin{equation}\label{eq:local-zero}
\zeta(x)\, \text{Conv}_{\mu_n} b^{a}_{\mathrm{off},Q}(x)\, \zeta(x) = 0.
\end{equation}
If $x \notin 5Q$, then
\begin{align}\label{eq:kernel-rep}
\zeta(x)\, \text{Conv}_{\mu_n} b^{a}_{\mathrm{off},Q}(x)\, \zeta(x)
&= \zeta(x)\Big( \int_{\mathbb{R}^d} \mu_n(x,y)\, b^{a}_{\mathrm{off},Q}(y)\,dy \Big)\zeta(x) \nonumber\\
&= \zeta(x)\Big( \int_Q [\mu_n(x,y) - \mu_n(x,c_Q)]\, \pi_Q f(y)\, \xi_Q \, dy \Big)\zeta(x).
\end{align}

Set for $x\notin 5Q$ and $y\in Q$
\[
K_Q(x,y) := \frac{|y-c_Q|^\alpha}{|x-c_Q|^{1+\alpha}},
\]
and define
$
B_Q(x) := \int_Q |K_Q^n(x,y)|\, \big|\pi_Q f(y)\, \xi_Q +\xi_Q f(y)\, \pi_Q\big|\, dy, $ for $x\notin 5Q$ and $0$ otherwise.
Therefore, we obtain for all $x\in\mathbb{Z}$
 \begin{equation}
   -C\zeta(x)B_Q(x)\zeta(x)\leq\zeta(x)\, \text{conv}_{\mu_n} b_{\mathrm{off},Q}(x)\, \zeta(x)\leq C \zeta(x)B_Q(x)\zeta(x).
 \end{equation}
Therefore, $-C\zeta B_Q\zeta\leq\zeta\text{Conv}_{\mu_n}b_{off}\zeta\leq C\zeta B_Q\zeta.$ This implies that 
\begin{equation}
    -C\zeta \Big(\sum_{j\geq 1}\sum_{Q\in\mathcal{D}_j}B_Q\Big)\zeta\leq\zeta\text{Conv}_{\mu_n}b_{off}\zeta\leq C\zeta \Big(\sum_{j\geq 1}\sum_{Q\in\mathcal{D}_j}B_Q\Big)\zeta.
\end{equation}
Then we obtain
\begin{equation}\label{eq:L1-estimate-start}
\|\zeta\, \Big(\sum_{j\geq 1}\sum_{Q\in\mathcal{D}_j}B_Q\Big)\, \zeta\|_1
\leq\sum_{j\geq 1}\sum_{Q\in\mathcal{D}_j}\|B_Q\|_1.
\end{equation}
Using Cauchy--Schwarz and \cite[Proposition 1.1]{Operatorvaluedhardyspacemei}, we estimate
\begin{align}\label{eq:L1-estimate}
\|B_Q\|_1\leq
&\int_{(5Q)^c}
\Big\|\Big(
\int_Q |K_Q(x,y)|^2 \, \pi_Q f(y)\, \pi_Q \, dy
\Big)^{\frac{1}{2}}\Big\|_1dx\;
\Big\|
\int_Q \xi_Q f(y)\, \xi_Q \, dy
\Big\|_{\infty}^{\frac{1}{2}}\\
&\qquad \leq
\int_{(5Q)^c}\Big\|\Big(
\int_Q |K_Q(x,y)|^2 \, \pi_Q f(y)\, \pi_Q \, dy \,
\Big)^{\frac{1}{2}}\Big\|_2 dx
\, \operatorname{\tau}(\pi_Q)^{\frac{1}{2}}
\Big\|
\int_Q \xi_Q f(y)\, \xi_Q \, dy
\Big\|_{\infty}^{\frac{1}{2}}.
\end{align}
Let $
C_{Q,\ell} = \Big\{ x \in \mathbb{Z} : 2^{\ell}\ell(Q) \leq |x - c_Q| <2^{\ell+1}\ell(Q) \Big\}.$
Then we may estimate
\begin{equation}\label{eq:2.3}
\begin{aligned}
&\int_{(5Q)^c}
\Big(
\int_Q |K_Q(x,y)|^2 \, \pi_Q f(y)\, \pi_Q \, dy
\Big)^{\frac{1}{2}} dx \\
&\qquad \leq
\sum_{\ell \geq 1}
\int_{C_{Q,\ell}}
\Big(
\int_Q |K_Q(x,y)|^2 \, \mathrm{tr}\big(\pi_Q f(y)\, \pi_Q\big) \, dy
\Big)^{\frac{1}{2}} dx \\
&\qquad \leq
\sum_{\ell \geq 1}
|C_{Q,\ell}|^{\frac{1}{2}}
\Big(
\int_{C_{Q,\ell} \times Q}
|K_Q(x,y)|^2 \, \mathrm{tr}\big(\pi_Q f(y)\, \pi_Q\big)\, dx\,dy
\Big)^{\frac{1}{2}} \\
&\qquad \leq
\sum_{\ell \geq 1}
\Big(
\sup_{y \in Q}
\int_{C_{Q,\ell}} |K_Q(x,y)|^2 \, dx \cdot |C_{Q,\ell}|
\Big)^{\frac{1}{2}}
\,
\mathrm{tr}\!\Big(
\int_Q \pi_Q f(y)\, \pi_Q \, dy
\Big)^{\frac{1}{2}}\lesssim \varphi(1_Qp_jf).
\end{aligned}
\end{equation}
The last inequality follows from the condition for the kernel $K_Q.$ Combining with \eqref{eq:2.3}, we obtain the following estimate.
\begin{align}
\varphi\!\Big( \big| \zeta\, T_k b_{\mathrm{off}}\, \zeta \big| > \lambda \Big)
&\leq \frac{1}{\lambda} \, \big\| \zeta\, T_k b_{\mathrm{off}}\, \zeta \big\|_1
 \\\notag
&\leq \frac{1}{\lambda}
\sum_{j \geq 1} \sum_{Q \in \mathcal{D}_j}
\big\| \zeta\, T_k (b^{a}_{\mathrm{off},Q} + b^{b}_{\mathrm{off},Q})\, \zeta \big\|_1
 \\\notag
&\leq \frac{C}{\lambda}
\sum_{j \geq 1} \sum_{Q \in \mathcal{D}_j}
\varphi(\mathbf{1}_Q p_j f)^{\frac{1}{2}}
\, \operatorname{tr}(\pi_Q)^{\frac{1}{2}}
\, |Q|^{\frac{1}{2}} \, \lambda^{\frac{1}{2}}
 \\\notag
&\leq \frac{C}{\lambda}
\Big(
\sum_{j \geq 1} \sum_{Q \in \mathcal{D}_j}
\varphi(\mathbf{1}_Q p_j f)
\Big)^{\frac{1}{2}}
\Big(
\lambda \sum_{j \geq 1} \sum_{Q \in \mathcal{D}_j}
\varphi(\mathbf{1}_Q p_j)
\Big)^{\frac{1}{2}}
 \\\notag
&= \frac{C}{\lambda}
\, \varphi\big( (1 - q) f \big)^{\frac{1}{2}}
\, \big( \lambda \varphi(1 - q) \big)^{\frac{1}{2}}
\\\notag
&\leq \frac{C}{\lambda}
\, \|f\|_1^{\frac{1}{2}}
\, \big( \lambda \|f\|_1 \big)^{\frac{1}{2}}
= \frac{C}{\lambda} \, \|f\|_1.\notag
\end{align}
This completes the proof of the theorem.
\end{proof}

\textbf{Acknowledgement:}
The second named author thanks the DST-INSPIRE Faculty Fellowship DST/INSPIRE/04/2020/001132,
Prime Minister Early Career Research Grant Scheme ANRF/ECRG/2024/000699/PMS and
ANRF/ARGM/2025/000895/MTR. The first-named author gratefully acknowledges the hospitality of the The Institute of Mathematical Sciences Chennai where a substantial part of this work was carried out.
\bibliographystyle{amsplain}
\nocite{*}
\bibliography{main}

@article {oualid-Ritt-E-operato,
    AUTHOR = {Bouabdillah, Oualid and Le Merdy, Christian},
     TITLE = {Polygonal functional calculus for operators with finite
              peripheral spectrum},
   JOURNAL = {Israel J. Math.},
  FJOURNAL = {Israel Journal of Mathematics},
    VOLUME = {263},
      YEAR = {2024},
    NUMBER = {2},
     PAGES = {517--551},
}

@article{MondalPalaiRay2026,
  author    = {Mondal, Suman and Palai, Subhajit and Ray, Samya Kumar},
  title     = {{$H^\infty$ Functional Calculus for a Commuting Pair of $\text{Ritt}_{\text{E}}$ Operators}},
  journal   = {Integral Equations and Operator Theory},
  year      = {2026},
  volume     = {98},
  number     = {3},
  pages      = {24}
}

@article {Blunk,
    AUTHOR = {Blunck, S\"onke},
     TITLE = {Analyticity and discrete maximal regularity on {$L_p$}-spaces},
   JOURNAL = {J. Funct. Anal.},
  FJOURNAL = {Journal of Functional Analysis},
    VOLUME = {183},
      YEAR = {2001},
    NUMBER = {1},
     PAGES = {211--230},
     
}

@article {Le-Merdy-Xu,
    AUTHOR = {Le Merdy, Christian and Xu, Quanhua},
     TITLE = {Maximal theorems and square functions for analytic operators
              on {$L^p$}-spaces},
   JOURNAL = {J. Lond. Math. Soc. (2)},
  FJOURNAL = {Journal of the London Mathematical Society. Second Series},
    VOLUME = {86},
      YEAR = {2012},
    NUMBER = {2},
     PAGES = {343--365},
    
}

@article {junge-Xu,
    AUTHOR = {Junge, Marius and Xu, Quanhua},
     TITLE = {Noncommutative maximal ergodic theorems},
   JOURNAL = {J. Amer. Math. Soc.},
  FJOURNAL = {Journal of the American Mathematical Society},
    VOLUME = {20},
      YEAR = {2007},
    NUMBER = {2},
     PAGES = {385--439},
      ISSN = {0894-0347,1088-6834},
   
}

@article {Non-commutative-vector-valued-L-p-space-pisier,
    AUTHOR = {Pisier, Gilles},
     TITLE = {Non-commutative vector valued {$L_p$}-spaces and completely
              {$p$}-summing maps},
   JOURNAL = {Ast\'erisque},
  FJOURNAL = {Ast\'erisque},
    NUMBER = {247},
      YEAR = {1998},
     PAGES = {vi+131},
  
}

@article {Doob-inequality-for-non-commutative-martingales-Junge-Marius,
    AUTHOR = {Junge, Marius},
     TITLE = {Doob's inequality for non-commutative martingales},
   JOURNAL = {J. Reine Angew. Math.},
  FJOURNAL = {Journal f\"ur die Reine und Angewandte Mathematik. [Crelle's
              Journal]},
    VOLUME = {549},
      YEAR = {2002},
     PAGES = {149--190},
 
}

@article {bouabdillah2024squarefunctionsassociatedritte,
    AUTHOR = {Bouabdillah, Oualid},
     TITLE = {Square functions associated with {${\rm Ritt}_E$} operators},
   JOURNAL = {Indag. Math. (N.S.)},
  FJOURNAL = {Koninklijke Nederlandse Akademie van Wetenschappen.
              Indagationes Mathematicae. New Series},
    VOLUME = {36},
      YEAR = {2025},
    NUMBER = {5},
     PAGES = {1417--1452},
     
}

@article {le-Merdy-Xu-q-variational-inequality,
    AUTHOR = {Le Merdy, Christian and Xu, Quanhua},
     TITLE = {Strong {$q$}-variation inequalities for analytic semigroups},
   JOURNAL = {Ann. Inst. Fourier (Grenoble)},
  FJOURNAL = {Universit\'e{} de Grenoble. Annales de l'Institut Fourier},
    VOLUME = {62},
      YEAR = {2012},
    NUMBER = {6},
     PAGES = {2069--2097},
    
}

@article {Hong-Tao-Ma,
    AUTHOR = {Hong, Guixiang and Ma, Tao},
     TITLE = {Vector valued {$q$}-variation for ergodic averages and
              analytic semigroups},
   JOURNAL = {J. Math. Anal. Appl.},
  FJOURNAL = {Journal of Mathematical Analysis and Applications},
    VOLUME = {437},
      YEAR = {2016},
    NUMBER = {2},
     PAGES = {1084--1100},

}

@article {Lance,
    AUTHOR = {Lance, E. Christopher},
     TITLE = {Ergodic theorems for convex sets and operator algebras},
   JOURNAL = {Invent. Math.},
  FJOURNAL = {Inventiones Mathematicae},
    VOLUME = {37},
      YEAR = {1976},
    NUMBER = {3},
     PAGES = {201--214},

}

@article{Flajolet1990SingularityAO,
  title={Singularity Analysis of Generating Functions},
  author={Philippe Flajolet and Andrew M. Odlyzko},
  journal={SIAM J. Discret. Math.},
  year={1990},
  volume={3},
  pages={216-240},
}

@article {Hong2024,
    AUTHOR = {Hong, Guixiang and Liu, Wei and Xu, Bang},
     TITLE = {Quantitative mean ergodic inequalities: power bounded
              operators acting on one single noncommutative {$L_p$} space},
   JOURNAL = {J. Funct. Anal.},
  FJOURNAL = {Journal of Functional Analysis},
    VOLUME = {286},
      YEAR = {2024},
    NUMBER = {1},
     PAGES = {Paper No. 110190, 37},
 
}

@article {Hong2023,
    AUTHOR = {Hong, Guixiang and Ray, Samya Kumar and Wang, Simeng},
     TITLE = {Maximal ergodic inequalities for some positive operators on
              noncommutative {$L_p$}-spaces},
   JOURNAL = {J. Lond. Math. Soc. (2)},
  FJOURNAL = {Journal of the London Mathematical Society. Second Series},
    VOLUME = {108},
      YEAR = {2023},
    NUMBER = {1},
     PAGES = {362--408},
  
}

@article {Hong2021,
    AUTHOR = {Hong, Guixiang and Liao, Ben and Wang, Simeng},
     TITLE = {Noncommutative maximal ergodic inequalities associated with
              doubling conditions},
   JOURNAL = {Duke Math. J.},
  FJOURNAL = {Duke Mathematical Journal},
    VOLUME = {170},
      YEAR = {2021},
    NUMBER = {2},
     PAGES = {205--246},
   
}

@article {Hong2020,
    AUTHOR = {Hong, Guixiang and Liu, Wei and Ma, Tao},
     TITLE = {Vector-valued {$q$}-variational inequalities for averaging
              operators and the {H}ilbert transform},
   JOURNAL = {Arch. Math. (Basel)},
  FJOURNAL = {Archiv der Mathematik},
    VOLUME = {115},
      YEAR = {2020},
    NUMBER = {4},
     PAGES = {423--433},
 
}

@article {Hong20201,
    AUTHOR = {Hong, Guixiang},
     TITLE = {Non-commutative ergodic averages of balls and spheres over
              {E}uclidean spaces},
   JOURNAL = {Ergodic Theory Dynam. Systems},
  FJOURNAL = {Ergodic Theory and Dynamical Systems},
    VOLUME = {40},
      YEAR = {2020},
    NUMBER = {2},
     PAGES = {418--436},

}

@article {Hong2017,
    AUTHOR = {Hong, Guixiang and Ma, Tao},
     TITLE = {Vector valued {$q$}-variation for differential operators and semigroups {I}},
   JOURNAL = {Math. Z.},
  FJOURNAL = {Mathematische Zeitschrift},
    VOLUME = {286},
      YEAR = {2017},
    NUMBER = {1-2},
     PAGES = {89--120},
 
}

@article {Hong2016,
    AUTHOR = {Hong, Guixiang and Ma, Tao},
     TITLE = {Vector valued {$q$}-variation for ergodic averages and
              analytic semigroups},
   JOURNAL = {J. Math. Anal. Appl.},
  FJOURNAL = {Journal of Mathematical Analysis and Applications},
    VOLUME = {437},
      YEAR = {2016},
    NUMBER = {2},
     PAGES = {1084--1100},

}

@article {Xu2025,
    AUTHOR = {Xu, Quanhua},
     TITLE = {Holomorphic functional calculus and vector-valued
              {L}ittlewood-{P}aley-{S}tein theory for semigroups},
   JOURNAL = {J. Eur. Math. Soc. (JEMS)},
  FJOURNAL = {Journal of the European Mathematical Society (JEMS)},
    VOLUME = {27},
      YEAR = {2025},
    NUMBER = {8},
     PAGES = {3191--3248},

}

@article {JungeLeMerdyXu06,
    AUTHOR = {Junge, Marius and Le Merdy, Christian and Xu, Quanhua},
     TITLE = {{$H^\infty$} functional calculus and square functions on
              noncommutative {$L^p$}-spaces},
   JOURNAL = {Ast\'erisque},
  FJOURNAL = {Ast\'erisque},
    NUMBER = {305},
      YEAR = {2006},
     PAGES = {vi+138},

}

@article {Xu2015,
    AUTHOR = {Xu, Quanhua},
     TITLE = {{$H^\infty$} functional calculus and maximal inequalities for
              semigroups of contractions on vector-valued {$L_p$}-spaces},
   JOURNAL = {Int. Math. Res. Not. IMRN},
  FJOURNAL = {International Mathematics Research Notices. IMRN},
      YEAR = {2015},
    NUMBER = {14},
     PAGES = {5715--5732},
 
}

@article {Xu2020,
    AUTHOR = {Xu, Quanhua},
     TITLE = {Vector-valued {L}ittlewood-{P}aley-{S}tein theory for
              semigroups {II}},
   JOURNAL = {Int. Math. Res. Not. IMRN},
  FJOURNAL = {International Mathematics Research Notices. IMRN},
      YEAR = {2020},
    NUMBER = {21},
     PAGES = {7769--7791},
 
}

@article {Bekjan2008,
    AUTHOR = {Bekjan, Turdebek N.},
     TITLE = {Noncommutative maximal ergodic theorems for positive
              contractions},
   JOURNAL = {J. Funct. Anal.},
  FJOURNAL = {Journal of Functional Analysis},
    VOLUME = {254},
      YEAR = {2008},
    NUMBER = {9},
     PAGES = {2401--2418},

}

@article {HongWangWang,
    AUTHOR = {Hong, Guixiang and Wang, Simeng and Wang, Xumin},
     TITLE = {Pointwise convergence of noncommutative {F}ourier series},
   JOURNAL = {Mem. Amer. Math. Soc.},
  FJOURNAL = {Memoirs of the American Mathematical Society},
    VOLUME = {302},
      YEAR = {2024},
    NUMBER = {1520},
     PAGES = {v+86},
 
}

@article{hong2025noncommutative,
  author    = {Hong, Guixiang and Lai, Xudong and Ray, Samya Kumar and Xu, Bang},
  title     = {Noncommutative Maximal Strong {$L_p$}-Estimates of {Calder\'on--Zygmund Operators}},
  journal   = {Israel Journal of Mathematics},
  year      = {2025},
  doi        = {10.1007/s11856-025-2883-2}
}

@book {I.G.Macdonald,
    AUTHOR = {Macdonald, I. G.},
     TITLE = {Symmetric functions and {H}all polynomials},
    SERIES = {Oxford Mathematical Monographs},
   EDITION = {Second},
      NOTE = {With contributions by A. Zelevinsky,
              Oxford Science Publications},
 PUBLISHER = {The Clarendon Press, Oxford University Press, New York},
      YEAR = {1995},
     PAGES = {x+475},
    
}

@article {Akcogluergodicthm,
    AUTHOR = {Akcoglu, M. A.},
     TITLE = {A pointwise ergodic theorem in {$L\sb{p}$}-spaces},
   JOURNAL = {Canadian J. Math.},
  FJOURNAL = {Canadian Journal of Mathematics. Journal Canadien de
              Math\'ematiques},
    VOLUME = {27},
      YEAR = {1975},
    NUMBER = {5},
     PAGES = {1075--1082},

}

@article {ZeqianXuZhi,
    AUTHOR = {Chen, Zeqian and Xu, Quanhua and Yin, Zhi},
     TITLE = {Harmonic analysis on quantum tori},
   JOURNAL = {Comm. Math. Phys.},
  FJOURNAL = {Communications in Mathematical Physics},
    VOLUME = {322},
      YEAR = {2013},
    NUMBER = {3},
     PAGES = {755--805},

}

@article {Bourgain,
    AUTHOR = {Bourgain, Jean},
     TITLE = {On pointwise ergodic theorems for arithmetic sets},
   JOURNAL = {C. R. Acad. Sci. Paris S\'er. I Math.},
  FJOURNAL = {Comptes Rendus des S\'eances de l'Acad\'emie des Sciences.
              S\'erie I. Math\'ematique},
    VOLUME = {305},
      YEAR = {1987},
    NUMBER = {10},
     PAGES = {397--402},
  
}

@incollection {Bellow-Calderon,
    AUTHOR = {Bellow, Alexandra and Calder\'on, Alberto P.},
     TITLE = {A weak-type inequality for convolution products},
 BOOKTITLE = {Harmonic analysis and partial differential equations
              ({C}hicago, {IL}, 1996)},
    SERIES = {Chicago Lectures in Math.},
     PAGES = {41--48},
 PUBLISHER = {Univ. Chicago Press, Chicago, IL},
      YEAR = {1999},
  
}

@article{Stein-ergodic-theorem,
 ISSN = {00278424, 10916490},
 URL = {http://www.jstor.org/stable/71067},
 author = {E. M. Stein},
 journal = {Proceedings of the National Academy of Sciences of the United States of America},
 number = {12},
 pages = {1894--1897},
 publisher = {National Academy of Sciences},
 title = {On the Maximal Ergodic Theorem},
 urldate = {2026-05-13},
 volume = {47},
 year = {1961}
}

@article {Wiener-ergodic-theorem,
    AUTHOR = {Wiener, Norbert},
     TITLE = {The ergodic theorem},
   JOURNAL = {Duke Math. J.},
  FJOURNAL = {Duke Mathematical Journal},
    VOLUME = {5},
      YEAR = {1939},
    NUMBER = {1},
     PAGES = {1--18},
    
}

@article{birkhoff1931proof,
  title={Proof of the ergodic theorem},
  author={Birkhoff, George D},
  journal={Proceedings of the National Academy of Sciences},
  volume={17},
  number={12},
  pages={656--660},
  year={1931},
  publisher={National Academy of Sciences}
}

@article{neumann1932proof,
  title={Proof of the quasi-ergodic hypothesis},
  author={Neumann, J. v},
  journal={Proceedings of the National Academy of Sciences},
  volume={18},
  number={1},
  pages={70--82},
  year={1932},
  publisher={National Academy of Sciences}
}

@article {DunfordSchwartz,
    AUTHOR = {Dunford, Nelson and Schwartz, J. T.},
     TITLE = {Convergence almost everywhere of operator averages},
   JOURNAL = {J. Rational Mech. Anal.},
  FJOURNAL = {Journal of Rational Mechanics and Analysis},
    VOLUME = {5},
      YEAR = {1956},
     PAGES = {129--178},
     
}

@article{Delyon-Bernard,
     author = {Delyon, Bernard and Delyon, Fran\c{c}ois},
     title = {Generalization of von {Neumann's} spectral sets and integral representation of operators},
     journal = {Bulletin de la Soci\'et\'e Math\'ematique de France},
     pages = {25--41},
     year = {1999},
     publisher = {Soci\'et\'e math\'ematique de France},
     volume = {127},
     number = {1},
     
}

@article {Bang-Xu,
    AUTHOR = {Hong, Guixiang and Lai, Xudong and Xu, Bang},
     TITLE = {Maximal singular integral operators acting on noncommutative
              {$L_p$}-spaces},
   JOURNAL = {Math. Ann.},
  FJOURNAL = {Mathematische Annalen},
    VOLUME = {386},
      YEAR = {2023},
    NUMBER = {1-2},
     PAGES = {375--414},
     
}

@incollection {PisierXuNclp,
    AUTHOR = {Pisier, Gilles and Xu, Quanhua},
     TITLE = {Non-commutative {$L^p$}-spaces},
 BOOKTITLE = {Handbook of the geometry of {B}anach spaces, {V}ol.\ 2},
     PAGES = {1459--1517},
 PUBLISHER = {North-Holland, Amsterdam},
      YEAR = {2003},
    
}

@article {pisierXuMartinglevariation,
    AUTHOR = {Pisier, Gilles and Xu, Quan Hua},
     TITLE = {The strong {$p$}-variation of martingales and orthogonal
              series},
   JOURNAL = {Probab. Theory Related Fields},
  FJOURNAL = {Probability Theory and Related Fields},
    VOLUME = {77},
      YEAR = {1988},
    NUMBER = {4},
     PAGES = {497--514},
   
}

@article{Yeadon-Noncommutative-ergodic-theorem,
author = {Yeadon, F. J.},
title = {Ergodic Theorems for Semifinite Von Neumann Algebras. {I}},
journal = {Journal of the London Mathematical Society},
volume = {s2-16},
number = {2},
pages = {326-332},
year = {1977}
}

@article {Noncommutative-ergodic-theorem-kummmerer,
    AUTHOR = {K\"ummerer, Burkhard},
     TITLE = {A non-commutative individual ergodic theorem},
   JOURNAL = {Invent. Math.},
  FJOURNAL = {Inventiones Mathematicae},
    VOLUME = {46},
      YEAR = {1978},
    NUMBER = {2},
     PAGES = {139--145},
   
}

@article {Conze,
    AUTHOR = {Conze, J.-P. and Dang-Ngoc, N.},
     TITLE = {Ergodic theorems for noncommutative dynamical systems},
   JOURNAL = {Invent. Math.},
  FJOURNAL = {Inventiones Mathematicae},
    VOLUME = {46},
      YEAR = {1978},
    NUMBER = {1},
     PAGES = {1--15},
 
}

@article {Cuculescu,
    AUTHOR = {Cuculescu, I.},
     TITLE = {Martingales on von {N}eumann algebras},
   JOURNAL = {J. Multivariate Anal.},
  FJOURNAL = {Journal of Multivariate Analysis},
    VOLUME = {1},
      YEAR = {1971},
    NUMBER = {1},
     PAGES = {17--27},

}

@article {Cadilhac-Conde-Alonso-Parcet-2022,
    AUTHOR = {Cadilhac, L\'eonard and Conde-Alonso, Jos\'e{} M. and Parcet,
              Javier},
     TITLE = {Spectral multipliers in group algebras and noncommutative
              {C}alder\'on-{Z}ygmund theory},
   JOURNAL = {J. Math. Pures Appl. (9)},
  FJOURNAL = {Journal de Math\'ematiques Pures et Appliqu\'ees. Neuvi\`eme
              S\'erie},
    VOLUME = {163},
      YEAR = {2022},
     PAGES = {450--472},
  
}

@article {CunnyRttWeaktype,
    AUTHOR = {Cuny, Christophe},
     TITLE = {On the {R}itt property and weak type maximal inequalities for
              convolution powers on {$\ell^1(\Bbb Z)$}},
   JOURNAL = {Studia Math.},
  FJOURNAL = {Studia Mathematica},
    VOLUME = {235},
      YEAR = {2016},
    NUMBER = {1},
     PAGES = {47--85},
   
}

@article {varitaionalarthimeticsetBourgain,
    AUTHOR = {Bourgain, Jean},
     TITLE = {Pointwise ergodic theorems for arithmetic sets},
      NOTE = {With an appendix by the author, Harry Furstenberg, Yitzhak
              Katznelson and Donald S. Ornstein},
   JOURNAL = {Inst. Hautes \'Etudes Sci. Publ. Math.},
  FJOURNAL = {Institut des Hautes \'Etudes Scientifiques. Publications
              Math\'ematiques},
    NUMBER = {69},
      YEAR = {1989},
     PAGES = {5--45},

}

@article {JonesSeegerWrightvariation,
    AUTHOR = {Jones, Roger L. and Seeger, Andreas and Wright, James},
     TITLE = {Strong variational and jump inequalities in harmonic analysis},
   JOURNAL = {Trans. Amer. Math. Soc.},
  FJOURNAL = {Transactions of the American Mathematical Society},
    VOLUME = {360},
      YEAR = {2008},
    NUMBER = {12},
     PAGES = {6711--6742},

}

@article {TaoTorreaXuvariation,
    AUTHOR = {Ma, Tao and Torrea, Jos\'e{} Luis and Xu, Quanhua},
     TITLE = {Weighted variation inequalities for differential operators and
              singular integrals},
   JOURNAL = {J. Funct. Anal.},
  FJOURNAL = {Journal of Functional Analysis},
    VOLUME = {268},
      YEAR = {2015},
    NUMBER = {2},
     PAGES = {376--416},

}

@article {MirekSteinTrojanvariation,
    AUTHOR = {Mirek, Mariusz and Stein, Elias M. and Trojan, Bartosz},
     TITLE = {{$\ell^p(\Bbb Z^d) $}-estimates for discrete operators of
              {R}adon type: variational estimates},
   JOURNAL = {Invent. Math.},
  FJOURNAL = {Inventiones Mathematicae},
    VOLUME = {209},
      YEAR = {2017},
    NUMBER = {3},
     PAGES = {665--748},

}

@article {OberlinSeegerTaoThieleWrightVariation,
    AUTHOR = {Oberlin, Richard and Seeger, Andreas and Tao, Terence and
              Thiele, Christoph and Wright, James},
     TITLE = {A variation norm {C}arleson theorem},
   JOURNAL = {J. Eur. Math. Soc. (JEMS)},
  FJOURNAL = {Journal of the European Mathematical Society (JEMS)},
    VOLUME = {14},
      YEAR = {2012},
    NUMBER = {2},
     PAGES = {421--464},

}

@article{ray2025noncommutative,
  title={Noncommutative ergodic theorems for action of semisimple Lie groups},
  author={Ray, Samya Kumar and Guixiang Hong},
  journal={arXiv preprint arXiv:2508.07444},
  year={2025}
}

@article {Operatorvaluedhardyspacemei,
    AUTHOR = {Mei, Tao},
     TITLE = {Operator valued {H}ardy spaces},
   JOURNAL = {Mem. Amer. Math. Soc.},
  FJOURNAL = {Memoirs of the American Mathematical Society},
    VOLUME = {188},
      YEAR = {2007},
    NUMBER = {881},
     PAGES = {vi+64},

}

@article {peripheralspectrumofpositiveoperators,
    AUTHOR = {Gl\"uck, Jochen},
     TITLE = {On the peripheral spectrum of positive operators},
   JOURNAL = {Positivity},
  FJOURNAL = {Positivity. An International Mathematics Journal Devoted to
              Theory and Applications of Positivity},
    VOLUME = {20},
      YEAR = {2016},
    NUMBER = {2},
     PAGES = {307--336},

}
\end{document}